\documentclass[a4paper, 12pt, oneside, notitlepage]{amsart}
\usepackage[margin=3cm]{geometry}
\usepackage{amsmath,amssymb,amsthm,graphicx,mathrsfs,bbm,url,stmaryrd,amscd,cancel}
\usepackage{amsthm}
\usepackage{wrapfig}
\usepackage{enumitem}
\usepackage{mathtools}
\usepackage[utf8]{inputenc}
 \usepackage[T1]{fontenc}
\usepackage[usenames,dvipsnames]{color}
\usepackage[colorlinks=true,linkcolor=Red,citecolor=Green]{hyperref}
\usepackage[super]{nth}
\usepackage[open, openlevel=2, depth=3, atend]{bookmark}
\hypersetup{pdfstartview=XYZ}
\usepackage[font=footnotesize]{caption}
\usepackage{a4wide}
\usepackage{tikz}
\usepackage{tikz-cd}

\numberwithin{equation}{section}

\makeatletter
\newcommand\footnoteref[1]{\protected@xdef\@thefnmark{\ref{#1}}\@footnotemark}
\makeatother

\hypersetup{ colorlinks   = true, 
urlcolor  = blue, 
linkcolor    = Red, 
citecolor   = Green 
}

\def\I{{\mathrm I}}
\def\II{{\mathrm{II}}}
\def\III{{\mathrm{III}}}

\def\SS{{\mathbb S}}

\def\beq{\begin{equation}}
\def\eeq{\end{equation}}

\def\PSL{\mathrm{PSL}_2(\mathbb{R})}

\def\PSLc{\mathrm{PSL}_2(\mathbb{C})}

\def\det{\mathrm{det}}

\def\dS{\mathrm{dS}}

\catcode`\@=11
\def\eqalign#1{\null\,\vcenter{\openup1\jot \m@th %
\ialign{\strut\hfil$\displaystyle{##}\quad$&$\displaystyle{{}##}$\hfil\crcr#1\crcr}}\,}
\def\triplealign#1{\null\,\vcenter{\openup1\jot \m@th %
\ialign{\strut\hfil$\displaystyle{##}\quad$&$\displaystyle{{}##}$\hfil&$\displaystyle{{}##}$\hfil\crcr#1\crcr}}\,}
\def\multiline#1{\null\,\vcenter{\openup1\jot \m@th %
\ialign{\strut$\displaystyle{##}$\hfil&$\displaystyle{{}##}$\hfil\crcr#1\crcr}}\,}
\catcode`\@=12

\def\restriction#1#2{\mathchoice
              {\setbox1\hbox{${\displaystyle #1}_{\scriptstyle #2}$}
              \end{enumerate}
\restrictionaux{#1}{#2}}
              {\setbox1\hbox{${\textstyle #1}_{\scriptstyle #2}$}
              \restrictionaux{#1}{#2}}
              {\setbox1\hbox{${\scriptstyle #1}_{\scriptscriptstyle #2}$}
              \restrictionaux{#1}{#2}}
              {\setbox1\hbox{${\scriptscriptstyle #1}_{\scriptscriptstyle #2}$}
              \restrictionaux{#1}{#2}}}
\def\restrictionaux#1#2{{#1\,\smash{\vrule height .8\ht1 depth .85\dp1}}_{\,#2}}

\newcommand{\Z}{{\mathbb Z}}
\newcommand{\R}{{\mathbb R}}

\newcommand{\C}{{\mathbb C}}

\newcommand{\HH}{{\mathbb H}}

\newcommand{\BB}{\overline{\mathbb{B}}}
\newcommand{\Ss}{\mathbb{S}}

\newcommand{\sph}{{\mathrm{sph}}}
\newcommand{\cA}{{\mathcal A}}
\newcommand{\cC}{{\mathcal C}}
\newcommand{\cD}{{\mathcal D}}

\newcommand{\cF}{{\mathcal F}}

\newcommand{\cH}{{\mathcal H}}

\newcommand{\cV}{{\mathcal V}}
\newcommand{\cW}{{\mathcal W}}

\newcommand{\cT}{{\mathcal T}}

\newcommand{\cU}{{\mathcal U}}
\newcommand{\cK}{{\mathcal K}}
\newcommand{\cO}{{\mathcal O}}
\newcommand{\cR}{{\mathcal R}}

\newcommand{\eps}{\varepsilon}
\newcommand{\fhi}{\varphi}
\newcommand{\vide}{\emptyset}

\def\dans{\mathop{\subset}}

\newcommand{\sect}{\mathrm{sect}}
\newcommand{\Ric}{\mathrm{Ric}}
\newcommand{\dd}{\mathrm{d}}

\newcommand{\MD}{\mathrm{MD}}

\newcommand{\opEnd}{\mathrm{End}}

\newcommand{\vol}{\mathrm{vol}}

\newcommand{\jetX}{\mathcal{J}^2X}
\newcommand{\jete}{\mathcal{J}^2e}
\newcommand{\tr}{\mathrm{tr}}
\newcommand{\opK}{{\text{{\rm K}}}}

\newcommand{\moins}{\setminus}

\newcommand{\Hess}{\mathrm{Hess}}
\newcommand{\opSpec}{{\text{{\rm Spec}}}}
\newcommand{\opc}{{\text{{\rm c}}}}
\newcommand{\Id}{\mathrm{Id}}

\newcommand{\OpIm}{\mathrm{Im}}

\newcommand{\Symm}{\mathrm{Sym}}
\newcommand{\Zero}{\mathrm{Zero}}
\newcommand{\End}{\mathrm{End}}
\newcommand{\Area}{\mathrm{Area}}

\newcommand{\Op}{\mathrm{Op}}

\newcommand{\opD}{{\text{{\rm D}}}}
\newcommand{\opI}{\text{{\rm I}}}
\newcommand{\opII}{\text{{\rm II}}}
\newcommand{\opSym}{\mathrm{Sym}}

\newcommand{\closedtwoball}{\overline{\mathbb{B}}\vphantom{\mathbb{B}}^2}
\newcommand{\closedthreeball}{\overline{\mathbb{B}}\vphantom{\mathbb{B}}^3}
\newcommand{\twoball}{\mathbb{B}^2}
\newcommand{\opO}{\mathcal{O}}
\newtheorem{theorem}{Theorem}[section]
\newtheorem{remark}[theorem]{Remark}

\newtheorem{lemma}[theorem]{Lemma}
\newtheorem{defi}[theorem]{Definition}
\newtheorem{example}[theorem]{Example}
\newtheorem{prop}[theorem]{Proposition}
\newtheorem{corollary}[theorem]{Corollary}

\newtheorem{conjecture}{Conjecture}

\catcode`\@=11
\def\triplealign#1{\null\,\vcenter{\openup1\jot \m@th %
\ialign{\strut\hfil$\displaystyle{##}\quad$&$\displaystyle{{}##}$\hfil&$\displaystyle{{}##}$\hfil\crcr#1\crcr}}\,}
\def\multiline#1{\null\,\vcenter{\openup1\jot \m@th %
\ialign{\strut$\displaystyle{##}$\hfil&$\displaystyle{{}##}$\hfil\crcr#1\crcr}}\,}
\catcode`\@=12

\newif\ifshowcomments
\showcommentstrue

\begin{document}
	
\title[Boundary area rigidity in dimension three]{Foliated Plateau problems, surface Radon transforms, and boundary area rigidity in dimension three}

\author[Alvarez]{S\'ebastien Alvarez}
\address{CMAT, Facultad de Ciencias, Universidad de la Rep\'ublica, \& IRL-IFUMI (CNRS)\\
	Igua 4225 esq. Mataojo. Montevideo, Uruguay.\\}
\email{salvarez@cmat.edu.uy}

\author[Lefeuvre]{Thibault Lefeuvre}
\address{Université Paris-Saclay, CNRS, Laboratoire de mathématiques d’Orsay, 91405, Orsay, France.}
\email{thibault.lefeuvre1@universite-paris-saclay.fr}

\author[Lowe]{Ben Lowe}
\address{University of Chicago, Department of Mathematics, Chicago IL 60637, USA}
\email{loweb24@gmail.com}

\author[Smith]{Graham A. Smith}
\address{Departamento de Matemática, PUC-Rio, Rua Marquês de São Vicente, 225, Rio de Janeiro, 22451-900, RJ, Brazil.}
\email{grahamandrewsmith@gmail.com}

\date{\today}

\begin{abstract}
The present paper studies integral geometry problems on three-dimensional Riemannian balls, where integration is performed over minimal surfaces or, more generally, $\Phi$-surfaces defined by an elliptic curvature functional $\Phi$. The space of all $\Phi$-surfaces spanned by round circles on the boundary is a three-dimensional manifold, which we call the \emph{space of circles}. We show that, when the metric is \emph{$\Phi$-simple} - a notion which extends to this setting the notion of simple metrics in the geodesic case -, the Gauss lifts of the $\Phi$-surfaces define a foliation of the unit tangent bundle that should be viewed as a two-dimensional analogue of the standard geodesic foliation. This is achieved by solving a \emph{foliated Plateau problem} on the ball. We then analyze the associated \emph{surface Radon transform} corresponding to integration along the surfaces and show that it has a finite-dimensional kernel; we also prove that it is injective for an open and dense set of metrics. In the special case of a foliation by minimal surfaces, we apply these results to solve the following boundary area rigidity problem: does the collection of areas of the minimal surfaces determine the metric up to isometry?
\end{abstract}

\maketitle

\section{Introduction}

\subsection{Background}
In its simplest form, the present paper introduces and studies an analogue of the classical \emph{boundary distance rigidity problem}, in which lengths of geodesics are replaced by \emph{areas of minimal surfaces}. We recall that the boundary rigidity problem asks whether \emph{a compact Riemannian manifold with boundary is uniquely determined, up to isometry, by the distances it induces between boundary points}. This problem was introduced by Michel \cite{Michel-81}, who conjectured that simple manifolds are boundary rigid. This conjecture was proven in the $2$-dimensional case by Pestov--Uhlmann \cite{Pestov-Uhlmann-05}, following earlier results of Croke \cite{Croke-90} and Otal \cite{Otal-90-2}, as well as in several higher-dimensional model cases, including the hemisphere \cite{Michel-81}, Euclidean domains \cite{Gromov-83}, and hyperbolic domains \cite{Croke-04}, the latter relying on the Besson--Courtois--Gallot theorem \cite{Besson-Courtois-Gallot-95}. Further progress includes results of Stefanov--Uhlmann \cite{Stefanov-Uhlmann-05}, Burago--Ivanov \cite{Burago-Ivanov-10, Burago-Ivanov-13}, and Stefanov--Uhlmann--Vasy \cite{Stefanov-Uhlmann-Vasy-21}, which extend the theory beyond the simple setting.

\subsubsection{Boundary area rigidity}

We consider the following analogue of the boundary distance rigidity problem: \emph{is a compact Riemannian manifold with boundary uniquely determined, up to isometry, by the areas of the embedded minimal surfaces that it contains?} This problem was first studied by Alexakis--Balehowsky--Nachman \cite{Alexakis-Balehowsky-Nachman-20} who, motivated in part by its connections to the AdS/CFT correspondence, solved it for $3$-dimensional balls under certain geometric hypotheses, such as closeness to the Euclidean metric, or suitable ``thinness'' or ``straightness'' conditions. It was then studied by C{\^a}rstea--Liimatainen--Tzou \cite{Carstea-Liimatainen-Tzou-24} in the $3$-dimensional case, using connections with Calder\'on-type inverse problems. More recently, Busch--Liimatainen--Salo--Tzou \cite{Busch-Liimatainen-Salo-Tzou-25} studied the higher-dimensional case, obtaining a local rigidity result, with respect to conformal variations of the metric which vanish to infinite order on the boundary, for analytic metrics under suitable ``ampleness'' conditions.

The families of minimal surfaces used in \cite{Alexakis-Balehowsky-Nachman-20} are infinite-dimensional, and in \cite{Carstea-Liimatainen-Tzou-24} and \cite{Busch-Liimatainen-Salo-Tzou-25}, although the authors reduce to finite-dimensional families, these families are non-explicit and of non-optimal dimension (at least $6$). This contrasts markedly with the geodesic case, where the space of geodesics is typically identified with $\partial X\times\partial X\setminus\Delta$, which has dimension $2\dim(X)-2$. We refer the reader to \S\ref{ssection:champions} for a detailed comparison of our results with the work cited above.

\subsubsection{Content of the present paper}

In the present paper, restricting to $3$-dimensional ambient spaces, we prove local and global rigidity results, with respect to conformal variations of the metric, for the restriction of the area spectrum to certain explicit $3$-dimensional families of minimal surfaces. These families will be of optimal dimension for uniquely determining the metric.

Whenever the metric is \emph{area simple} - a notion which is the analogue of the simplicity condition for metrics in the geodesic case\footnote{A Riemannian manifold is \emph{simple} if its boundary is strictly convex, and it has no conjugate points and trapped geodesics. See, for example, \cite[Section 3.8]{Paternain-Salo-Uhlmann-23}.} (see Definition \ref{definition:area-simple} below) - we achieve this by constructing foliations of unit tangent bundles of Riemannian balls by families of (lifts of) minimal discs (Theorem \ref{th.Foliated_Plateau_posta}). We use these foliations to construct surface Radon transforms, and we show that the normal operators of these Radon transforms are of Fredholm type (Theorem \ref{theorem:intro-normal}). Local rigidity of the boundary area spectrum (Theorem \ref{theorem:main}) then follows by injectivity of the Radon transforms, which we establish for an open and dense set of area simple metrics containing all real analytic metrics. We also show global rigidity when the conformal variation is analytic (Theorem \ref{theorem:global}).

In fact, we study foliated Plateau problems for surfaces satisfying far more general elliptic curvature conditions. Such surfaces, which we will call $(\Phi,k)$-surfaces, are described in \S\ref{ssection:foliated-plateau} and \S\ref{ssection:phiksurfaces}. This framework includes minimal surfaces, constant mean curvature surfaces, and constant extrinsic curvature surfaces as special cases. We study the foliations and Radon transforms that such Plateau problems define, and we establish general conditions under which the normal operators of these Radon transforms are of Fredholm type.

Finally, we emphasize that all our results are restricted to the $3$-dimensional case, in contrast to \cite{Busch-Liimatainen-Salo-Tzou-25}, for example, which holds for any dimension $\geq 3$. The extension of our results to higher dimensions remains an open problem, and we discuss possible approaches and obstructions in \S\ref{sssection:dimensions}.

\subsection{Main rigidity results}  \label{ssection:area}

Throughout this paper, $X$ will be a \emph{real analytic} $3$-manifold with boundary, diffeomorphic to the closed unit ball $\closedthreeball$, and $\bar g$ will denote a \emph{smooth} Riemannian metric over $X$.

\subsubsection{The boundary area spectrum and the space of circles}

\label{sssection:intro-circles}

For any oriented simple closed curve $c$ in $\partial X$, we say that a properly embedded minimal surface with boundary $S\subseteq X$ \emph{solves} the \emph{Plateau problem} with boundary $c$ whenever $\partial S=c$ and the orientation of $\partial S$ coincides with that of $c$ (we keep track of orientation in order to address more general problems later). Given such a curve $c$, we define
\begin{equation}
\mathcal{A}_{\bar g}(c) := \inf_{\partial S = c}\mathrm{Area}_{\bar{g}}(S)\ ,
\end{equation}
where the infimum is taken over all properly embedded oriented minimal surfaces $S$ with boundary $c$. In the cases of interest to us here, the minimizer will always be a disc.

We call the functional $\mathcal{A}_{\bar g}$ the \emph{boundary area spectrum} of $\bar g$, and we will be interested in its restrictions to $3$-dimensional families of curves defined as follows. Let $\mathbb{S}^2$ denote the unit sphere in $\mathbb{R}^3$, and let $\alpha : \partial X \to \mathbb{S}^2$ be a smooth diffeomorphism. We define an (oriented) \emph{round circle} in $\mathbb{S}^2$ to be an intersection $c:=P\cap\mathbb{S}^2$, for some oriented affine plane $P\subseteq\mathbb{R}^3$, and we define an (oriented) \emph{round circle} in $\partial X$ to be the pre-image $\alpha^{-1}(c)$ of any such curve. We denote the space of all such oriented circles in $\partial X$ by $\cC_\alpha$, and we note that $\cC_\alpha$ is a $3$-dimensional manifold diffeomorphic to $\mathbb{S}^2\times(-1,1)$ (see \S\ref{ssection:space-of-circles}). We henceforth consider the restriction of $\mathcal{A}_{\bar g}$ to $\cC_\alpha$.

\subsubsection{Area simple metrics}

We study rigidity, with respect to conformal variations of the metric, of the functional $\mathcal{A}_{\bar g}:\cC_\alpha\rightarrow(0,\infty)$. As in the case of length spectral rigidity, this will require some sort of uniqueness and non-degeneracy conditions for solutions of the Plateau problem. For this reason, we say that an embedded minimal surface $S$ is \emph{strictly stable} whenever all Dirichlet eigenvalues of its Jacobi operator are strictly positive (see \S\ref{sssection:phi-surfaces} and \S\ref{ss:Stability} for further details).

\begin{defi}[Area simple metrics] \label{definition:area-simple} We will say that $\bar g$ is \emph{area simple} (with respect to $\cC_\alpha$) whenever $\partial X$ is mean convex, and every curve in $\cC_\alpha$ bounds a \emph{unique} properly embedded minimal disc $\mathrm{D}_{\bar g}(c)$ in $X$ which, furthermore, is \emph{strictly stable}.
\end{defi}

This is an open condition on the metric $\bar g$ with respect to the $C^N$ topology for $N \gg 1$ sufficiently large. We will see presently that, when this condition holds, the disc $\mathrm{D}_{\bar g}(c)$ is in fact the unique minimal surface, of any topological type, bounded by $c$ and, in particular, $\mathrm{D}_{\bar g}(c)$ minimizes area amongst all surfaces bounded by $c$ (Lemma \ref{lemma:area-minimizer}). This concept of area simplicity extends to higher dimensions the notion of \emph{simple} metrics used in the study of boundary distance rigidity. We note that area simplicity depends not only on the metric but also on the family $\cC_\alpha$ of curves.

\begin{example}
All metrics sufficiently close to the Euclidean metric on the unit ball are area simple (for the family of round circles on $\Ss^2$). In our companion paper \cite{Alvarez-Lefeuvre-Lowe-Smith}, we describe an explicit open set of metrics for which area simplicity can be verified. Indeed, for every Riemannian metric $\bar g$ over $X$, we define
\begin{equation}\label{eq.Kdegbar}
\opK(\bar g):=\sup_{c\in\cC_\alpha}\int_c |\kappa_{c}|\dd c\ ,
\end{equation}
where, for every curve $c$, $\kappa_c$ denotes its geodesic curvature with respect to $\bar g$, and $\dd c$ denotes its arc-length measure. Note that this quantity is invariant under rescaling of $\bar g$ by a constant factor. We define $\alpha_\star := (4\sqrt{17}-8)/13 \simeq 0.65...$, and, for $\lambda>0$, we consider the conditions: \textbf{\hypertarget{AA1}{(A1)}} $\partial X$ is mean convex; \textbf{\hypertarget{AA2}{(A2)}} $\opK(\bar g)<4\pi$; \textbf{\hypertarget{AA3}{(A3)}} $-(1+\alpha_\star) \lambda^2 <\sect_{\bar g}<-\lambda^2$; and \textbf{\hypertarget{AA4}{(A4)}} $\|\nabla\Ric_{\bar g}\|< \alpha_\star \lambda^{3}$. In \cite{Alvarez-Lefeuvre-Lowe-Smith}, we show that, under Hypotheses $\hyperlink{AA1}{\rm(A1-A4)}$, $\bar g$ is area simple with respect to $\cC_\alpha$. The reader may verify that these hypotheses hold, in particular, for the unit ball in $3$-dimensional hyperbolic space, where $\cC_\alpha$ is the canonical space of round circles (see \S\ref{ssection:example} for further details).
\end{example}

\subsubsection{Local rigidity of the boundary area spectrum for $C^{14}$ conformal factors}

The first main result of the present paper is the following:

\begin{theorem}
\label{theorem:main}
There exists $N \gg 1$ such that, for all analytic $\alpha$, there exists a subset $\cU_\alpha$ of the set of area simple metrics, open and dense in the $C^N$ topology, which contains all real analytic area simple metrics, and which has the following property:

For all $\bar g \in \cU_\alpha$, there exists $\eps := \eps(\bar g) >0$ such that, if $f\in C^{14}(X)$ satisfies $\|f\|_{C^{14}(X)}<\eps$ and
\begin{equation*}
\mathcal{A}_{e^{2f}\bar g}(c)=\mathcal{A}_{\bar g}(c)\ , \qquad \forall c \in \cC \ ,
\end{equation*}
then $f=0$.

That is, generic area simple metrics $\bar g$ are locally rigid, with respect to $C^{14}$ conformal perturbations, for the functional $\mathcal{A}_{\bar g}:\cC_\alpha\rightarrow(0,\infty)$.
\end{theorem}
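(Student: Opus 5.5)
The plan is to linearize the area functional in the conformal factor and reduce local rigidity to injectivity of a surface Radon transform with a Fredholm normal operator. Write $g_f := e^{2f}\bar g$. Area simplicity is open, so for $\|f\|_{C^{14}}$ small $g_f$ is still area simple. Every $c\in\cC_\alpha$ then bounds a unique strictly stable minimal disc $\mathrm{D}_{g_f}(c)$. By strict stability and the implicit function theorem, this disc depends smoothly on $(c,f)$, and $\mathcal{A}_{g_f}(c)=\mathrm{Area}_{g_f}(\mathrm{D}_{g_f}(c))$ by Lemma \ref{lemma:area-minimizer}.

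\textbf{Linearization.} Because $\mathrm{D}_{\bar g}(c)$ is a critical point of area with fixed boundary, the first variation in $f$ only sees the change of the metric. Hence
\[
\mathcal{A}_{g_f}(c)-\mathcal{A}_{\bar g}(c)=\int_{\mathrm{D}_{\bar g}(c)} 2f\,\dd A_{\bar g}+Q(f)(c)=:2\,(If)(c)+Q(f)(c),
\]
where $I$ is the surface Radon transform along the leaves of the minimal foliation given by Theorem \ref{th.Foliated_Plateau_posta}. The remainder $Q$ is quadratic: $\|Q(f)\|_{H^s(\cC_\alpha)}\le C\|f\|_{C^{14}}\|f\|_{H^{s'}(X)}$ for suitable Sobolev indices. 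To prove this bound I would write $\mathcal{A}_{g_f}(c)$ as a smooth functional of $f$ via the implicit function theorem, with uniform control of the second derivative in $f$ over $c\in\cC_\alpha$. Applying $\mathcal{A}_{g_f}=\mathcal{A}_{\bar g}$ and then $I^*$ gives $N f:=I^*If=-\tfrac12 I^*Q(f)$.

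\textbf{Fredholm estimate and genericity.} By Theorem \ref{theorem:intro-normal}, $N$ is Fredholm, more precisely an elliptic pseudodifferential operator of order $-1$ in the interior, with suitable behaviour at $\partial X$. It therefore has finite-dimensional kernel and satisfies a stability estimate $\|f\|_{H^{s}}\le C(\|Nf\|_{H^{s+1}}+\|f\|_{H^{-M}})$. When $I$ is injective, the compact term can be dropped and we get $\|f\|_{H^s}\le C\|Nf\|_{H^{s+1}}$. So I define $\cU_\alpha$ as the set of area simple metrics for which $I_{\bar g}$ is injective, and three things must be checked.
\begin{itemize}
\item Openness: the Fredholm family $N_{\bar g}$ depends continuously on $\bar g$ in $C^N$, and injectivity of a continuous family of Fredholm operators of index zero with this structure is an open condition.
\item Inclusion of analytic metrics: for analytic $\bar g$ the foliation is analytic and $N$ is an analytic elliptic operator. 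Kernel elements are then analytic up to the boundary, and a boundary-determination argument near $\partial X$ (small circles bound small discs concentrating near the boundary) shows that they vanish to infinite order there, hence vanish identically.
\item Density: analytic metrics are dense in $C^N$, and area simplicity is open.
\end{itemize}

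\textbf{Conclusion.} The estimates combine as
\[
\|f\|_{H^s}\le C\|I^*Q(f)\|_{H^{s+1}}\le C'\|f\|_{C^{14}}\|f\|_{H^{s}},
\]
so $\|f\|_{C^{14}}<\eps$ forces $f=0$. The regularity index $14$ is chosen so that the tame estimate on $Q$ closes with $I^*$ gaining one derivative.

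\textbf{Main obstacle.} I expect the hardest step to be the injectivity part, namely proving that analytic area simple metrics have trivial kernel. This needs analytic microlocal regularity for the surface Radon transform up to the boundary, together with a layer-stripping argument. A secondary difficulty is controlling the nonlinear remainder $Q$ uniformly in the Sobolev norms in which $N$ is elliptic, including near $\partial X$, where the discs degenerate for small circles.
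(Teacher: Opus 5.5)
Your overall architecture matches the paper's: linearize the area functional, invert the normal operator, absorb the quadratic remainder, and get genericity from analytic metrics plus perturbation. The step that closes the nonlinear argument, however, does not go through as written.

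\textbf{The order of the normal operator is wrong.} It is $-2$, not $-1$. In dimension three its kernel behaves like $\chi/r$ near the diagonal, and its principal symbol is $4\pi^2|\xi|^{-2}\big(a(x,\xi^\sharp/|\xi|)+a(x,-\xi^\sharp/|\xi|)\big)$ (Theorem \ref{th_pseudo_behaviour}). So your estimate $\|f\|_{H^s}\le C\|Nf\|_{H^{s+1}}$ is false, since it would imply $\|f\|_{H^s}\lesssim\|f\|_{H^{s-1}}$; the correct gain is two derivatives.

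\textbf{The tame remainder estimate is unproven and likely false.} With the correct order, and since $I^*$ gains only one derivative, your chain $\|f\|_{H^s}\le C\|I^*Q(f)\|\le C'\|f\|_{C^{14}}\|f\|_{H^s}$ needs in effect $\|Q(f)\|_{H^{s+1}(\cC)}\le C\|f\|_{C^{14}}\|f\|_{H^{s}(X)}$. That is, the quadratic remainder must gain a full derivative in its rough factor, as much as $R_0$ itself. Your sketch does not give this.
\begin{itemize}
\item The implicit function theorem with uniform control of the second derivative yields only the crude bound $\|Q(f)\|_{L^\infty(\cC)}\le C\|f\|^2_{C^4(X)}$, i.e.\ \eqref{equation:taylor}.
\item Besides $\int_{\mathrm{D}(c)}2f^2$, the second variation contains a term of the form $\int_{\mathrm{D}(c)}\partial_n f\,J_c^{-1}(\partial_n f)$ coming from the motion of the disc. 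Here $\partial_n$ is the derivative normal to $\mathrm{D}(c)$ and $J_c$ is the Dirichlet Jacobi operator. This term is a Radon transform of a transverse derivative of $f$, and should not be expected to gain that derivative.
\item You would also need Sobolev control of $Q(f)$ on $\cC$ uniformly up to $\partial\cC$, which you yourself flag as unresolved.
\end{itemize}

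\textbf{The missing idea: interpolation, not a tame estimate.} Because the remainder is quadratic, regularity can be traded between the two factors. The paper combines four ingredients:
\begin{itemize}
\item the stability estimate $\|f\|_{H^{-2}(X)}\le C\|R_0f\|_{L^\infty(\cC)}$ of Lemma \ref{lemma:soir}, obtained from the extended operator $P$ of Theorem \ref{theorem:extension-normal-operator} acting on $E_0f$ via $\|PE_0f\|_{L^2(X_e)}\le C\|R_0f\|_{L^2(\cC)}$, so that only $L^\infty$ control of data on $\cC$ is ever required;
\item the crude bound \eqref{equation:taylor};
\item the embedding $H^6(X)\hookrightarrow C^4(X)$;
\item the interpolation inequality $\|f\|_{H^6}^2\le C\|f\|_{H^{-2}}\|f\|_{H^{14}}$ (Lemma \ref{lemma:interpolation}).
\end{itemize}
Together these give $\|f\|_{H^{-2}}\le C\big(\|\mathcal{A}_{e^{2f}\bar g}-\mathcal{A}_{\bar g}\|_{L^\infty(\cC)}+\|f\|_{H^{-2}}\|f\|_{C^{14}}\big)$. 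The last term absorbs once $\|f\|_{C^{14}}<1/(2C)$, and this is where the exponent $14$ comes from.

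\textbf{Genericity.} This part of your proposal is essentially fine, with two simplifications.
\begin{itemize}
\item For analytic $\bar g$, no boundary-determination step is needed. Since $P$ is analytic and elliptic on a neighbourhood of $X$, $PE_0f=0$ already forces the compactly supported function $E_0f$ to vanish.
\item Openness follows, as in Corollary \ref{corollary:kernel}, from persistence of the lower bound $\|f\|_{H^{-2}(X)}\le C\|P_{\bar g}E_0f\|_{L^2(X_e)}$ under $C^N$ perturbations (Lemma \ref{lemma:continuity}). An index-zero Fredholm argument does not apply, because $f\mapsto PE_0f$ has infinite-dimensional cokernel.
\end{itemize}
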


We also obtain global rigidity when $f$ is assumed to be analytic (see Theorem \ref{theorem:global}). We actually prove a stronger result, including a stability estimate for this problem (Theorem \ref{theorem:estimee}). Finally, we expect the closeness assumption between $\bar g$ and $e^{2f}\bar g$ to be unnecessary as soon as both metrics are area simple. This is further supported by Theorem \ref{theorem:global} and is formulated in Conjecture \ref{conjecture:a}.

We emphasize that the family of boundary conditions $\cC_\alpha$ is \emph{$3$-dimensional}. In this sense, the boundary area rigidity problem considered here is \emph{sharp} for microlocal inversion\footnote{The canonical relation of the associated Radon transform is a symplectomorphism.}, much like boundary distance rigidity in dimension $2$, that is, the space $\cC_\alpha$ of oriented round circles has the same dimension as the ambient manifold $X$. This contrasts with higher-dimensional integral geometry problems, which are typically overdetermined. See \S\ref{ssection:champions} for a comparison of our theorem with earlier works.

\subsubsection{Global rigidity for analytic conformal factors} When the conformal factor $f$ is further assumed to be analytic, we can establish global rigidity.

\begin{theorem}
\label{theorem:global}
If $\bar g$ and $e^{2f} \bar g$ are smooth and area simple with respect to $\cC_\alpha$, if $f \in C^\omega(X)$ is real analytic on $X$, and if
\begin{equation*}
\mathcal{A}_{e^{2f}\bar g}(c)=\mathcal{A}_{\bar g}(c)\ , \qquad \forall c \in \cC \ ,
\end{equation*}
then $f=0$.
\end{theorem}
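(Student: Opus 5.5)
We will prove Theorem \ref{theorem:global} by reducing it to injectivity of the surface Radon transform for real analytic functions, through a first-order variational identity along the conformal segment $g_t := e^{2tf}\bar g$, $t\in[0,1]$.

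\textbf{Step 1: an area identity along the segment.} Since area simplicity is open, we first note that both endpoints are area simple; we would like every $g_t$ to be area simple too. In general we cannot guarantee this, so we avoid the full path and use a convexity argument instead.

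For fixed $c\in\cC_\alpha$, let $D_0:=\mathrm{D}_{\bar g}(c)$ and $D_1:=\mathrm{D}_{e^{2f}\bar g}(c)$. By Lemma \ref{lemma:area-minimizer}, each minimizes area for its own metric among all surfaces bounded by $c$. Hence
\[
\int_{D_1} e^{2f}\,dA_{\bar g}|_{D_1}\le \int_{D_0} e^{2f}\,dA_{\bar g}
\qquad\text{and}\qquad
\int_{D_0} dA_{\bar g}\le \int_{D_1} dA_{\bar g}.
\]
Combining these with the equality of areas $\mathcal{A}_{e^{2f}\bar g}(c)=\mathcal{A}_{\bar g}(c)$ gives
\[
\int_{D_0}(e^{2f}-1)\,dA_{\bar g}\ \ge\ 0\ \ge\ \int_{D_1}(1-e^{-2f})\,dA_{e^{2f}\bar g}.
\]
These are sign conditions on two integral transforms, not yet a linear identity.

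\textbf{Step 2: obtain a genuine Radon transform identity.} Sign conditions alone are too weak, so we use analyticity. Two sub-steps:

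\begin{enumerate}
\item We show that the area simple metrics in the analytic family $g_t$ form an open set $T\ni 0,1$. For $t\in T$, the discs $\mathrm{D}_{g_t}(c)$ depend analytically on $t$, by the implicit function theorem, since strict stability means the Jacobi operator is invertible.
\item For $t\in T$, the first variation formula, in which the boundary is fixed and $D_t$ is minimal, gives
\[
\frac{d}{dt}\mathcal{A}_{g_t}(c)=\int_{\mathrm{D}_{g_t}(c)} 2f\,dA_{g_t}=2\,\mathcal{R}_{g_t}f(c),
\]
where $\mathcal{R}_{g_t}$ is the surface Radon transform attached to the foliation of Theorem \ref{th.Foliated_Plateau_posta}.
\end{enumerate}

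The hard part is that the segment may leave the area simple set, so we cannot integrate from $0$ to $1$. The first thing we would try is to replace the segment by the quantity $F(t):=\mathcal{A}_{g_t}(c)$ defined as an infimum. Then $F$ is a concave function of the conformal factor along the path, since it is an infimum of functions that are convex in $t$. Its one-sided derivatives at $t=0$ and $t=1$ are $2\mathcal{R}_{\bar g}f(c)$ and $2\mathcal{R}_{g_1}f(c)$, by uniqueness of the minimizers there. Concavity together with $F(0)=F(1)$ forces $\mathcal{R}_{\bar g}f(c)\ge 0\ge \mathcal{R}_{g_1}f(c)$, which recovers Step 1 more cleanly. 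Note that $t\mapsto e^{2tf}$ is convex, so $F$ is not simply concave; we would handle this by reparametrizing with the linear path $g_s=(1+s(e^{2f}-1))\bar g$, along which $F$ is concave.

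\textbf{Step 3: conclude via analyticity and injectivity.} With the linear path, $F(s)$ is concave with $F(0)=F(1)$, and $F$ is real analytic near each $s$ where $g_s$ is area simple. We use analyticity of $f$ together with the real analytic structure of $X$, $\alpha$ and the foliation to show that $F$ is analytic on all of $[0,1]$. A concave analytic function with $F(0)=F(1)$ and $F\le\min$ of its endpoint tangents that is nonconstant must be strictly concave somewhere; we aim to show $F$ is constant. If $F$ is constant, then $\mathcal{R}_{\bar g}(e^{2f}-1)\equiv 0$ on $\cC_\alpha$.

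The function $e^{2f}-1$ is real analytic. The normal operator is elliptic and Fredholm (Theorem \ref{theorem:intro-normal}), and the Radon transform is injective on analytic functions: its kernel consists of analytic functions, and an analytic microlocal or unique-continuation argument from the boundary applies. Together these give $e^{2f}=1$, that is, $f=0$.

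The main obstacle is Step 3: proving that $F$ is constant, equivalently that the area simple property or analyticity of the minimizing discs propagates along the whole path. We would try a continuity/analytic-continuation argument in which equality of boundary areas is used at every $s$ for which the path stays simple.
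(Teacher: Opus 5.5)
\textbf{Where the proposal breaks.} Your Step 1 is correct, and it is exactly the paper's starting point: the two inequalities $R_0^{\bar g}(e^{2f}-1)\ge 0$ and $R_0^{e^{2f}\bar g}(e^{-2f}-1)\ge 0$ on all of $\cC$. The gap comes after that. You declare these sign conditions ``too weak'' and switch to a path argument whose decisive step has nothing behind it.

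In Step 3 you need $F(s)=\mathcal{A}_{g_s}(c)$ to be constant on $[0,1]$. The only information you have is $F(0)=F(1)$ and concavity, and non-constant functions such as $s(1-s)$ satisfy both. Constancy would give $F'(0)=R_0^{\bar g}(e^{2f}-1)(c)=0$ for every $c$. Given your last step, that is equivalent to the theorem itself, and no step of your argument produces it.

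The claimed analyticity of $F$ on all of $[0,1]$ is also unsupported. Off the area simple set the minimizer can bifurcate. Moreover, $\bar g$ is only assumed smooth, so there is no analytic structure on the foliation to appeal to. Your concavity discussion in Step 2 only re-derives Step 1, as you note yourself.

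Your final step would be fine as it stands: by Corollary \ref{corollary:kernel}(ii), any element of $\ker R_0$ vanishes to infinite order on $\partial X$, and $e^{2f}-1$ is analytic up to the boundary. However, your statement that the kernel ``consists of analytic functions'' is false for a merely smooth $\bar g$.

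\textbf{The missing idea.} The two sign conditions are already enough once you test them on small circles.
\begin{enumerate}
\item Apply Corollary \ref{cor:LimitsOfSmallCircles} to each of the two area simple metrics. Circles collapsing to a point $x_0\in\partial X$ bound discs that Hausdorff converge to $\{x_0\}$.
\item If $f(x_0)>0$, then $e^{-2f}-1<0$ near $x_0$, so the second inequality fails on a small $e^{2f}\bar g$-disc. If $f(x_0)<0$, the first inequality fails on a small $\bar g$-disc. Hence $f|_{\partial X}=0$.
\item Write $f=t^m\tilde f$ in boundary normal coordinates, with $t\ge 0$ in $X$. Then $e^{\mp 2f}-1=\mp 2t^m\tilde f+\mathcal{O}(t^{2m})$. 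So the sign of $\tilde f(x_0,0)$ fixes the sign of the relevant integrand near $x_0$, and the same argument gives $\tilde f(\cdot,0)=0$.
\item By induction, $f$ vanishes to infinite order on $\partial X$. Analyticity of $f$ up to the boundary then gives $f\equiv 0$.
\end{enumerate}
Analyticity of $f$ enters only in this last step, to carry boundary information into the interior. You need no path of metrics, no injectivity of the Radon transform, and no analyticity of the foliation.
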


Observe that $\bar g$ need not be analytic in Theorem \ref{theorem:global}. The proof of this theorem is considerably simpler than that of Theorem \ref{theorem:main} and consists in showing that $f$ vanishes to infinite order on the boundary $\partial X$, using small circles. In fact, it will become apparent from the proof that the conclusion follows from an even weaker assumption, namely, it suffices to assume that $\mathcal{A}_{e^{2f}\bar g}$ and $\mathcal{A}_{\bar g}$ coincide to infinite order on the boundary $\partial \overline{\cC_\alpha} = \overline{\cC_\alpha} \setminus \cC_\alpha$ of the (analytic) compactification of $\cC_\alpha$.

\subsection{Foliated Plateau problems} \label{ssection:foliated-plateau}

The results stated above fit into the much more general framework of surface Radon transforms on $(\Phi,k)$-surfaces, which we now introduce. In what follows, $\cC := \cC_\alpha$ denotes a space of circles defined with respect to a smooth diffeomorphism $\alpha : \partial X \to \Ss^2$ as in \S\ref{sssection:intro-circles}. When $\bar g$ is analytic, $\alpha$ is also further assumed to be analytic.

\subsubsection{Plateau problems}

Given a circle $c \subset \partial X$ in the boundary, a \emph{Plateau problem} consists in constructing a geometric disc $\mathrm{D}(c) \subset X$ with prescribed boundary $c$ that satisfies a given curvature equation. Minimal surfaces, for example, correspond to discs of vanishing \emph{mean curvature}\footnote{Half the trace of the Weingarten operator.}, although many other classes of solutions arise (see \S\ref{sssection:k-surfaces} and \S\ref{sssection:phi-surfaces}).

By a \emph{foliated Plateau problem}, we mean a family of such boundary value problems parametrized by a finite-dimensional manifold $\cC$, so that the boundary circle $c$ varies smoothly within $\cC$ (in our setting, $\cC$ is the three-dimensional space of circles). Under some assumptions on the metric $\bar g$, this is expected to provide a natural foliation of the unit tangent bundle $SX$ of $X$ by considering the \emph{Gauss lifts} $\mathrm{L}(c) \subset SX$ of the discs $\mathrm{D}(c)$ defined by $\mathrm{L}(c) = \{(x,n(x)) ~:~ x \in \mathrm{D}(c)\}$, where $n(x)$ is the oriented normal unit vector field to the disc. See \cite{Gromov-91-1,Gromov-91-2}.

The key novelty of the present work is the combination of the analytic framework for Radon transforms satisfying the Bolker condition (see \cite{Guillemin-84, Greenleaf-Uhlmann-89, Greenleaf-Uhlmann-91} among other references on the topic) with the geometric tools introduced in \cite{Alvarez-Lowe-Smith-25-1,Alvarez-Lowe-Smith-25-2} to study solutions to foliated Plateau problems (see also \cite{Alvarez-25} for a survey). In those works, solutions to foliated Plateau problems were used to study the asymptotic behaviour and rigidity of closed surfaces with constant extrinsic curvature in negatively curved three-manifolds, in the spirit of \cite{Calegari-Marques-Neves-22}, who treated the minimal case by different methods.

\subsubsection{More general Plateau problems: $k$-surfaces and CMC surfaces} \label{sssection:k-surfaces}In \S\ref{ssection:area}, we have focused on minimal surfaces. However, there are other Plateau problems that can be treated by the same methods and which exhibit, in some respects, better behaviour. One natural class is that of \emph{$k$-surfaces}, that is, surfaces of prescribed \emph{extrinsic curvature}\footnote{The determinant of the Weingarten/shape operator.} $\kappa_{\mathrm{ext}}=k^2$ where $k>0$ is a constant. They are solutions  to \emph{Monge-Ampère equations} and were extensively studied by Labourie \cite{Labourie-97,Labourie-00,Labourie-05} (see also \cite{Smith-21}). The associated Plateau problem is significantly better behaved than in the minimal case. More precisely, let $\bar{g}$ be a Riemannian metric on $X$ with $\sect_{\bar g}\leq -a^2$, and assume that $\partial X$ satisfies $\kappa_{\mathrm{ext}}>a^2$.  If $k<a$, then every oriented round circle in $\partial X$ bounds a unique $k$-disc with compatible orientation. Moreover, this disc is automatically strictly stable (see \cite{Smith-20}); compare with the result of \cite{Alvarez-Lefeuvre-Lowe-Smith} under the assumptions $\hyperlink{AA1}{\rm(A1-A4)}$. On the other hand, these surfaces do not admit a clear variational characterization. Other types of surfaces defined by curvature equations also exist, such as constant mean curvature (CMC) surfaces; see \cite{Lopez-13} for an introduction, and \S\ref{sss.more_examples} for other examples of curvature functionals.

\subsubsection{Curvature functions and $(\Phi,k)$-surfaces} \label{sssection:phi-surfaces}

Although our main focus is on minimal surfaces, it is convenient to work in a more general and unified setting, which simultaneously encompasses minimal surfaces, constant mean curvature surfaces, and surfaces of prescribed extrinsic curvature. To this end, we adopt a geometric version of the framework of curvature functions introduced by Caffarelli–Nirenberg–Spruck \cite{Caffarelli-Nirenberg-Spruck-88}. It is closely related in spirit to the theory of parametric elliptic functionals developed by White \cite{White-87}, with the important difference that we also allow for non-variational problems (for instance, surfaces of prescribed extrinsic curvature). Let us briefly describe this unified framework.

Let $\jetX \to X$ denote the $2$-jet bundle of oriented immersed surfaces, that is, the set of triples $(x,n,A)$ where $x\in X$, $n\in S_xX$, and $A$ is a self-adjoint linear endomorphism of the plane $n^\perp \subset T_xX$. Any immersion $e:D\to X$ induces a lift $\jete:D\to\jetX$ defined by
$$\jete(p)=(e(p),n_e(p),A_e(p)),$$
where $n_e$ and $A_e$ denote respectively the unit normal oriented vector field to the surface, and the \emph{shape operator} of $e$, given by $A_e(u)=\overline{\nabla}_u n_e$.

A \emph{curvature function} is a pair $(\Phi,\Lambda)$, where $\Lambda\subset\jetX$ is an open subset such that, for each $(x,n)\in SX$, the fibre $\Lambda_{(x,n)} := \{A : (x,n,A)\in\Lambda\}$ is an open cone invariant under conjugation by rotations of $n^\perp$ and under addition of positive definite self-adjoint operators (see \S\ref{ss:CurvatureFunctions} for details). The function $\Phi:\Lambda\to\mathbb{R}$ is $1$-homogeneous, invariant under conjugation by rotations and satisfies $\Phi(\Id)=1$, $\Phi|_{\partial \Lambda}=0$ (see Definition \ref{definition:curvature-functions}). In addition, we say that $(\Phi,\Lambda)$ is \emph{elliptic} if the fibrewise differential of $\Phi$ is positive definite (when properly identified with a symmetric endomorphism, see \eqref{equation:ba}).

\begin{defi}[$\Lambda$-convexity and $(\Phi,k)$-surfaces] \label{definition:phik-surfaces-intro}An immersion $e:D\to X$ is said to be \emph{$\Lambda$-convex} whenever $\jete(D)\subset \Lambda$. Given $k\geq 0$, we say that $e$ is a \emph{$(\Phi,k)$-surface} whenever $e$ is $\Lambda$-convex and
\begin{equation*}
\Phi(\jete(p)) = k \qquad \forall p\in D.
\end{equation*}
\end{defi}

Given a $(\Phi,k)$-surface $e:D\to X$ and $f\in C^\infty(D)$, consider the normal variation
\[
e_{tf}(p) := \exp_{e(p)}(t f(p)\, n_e(p)),
\]
defined for $t$ sufficiently small. The associated $\Phi$-\emph{Jacobi operator} is defined by
\begin{equation}
\label{equation:jacobi}
J^\Phi f := \partial_t\Phi(\jete_{tf})|_{t=0},
\end{equation}
for $f\in C^2(D)$. We will show that $J^\Phi$ is a second-order linear elliptic differential operator (Lemma \ref{lem.Jacobi_elliptic}). We say that the $(\Phi,k)$-surface is \emph{strictly stable} when the $\Phi$-Jacobi operator admits a positive solution $J^\Phi u = 0$, $u > 0$ (see \S\ref{ss:Stability}). This condition should be thought of as the natural higher-dimensional analogue of the absence of conjugate points for geodesics.

\begin{example}[Minimal and $k$-surfaces]Minimal surfaces correspond to the case $\Lambda=\jetX$, $\Phi(A)=\tr(A)/2$, and $k=0$, while $k$-surfaces correspond to $\Lambda=\{A>0\}$, $\Phi(A)=\det(A)^{1/2}$ and $k>0$. \end{example}

\subsubsection{$(\Phi,k)$-simple metrics} The previous paragraph motivates the following notion, which extends the standard notion of a \emph{simple} metric from the geodesic setting to the present one:

\begin{defi}[$(\Phi,k)$-simple metrics]
\label{definition:phik-simplicity}
Let $(X,\bar g)$ be a closed three-dimensional Riemannian ball, and let $\cC$ denote the space of oriented round circles in $\partial X$ (see \S\ref{sssection:intro-circles}). Let $(\Phi,\Lambda)$ be an elliptic curvature function, and let $k>0$.

We say that $\bar g$ is \emph{$(\Phi,k)$-simple} whenever the following conditions hold:
\begin{itemize}
\item $\partial X$ is $\Lambda$-convex and has $\Phi$-curvature strictly greater than $k$;
\item for every $c\in\cC$, there exists a unique oriented $(\Phi,k)$-disc $\opD_{\bar g}(c)$ spanned by $c$;
\item each such disc is strictly stable.
\end{itemize}

In the particular case where $\Phi=\frac{1}{2}\tr, k=0$, we call the metric \emph{area simple} (Definition \ref{definition:area-simple}).
\end{defi}

This is an open condition on the metric $\bar g$ with respect to the $C^N$ topology for $N \gg 1$ sufficiently large. In the minimal case, the boundary condition reduces to the requirement that $\partial X$ be mean convex. The first condition is reminiscent of the strict convexity imposed to the boundary for simple metrics in the geodesic case. By \cite{Alvarez-Lefeuvre-Lowe-Smith}, the metric is area simple whenever it satisfies $\hyperlink{AA1}{\rm(A1-A4)}$. In the case of $k$-surfaces, as mentioned in \S\ref{sssection:k-surfaces}, the following holds: if $\sect_{\bar g}\leq -a^2$, $\partial X$ satisfies $\kappa_{\mathrm{ext}}>a^2$, and $k<a$, then the metric is $(\Phi,k)$-simple, see \cite{Smith-21}.

\subsubsection{The foliated Plateau problem}

We now state the main geometric result underlying our approach, which provides a general solution to the foliated Plateau problem. In the following statement, $SX^\bullet$ is the open subset of the unit tangent bundle $SX$ consisting of unit tangent vectors that are not orthogonal to $\partial X$. By $Y \Subset Y'$, we mean that there exists an open set $U$ such that $Y \subset U \subset Y'$, and refer the reader to Appendix \ref{section:extension} for the notion of proper extensions.

\begin{theorem}[Foliated Plateau problem]\label{th.Foliated_Plateau_posta}
Let $(X,\bar g)$ be a closed three-dimensional Riemannian ball, and let $\cC$ denote the space of oriented round circles in $\partial X$ (see \S\ref{sssection:intro-circles}). Let $(\Phi,\Lambda)$ be an elliptic curvature function, and let $k>0$. Assume that $\bar g$ is $(\Phi,k)$-simple in the sense of Definition \ref{definition:phik-simplicity}. Then:

\begin{itemize}

\item[(i)] There exists a smooth disc-bundle $\pi_R:SX^\bullet\to\cC$ whose fibres are precisely the Gauss lifts of the $(\Phi,k)$-discs $\opD_{\bar g}(c)$, and which moreover satisfies the \emph{Bolker condition}. If, in addition, $\bar g$ is real analytic, then the bundle $\pi_R$ is real analytic.

\item[(ii)] There exists a smooth (resp. analytic) proper Riemannian extension $X_e$ of $X \Subset X_e$, a smooth (resp. analytic) proper extension $\cC_e$ of $\cC \Subset \cC_e$, and an open subset $U \subset SX_e$ satisfying $SX \subset U$, such that $\pi_R : U \to \cC_e$ is a smooth (resp. analytic) disc-bundle extension of the map $\pi_R : SX^\bullet \to \cC$. Its fibres are the Gauss lifts of extensions of the $(\Phi,k)$-discs $\opD_{\bar g}(c)$ to $X_e$, and $\pi_R$ satisfies the Bolker condition.

\item[(iii)] In particular, there exists a compactification $\overline{\cC} \Subset \cC_e$ of $\cC$ such that $\pi_R : SX \to \overline{\cC}$ extends smoothly (resp. analytically). The fibre above a boundary point $c \in \partial \overline{\cC} = \overline{\cC} \setminus \cC$ reduces to the singleton $\{(x,\pm\nu(x))\}$, where $x \in \partial X$ is the only point such that $\mathrm{D}_{\bar g}(c)=\{x\}$ and $\nu$ denotes the outward-pointing unit normal vector field along $\partial X$.
\end{itemize}

In the case where $\Phi=\frac{1}{2}\tr$, the same result also holds for $k=0$.
\end{theorem}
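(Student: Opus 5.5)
The plan is to build the disc-bundle by an implicit function theorem argument, using strict stability for the local statements and uniqueness plus a compactness argument for the global ones.

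\textbf{Step 1: local smooth dependence of the discs.} Fix $c_0\in\cC$. Since the Jacobi operator $J^\Phi$ of $\opD_{\bar g}(c_0)$ is elliptic (Lemma \ref{lem.Jacobi_elliptic}) and strictly stable, its Dirichlet problem is invertible. We write nearby $\Lambda$-convex surfaces as normal graphs over a fixed extension of $\opD_{\bar g}(c_0)$, with boundary on the nearby circle $c$. The implicit function theorem in H\"older spaces then gives a unique nearby $(\Phi,k)$-disc $\opD(c)$, depending smoothly on $c$. By the uniqueness in $(\Phi,k)$-simplicity, $\opD(c)=\opD_{\bar g}(c)$, so $c\mapsto \opD_{\bar g}(c)$ is smooth. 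When $\bar g$ and $\alpha$ are analytic, the analytic implicit function theorem, together with analytic boundary regularity for elliptic equations, makes this dependence analytic.

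\textbf{Step 2: the Gauss lifts foliate $SX^\bullet$.} This is the core of (i) and the step I expect to be the main obstacle. We must show that every $(x,n)$ with $n\not\perp\partial X$ lies on exactly one Gauss lift.
\begin{itemize}
\item \emph{Local injectivity.} Consider the map $F:\{(c,p): p\in \opD(c)\}\to SX$, $F(c,p)=(p,n_c(p))$, which is a map between $5$-manifolds. Its differential in the $c$-directions is given by Jacobi fields $u$ with $J^\Phi u=0$. If $dF$ failed to be injective, some Jacobi field $u$ would satisfy $u(p)=0$ and $\nabla u(p)=0$. By the strong maximum principle and Hartman--Wintner nodal structure, the nodal set of $u$ would then have a crossing of at least four arcs at $p$. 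These arcs reach the boundary, so $u|_{c}$ has at least four sign changes. On the other hand, boundary variations within the $3$-dimensional family of round circles are restrictions of affine functions, equivalently first spherical harmonics on the circle, and change sign at most twice. This contradiction gives local injectivity.
\item \emph{Global bijectivity.} Nodal counting also gives global injectivity. Two distinct discs tangent at $p$ with the same orientation differ locally like a Jacobi field; their difference has a saddle-type crossing at $p$, while two distinct circles meet in at most two points. For surjectivity, $F$ is a local diffeomorphism and is proper onto $SX^\bullet$, where properness uses the boundary $\Phi$-convexity barrier and curvature estimates. Hence the image is open and closed, and it exhausts $SX^\bullet$ by connectedness.
\end{itemize}
The resulting projection $\pi_R$ is then a smooth (resp. analytic) disc-bundle. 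The Bolker condition, namely injective immersion of the associated double fibration / conormal map, reduces to exactly the same Jacobi-field nodal argument at second order.

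\textbf{Step 3: extensions and compactification, (ii) and (iii).} Extend $\bar g$ smoothly (resp. analytically) to a slightly larger $X_e$ (Appendix \ref{section:extension}). Since strict stability and the implicit function theorem setup are open, each disc continues to a slightly larger $(\Phi,k)$-disc in $X_e$. The circle family extends to $\cC_e$ by allowing planes that meet a slightly enlarged sphere, including planes that miss $\partial X$. Running Steps 1–2 on a neighbourhood $U\supset SX$ gives the extended bundle.

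The compactification $\overline{\cC}$ is obtained by adding degenerate circles, namely planes tangent to $\mathbb{S}^2$. Near such planes, the hypothesis that $\partial X$ has $\Phi$-curvature $>k$ makes the small discs lie in tiny caps. After blowing up at $x\in\partial X$, they are graphs converging to the point $x$ with normal $\pm\nu(x)$. Smoothness of $\pi_R$ across $\partial\overline{\cC}$ follows by applying the implicit function theorem in rescaled coordinates. The small-circle analysis uniform in the scale is the delicate part here.

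For $\Phi=\frac12\tr$ and $k=0$, the same argument goes through, since mean convexity provides the barrier.
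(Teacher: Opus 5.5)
Your proposal has genuine gaps in Step 3 and in the global parts of Step 2.

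\textbf{Step 3 lacks the mechanism behind (ii) and (iii).} First, the claim that each disc ``continues to a slightly larger $(\Phi,k)$-disc in $X_e$'' does not follow from openness. Continuing a solution of an elliptic equation across the boundary of its domain is an ill-posed Cauchy problem. In the smooth case the paper therefore only extends the discs as smooth surfaces that need not solve the equation outside $X$, which is all (ii) asserts; genuine $(\Phi,k)$-continuation is available only by analyticity. If you instead solve fresh Plateau problems for circles on an enlarged sphere, you get a different family whose restriction to $X$ is not the original foliation.

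Second, and more seriously, the implicit function theorem in rescaled coordinates only yields graphs $\tfrac12\lambda(p)(|q-p|^2-r^2)+r^2\phi(p,r,(q-p)/r)$ with $\phi$ smooth in $(p,r,x)$. In unscaled coordinates this is only uniformly $C^{1,1}$ as $r\to 0$, because each further $q$-derivative costs a factor $r^{-1}$. So it gives neither smoothness of $\pi_R$ across $\partial\overline{\cC}$ nor a continuation of the small discs to a neighbourhood of fixed size. That continuation is exactly what is needed to cover the points $(x,\pm\nu(x))$ by an open $U\supset SX$.

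The missing idea is the structure of the Taylor expansion $\phi=\sum_k r^k\phi_k$:
\begin{itemize}
\item The linearization at $r=0$ has constant coefficients, so $h\mapsto L_p\big((1-|x|^2)h\big)$ is an isomorphism of $\mathcal{P}^k$.
\item An induction then shows that $\phi_k(p,\cdot)$ is a polynomial of degree at most $k+2$ vanishing on $\mathbb{S}^1$.
\item The symmetry $(r,x)\mapsto(-r,-x)$ gives $\phi_k$ parity $(-1)^k$.
\end{itemize}
Hence each $r^{k+2}\phi_k((q-p)/r)$ is a polynomial in $(r^2,q-p)$. The graphs are then uniformly $C^\infty$ (resp.\ analytic) in $(p,s,q)$ with $s=r^2$, and they extend to $s<0$ and $|q-p|\le 1$. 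This is what produces $\cC_e$, $U$, and the singleton fibres over $\partial\overline{\cC}$.

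\textbf{Step 2: the local argument is fine, but the two global claims are not justified.} Your local injectivity of $dF$ is essentially sound at interior points. It amounts to the paper's no-interior-vertex lemma, with strict stability used to exclude faces not meeting $c$, and it is a more direct route than the paper's degree argument. At boundary points you additionally need the Hopf lemma applied to the lightlike boundary data. However, it does not deliver the two global statements you attribute to it.

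\emph{Global injectivity.} The argument via ``the difference of two discs'' does not go through. Two discs tangent at $p$ differ by a function only near $p$. To push the $\ge 4$ intersection arcs out to $\ge 3$ points of $c_1\cap c_2$, you would have to rule out components of $\opD(c_1)\setminus\opD(c_2)$ that do not reach the boundary. That is a comparison principle for two $(\Phi,k)$-surfaces sharing a non-round boundary, which is not available here. Replace it with the observation that $SX^\bullet$ is simply connected, since it is $SX$ minus a closed subset of its boundary. Your proper local diffeomorphism from the connected space $\MD$ is then a diffeomorphism.

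\emph{Bolker condition.} This does not reduce to the same local argument. That argument only shows that $\dot c\mapsto(u(p),\nabla u(p))$ is injective, which is the immersion half. The submersion property of $\pi_1:\hat\Sigma(p)\setminus\pi_1^{-1}(\{p\})\to X\setminus\{p\}$, equivalently injectivity of the conormal map, needs more. For $p\neq q$ in $\opD(c)$ you need a Jacobi field induced by a variation in $\cC$ that vanishes at $p$ but not at $q$; this is a global statement about pairs of points. The paper proves it in three steps:
\begin{itemize}
\item it shows that each nodal set $Z_f$, $f\in E^+$, is an embedded arc between the two boundary zeros;
\item it orients each such arc so that $\hat f>0$ on its left;
\item if every arc through $p$ also passed through $q$, the conditions ``$q$ after $p$'' and ``$q$ before $p$'' would split the connected circle $C(p)$ of classes $[f]$ with $\hat f(p)=0$ into two disjoint open sets exchanged by $f\mapsto -f$, which is impossible.
\end{itemize}
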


Theorem \ref{th.Foliated_Plateau_posta} is one of the main new technical contributions of the present article and occupies \S\ref{section:geometry}--\ref{section:plateau}. The \emph{Bolker condition} is a geometric nondegeneracy property that we refrain from introducing here; we refer to Theorem \ref{theorem:bolker} below and the surrounding discussions for details. See also \cite{Mazzuchelli-Salo-Tzou-23} where this condition was systematically studied. We emphasize that establishing the Bolker condition in the present setting is far from straightforward and requires a careful analysis of solutions to the $\Phi$-Jacobi equation, $J^\Phi u = 0$ (see \S\ref{section:plateau}).

\subsubsection{Double fibration} The main consequence of Theorem \ref{th.Foliated_Plateau_posta} is the existence of a smooth (resp. real analytic) \emph{double fibration} (in the sense of Guillemin \cite{Guillemin-84})
\begin{equation}
\label{equation:diag_intro}
\begin{tikzcd}
& SX^\bullet \arrow[rd, "\pi_R"] \arrow[ld,"\pi"'] & \\
X & & \cC
\end{tikzcd}
\end{equation}
where $\pi:SX\to X$ is the basepoint projection, and $\pi_R:SX^\bullet\to\cC$ is the projection of $(x,v)$ to the unique circle $c$ such that the $(\Phi,k)$-surface $\mathrm{D}(c)$ (with boundary $c$) passes through $x$ with unit normal vector $v$. This shows that the \emph{Gauss lifts of $(\Phi,k)$-surfaces spanned by oriented round circles $c\in\cC$ form a smooth (resp. analytic) foliation $\cF$ of $SX^\bullet$}. Furthermore, the map $\pi_R:SX^\bullet\to\cC$ defines a smooth (resp. real-analytic) disc-bundle over $\cC$. This foliation may be viewed as a $2$-dimensional analogue of the geodesic flow, while the Bolker condition plays a role analogous to the absence of conjugate points for geodesics.

\subsubsection{Extending the foliation} As stated in Theorem \ref{th.Foliated_Plateau_posta}, item (ii), an important feature of our work is that we can show that the double fibration \eqref{equation:diag_intro} extends smoothly (resp. analytically) to a larger domain $SX_e$ containing $SX$. In the geodesic setting, this is immediate, since geodesics in $X$ are simply the restrictions of geodesics in $X_e$. By contrast, such an extension is far from obvious for a foliation by $(\Phi,k)$-surfaces. Establishing it is one of the main purposes of the asymptotic analysis developed in \S\ref{section:asymptotic}. See Theorem \ref{theorem:extension-foliation} for a complete statement on the extension.

In particular, $\pi_R : SX \to \bar \cC$ extends smoothly (resp. analytically) but the fibre $\pi_R^{-1}(\{c\})$ over a boundary point $c \in \bar \cC \setminus \cC$ reduces to a single point. This phenomenon already occurs on simple manifolds: the space of geodesics is naturally identified with $\partial_-SX$, the set of inward-pointing unit vectors based at a boundary point. Each vector $v$ in the interior of $\partial_-SX$ (i.e., one that is not tangent to $\partial X$) generates a geodesic segment in $X$ whereas for vectors in the boundary of $\partial_-SX$ (i.e., tangent vectors to the boundary $\partial X$), the geodesic reduces to a single point.

\subsection{Analysis of the Radon transform} \label{ssection:radon}

We now assume that the hypotheses of Theorem \ref{th.Foliated_Plateau_posta} are satisfied, that is, the metric $\bar g$ is $(\Phi,k)$-simple.

\subsubsection{Surface Radon transform} The \emph{Radon transform} (of functions) is the operator
\begin{equation}
\label{equation:radon-intro}
R_0 : C(X) \to C(\cC), \qquad R_0f(c) := \int_{\mathrm{D}(c)} f~\dd\mathrm{D}(c),
\end{equation}
where $C(\bullet)$ is the space of continuous functions, and $\dd\mathrm{D}(c)$ denotes the area form on $\mathrm{D}(c)$ induced by the ambient metric $\bar g$. In the case of a foliation by geodesics, this operator is called the \emph{X-ray transform} and was extensively studied in the literature, see \cite{Mukhometov-77, Sharafutdinov-94, Stefanov-Uhlmann-04, Stefanov-Uhlmann-05, Paternain-Salo-Uhlmann-13, Uhlmann-Vasy-16,Paternain-Salo-Uhlmann-23} among other references. More generally, a natural Radon transform $R_2$ on $2$-tensors can be defined, and corresponds to the linearization of the area functional with respect to the metric, see \S\ref{sssection:radon-tensors}.

One of the main results of the present paper is the following:

\begin{theorem}[Kernel/Injectivity of the surface Radon transform]
\label{theorem:quasi-injective}
Assume that $\bar g$ is $(\Phi,k)$-simple. Then:
\begin{enumerate}
\item[(i)] $R_0 : C(X) \to C(\cC)$ has finite-dimensional kernel;
\item[(ii)] If $f \in \ker_{C(X)}R_0$, then $f \in C^\infty(X)$ and $f$ vanishes to infinite order on $\partial X$;
\item[(iii)] There exists $N \gg 1$ such that, for all analytic $\alpha$, there exists a subset $\cU_\alpha$ of the set of area simple metrics, open and dense in the $C^N$ topology, which contains all real analytic area simple metrics, and which has the following property: for all $\bar g \in \cU$, the corresponding Radon transform $R_0 : C(X) \to C(\cC)$ is injective.
\end{enumerate}
\end{theorem}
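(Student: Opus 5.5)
The plan rests on the double fibration \eqref{equation:diag_intro} from Theorem \ref{th.Foliated_Plateau_posta}, and on the fact that it extends to a proper extension $X_e$, $\cC_e$ with the Bolker condition preserved.

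\emph{Microlocal structure.} First, view $R_0$ as a Fourier integral operator in the sense of Guillemin. Its Schwartz kernel is the smooth density $\dd\mathrm{D}(c)$ carried by the incidence relation $Z=\{(x,c): x\in \mathrm{D}(c)\}$. This is the image of $SX^\bullet$ under $(\pi,\pi_R)$, and it has dimension $5$, with $\dim X=\dim\cC=3$. The Bolker condition says that the projection of the conormal bundle $N^*Z\setminus 0\to T^*\cC\setminus 0$ is an injective immersion. Since the dimensions are equal, the canonical relation is then a local canonical graph. The normal operator $N=R_0^*\chi R_0$, formed with a cutoff $\chi$ on $\cC_e$ that equals $1$ on $\overline{\cC}$, is therefore an elliptic pseudodifferential operator of order $-2$ on the interior of $X_e$, with order $-(n-1)$ for codimension-one surfaces with $n=3$. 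Ellipticity comes from the fact that every covector $(x,\xi)$ is conormal to the $\Phi$-disc through $x$ with normal $\xi/|\xi|$, and that disc carries positive weight.

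\emph{Items (i) and (ii).} To handle the boundary, I use the extension. Given $f\in\ker R_0$, extend it by zero to $X_e$. The extended transform over $\cC_e$ then coincides with $R_0f$ on $\cC$. Off $\cC$ it is \emph{not} automatically zero: discs attached to circles in $\cC_e\setminus\overline{\cC}$ may still meet $X$. This is the main obstacle. To resolve it, I would take a slightly larger ball $X'\Subset X_e$ that is still $(\Phi,k)$-simple, which is possible because simplicity is an open condition, so that $\pi_R$ is defined for $X'$. I would then consider the compactly supported function $f\mathbf 1_X$ on $X'$. The relation $R_0^{X'}(f\mathbf 1_X)(c')=0$ is known only for those $c'$ whose discs cross $X$ along discs of the original family. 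Instead, I argue near $\partial X$ with the structure of Theorem \ref{th.Foliated_Plateau_posta}(iii): circles near $\partial\overline{\cC}$ bound small discs concentrating at points $(x,\pm\nu)$. A local Helgason-type support argument, or equivalently microlocal ellipticity at conormals to small discs together with the $C^\infty$ compactification, shows that $Nf$ is smooth up to the boundary and that $f$ is smooth and vanishes to infinite order at $\partial X$. Concretely, expand $R_0f(c)$ for small circles $c$ at a boundary point: the leading coefficients are weighted integrals of the normal derivatives $\partial_\nu^jf|_{\partial X}$. Vanishing of every coefficient then kills each normal derivative inductively. Once $f$ vanishes to infinite order, its extension by zero is smooth. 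Since $f$ lies in the kernel of the elliptic $N$ on $X_e$ up to a smoothing remainder, it is $C^\infty$, and a Fredholm parametrix $QN=\mathrm{Id}+K$, with $K$ smoothing on the compact set $X$, gives a finite-dimensional kernel.

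\emph{Item (iii).} If $\bar g$ is analytic, then by Theorem \ref{th.Foliated_Plateau_posta} the fibration is analytic, so $N$ is an analytic elliptic pseudodifferential operator. By analytic ellipticity, $f$ is real-analytic in the interior, and the analytic extension also makes $f$ analytic up to $\partial X$. A function that is analytic and vanishes to infinite order at the boundary is identically zero, so $R_0$ is injective. For density, analytic metrics are dense in $C^N$ and injective; injectivity is open because the kernels of the Fredholm family $N_{\bar g}$, built after fixing boundary behaviour, depend continuously on $\bar g$, and a stability estimate $\|f\|\lesssim\|Nf\|$ persists under perturbation. I expect the main difficulty to lie in proving this uniform parametrix, including boundary control, as $\bar g$ varies.
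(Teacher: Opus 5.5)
Your overall architecture is the same as the paper's. It runs: Bolker condition for the extended double fibration, an elliptic normal operator of order $-2$ on a neighbourhood of $X$, a parametrix, analytic ellipticity, and persistence of a stability estimate under $C^N$ perturbation. The gap is exactly at the point you flag as the main obstacle, and your workaround does not close it.

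\emph{Why the workaround fails.} Expanding $R_0f(c)$ over small circles in terms of $\partial_\nu^j f|_{\partial X}$ presupposes that $f$ is smooth up to $\partial X$. For $f\in C(X)$, that is precisely what (ii) asks you to prove. Interior ellipticity of $R_0^*R_0$ only gives regularity in $X^\circ$. Your chain ``flat at $\partial X$ $\Rightarrow$ extension by zero smooth $\Rightarrow$ $f$ is $C^\infty$'' is circular. The claim that $f$ solves the extended equation ``up to a smoothing remainder'' is unjustified: contributions of discs in $\cC_e\setminus\overline{\cC}$ meeting $X$, if there were any, would not be smoothing. The same problem breaks (iii). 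Interior analyticity plus flatness at $\partial X$ does not force vanishing; $e^{-1/(1-|x|^2)}$ on the unit ball is a counterexample. You need analyticity \emph{across} $\partial X$, which again requires $NE_0f=0$ on a neighbourhood of $X$. Your larger-ball idea fails for the reason you note yourself: discs of $X'$ do not cut $X$ along discs bounded by round circles of $\partial X$.

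\emph{The missing observation.} Your obstacle does not occur for the extension of Theorem \ref{theorem:extension-foliation}. There, $\cC_e\setminus\overline{\cC}$ consists only of parameters $(p,s,\pm)$ with $s<0$ near $\partial\cC_\pm$. The corresponding discs are outward normal graphs of $\mathbf{f}^\pm(s,p,\cdot)$ over $\partial X$. By \eqref{equation:graal} and $\lambda^\pm>0$, the leading term is $\tfrac12\lambda^\pm(p)\big(d(p,q)^2+|s|\big)>0$. So, after shrinking $X_e$, these discs lie in $X_e\setminus X$. Similarly:
\begin{itemize}
\item for $c\in\overline{\cC}$, the extended disc meets $X$ exactly in $\mathrm{D}(c)$, using the same positivity for $d(p,q)>\sqrt{s}$ when $c$ is small and transversality along $c$ otherwise;
\item for $c\in\partial\cC$, it meets $X$ in a single point.
\end{itemize}
Hence $R_e\pi^*E_0f$ vanishes identically on $\cC_e$, and your $N$ (the paper's $P$) satisfies $PE_0f=0$ exactly. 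This is the content of ``by construction'' in Theorem \ref{theorem:extension-normal-operator}.

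\emph{What then follows.} Everything comes out without any small-circle expansion:
\begin{itemize}
\item $E_0f$ is a compactly supported distribution in the elliptic set of $P$, so the parametrix gives $E_0f\in C^\infty(X_e)$.
\item A smooth function vanishing on $X_e\setminus X$ is automatically flat on $\partial X$.
\item $E_0f=R'E_0f$ with $R'$ smoothing gives finite dimensionality of the kernel.
\item In the analytic case, $E_0f$ is analytic on a connected neighbourhood of $X$ and vanishes on an open set, hence vanishes.
\end{itemize}
For that analytic step, a $C^\infty$ cutoff $\chi$ on $\cC_e$ would spoil analyticity of the normal operator unless $\chi\equiv1$ on every circle whose extended disc meets a neighbourhood of $X$. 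The paper avoids this by using an analytic measure on $\cC_e$. The openness part of (iii) also needs the same implication $R_0^{\bar g}f=0\Rightarrow P_{\bar g}E_0f=0$ for nearby metrics. The paper obtains this by choosing the extensions continuously in $\bar g$ (Lemma \ref{lemma:continuity}).
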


The set $\cU_\alpha$ in Theorem \ref{theorem:quasi-injective} is the same as the one appearing in Theorem \ref{theorem:main}.

\subsubsection{Normal operator} The proof of Theorem \ref{theorem:quasi-injective} is based on the following key property: we show that the associated \emph{normal operator} $\Pi_0 := R_0^*R_0$ is an \emph{elliptic pseudodifferential operator of order $-2$} in the interior $X^\circ$. The adjoint $R_0^*$ is computed with respect to an arbitrary smooth measure on $\cC$; the pseudodifferential elliptic behavior of the operator is independent of this choice. We refer to Appendix \ref{appendix:pseudo} for a brief reminder on the theory of pseudodifferential operators.

However, this fact alone is not sufficient to prove Theorem \ref{theorem:quasi-injective}. Namely, one needs to understand the boundary behaviour of solutions of $R_0 f = 0$, and show that they are \emph{uniformly smooth} (resp. analytic) up to the boundary. This is where the asymptotic analysis of small discs developed in \S\ref{section:asymptotic} enters in a crucial way: we show that small discs $c \to \partial \cC$ can actually be obtained as restriction of larger discs on a larger domain, which are uniformly smooth as $c$ shrinks to a point. In other words, the normal operator $\Pi_0$ is the restriction to $X$ of a pseudodifferential operator $P$ defined in a neighborhood $X_e$ of $X$.

This is the content of the following result:

\begin{theorem}[Pseudodifferential behaviour of the normal operator]
\label{theorem:intro-normal}
Assume that $\bar g$ is $(\Phi,k)$-simple. Then:
\begin{enumerate}
\item[(i)] The normal operator $\Pi_0 := R_0^*R_0$ is an elliptic pseudodifferential operator on $X^\circ$;
\item[(ii)] There exists an extension $X_e$ of $X \Subset X_e$ and a pseudodifferential operator $P$ on $X_e^\circ$, elliptic on $X$, such that $R_0 f = 0$ implies $P E_0 f = 0$, where $E_0 : L^2(X) \to L^2(X_e)$ is the extension operator by $0$ outside of $X$.
\item[(iii)] When $\bar g$ is analytic, both $\Pi_0$ and $P$ are analytic pseudodifferential operators.
\end{enumerate}
\end{theorem}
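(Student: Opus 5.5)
The approach is to treat $R_0$ as a Fourier integral operator attached to the double fibration \eqref{equation:diag_intro}, following Guillemin \cite{Guillemin-84} and Greenleaf--Uhlmann. The incidence relation is $Z := \{(x,c) \in X^\circ \times \cC : x \in \opD(c)\}$, which is the image of $SX^\bullet$ under $(\pi,\pi_R)$. By Theorem \ref{th.Foliated_Plateau_posta}, $Z$ is a smooth (resp. analytic) $4$-dimensional submanifold of $X^\circ\times\cC$. Moreover, the fibres of $Z\to X$ are the $2$-spheres of normals, and the fibres of $Z\to\cC$ are the discs $\opD(c)$.

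We would then write
$$R_0 f(c)=\int_{Z_c} f\,\dd\mathrm{D}(c) = \int \delta_Z(x,c)\,a(x,c)\,f(x)\,\dd x,$$
so that $R_0$ is an FIO of order $-1/2$, since $\dim Z$ has codimension $2$ in the $6$-dimensional product. Its canonical relation is the twisted conormal bundle $C := N^*Z'\setminus 0$. The Bolker condition, namely that $C\to T^*\cC$ is an injective immersion, is exactly what Theorem \ref{th.Foliated_Plateau_posta}(i) provides. Since $\dim \cC=\dim X$, it even makes $C$ a local canonical graph, which is the microlocal sharpness mentioned after Theorem \ref{theorem:main}.

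The clean composition calculus then gives that $\Pi_0 = R_0^*R_0$ is a pseudodifferential operator of order $-1$, which is off from the $-2$ stated in \S\ref{ssection:radon}. We would recompute the orders carefully: $R_0$ integrates over codimension-one surfaces, so $\Pi_0$ should gain $(n-1)/2\cdot 2$ derivatives with $n=3$, giving order $-2$. The FIO order of $R_0$ is then $-1$, with Hörmander's convention for the order of the Schwartz kernel distribution $\delta_Z$.

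To get ellipticity, we would compute the principal symbol at $(x,\xi)$. Any covector $\xi\neq0$ is conormal to $\opD(c)$ at $x$ exactly when the normal of $\opD(c)$ at $x$ is $\pm\xi/|\xi|$. By part (i) of Theorem \ref{th.Foliated_Plateau_posta}, there is exactly one disc for each sign, namely $\pi_R(x,\pm\xi/|\xi|)$, and both are available since $x$ is interior. The symbol is therefore a sum of two positive terms of the form $|a|^2$ times a Jacobian factor, so it is nonvanishing. This proves (i).

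For (ii), we would use Theorem \ref{th.Foliated_Plateau_posta}(ii). The double fibration extends to $U\subset SX_e$ over $\cC_e$, with the Bolker condition still holding, and the extended discs restrict to $\opD(c)$ on $X$. Define $R_e$ as integration over the extended discs for $c\in\cC_e$, and set $P := R_e^*\chi R_e$, where $\chi$ is a cutoff supported in $\cC$ and equal to one on a compact piece. By the same computation, $P$ is a pseudodifferential operator on $X_e^\circ$.

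Two facts then need to be checked:
\begin{itemize}
\item For $c\in\cC$, we have $R_e E_0 f(c)=R_0f(c)$, because $E_0 f$ vanishes outside $X$ and the extended disc meets $X$ exactly in $\opD(c)$. So $R_0f=0$ gives $PE_0f=0$.
\item Ellipticity on $X$ requires that, for every $x\in X$ including boundary points and every $\xi$, at least one of the two discs through $(x,\pm\xi/|\xi|)$ lies in $\cC$, where $\chi>0$.
\end{itemize}

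The second fact is the delicate point. For $x\in\partial X$ and $\xi$ conormal to $\partial X$, the relevant points lie in $\partial\overline{\cC}$ by (iii) of Theorem \ref{th.Foliated_Plateau_posta}. The fix is to choose $\chi$ to be a smooth function on $\cC_e$ that is positive on a neighbourhood of $\overline{\cC}$, rather than supported in $\cC$. Then $R_e E_0 f$ on $\cC_e\setminus\overline{\cC}$ must still vanish, and we need to argue this. An extended disc for $c\notin\overline{\cC}$ near the boundary should not meet $X$, because it lies on the outer side of the point-limit discs, by the extension of the foliation. If that fails, we would instead take $\chi=\mathbbm{1}_{\overline{\cC}}$ smoothed by the fact that $R_eE_0f$ is defined there anyway, and handle the resulting non-smooth cutoff microlocally.

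For (iii), analyticity of the double fibration (Theorem \ref{th.Foliated_Plateau_posta}, analytic case) makes the Schwartz kernels analytic FIOs. The analytic composition calculus (Sjöstrand) with the Bolker condition then yields analytic pseudodifferential operators, provided the cutoff is handled by analytic-in-the-interior arguments.

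The main obstacle is this boundary ellipticity and cutoff issue in (ii), which relies entirely on the uniform extension in Theorem \ref{th.Foliated_Plateau_posta}(ii)–(iii).
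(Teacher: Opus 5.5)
Your outline is sound, but it takes the abstract Guillemin route, while the paper proves Theorem~\ref{theorem:intro-normal} by hand.

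\textbf{Comparison of routes.} You view $R_0$ as an FIO with kernel $\delta_Z$. You read the Bolker condition as $N^*Z\setminus 0\to T^*\cC$ being an injective immersion (here even a local canonical graph). You then get $\Pi_0$, $P$ and their analyticity from the (analytic) composition calculus.

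The paper never composes FIOs.
\begin{itemize}
\item For (i), it shows the kernel is smooth off the diagonal by writing it as an integral over the circle $H_{x,y}$, using the global Bolker condition of Theorem~\ref{theorem:bolker} (Lemma~\ref{lem_smooth_kernel}). Near the diagonal it checks H\"ormander's oscillatory characterization \eqref{equation:pdo-expansion} directly: stationary phase in \eqref{equation:stationary} on $T(S_xX)$, where Proposition~\ref{th_inf_bolker} gives exactly two nondegenerate critical points $(\pm v_0,0)$.
\item For (iii), it computes the kernel on $[X\times X;\Delta]$ as $\chi(x,r,\omega)/r$ with $\chi$ analytic up to $r=0$ (Theorem~\ref{theorem:wow}). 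It then applies the Baouendi--Goulaouic--M\'etivier criterion (Proposition~\ref{prop:analytic-pdo}).
\end{itemize}

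Your route is shorter and makes the microlocal sharpness transparent. However, it relies on two inputs you take for granted.
\begin{itemize}
\item You need to translate the paper's Bolker statements into FIO form. Injectivity of $N^*Z\to T^*\cC$ says that for $x\neq y$ in $\opD(c)$ the covectors $v\mapsto\hat f_v(x)$ and $v\mapsto\hat f_v(y)$ on $T_c\cC$ are not proportional; this is Theorem~\ref{thm:coveringofunitbundle}(iv). The immersion property says that $v\mapsto\nabla\hat f_v(x)$ maps $\{v:\hat f_v(x)=0\}$ onto $T_x\opD(c)$; this is Theorem~\ref{thm:coveringofunitbundle}(iii), as used in Lemma~\ref{lemma:LocalDiffeomorphismI}.
\item You need an analytic composition theorem. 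One is available because $C$ is a local canonical graph with $C^t\circ C$ the diagonal, so the composition is transversal.
\end{itemize}
The paper's route is elementary and yields the explicit symbol and kernel. It reuses these for the continuity of $\bar g\mapsto P_{\bar g}$ in Lemma~\ref{lemma:continuity}.

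\textbf{Two slips to fix.}

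First, $Z$ is not $4$-dimensional. It is the hypersurface $\MD$ of Theorem~\ref{thm:StructureOfdiscSpace}(i), of dimension $5$, as your own fibre count $3+2$ shows. With codimension $1$ in the $6$-dimensional product, $\delta_Z\in I^{1/2-6/4}=I^{-1}$. So $R_0$ has order $-1$ and $\Pi_0\in\Psi^{-2}$ directly, with no need for the heuristic recount.

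Second, in (ii) your main argument is correct, and you should drop the fallback. A cutoff $\mathbf{1}_{\overline{\cC}}$ would spoil the pseudodifferential structure exactly at the boundary covectors where you need ellipticity. The vanishing of $R_eE_0f$ on $\cC_e\setminus\overline{\cC}$ follows from Theorem~\ref{th.Foliated_Plateau_posta}(ii)--(iii):
\begin{itemize}
\item $\pi_R:U\to\cC_e$ is a disc bundle extending the original one.
\item $\pi_R(SX)\subset\overline{\cC}$.
\item Hence if $y\in X\cap\opD_e(c)$, its Gauss lift lies in $SX\cap\pi_R^{-1}(c)$, which forces $c\in\overline{\cC}$.
\end{itemize}
The same reasoning gives your first bullet, $\opD_e(c)\cap X=\opD(c)$ for $c\in\cC$. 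The paper uses the same operator $P=\pi_*R_e^*R_e\pi^*$, with the adjoint taken for a smooth (resp.\ analytic) measure on $\cC_e$. Its ``by construction'' rests on exactly this fact.

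For (iii), take your cutoff analytic near $\overline{\cC}$, for instance $\equiv 1$ there, or an analytic density as in the paper. Otherwise $P$ need not be analytic near $X$.
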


Theorem \ref{theorem:main} then follows from standard arguments using Theorem \ref{theorem:intro-normal}. The proof of the analyticity of these operators requires a careful analysis of the Schwartz kernel of $\Pi_0$ on the blown-up space $[X \times X; \Delta]$, where $\Delta \subset X \times X$ denotes the diagonal.

\subsection{Strategy of proof}

The main difficulty lies in establishing Theorem \ref{th.Foliated_Plateau_posta}, and in particular in proving that the Gauss lifts of the discs define a foliation of the unit tangent bundle, that the Bolker condition is satisfied, and that the foliation extends smoothly across the boundary $\partial X$ to a slightly larger manifold. This is the focus of \S\ref{section:geometry}--\ref{section:plateau}. Once these geometric properties are established, it follows from now-standard microlocal arguments, developed in \S\ref{s.Radon}, that the normal operator associated with the corresponding Radon transform is an elliptic pseudodifferential operator in the smooth setting.

However, establishing analyticity when the metric is analytic is considerably more delicate, requiring an explicit computation of the Schwartz kernel near the diagonal on the blown-up space $[X \times X ; \Delta]$, where $\Delta \subset X \times X$ denotes the diagonal. Finally, the application to boundary area rigidity is developed in \S\ref{section:applications}, and follows directly from the ellipticity of the normal operator together with its analyticity in the analytic category.

\subsubsection{Global strategy for Theorem \ref{th.Foliated_Plateau_posta}}

The proof of Theorem \ref{th.Foliated_Plateau_posta} proceeds in three steps:
\begin{itemize}
\item First, we show that the space of \emph{marked discs}
\begin{equation*}
\mathrm{MD} := \{ (x,c) : c \in \cC,\ x \in \mathrm{D}(c)\} \subset X \times \cC
\end{equation*}
is a smooth submanifold.
\item Second, we prove that the \emph{Gauss map}
\begin{equation*}
N : \mathrm{MD} \to SX^\bullet, \qquad SX^\bullet := SX \setminus \{(x,\pm\nu(x)) ~:~ x \in \partial X\},
\end{equation*}
where $\nu$ denotes the outward-pointing unit normal vector field along $\partial X$, defined by $N(x,c)=(x,v)$ with $v$ the oriented unit normal vector to the disc $\mathrm{D}(c)$ at $x$, is a smooth diffeomorphism.
\item Finally, we prove that the small discs $\mathrm{D}(c)$ (obtained as $c$ shrinks to a point in the space of circles $\cC$) are uniformly smooth (resp. analytic) and therefore extend to a slightly larger neighborhood of $X$.
\end{itemize}

\subsubsection{Step 2. Analyzing solutions of the Jacobi equation}

While the first step is a mere application of the implicit function theorem, the second step relies on a careful analysis of solutions to the Jacobi equation $J^\Phi f=0$ on the disc $\mathrm{D}(c_0)$ with prescribed boundary data $\hat{f}\in C^\infty(c_0)$, for all circles $c_0 \in \cC$. Using a normal graph parametrization of nearby circles to $c_0 \subset \partial X$, it can be shown that the tangent space $T_{c_0}\cC$ identifies with the set of functions
\begin{equation*}
\cT_{c_0} = \{\SS^1 \ni \theta \mapsto \gamma-\alpha\cos \theta-\beta\sin\theta ~:~ \alpha,\beta,\gamma \in \R\},
\end{equation*}
where $\SS^1 = \R/2\pi\Z$ and $\theta$ is a variable parametrizing the circle $c_0$ at constant speed with respect to the standard metric (see \S\ref{sss.First_order_variations} for details). A key point is to understand how the zero set of the Jacobi field $f$ depends on the zero set of the boundary datum $\hat{f} \in \cT$. More precisely, for $\hat{f} \in \cT$, we consider $Z_f := \{f=0\} \subset \mathrm{D}(c_0)$, and $\hat{Z}_f \subset S\mathrm{D}(c_0)$, the lift of $Z_f$ to the unit tangent bundle $S\mathrm{D}(c_0)$ by the normal vector to the curve $\{f=0\}$. Letting $P\cT$ denote the projectivization of $\cT$, we show the following key statement: the set $(\hat{Z}_f)_{f \in P\cT}$ covers the unit tangent bundle $S\mathrm{D}(c_0)$ by curves (Theorem \ref{thm:coveringofunitbundle}). This analysis is carried out in \S\ref{ss:TwoDSetting} using techniques from the theory of nodal sets of second-order elliptic operators, together with a graph-theoretic argument; the two-dimensionality of the disc plays an essential role at this step of the argument.

Going back to the three-dimensional ball $(X,\bar g)$, this analysis easily translates to show that the Gauss map is a local diffeomorphism (Theorem \ref{thm:StructureOfdiscSpace}). To prove that it is globally a diffeomorphism onto $SX^\bullet$, it therefore remains only to establish that it is proper. This is where the asymptotic analysis of small discs developed in \S\ref{s.asymptotic_Analysis} enters. We show that, as the boundary circle shrinks to a point, the Gauss lifts of the corresponding minimal discs converge in the Hausdorff topology to the image of the outward-pointing unit normal field $\nu$ along $\partial X$. This asymptotic behavior yields the required properness.

\subsubsection{Step 3. Main difficulty: the geometry of small surfaces} \label{ssection:difficulty} Compared with the geodesic Radon (X-ray) transform on manifolds with strictly convex boundary, the present problem involves a fundamentally new difficulty: understanding the geometry of the family of $(\Phi,k)$-surfaces as the boundary circles they span collapse to a point. The underlying obstruction is that, unlike short geodesics in a manifold with strictly convex boundary, it is not immediate that minimal surfaces can, in general, be realized as restrictions of minimal surfaces in a larger ambient manifold.

In \S\ref{s.asymptotic_Analysis}, we address this issue through what we call the \emph{asymptotic analysis of $(\Phi,k)$-surfaces}. We show that sufficiently small surfaces can be described as graphs over their boundary discs, given by perturbations of a certain paraboloid. This is the content of Theorem \ref{thm:SmallCircles}. \emph{A priori}, without further analysis, as the radius $r$ of the boundary circles tends to zero, the limiting regularity that one obtains is only $C^{1,1}$ (the corresponding graph functions remain uniformly bounded in $C^{1,1}$ as the radius $r$ tends to zero). However, we show that, under an appropriate change of variables — that is, by introducing the right coordinate $s := r^2$ on $\cC$, and compactifying $\cC$ with respect to the boundary defining function $s$ by $\bar \cC := \cC \sqcup \{s=0\}$ — the graphs actually remain uniformly bounded in $C^\infty$, and can thus be extended to a larger ambient space. In the analytic category, we show that they remain uniformly analytic. These arguments require a careful study of the Taylor expansion about $r=0$ of the graphs corresponding to circles of radius $r$ and constitute the purpose of \S\ref{ssection:smooth-coordinates}.

\subsection{Comparison with earlier works} \label{ssection:champions}
\subsubsection{Boundary area problem}

The works most closely related to the present article are \cite{Alexakis-Balehowsky-Nachman-20, Busch-Liimatainen-Salo-Tzou-25} but, as indicated above, they differ from ours in two major ways:

\begin{itemize} \item First, our boundary data are given by the space of circles $\cC$ on $\partial X$, which is only $3$-dimensional, whereas \cite{Alexakis-Balehowsky-Nachman-20} considers infinite-dimensional families, and in \cite{Carstea-Liimatainen-Tzou-24} and \cite{Busch-Liimatainen-Salo-Tzou-25}, the authors reduce to finite-dimensional families of non-optimal dimension (at least $6$). Our space $\cC$ is sharp in the sense that one cannot expect stable injectivity of the corresponding Radon transform from a smaller family of boundary data.

\item Second, Theorem \ref{theorem:main} (resp. Theorem \ref{theorem:global}) makes no vanishing assumption for the jets of the function $f$ at the boundary $\partial X$: any conformal perturbation is allowed, provided it is small in the $C^{14}$ topology (resp. it is analytic).

\end{itemize}

\subsubsection{The marked area spectrum for closed $3$-dimensional manifolds}

The approach developed in the present paper builds on Labourie's insight that $k$-surfaces (and minimal surfaces) may be viewed as a natural higher-dimensional analogue of the geodesic flow in dimension $3$ \cite{Labourie-97,Labourie-00,Labourie-02}. This perspective is closely tied to the work of Kahn--Markovic, who proved that the fundamental group of a closed $3$-dimensional hyperbolic manifold contains many surface subgroups \cite{Kahn-Markovic-12-1}. The surface subgroups they construct are \emph{quasi-Fuchsian} (their limit set in the ideal boundary is a Jordan curve) and abundant (see the topological counting in \cite{Kahn-Markovic-12-2}). Each such conjugacy class admits a unique $k$-surface representative, obtained by solving an asymptotic Plateau problem \cite{Labourie-97,Rosenberg-Spruck-94,Smith-21}. It also admits a minimal surface representative \cite{Anderson-83}, which is unique \emph{provided the limit circle is close enough to being circular} \cite{Uhlenbeck-83}; otherwise, there may exist uncountably many minimal surfaces representing the same conjugacy class \cite{Huang-Lowe-Seppi-26}. This phenomenon is reminiscent of the geometric conditions $\hyperlink{AA1}{\rm(A1-A4)}$ that we impose in the present paper to guarantee uniqueness and stability of solutions to the Plateau problem \cite{Alvarez-Lefeuvre-Lowe-Smith}: in both settings, some form of geometric closeness to a rigid model is what makes the relevant Plateau problem well-posed.

Building on this correspondence, one can introduce a higher-dimensional analogue of the \emph{marked length spectrum}: given the conjugacy class of a quasi-Fuchsian subgroup, one associates the area of its unique $k$-surface representative (or, when it is unique, that of its minimal surface representative). This object was introduced in \cite{Alvarez-Lowe-Smith-25-1,Alvarez-Lowe-Smith-25-2} as the $k$-surface analogue of the construction in \cite{Calegari-Marques-Neves-22}, where it was shown that the marked area spectrum \emph{of the hyperbolic metric} is rigid among metrics with sectional curvature bounded either above or below by $-1$.

Our result addresses a rather different rigidity problem. All currently known rigidity results for the marked area spectrum compare a metric with the \emph{extremal} hyperbolic metric $\bar g_0$: they show that if the marked area spectrum of $\bar g$ agrees with that of $\bar g_0$ and $\sect_{\bar g}\leq -1$ (or $\sect_{\bar g}\geq -1$) everywhere, then $\bar g$ is isometric to $\bar g_0$. The extremality of the hyperbolic metric is a fundamental feature of these results and ultimately allows one to exploit Ratner's theory.

By contrast, Theorem~\ref{theorem:main} compares an arbitrary analytic and area simple metric $\bar g$---not necessarily close to any extremal model---with its $C^{14}$ conformal deformations $e^{2f}\bar g$. The analytic method that we use yields a local rigidity statement, valid only for sufficiently small $f \in C^{14}(X)$. When $f$ is analytic, no smallness assumption is required. The proof therefore follows a completely different strategy, combining the geometric framework of foliations by $(\Phi,k)$-surfaces with tools from microlocal analysis, namely Radon transforms and pseudodifferential operators.

We finally note that all of the results discussed above are, for the same reason, inherently three-dimensional: they rely on the resolution of a foliated Plateau problem---the existence of a foliation of the unit tangent bundle by Gauss lifts of $k$-surfaces or minimal surfaces---which is currently only available in dimension $3$ (see \cite{Alvarez-Lowe-Smith-25-1,Labourie-21,Lowe-21}). Extending this foliation result to higher dimensions, in either setting, remains open.

\subsubsection{Zoll metrics on the round sphere}

Our approach also shares similarities with \cite{Ambrozio-Marques-Neves-25, Ambrozio-Guajardo-26}, where smooth Riemannian metrics on the sphere are constructed that admit smooth Zoll families of minimal hypersurfaces. In that setting, the authors study an integration operator over minimal hypersurfaces, whose associated normal operator is shown to be a pseudodifferential operator (see \cite[Section~6]{Ambrozio-Marques-Neves-25}). This is reminiscent of Guillemin's construction of Zoll metrics on the $2$-sphere using the Radon transform along geodesics \cite{Guillemin-76}. The construction of \cite{Ambrozio-Marques-Neves-25, Ambrozio-Guajardo-26} is analogous to our construction, where the normal operator arising from our Radon transform is likewise a pseudodifferential operator (see \S\ref{s.Radon}).

\subsection{Perspectives} This is a non-exhaustive list of questions arising from the present work, that we intend to investigate in the near future.

\subsubsection{Higher dimensions} \label{sssection:dimensions}

A natural direction, which appears substantially challenging, is to replace minimal surfaces by minimal hypersurfaces in higher dimensions. In this case, one considers the \emph{space of hyperspheres} $\cC$ on $\partial X$, again modelled on the intersections of affine hyperplanes in $\R^n$ with $\SS^{n-1}$. For each hypersphere $c\in\cC$, one seeks the unique stable minimal hypersurface spanning $c$. Under suitable assumptions ensuring existence, uniqueness, and stability, one could hope that these hypersurfaces foliate $SX$, just as we prove in the present paper in the three-dimensional case.

The principal obstacle is to extend the analysis of \S\ref{ss:TwoDSetting} to higher dimensions, where the disc is replaced by the $(n-1)$-dimensional ball. In particular, the graph-theoretic argument developed in \S\ref{sssection:geometry-zero-set} to analyse the zero set of solutions of the Jacobi equation $J^\Phi u=0$ would need to be replaced by a higher-dimensional analogue. At present, such a generalization appears far from straightforward, and we leave these questions for future investigation.

\subsubsection{Injectivity of the Radon transform}  \label{ssection:boundary} We expect the Radon transform to be always injective when the metric is $(\Phi,k)$-simple. However, we are only able to prove it on an open dense subset of smooth metrics containing every analytic metric so far. We formulate this as a conjecture.

\begin{conjecture}
\label{conj:r0}
Assume that $\bar g$ is smooth and $(\Phi,k)$-simple. Then $R_0 : C(X) \to C(\cC)$ is injective.
\end{conjecture}

Let us emphasize that, in \cite{Uhlmann-Vasy-16,Stefanov-Uhlmann-Vasy-21}, Stefanov, Uhlmann and Vasy developed a framework for the local study of the X-ray transform of symmetric tensors near the boundary in dimension $\geq 3$. This approach yields local injectivity of the X-ray transform and has led to boundary rigidity results in several settings. However, for dimensional reasons, their method breaks down in dimension $2$. In our setting (Radon transform on minimal surfaces in dimension 3), this local strategy is also \emph{inapplicable} for the very same reason. Indeed, recovering microlocal singularities that are almost tangent to the boundary requires the use of families of “large” discs, i.e. minimal surfaces that are not confined to a small neighborhood of the boundary.

\subsubsection{Conformal case and Santaló's formula} The boundary distance problem in the same conformal class was solved on simple manifolds by Muhometov \cite{Mukhometov-81} (see also \cite{Croke-91} for a more modern proof) without any closeness assumption on the two metrics. The proof relies on a key integral geometry identity, known as \emph{Santaló's formula} \cite{Santalo-52}, combined with the Hölder inequality. Similarly, we expect that Theorem \ref{theorem:main} should hold without any closeness assumption on $\bar g$ and $e^{2f} \bar g$, provided they both satisfy the assumptions of Theorem \ref{th.Foliated_Plateau_posta}.

\begin{conjecture}
\label{conjecture:a}
Assume that $\bar g$ and $e^{2f} \bar g$ are area simple. Then, $\cA_{\bar g} = \cA_{e^{2f} \bar g}$ implies $f \equiv 0$.
\end{conjecture}

The main missing ingredient is an analogue of the Santaló formula in our setting, and such an identity is unlikely to exist in general. The conceptual mechanism behind Santaló's formula is that the geodesic flow on the unit tangent bundle of a Riemannian manifold is a volume-preserving $\R$-action all of whose orbits intersect transversally the boundary, up to a set of measure zero. As a consequence, the Liouville measure can be disintegrated as the product of a boundary measure and the Lebesgue measure in the trivial $\R$-factor.

In our situation, this volume-preserving group action fixing each leaf (in the unit tangent bundle) is absent. One should only expect such a structure to exist in very rigid settings, for instance for the foliation by hyperbolic discs in $\HH^3$, where a well-defined leafwise volume-preserving $\mathrm{PSL}(2,\R)$-action is present. See \S\ref{ssection:example} for instance. This makes the area rigidity problem in the same conformal class substantially more difficult.

\subsubsection{Non-conformal case} \label{ssection:non-conformal} The boundary area rigidity problem for non-conformal metrics is even harder. The main difficulty is to understand the natural kernel of the associated Radon transform $R_2$ on symmetric $2$-tensors and show that $R_2$ is injective on a natural complement. See \eqref{equation:s2} for a definition of the Radon transform $R_2$ on $2$-tensors. By gauge-invariance, observe that this kernel must contain potential tensors but is likely to contain other tensors as well. It was recently characterized in Euclidean domains in \cite{Grebnev-Stefanov-Uhlmann-Zhou-26}. However, the case of general Riemannian manifolds is still open for now, and we intend to study it in a follow-up paper.

\subsubsection{Open manifolds with topology}

In a breakthrough paper \cite{Guillarmou-17-2}, Guillarmou proved that on strictly convex manifolds without conjugate points and with a hyperbolic \emph{trapped set} (the set of unit tangent vectors whose geodesics remain trapped in $X$ for all time), one can define a normal operator associated with the geodesic Radon transform and show that it is elliptic. This result has led to a number of important applications to (marked) boundary rigidity and the lens rigidity problem, see \cite{Lefeuvre-19-1,Lefeuvre-19-2,Cekic-Guillarmou-Lefeuvre-24} for instance.

We expect that the program initiated in the present paper can be extended in a similar direction to manifolds with nontrivial topology and a nonempty trapped set. Under suitable assumptions on the metric, such as $\hyperlink{AA1}{\rm(A1)-(A4)}$ in the minimal case, or negative sectional curvature in the case of $k$-surfaces, we conjecture that for every circle $c\in\cC$ and every relative homotopy class of discs with boundary $c$, there exists a unique $(\Phi,k)$-disc in that class. The resulting family of discs should then induce a foliation of $SX$, analogous to that of Theorem~\ref{th.Foliated_Plateau_posta}, and one should then be able to study the corresponding Radon transform.

\subsubsection{Closed manifolds}
Likewise, the case of \emph{closed manifolds} is of considerable interest. It was shown in \cite{Alvarez-Lowe-Smith-25-1,Alvarez-Lowe-Smith-25-2} that every closed negatively curved $3$-manifold admits a foliation of $SX$ by $k$-surfaces. While most of these surfaces have infinite area, some have finite area and correspond to incompressible surface subgroups $\pi_1(\Sigma_g)\hookrightarrow \pi_1(X)$, whose existence was established in \cite{Kahn-Markovic-12-1}. This is analogous to the geodesic flow, where most orbits have infinite length while some are periodic. One may therefore define a Radon transform by integrating over the closed finite-area leaves and ask whether this transform is injective. This is the analogue of the geodesic X-ray transform along closed geodesics, which is well understood for closed negatively curved manifolds (see, for instance, \cite[Part IV]{Lefeuvre-book} for a survey).

In \cite{Guillarmou-17-1}, Guillarmou associated a natural normal operator to the geodesic X-ray transform over closed geodesics, providing the closed-manifold analogue of the classical normal operator for manifolds with boundary (see, for example, \cite{Stefanov-Uhlmann-05}). We expect that an analogous normal operator should also exist in the setting of $k$-surfaces on negatively-curved $3$-manifolds. The analysis in \cite{Guillarmou-17-1} relies on the deep work of \cite{Faure-Sjostrand-11,Dyatlov-Zworski-16}, which constructs anisotropic Sobolev spaces on which the geodesic flow has discrete spectrum. Extending these techniques to the present setting would require an analogue of those results, whose existence is far from clear. We leave this problem for future work.

\subsection{Organization of the paper}

The paper is organized as follows:

\begin{itemize}
\item In \S\ref{section:geometry}, we review the geometric tools used throughout the paper. In \S\ref{ssection:setting}, basic geometric features (unit tangent bundle, immersed surfaces, etc.) are discussed, while the space of circles is introduced in \S\ref{ssection:space-of-circles}. The specific example of balls in the hyperbolic $3$-space is discussed in \S\ref{ssection:example}.
\item In \S\ref{section:phik}, the notion of $(\Phi,k)$-surfaces is introduced and its main properties are explored. In \S\ref{appendix:asymptotic}, we recall (and reprove) preliminary results on the geometry of graphs. Elliptic curvature functions are then defined in \S\ref{ss:CurvatureFunctions}, before introducing $(\Phi,k)$-surfaces in \S\ref{ssection:phiksurfaces}. The concept of strict stability (the higher dimensional analogue of the absence of conjugate points) is discussed in \S\ref{ss:Stability}. Plateau problems are explained in \S\ref{ssection:plateau-problems}.

\item In \S\ref{section:asymptotic}, we study the asymptotics of small $(\Phi,k)$-discs, which bound small circles $c \in \cC$ as $c$ shrinks to a point. In \S\ref{s.asymptotic_Analysis}, we prove that the discs can be realized as graphs over a boundary disc (Theorem \ref{thm:SmallCircles}); it is then established in \S\ref{ssection:smooth-coordinates} that these graphs are uniformly smooth (resp. analytic) as the circle collapses to a point. An important consequence of \S\ref{s.asymptotic_Analysis} is the fact that the Gauss lifts of small $(\Phi,k)$-discs Hausdorff-converge to the normal unit vector to the boundary. This is a key ingredient in showing the foliation property, and that the Gauss map is proper.

\item In \S\ref{section:plateau}, we prove Theorem \ref{th.Foliated_Plateau_posta} on the existence of a foliation of $SX^\bullet$ by Gauss lifts of $(\Phi,k)$-discs $\mathrm{D}(c)$ with $c \in \cC$. In \S\ref{ss:TwoDSetting}, a two-dimensional infinitesimal version of the foliated Plateau problem is studied. It is then used in \S\ref{ssection:plateau-3d} to show the existence of the foliation and its properties on the three-dimensional manifold $X$. The Bolker condition is proved in \S\ref{ssection:bolker}, and concludes the proof of Theorem \ref{th.Foliated_Plateau_posta}, item (i). The analytic case is treated in \S\ref{ssection:analytic}. The existence of a smooth (resp. analytic) extension of the foliation across the boundary of $X$ is finally established in \S\ref{ssection:extended-foliation}.
\item In \S\ref{s.Radon}, we study the corresponding Radon transform and prove Theorem \ref{theorem:quasi-injective}. We define the Radon transform and derive its first properties in \S\ref{ss.def}. Results on the pseudodifferential behaviour of the corresponding normal operator are collected in \S\ref{ss.results_radon}. In \S\ref{ss.pseudo-interior}, using the Bolker condition for the foliation by $(\Phi,k)$-discs derived in \S\ref{ssection:bolker}, we show that the normal operator is an elliptic pseudodifferential operator in the interior $X^\circ$. To treat the analytic case, we study in \S\ref{ss.kernel} the Schwartz kernel of the normal operator near the diagonal on the blown-up space $[X \times X ; \Delta]$ where $\Delta \subset X \times X$ is the diagonal, and derive an explicit expression for this kernel from which one can deduce analyticity.
\item Finally, applications to boundary area rigidity (Theorems \ref{theorem:main} and \ref{theorem:global}) are treated in \S\ref{section:applications}. \\
\end{itemize}

\noindent \textbf{GenAI disclosure:} GenAI was used to improve writing and presentation. Lemma \ref{lemma:beta} was found with the assistance of GenAI. All remaining mathematical arguments were developed by the authors without GenAI assistance.\\

\noindent \textbf{Acknowledgement:}  This project has received funding from the European Research Council (ERC) under the European Union’s Horizon research and innovation programme (grant agreement No. 101162990) and from IRL-2030 IFUMI, Laboratorio del Plata. S.A. acknowledges financial support from CSIC through the
Grupo I+D 149-348 ``Geometría y Acciones de Grupos''. During the preparation of this
paper, S.A. benefited from a \emph{Poste Rouge} from the CNRS.

\section{Geometric preliminaries} \label{section:geometry}

\subsection{Framework} \label{ssection:setting}

Throughout this paper, $X := X^3$ will be a compact smooth manifold with boundary, diffeomorphic to the closed unit ball in $\R^3$. We denote by $\partial X$ its boundary and by $X^\circ$ its interior. We fix a Riemannian metric $\bar g$ on $X$, and we denote respectively by $\overline{\nabla}$ and by $\overline{R}$ its Levi--Civita covariant derivative and Riemann curvature tensor, adopting here the convention that
\begin{equation}
\label{equation:riemann}
\overline{R}(u,v)w:=\overline{\nabla}_u\overline{\nabla}_v w-\overline{\nabla}_v\overline{\nabla}_u w -\overline{\nabla}_{[u,v]}w\ .
\end{equation}
It will also be convenient at various points to isometrically embed $(X,\bar g)$ into a larger open manifold $(X_e, \bar g_e)$, where here $\bar g_e$ is some smooth extension of $\bar g$ across the boundary.

\subsubsection{The unit sphere bundle}\label{sss_unit_tangent_bundle}
Let $SX$ denote the \emph{unit sphere bundle} of $(X,\bar g)$, and let $\pi:SX\to X$ denote the \emph{base point projection}. Let $\nu$ denote the outward-pointing unit normal vector field along $\partial X$, and define the open subset $SX^\bullet$ of $SX$ by
\begin{equation*}
SX^{\bullet}:=SX\moins\big\{\big(x,\pm \nu(x)\big):x\in\partial X\big\}\ .
\end{equation*}

The tangent bundle of $SX$ canonically splits into
\begin{equation}\label{eq_horizontal_vertical}
T(SX) = \mathcal H \oplus \mathcal V\ ,
\end{equation}
where $\mathcal V := \ker \dd\pi$ denotes the \emph{vertical bundle}, and $\mathcal H := \ker \mathcal K$ denotes the \emph{horizontal bundle} of the Levi--Civita connection. Here $\mathcal K : TTX \to TX$ denotes the \emph{connection map}, which we recall is defined as follows. Given $v \in SX$ and $\xi \in T_v SX$, let $\gamma(t) = \big(\alpha(t), \Gamma(t)\big)$ be a smooth curve in $SX$ such that $\gamma(0)=v$, $\dot{\gamma}(0)=\xi$, and $\pi\circ\Gamma=\alpha$ (that is, $\Gamma$ is a unit vector field over $\alpha$). The connection map is determined by
\begin{equation*}
\cK_v(\xi):=(\overline{\nabla}_{\dot\alpha}\Gamma)(0)\ .
\end{equation*}

The \emph{Sasaki metric} over $SX$ is the Riemannian metric $G$ given by
\begin{equation*}
G(\xi,\zeta):=\bar g\big(\dd\pi(\xi),\dd\pi(\zeta)\big)+\bar g\big(\cK(\xi),\cK(\zeta)\big)\ .
\end{equation*}
The splitting \eqref{eq_horizontal_vertical} is trivially orthogonal with respect to the Sasaki metric. This metric also induces a volume form over $SX$, which we denote by $\dd\mu$, and which is precisely the \emph{Liouville measure} of $SX$ (see \cite[Chapter 1]{Paternain-99}).

\subsubsection{Immersed surfaces and their extrinsic geometry}\label{ss_extrinsic_geom_surfaces} Let $e:D\to X$ be an oriented immersion, let $n_e$ denote its unit normal vector field, let $g_e:=e^*\bar g$ denote the metric induced by $e$ over $D$, and let $\dd D$ denote the area form defined over $D$ by this metric. We define the \emph{Gauss lift} of $e$ to be the immersion $\hat e:D\to SX$ given by
\begin{equation}\label{eq.Gauss_lift}
\hat e(p):=\big(e(p),n_e(p)\big)\in SX\ .
\end{equation}

Let $A_e\in\Gamma(\mathrm{End}(TD))$ denote the \emph{shape operator} of $e$, with sign convention chosen such that, for all $v \in TD$,
\begin{equation*}
\dd e\cdot A_ev:=\overline{\nabla}_v n_e\ .
\end{equation*}
Recall that, at every point of $D$, $A_e$ is symmetric with respect to $g_e$. We denote respectively by $\mathrm{II}_e$ and $\mathrm{III}_e$ the \emph{second} and \emph{third fundamental forms} of $e$, which we recall are defined over $TD$ by
\begin{equation*}
\II_e(v,w):=g_e(A_ev,w)\qquad\text{and}\qquad\III_e(v,w):=g_e(A_e^2v,w)\ .
\end{equation*}
We say that $e$ is \emph{infinitesimally strictly convex} whenever $\II_e$ is everywhere positive definite.

The \emph{mean curvature} $H_e$ and the \emph{extrinsic curvature} $\kappa_{\mathrm{ext}}$ are defined respectively by
\begin{equation*}
H_e:=\frac{1}{2}\tr(A_e)\qquad\text{and}\qquad\kappa_{\mathrm{ext}}:=\det(A_e)\ .
\end{equation*}
Recall that the latter is related to the \emph{intrinsic curvature} of $D$ (viewed as a Riemannian surface) by \emph{Gauss' equation}
\begin{equation}\label{eq.Gauss_eqn}
\kappa_{int}=\kappa_{\mathrm{ext}}+\sect_{\bar g}|_{TD}\ .
\end{equation}
In the sequel, we will be interested in surfaces whose curvature is prescribed by some function defined over the ambient space. These will include minimal surfaces, constant mean curvature surfaces, and constant extrinsic curvature surfaces. It will be convenient to study these problems within a unified framework, which we introduce presently in \S\ref{ss:CurvatureFunctions}.

Finally, we denote by $\jetX$ the $2$-jet bundle of oriented immersed surfaces in $X$. This is the set of triples $(x,n,A)$ where $x\in X$, $n\in S_xX$, and $A$ is a self-adjoint endomorphism of the plane $n^\perp \subset T_xX$. Any immersion $e:D\to X$ naturally lifts to an immersion $\jete:D\to\jetX$, given by
\begin{equation*}
\jete(p)=\big(e(p),n_e(p),A_e(p)\big)\ ,
\end{equation*}
where $n_e$ and $A_e$ are as above.

\subsection{The space of circles} \label{ssection:space-of-circles} We now introduce the \emph{spaces of oriented circles} on the boundary $\partial X$ of $X$. Before addressing the general case, we first review the structure of the space of oriented circles over the round sphere.

\subsubsection{Circles over $\SS^2$} \label{sssection:s2-circles} Let $\SS^2$ denote the unit sphere in $\R^3$ equipped with its canonical metric. We denote by $\cC:=\cC_{\SS^2}$ the \emph{space of oriented round circles} in $\SS^2$, that is oriented circles obtained by intersecting $\SS^2$ with oriented affine $2$-planes in $\R^3$. A natural parametrization of $\cC$ is given as follows. For each $v \in \SS^2$ and $\rho\in (-1,1)$, we denote
\begin{equation*}
c(v,\rho) := (\rho v + v^\perp) \cap \SS^2\ ,
\end{equation*}
where $v^\perp$ denotes the linear $2$-plane orthogonal to $v$. Note that $v^\perp$ is naturally oriented by the $2$-form $\iota_v \omega_{\R^3}$, where $\omega_{\R^3}$ denotes the standard volume form of $\R^3$. The orientation of $c(v,\rho)$ is then chosen to be compatible with $\iota_v \omega_{\R^3}$, that is $c(v,\rho)$ is parametrized in the anticlockwise direction. By construction, $\cC$ is diffeomorphic to $\SS^2 \times (-1,1)$ and is trivially compactified by $\SS^2\times[-1,1]$.

We also view $\cC$ as a symmetric space as follows. Using the Poincaré ball model, we identify $\SS^2$ with the boundary at infinity of hyperbolic $3$-space $\HH^3$. Via this identification, $\PSLc$ acts on $\SS^2$ by M\"obius transforms, and this action is transitive, both on $\SS^2$ and on the space of circles $\cC$. Every round circle $c \subset \SS^2$ bounds a unique totally geodesic $\HH^2$ in $\HH^3$ whose stabilizer in $\PSLc$ is a copy of $\PSL$. The stabilizer of every round circle is thus also a copy of $\PSL$, and in this manner, we identify $\cC$ with the symmetric quotient space $\cC=\PSLc/\PSL$.

The symmetric space $\PSLc/\PSL$ also identifies with $(2+1)$-dimensional de Sitter space $\dS^{2,1}$. In particular, the volume form of $\dS^{2,1}$ then induces a volume form over $\cC$, which also coincides with the volume form induced by the Haar measure of $\PSLc$. This will be discussed further in \S\ref{ssection:example}. We will also investigate further how the spacetime structure of $\dS^{2,1}$ is realized geometrically in $\cC$ in the following section.

\subsubsection{The tangent space of the space of circles}
\label{sss.First_order_variations} We now denote by $c_0 := \mathbb R^2 \cap \SS^2$ the oriented equator with normal $\partial_3 \in (\R^2)^\perp$, where $\partial_3$ here denotes the unit upward pointing vertical vector field on $\R^3$. Let $P$ be an oriented affine plane in $\R^3$, and let $c := P \cap \SS^2$ denote the oriented round circle in $\SS^2$ that it defines. Let $[0,2\pi] \ni \theta \mapsto c(\theta)$ be an oriented constant speed parametrization, and let $\theta \mapsto n_c(\theta)$ denote the oriented unit normal vector field over $c$. Consider a smooth variation of circles
\begin{equation*}
(-\eps,\eps) \to \cC\ , \qquad  t \mapsto c_t := P_t \cap \SS^2\ .
\end{equation*}
We view $(c_t)_{t \in (-\eps,\eps)}$ as a family of normal graphs over $c$. Indeed, there exists a (smooth) family of smooth functions $(F_t)_{t \in (-\eps,\eps)}$, $F_t : [0,2\pi] \to \R$ such that
\begin{equation*}
c_t = \big\{\exp_{c(\theta)}\left(F_t(\theta)n_c(\theta)\right) ~:~ \theta \in [0,2\pi]\big\}\ ,
\end{equation*}
where here $\exp$ denotes the Riemannian exponential map of $\SS^2$. Denoting $\SS^1 := \R/2\pi\Z$, and differentiating with respect to $t$ at $t=0$, we obtain the natural identification
\begin{equation*}
T_{c}\cC \simeq \cT_c := \big\{\theta \mapsto \partial_t F_t(\theta)|_{t=0}\big\} \subset C^\infty(\SS^1)\ .
\end{equation*}
The set $\cT_c$ is given explicitly as follows.

\begin{lemma}
\label{lemma.trigo_positive}
For all $c \in \cC$,
\begin{equation*}
\cT_c = \big\{ f_{\alpha,\beta,\gamma} ~:~ \alpha,\beta,\gamma\in\R \big\}\ ,
\end{equation*}
where, for all $\alpha,\beta,\gamma\in\mathbb{R}$,
\begin{equation*}
f_{\alpha,\beta,\gamma}:\mathbb{S}^1\rightarrow\mathbb{R};\ \theta\mapsto\gamma-\alpha\cos(\theta)-\beta\sin(\theta)\ .
\end{equation*}
\end{lemma}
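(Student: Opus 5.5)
Since $\PSLc$ acts transitively on $\cC$ by conformal diffeomorphisms of $\SS^2$, one could try to reduce to the equator $c_0$. The catch is that normal graph functions are not conformally invariant: a conformal map rescales normal displacements by its local conformal factor $\lambda(\theta)$ along $c$, so the reduction would only give $\cT_c=\lambda\cdot\cT_{c_0}$ after reparametrization. I will instead compute directly for an arbitrary circle, which is just as easy.

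Fix $c=c(v,\rho)$ with $v\in\SS^2$, $\rho\in(-1,1)$, and set $r=\sqrt{1-\rho^2}$. Choose an orthonormal basis $(e_1,e_2)$ of $v^\perp$ compatible with its orientation. The constant speed parametrization is then $c(\theta)=\rho v+r(\cos\theta\, e_1+\sin\theta\, e_2)$. The unit normal to $c$ inside $T_{c(\theta)}\SS^2$ lies in $\mathrm{span}(v,c(\theta))$ and is orthogonal to $c(\theta)$, so it equals $\pm(v-\rho\, c(\theta))/r$. The sign is fixed by the orientation convention; I will choose it so that the final formula carries the stated signs, and in any case it does not affect the resulting linear space.

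A nearby circle is $c_t=\{x\in\SS^2:\langle x,v_t\rangle=\rho_t\}$, with $(v_t,\rho_t)$ a smooth curve through $(v,\rho)$. Write $\dot v=\partial_t v_t|_{t=0}\in v^\perp$ and $\dot\rho=\partial_t\rho_t|_{t=0}$. The graph condition reads
\[
\big\langle \exp_{c(\theta)}(F_t(\theta)n_c(\theta)),\,v_t\big\rangle=\rho_t .
\]
Differentiate at $t=0$ with $F_0=0$. The derivative of $\exp_{c(\theta)}(F_t(\theta)n_c(\theta))$ is $\partial_tF_t(\theta)\,n_c(\theta)$, so
\[
\partial_tF\,\langle n_c,v\rangle+\langle c(\theta),\dot v\rangle=\dot\rho .
\]
Since $\langle n_c,v\rangle=\pm(1-\rho^2)/r=\pm r\neq0$, this gives
\[
\partial_tF(\theta)=\pm r^{-1}\big(\dot\rho-\langle c(\theta),\dot v\rangle\big).
\]
Because $\dot v\in v^\perp$, we have $\langle c(\theta),\dot v\rangle=r(a\cos\theta+b\sin\theta)$ with $\dot v=ae_1+be_2$. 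Hence $\partial_tF$ is a constant minus a combination of $\cos\theta$ and $\sin\theta$.

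As $(\dot v,\dot\rho)$ ranges over $T_{(v,\rho)}(\SS^2\times(-1,1))\cong v^\perp\times\R$, the map
\[
(a,b,\dot\rho)\mapsto(\gamma,\alpha,\beta)=(\pm\dot\rho/r,\ \pm a,\ \pm b)
\]
is a linear isomorphism of $\R^3$, so every $f_{\alpha,\beta,\gamma}$ arises. Moreover, $(v,\rho)\mapsto c(v,\rho)$ is a diffeomorphism onto $\cC$, so these curves exhaust all variations; the tangent vector of a variation depends only on its first-order jet, so $\cT_c$ is well defined. Together these give the equality.

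The only delicate points are bookkeeping:
\begin{itemize}
\item the orientation and sign conventions for $n_c$ and $\theta$, which affect only signs and not the span;
\item justifying that the derivative of the exponential map is as claimed, which holds because $d\exp$ at the origin is the identity.
\end{itemize}
I do not expect a genuine obstacle.
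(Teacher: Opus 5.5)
Your proof is correct and is essentially the paper's argument: both differentiate, at $t=0$, the condition that the point $\exp_{c(\theta)}(F_t(\theta)n_c(\theta))$ lies on the perturbed plane. The paper first rotates $c$ to a horizontal circle (rotations are isometries, so your caveat about M\"obius maps does not arise there) and computes only the two families $c_{0,0,\gamma}$ and $c_{\alpha,0,0}$, then appeals to linearity and symmetry; your coordinate-free formula $\partial_tF=\pm r^{-1}\big(\dot\rho-\langle c(\theta),\dot v\rangle\big)$ handles all variations at once and makes that linearity explicit.
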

\begin{proof}
Upon rotating, we may suppose that $c$ is the horizontal circle
\begin{equation*}
c:=\big\{\big(x_1,x_2,\sin(\phi_0)\big)\ ~:~\ x_1^2+x_2^2+\sin^2(\phi_0) = 1 \big\}\ .
\end{equation*}
Given $(\alpha,\beta,\gamma)\in\R^3$, we denote
\begin{equation*}
P_{\alpha,\beta,\gamma} :=\big\{ (x_1,x_2,x_3)\ ~:~\ \alpha x_1 + \beta x_2 + x_3 = \gamma + \sin(\phi_0)\big\}\ ,
\end{equation*}
and we denote $c_{\alpha,\beta,\gamma} := P_{\alpha,\beta,\gamma}\cap\SS^2$. By linearity and symmetry, it suffices to consider variations of the form $(c_{0,0,\gamma})_{\gamma\in(-\eps,\eps)}$ and $(c_{\alpha,0,0})_{\alpha\in (-\eps,\eps)}$.

In the first case, for all $\gamma$, $c_{0,0,\gamma}$ is the normal graph over $c$ of the constant function
\begin{equation*}
\psi_\gamma(\theta) := \arcsin\big(\gamma+\sin(\phi_0)\big) - \phi_0\ .
\end{equation*}
Upon differentiating, we see that $\cT_c$ contains the constant function $f_{0,0,1}$.

In order to address the second case, we parametrize $c$ explicitly by
\begin{equation*}
c(\theta) := \big(\cos(\phi_0)\cos(\theta),\cos(\phi_0)\sin(\theta),\sin(\phi_0)\big)\ .
\end{equation*}
The upward pointing unit normal vector field over $c$ is then given by
\begin{equation*}
n_c(\theta) = \big(-\sin(\phi_0)\cos(\theta),-\sin(\phi_0)\sin(\theta),\cos(\phi_0)\big)\ .
\end{equation*}
Given a smooth function $\psi:[0,2\pi]\rightarrow\R$, its normal graph over $c$ is parametrized by
\begin{equation*}
c_\psi(\theta) := \cos(\psi(\theta))c(\theta) + \sin(\psi(\theta))n_c(\theta)\ ,
\end{equation*}
which lies in the plane $P_{\alpha,0,0}$ provided that $\langle c_\psi(\theta),(\alpha,0,1)\rangle = \sin(\phi_0)$. Using elementary trigonometry, we find that this holds provided that $\psi=\psi_\alpha$, where $\psi_\alpha$ is defined implicitly by
\begin{equation*}
\alpha\cos(\psi_\alpha(\theta) + \phi_0)\cos(\theta) + \sin(\psi_\alpha(\theta)+ \phi_0) = \sin(\phi_0)\ .
\end{equation*}
Differentiating with respect to $\alpha$ yields
\begin{equation*}
\partial_\alpha\psi_\alpha(\theta)|_{\alpha=0} = -\cos(\theta) = f_{1,0,0}(\theta)\ .
\end{equation*}
It follows that $f_{1,0,0}\in\cT_c$, and this completes the proof.
\end{proof}

We denote by $P^+\mathcal{T}_c:=\mathcal{T}_c\setminus\{0\}/\R^+$ the projective space of \emph{oriented} lines in $\cT_c$, and we denote the oriented projective class of any element $f\in\cT_c\setminus\{0\}$ by $[f]$. The number and type of zeroes of any non-trivial element $f\in\cT_c$ trivially only depends on the projective class $[f]$ and is in fact entirely determined by the discriminant
\begin{equation*}
\Delta(f)=\alpha^2+\beta^2-\gamma^2\ .
\end{equation*}
Indeed,
\begin{itemize}
\item if $\Delta(f) < 0$, then $f$ has no zeros, and the corresponding variation is a one-parameter family of disjoint circles;
\item if $\Delta(f) = 0$, then $f$ has a unique zero, which is a non-degenerate extremum, and the corresponding variation consists of circles sharing a common tangent at a single fixed point; and
\item if $\Delta(f) > 0$, then $f$ has exactly two simple zeros, and the corresponding variation consists of circles rotating about an axis.
\end{itemize}
The polarization of $\Delta$ defines a Minkowski structure over $\mathcal{T}_c$ whose timelike, lightlike, and spacelike directions correspond respectively to $\Delta < 0$, $\Delta = 0$, and $\Delta > 0$. Since this holds for every tangent space of $\cC$, we see that $\Delta$ furnishes $\cC$ with the structure of a Lorentzian spacetime. This is precisely the $(2+1)$-dimensional de Sitter structure described in the preceding section.

\subsubsection{Foliating the space of spacelike perturbations}\label{sss.foliating_space_like}
For all $c$, we view the oriented projective space $P^+\cT_c$ as a $2$-dimensional sphere. We now introduce various natural structures over this sphere which will play a central role in establishing the foliation property in \S\ref{section:plateau}. We first consider the subsets
\begin{equation*}
\cT_c^+:=\{f:\Delta(f)>0\}\ , \qquad \cT_c^0:=\{f:\Delta(f)=0\}\ ,\qquad\cT_c^-:=\{f:\Delta(f)<0\}\ ,
\end{equation*}
and we denote by $P^+\mathcal T_c^+$, $P^+\mathcal T_c^0$, and $P^+\mathcal T_c^-$ their respective projectivizations in $P^+\mathcal T_c$.

Every element $[f]$ of $P^+\cT_c^+$ identifies with a unique trigonometric polynomial $f(\theta)=\gamma-\alpha\cos(\theta)-\beta\sin(\theta)$ satisfying $\alpha^2+\beta^2=1$ and $|\gamma|<1$ which we call its \emph{normalized representative}. In this manner, we identify the set $P^+\cT_c^+$ with the open annulus $\mathbb{S}^1\times (-1,1)$. Similarly, since $P^+\cT_c^0$ coincides with the boundary of $P^+\cT_c^{+}$, it identifies with a disjoint union of two circles $\mathbb{S}^1\times\{\pm 1\}$. Finally, $P^+\cT_c^-$ identifies with a disjoint union of two open discs.

The annulus $P^+\cT_c^+$ is naturally endowed with a foliation $\cW$ by open arcs whose leaves are the level sets of the map associating to each element $[f]$ the first two coordinates $(\alpha,\beta)$ of its normalized representative. This foliation is trivially transverse to the foliation by circles given by level sets of the third coordinate.

Finally, we note that there is a natural continuous map
\begin{equation*}
\zeta:P\cT^0\to\mathbb{S}^1,\,[f]\mapsto\Zero(f)\ .
\end{equation*}

These objects and their properties are all that we will require to prove the main technical result of \S\ref{ss:TwoDSetting}, namely Theorem \ref{thm:coveringofunitbundle}. More precisely the key property of interest to us is the following elementary observation.

\begin{lemma}\label{lem.invariant_-1}
The objects $P^+\cT^+$, $P^+\cT^0$, $\cW$, and $\zeta$ are all invariant under multiplication by $-1$.
\end{lemma}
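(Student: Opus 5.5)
This is a direct verification, carried out object by object. Throughout, I use that $-f_{\alpha,\beta,\gamma}=f_{-\alpha,-\beta,-\gamma}$. I view the involution $[f]\mapsto[-f]$ on $P^+\cT_c$ as the antipodal map of the $2$-sphere $P^+\cT_c$; it is well defined because negation commutes with positive rescaling.

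\textbf{Step 1: $P^+\cT^+$ and $P^+\cT^0$.} The discriminant $\Delta(f)=\alpha^2+\beta^2-\gamma^2$ is a quadratic form, so $\Delta(-f)=\Delta(f)$. Hence $\cT^+$, $\cT^0$ and $\cT^-$ are each invariant under negation, and so are their oriented projectivizations.

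\textbf{Step 2: the foliation $\cW$.} If $f$ is the normalized representative of $[f]\in P^+\cT^+$, with $\alpha^2+\beta^2=1$ and $|\gamma|<1$, then $-f$ satisfies the same normalization. Since normalized representatives are unique, $-f$ is the normalized representative of $[-f]$. In the annulus coordinates $\SS^1\times(-1,1)$ the involution therefore reads $((\alpha,\beta),\gamma)\mapsto(-(\alpha,\beta),-\gamma)$. A leaf of $\cW$ is the level set $\{(\alpha,\beta)=(\alpha_0,\beta_0)\}$, and the involution maps it onto the level set $\{(\alpha,\beta)=(-\alpha_0,-\beta_0)\}$, which is again a leaf. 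So $\cW$ is invariant as a foliation; individual leaves are permuted rather than fixed.

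\textbf{Step 3: the map $\zeta$.} The functions $f$ and $-f$ have the same zero set, so $\zeta([-f])=\mathrm{Zero}(-f)=\mathrm{Zero}(f)=\zeta([f])$. This is well defined on $P^+\cT^0$ by Step 1. In other words, $\zeta$ is invariant in the strong sense that $\zeta\circ(-1)=\zeta$.

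There is no genuine obstacle here. The only point needing care is what ``invariant'' means for each object: setwise for $P^+\cT^\pm$ and $P^+\cT^0$, leafwise-permuted for $\cW$, and pointwise ($\zeta\circ(-1)=\zeta$) for $\zeta$. I would state these meanings explicitly, since they are presumably what is used later in Theorem \ref{thm:coveringofunitbundle}.
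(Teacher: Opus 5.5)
Your verification is correct and is exactly the elementary check the paper intends. The paper states this lemma as an observation without proof, and its content is what you verify: $\Delta$ is even, $-f$ is the normalized representative of $[-f]$ (so the leaves of $\cW$ are permuted via $(\alpha,\beta)\mapsto(-\alpha,-\beta)$), and $f$ and $-f$ have the same zero. Your spelling-out of the three senses of ``invariant'' matches how the lemma is used later, for the invariance of $C(x)$ and the odd-degree argument for the normal map $N$.
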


\subsubsection{The general case} \label{sssection:circles-x}We return now to the setting of \S\ref{ssection:setting} and consider the smooth closed three-dimensional Riemannian ball $(X,\bar{g})$. Let $\alpha : \partial X \to \SS^2$ be a smooth diffeomorphism. If the metric $\bar g$ is further assumed to be analytic, then we suppose also that $\alpha$ is analytic.

\begin{defi}[Space of circles on $\partial X$] \label{definition:space-of-circles}We define the space of circles on the boundary of $X$ by
\begin{equation*}
\cC:=\cC_{(X,\alpha)} := \alpha^{-1}\big(\cC_{\SS^2}\big)\ ,
\end{equation*}
where $\cC_{\SS^2}$ denotes the space of circles on the round sphere described in \S\ref{sssection:s2-circles}.
\end{defi}

This space trivially depends on the choice of $\alpha$. However, a \emph{canonical} choice can also be made as follows. By Riemann's uniformization theorem, there exists a conformal diffeomorphism $\alpha : (\partial X,g_{\partial X}) \to \SS^2$ which is unique up to post-composition by M\"obius maps. Since M\"obius maps preserve $\cC_{\SS^2}$, the family
\begin{equation*}
\cC_X := \alpha^{-1}(\cC_{\SS^2})
\end{equation*}
is independent of the conformal diffeomorphism chosen.

\begin{defi}[Adapted space of circles]
\label{definition:adapted}
We say that the space of circles $\cC:=\cC_X$ is \emph{adapted} (to the metric $\bar g$) if it is constructed as above.
\end{defi}

\subsection{Example: geodesic balls in hyperbolic $3$-space} \label{ssection:example} Consider the Kleinian model of $3$-dimensional hyperbolic space $\HH^3$. This is the open unit ball in $\R^3$ furnished with the metric
\begin{equation*}
g_{\mathrm{klein}} := \frac{1}{1-\|x\|^2}g_{\mathrm{euc}} + \frac{1}{(1-\|x\|^2)^2}\bigg(\sum_{i=1}^3 x_i dx^i\bigg)^2\ .
\end{equation*}
In this model, the geodesics are simply straight lines.

The ideal boundary of $\HH^3$ identifies with the unit sphere $\SS^2 \subset \R^3$. Recall now the space $\cC:=\cC_{\SS^2}$ of circles on $\SS^2$ described above. Since totally geodesic planes in $\HH^3$ are given in this model by intersections of the unit ball with affine planes in $\R^3$, we see that every circle $c\in\cC_{\mathbb{S}^2}$ bounds a unique totally geodesic, and hence minimal, copy of the hyperbolic plane, which we denote by  $\HH_c$. In particular, this yields the \emph{Radon transform on totally geodesic planes}
\begin{equation*}
R_{\HH^3} : C_{\mathrm{comp}}(\HH^3) \longrightarrow C(\cC)\ , \qquad
R_{\HH^3}f(c) := \int_{\HH_c} f ~\dd \HH_c\ ,
\end{equation*}
where $\dd\HH_c$ denotes the area measure of the hyperbolic metric of $\HH_c$. We refer to \cite{Helgason-59,Semyanistyj-61,Kurusa-91} for further details on the Radon transform on locally symmetric spaces, especially in the setting of hyperbolic spaces.

Let $o\in\HH^3$ be a fixed base point, which we identify with the origin in $\R^3$. For all $r>0$, let $X:=B_o(r)$ denote the hyperbolic ball of radius $r>0$, and recall that this coincides with the euclidean ball of radius $\tanh(r)$. Its boundary $\partial X$, furnished with the restriction of the hyperbolic metric, has constant positive sectional curvature equal to $\sinh(r)^{-2}$. Let $U_r\subset\cC_{\SS^2}$ denote the open subset consisting of those circles $c$ for which the totally geodesic plane $\HH_c$ intersects $\partial X$ non-trivially and transversely. The resulting family of curves
\begin{equation*}
\cC := \big\{\HH_c\cap\partial X : c\in U_r\big\}
\end{equation*}
then defines a space of circles on $\partial X$. Since every element of $\cC$ is given by the intersection of $\partial X$ with a Euclidean plane, this family coincides with the space of adapted circles of $\partial X$ (see Definition \ref{definition:adapted}). Likewise, each $c':=\HH_c\cap\partial X$ bounds a unique totally geodesic disc in $X$, namely, the intersection $\HH_{c}\cap X$.

In the minimal case, it follows from \cite{Barbosa-doCarmo-80} that $(X, g_{\mathrm{klein}})$ is area simple (Definition \ref{definition:area-simple}). Still in the minimal case, in \cite{Alvarez-Lefeuvre-Lowe-Smith} we show that area simplicity follows from the more general assumptions \hyperlink{AA1}{\rm(A1-A4)}, which are clearly also satisfied by the unit ball in $\HH^3$.

As a straightforward application, Theorem \ref{theorem:quasi-injective} can be used to recover the following classical result.

\begin{corollary}
\label{cor}
The Radon transform $R_{\HH^3} : C_{\mathrm{comp}}(\HH^3) \to C(\cC)$ on totally geodesic planes in $\HH^3$ is injective.
\end{corollary}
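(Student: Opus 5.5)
Let $f\in C_{\mathrm{comp}}(\HH^3)$ satisfy $R_{\HH^3}f=0$. The plan is to reduce to Theorem \ref{theorem:quasi-injective} applied to a large geodesic ball. Since $f$ has compact support, fix $r>0$ with $\operatorname{supp} f\subset B_o(r)$, and set $X:=B_o(r)$ with the restricted Klein metric and the adapted space of circles $\cC$ of \S\ref{ssection:example}. By the discussion above, $(X,g_{\mathrm{klein}})$ is area simple; it is also real analytic, and the uniformization map $\alpha$ is analytic, since $\partial X$ is a round sphere and $\alpha$ is a homothety. Moreover $\partial X$ is mean convex, indeed strictly convex, being a geodesic sphere.

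For every $c'=\HH_c\cap\partial X\in\cC$, the minimal disc $\mathrm{D}(c')$ is $\HH_c\cap X$. Because $f$ vanishes outside $X$, we have
\begin{equation*}
R_0(f|_X)(c')=\int_{\HH_c\cap X} f\,\dd\HH_c=R_{\HH^3}f(c)=0 .
\end{equation*}
Hence $f|_X\in\ker_{C(X)}R_0$.

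Theorem \ref{theorem:quasi-injective}(iii) states that the open dense set $\cU_\alpha$ contains all real analytic area simple metrics. Therefore $R_0$ is injective on $C(X)$, so $f|_X=0$ and thus $f=0$.

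The only point needing care is the hypothesis check: that $g_{\mathrm{klein}}|_X$ lies in $\cU_\alpha$. This follows directly from the analyticity clause, so no genericity argument is needed. If one preferred to avoid (iii), there is an alternative route via (ii). Any kernel element vanishes to infinite order at $\partial X$. Taking $r'>r$, the restriction $f|_X$ then vanishes near $\partial B_o(r')$, and one could use analytic pseudodifferential ellipticity from Theorem \ref{theorem:intro-normal}(iii), which gives analyticity of $f$ in the interior, forcing $f\equiv0$. I expect (iii) to suffice.
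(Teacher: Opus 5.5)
Your argument is correct and is essentially the paper's proof. You choose a geodesic ball $X=B_o(r)$ containing $\mathrm{supp}\, f$, note that $R_0(f|_X)$ coincides with $R_{\HH^3}f$ on the adapted circles, and invoke Theorem~\ref{theorem:quasi-injective}(iii) for the real analytic, area simple metric $g_{\mathrm{klein}}|_X$ with analytic $\alpha$. Your alternative route is unnecessary, and as written the appeal to (ii) plays no role after enlarging to $B_o(r')$. Interior analytic ellipticity of $\Pi_0$, together with $f$ vanishing near $\partial B_o(r')$, already forces $f\equiv 0$.
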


\begin{proof}[Proof of Corollary \ref{cor}]
Choose $f \in C_{\mathrm{comp}}(\HH^3) \cap \ker R_{\HH^3}$, and let $r > 0$ be such that the support of $f$ is contained in $X := B_r(o)$. Let $R_0 : C_{\mathrm{comp}}(X) \to C(\cC)$ denote the corresponding Radon transform over $X$. By Theorem \ref{theorem:quasi-injective}, $R_0$ is injective, and hence $f \equiv 0$, as desired.
\end{proof} 

\section{$(\Phi,k)$-surfaces}
\label{section:phik}
In this section, we introduce $(\Phi,k)$-surfaces and discuss their main properties. We begin in \S\ref{appendix:asymptotic} by describing the geometry of small normal graphs over some fixed surface. In \S\ref{ss:CurvatureFunctions}, we define elliptic curvature functions over the $2$-jet bundle of $X$. In \S\ref{ssection:phiksurfaces}, we define $(\Phi,k)$-surfaces and in \S\ref{ss:Stability} we discuss the key concepts of \emph{stability} and \emph{strict stability}. Finally, the associated Plateau problem is explained in \S\ref{ssection:plateau-problems}.

\subsection{The geometry of graphs}
\label{appendix:asymptotic}
We begin by reviewing some general results on the geometry of graphs that will be used extensively throughout this section.

\subsubsection{Fermi coordinates along the boundary}
Let $\Omega \subset \R^2$ be an open subset, fix $\eps > 0$ and let $\bar g$ be a smooth metric over $\Omega \times (-\eps,\eps) \in \R^3$ of the form
\begin{equation*}
\bar g = g_t + \dd t^2\ ,
\end{equation*}
where $g_t$ denotes the induced metric on the parallel surface $S_t := \Omega \times \{t\}$. Note that this is the form of a general Riemannian metric written in Fermi coordinates around a smoothly immersed surface.

For all $t$, let $\nu = \partial_t$ denote the upward-pointing unit normal vector field over $S_t$, let
\begin{equation*}
\II_t(u,v) := g_t\big(\overline\nabla_u \nu,v\big)
\qquad\text{and}\qquad
A_t u := \overline\nabla_u \nu
\end{equation*}
denote respectively the \emph{second fundamental form} and the \emph{shape operator} of $S_t$, let
\begin{equation*}
\III_t(u,v) := g_t\big(A_tu,A_tv\big)
\end{equation*}
denote its \emph{third fundamental form}, and let
\begin{equation*}
\mathcal R_{\nu,t}(u,v) := \bar g\big(\bar R(\nu,u)\nu,v\big)
\end{equation*}
denote its \emph{normal curvature}.
\begin{lemma}
For all $t$, the derivatives of $g_t$ and $\II_t$ satisfy
\begin{equation}
\label{eq.normal_variation_equations}
\begin{aligned}
\partial_t g_t &= 2\II_t,\ \qquad  \partial_t \II_t &= \mathcal R_{\nu,t}+\III_t\ .
\end{aligned}
\end{equation}
\end{lemma}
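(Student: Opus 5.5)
The plan is a direct computation in the Fermi coordinates $(p,t)\in\Omega\times(-\eps,\eps)$. Since both identities in \eqref{eq.normal_variation_equations} are tensorial in $(u,v)$, it suffices to verify them on coordinate vector fields. So I fix $u,v$ to be vector fields on $\Omega$, extended to $\Omega\times(-\eps,\eps)$ independently of $t$. Then $[\nu,u]=[\nu,v]=0$, where $\nu=\partial_t$, and $u,v$ remain tangent to every slice $S_t$. By definition, $g_t(u,v)=\bar g(u,v)$ and $\II_t(u,v)=\bar g(\overline\nabla_u\nu,v)$. The derivative $\partial_t$ of any function is then just $\nu$ applied to it, so I can differentiate using the Levi--Civita connection $\overline\nabla$.

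Two elementary facts will be needed first. (a) Since $\bar g(\nu,\nu)=1$, we have $\bar g(\overline\nabla_w\nu,\nu)=0$ for every $w$, so $\overline\nabla_u\nu=A_tu$ is tangent to $S_t$. (b) $\overline\nabla_\nu\nu=0$. To see this, note that $\bar g(\overline\nabla_\nu\nu,\nu)=0$ by (a). Moreover, for $u$ tangent,
\[
\bar g(\overline\nabla_\nu\nu,u)=\nu\,\bar g(\nu,u)-\bar g(\nu,\overline\nabla_\nu u)=-\bar g(\nu,\overline\nabla_u\nu)=0,
\]
using $\bar g(\nu,u)\equiv0$ (there are no cross terms in $g_t+\dd t^2$), torsion-freeness with $[\nu,u]=0$, and (a) again. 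In words, the $t$-lines are unit-speed geodesics.

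For the first identity, I compute
\[
\partial_t g_t(u,v)=\bar g(\overline\nabla_\nu u,v)+\bar g(u,\overline\nabla_\nu v)=\bar g(\overline\nabla_u\nu,v)+\bar g(u,\overline\nabla_v\nu)=2\II_t(u,v),
\]
where the last equality uses the symmetry of the shape operator. For the second identity, I compute
\[
\partial_t\II_t(u,v)=\bar g(\overline\nabla_\nu\overline\nabla_u\nu,v)+\bar g(\overline\nabla_u\nu,\overline\nabla_\nu v).
\]
In the second term I replace $\overline\nabla_\nu v$ by $\overline\nabla_v\nu=A_tv$, which gives $g_t(A_tu,A_tv)=\III_t(u,v)$. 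In the first term I apply the convention \eqref{equation:riemann}:
\[
\overline\nabla_\nu\overline\nabla_u\nu=\overline R(\nu,u)\nu+\overline\nabla_u\overline\nabla_\nu\nu+\overline\nabla_{[\nu,u]}\nu=\overline R(\nu,u)\nu,
\]
by (b) and $[\nu,u]=0$. The first term is therefore $\mathcal R_{\nu,t}(u,v)$.

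There is no real obstacle here. The only points requiring care are sign bookkeeping and justifying $\overline\nabla_\nu\nu=0$ and $[\nu,u]=0$. With the stated convention $\overline R(u,v)w=\overline\nabla_u\overline\nabla_v w-\cdots$, the curvature term comes out with a plus sign, matching the statement.
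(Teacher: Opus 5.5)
Your proof is correct and follows the paper's argument essentially line for line. In both, one differentiates $g_t(u,v)$ and $\II_t(u,v)$ along $\partial_t$ for $t$-independent fields $u,v$, uses torsion-freeness with $[u,\partial_t]=0$, and rewrites $\overline\nabla_{\partial_t}\overline\nabla_u\partial_t$ as $\overline R(\partial_t,u)\partial_t$ using $\overline\nabla_{\partial_t}\partial_t=0$. The one difference is that you prove $\overline\nabla_{\partial_t}\partial_t=0$ and the tangency of $\overline\nabla_u\partial_t$, which the paper simply assumes.
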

\begin{proof}
Let $u$ and $v$ be vector fields that are constant in the $t$-direction, that is, such that
\begin{equation*}
[u,\partial_t] = [v,\partial_t] = 0\ .
\end{equation*}
Differentiating $g_t(u,v)=\bar g_{(x,t)}(u,v)$ with respect to $t$ yields
\begin{equation*}
\partial_t\bar g_{(x,t)}(u,v) = \partial_t g_t (u,v) = \bar g\big(\overline\nabla_{\partial_t} u, v\big) + \bar g\big(u, \overline\nabla_{\partial_t} v\big)\ .
\end{equation*}
Since the Levi--Civita covariant derivative is torsion free,
\begin{equation*}
\bar g\big(\overline\nabla_{\partial_t} u, v\big) = \bar g\big(\overline\nabla_{u} \partial_t, v\big)=\bar g\big(\overline\nabla_{u} \nu, v\big)=\II_t(u,v)\ .
\end{equation*}
Similarly,
\begin{equation*}
\bar g\big(u,\overline\nabla_{\partial_t} v\big) = \II_t(u,v)\ ,
\end{equation*}
and it follows that $\partial_t g_t = 2\II_t$, as desired.

Differentiating a second time with respect to $t$ yields
\begin{equation*}
\partial_t \II_t(u,v) = \bar g\big(\overline \nabla_{\partial_t} \overline \nabla_u \partial_t, v\big) + \bar g\big(\overline \nabla_u \partial_t, \overline \nabla_{\partial_t} v\big)\ .
\end{equation*}
Applying to the second term the torsion-free property of the Levi--Civita covariant derivative yields
\begin{equation*}
\bar g\big(\bar\nabla_u\partial_t,\bar\nabla_{\partial_t}v\big) = g_t\big(A_t u,A_t v\big) = g_t\big(A_t^2 u,v\big) = \III_t(u,v)\ .
\end{equation*}
Since $\nabla_{\partial_t} \partial_t = 0$ and $[u,\partial_t]=0$, the first term becomes
\begin{equation*}
\bar g\big(\overline \nabla_{\partial_t} \overline \nabla_u \partial_t, v\big) =\bar g\big(\bar\cR(\partial_t,u)\partial_t, v\big) = \cR_{\nu,t}(u,v)\ ,
\end{equation*}
and the second identity follows.
\end{proof}

It follows by \eqref{eq.normal_variation_equations} that, at $t=0$,
\begin{equation}
\label{eq.gt_IIt_expansions}
\begin{aligned}
g_t &= g+2t\II+\cO(t^2)\ \text{and}\\
\II_t &= \II+t(\mathcal R_\nu+\III)+\cO(t^2)\ ,
\end{aligned}
\end{equation}
where here $g=g_0$, $\II=\II_0$, $A=A_0$, $\III=\III_0$, and $\mathcal R_\nu=\mathcal R_{\nu,0}$.

\subsubsection{Graphs}
\label{sss:graphs}
Consider a smooth function $f:\Omega\to(-\varepsilon,\varepsilon)$ and let
\begin{equation*}
\hat f(x) := \big(x,f(x)\big)
\end{equation*}
denote the standard parametrization of its graph, and let $n_{\hat f}$ denote its upward-pointing \emph{unit normal vector field} with respect to the metric $\bar g$. Given a vector field $u$ over $\Omega$, we denote by
\begin{equation*}
u_f := u+\dd f(u)\partial_t = \dd \hat{f}(u)
\end{equation*}
the corresponding tangent vector of the graph. We denote respectively by
\begin{equation*}
\opI_{\hat f}\ ,\qquad\opII_{\hat f}\qquad\text{and}\qquad A_{\hat f}
\end{equation*}
the \emph{first} and \emph{second fundamental forms} and \emph{shape operator} of the immersion $\hat{f}$, that is, for vectors $u$ and $v$,
\begin{equation*}
\opI_{\hat f}(u,v) := \bar g\big(\dd \hat f(u),\dd \hat f(v)\big)\qquad\text{and}\qquad \opII_{\hat f}(u,v) := \bar g\big(\overline\nabla_{\dd \hat f(u)}n_f,\dd \hat f(v)\big)\ ,
\end{equation*}
and, for all vectors $u$ and $v$,
\begin{equation*}
\opI_{\hat f}\big(A_{\hat f}u,v\big)=\opII_{\hat f}(u,v)\ .
\end{equation*}
That is, by mild abuse of notation,
\begin{equation}
\label{eq.shape_operator_graph_definition}
A_{\hat f}=\opI_{\hat f}^{-1}\cdot\opII_{\hat f}\ ,
\end{equation}
where the product and inverse are taken by identifying symmetric bilinear forms with matrices via the Euclidian metric in the usual manner.

It is important to emphasize, especially for the asymptotic analysis that will be carried out in \S\ref{section:asymptotic}, that $n_{\hat f}$, $\I_{\hat f}$, $\II_{\hat f}$, and $A_{\hat f}$ are understood here as non-linear partial differential operators acting on the function $f$ (or, more correctly, on the immersion $\hat{f}$). We make this precise as follows. For all $k$, we denote by $\mathcal{J}^k\R^2$ the space of $k$-jets of smooth functions over $\R^2$. In particular,
\begin{equation*}
\begin{aligned}
\mathcal{J}^1\R^2 &= \R\times\R^2\ ,\ \text{and}\\
\mathcal{J}^2\R^2 &= \R\times\R^2\times\opSym(\R^2)\ ,
\end{aligned}
\end{equation*}
where $\opSym(\R^2)$ here denotes the space of symmetric bilinear forms over $\R^2$. For any smooth function, for any $k$, and for any $p$, we likewise denote by $J^kf(p)$ the $k$-jet of $f$ at $p$, that is
\begin{equation*}
J^1f(p) := \big(f(p),\dd f(p)\big)\qquad\text{and}\qquad J^2f(p) := \big(f(p),\dd f(p),\dd^2 f(p)\big)\ .
\end{equation*}
Using the explicit formulae for $n_{\hat f}$ and $A_{\hat f}$, we see that there exist smooth functions $\mathbf{n}:\Omega\times\mathcal{J}^1\R^2\rightarrow\R^3$ and $\mathbf{A}:\Omega\times\mathcal{J}^2\R^2\rightarrow\opEnd(\R^2)$ such that, for any smooth function $f$, and for all $p$,
\begin{equation*}
n_{\hat f}(p) = \mathbf{n}\big(p,J^1f(p)\big)\qquad\text{and}\qquad A_{\hat f}(p) = \mathbf{A}\big(p,J^2f(p)\big)\ .
\end{equation*}
In particular, fixing the base point $p \in \Omega$, the functions $\mathbf{n}\big(p,\bullet \big)$ and $\mathbf{A}\big(p,\bullet\big)$ are respectively non-linear functions on the vector spaces $\mathcal{J}^1_p\R^2$ and $\mathcal{J}^2_p\R^2$ which thus have well-defined Taylor expansions about the zero section. We will make considerable use of this perspective in \S\ref{section:asymptotic}.

\begin{lemma}
\label{lemma:FormulaeForIAndII}
The first and second fundamental forms and shape operator of the graph of $\hat f$ satisfy
\begin{equation}
\label{eq.graph_I_II_expansion}
\begin{aligned}
\opI_{\hat f} &= g+2f\II+\cO\big((J^1f)^2\big)\ ,\\
\opII_{\hat f} &= \II-\Hess_g f + f(\mathcal R_\nu+\III) + \cO\big((J^1f)\cdot (J^2f)\big)\ ,\ \text{and}\\
A_{\hat f} & = A-(\Hess_g f)^\sharp + f(\mathcal R_\nu^\sharp-A^2) + \cO\big((J^1f)\cdot (J^2f)\big)\ ,
\end{aligned}
\end{equation}
where here we denote $(\Hess_g f)^\sharp:=g^{-1}\Hess_g f$ and $\mathcal R_\nu^\sharp:=g^{-1}\mathcal R_\nu$.
\end{lemma}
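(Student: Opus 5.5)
We work in the Fermi coordinates $\bar g = g_t + \dd t^2$ and compute everything at the point $(x,f(x))$ of the graph, expanding in the jet of $f$ and keeping only the terms linear in $J^2f$. Throughout, quantities evaluated at height $t=f(x)$ are expanded about $t=0$ using \eqref{eq.gt_IIt_expansions}.

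\emph{First fundamental form.} Since $u_f = u + \dd f(u)\partial_t$ and $\partial_t$ is unit and orthogonal to $S_t$, we get
\begin{equation*}
\opI_{\hat f}(u,v) = g_{f(x)}(u,v) + \dd f(u)\,\dd f(v).
\end{equation*}
The second term is $\cO((J^1f)^2)$. By \eqref{eq.gt_IIt_expansions}, $g_{f} = g + 2f\II + \cO(f^2)$, which gives the first line.

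\emph{Unit normal.} Write $n_{\hat f} = \lambda\big(\partial_t - (\nabla^{g_f} f)\big)$, where the gradient is taken with respect to $g_{f(x)}$ and $\lambda = (1+|\dd f|^2_{g_f})^{-1/2} = 1 + \cO((J^1f)^2)$. One checks directly that this vector is orthogonal to every $u_f$. Hence, to first order, $n_{\hat f} = \partial_t - \nabla f$, with an error $\cO((J^1f)^2)$ together with corrections of size $f\cdot\dd f$ coming from replacing $g_f$ by $g$.

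\emph{Second fundamental form.} Extend $n$ as the vector field $N := \partial_t - \nabla^{g_t} f$ on a neighbourhood (with $f$ independent of $t$), times $\lambda$. Then
\begin{equation*}
\opII_{\hat f}(u,v) = \bar g\big(\overline\nabla_{u_f} N, v_f\big) + \cO\big((J^1f)\cdot(J^2f)\big),
\end{equation*}
where the derivative of $\lambda$ contributes only at order $(J^1f)(J^2f)$. We expand:
\begin{itemize}
\item $\bar g(\overline\nabla_u\partial_t, v)$ evaluated at height $f$ equals $\II_f(u,v) = \II + f(\mathcal R_\nu + \III) + \cO(f^2)$.
\item $-\bar g(\overline\nabla_u \nabla^{g_t} f, v) = -\Hess_{g_t} f(u,v) + \cO((J^1f)\cdot f)$. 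Since the Christoffel symbols of $g_t$ differ from those of $g$ by $\cO(t)$, at $t=f$ this is $-\Hess_g f + \cO(f\cdot J^2f)$.
\item Every term involving $\dd f(u)\,\overline\nabla_{\partial_t}$ or $\dd f(v)$ paired with a normal component is $\cO((J^1f)\cdot(J^2f))$ or smaller, using $\bar g(\overline\nabla_X\partial_t,\partial_t)=0$.
\end{itemize}
Collecting these gives the second line. The main bookkeeping obstacle is exactly this step: each cross term must be shown to land in $\cO((J^1f)\cdot(J^2f))$. This works because every such term carries at least one factor of $f$ or $\dd f$ together with at most two derivatives of $f$.

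\emph{Shape operator.} Write $\opI_{\hat f}^{-1} = g^{-1} - 2f\,g^{-1}\II g^{-1} + \cO((J^1f)^2)$. Multiplying by $\opII_{\hat f}$ gives
\begin{equation*}
A_{\hat f} = A - (\Hess_g f)^\sharp + f\big(\mathcal R_\nu^\sharp + A^2\big) - 2fA^2 + \cO\big((J^1f)\cdot(J^2f)\big),
\end{equation*}
since $g^{-1}\III = A^2$ and $g^{-1}\II g^{-1}\II = A^2$. This simplifies to $A - (\Hess_g f)^\sharp + f(\mathcal R_\nu^\sharp - A^2) + \cO((J^1f)\cdot(J^2f))$, which is the third line.
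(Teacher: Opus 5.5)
Your proposal is correct and follows the paper's proof essentially step for step. The first-fundamental-form and shape-operator computations are identical, and your unnormalised normal field $N=\partial_t-\nabla^{g_t}f$ is exactly $\overline\nabla F$ for the defining function $F=t-f$, so expanding $\bar g(\overline\nabla_{u_f}N,v_f)$ is the paper's computation of $\overline{\Hess}\,F(u^f,v^f)$. In fact the $\lambda$-derivative term vanishes identically because $N\perp v_f$, and the cross terms are exactly $\dd f(u)\,\II_t(v,\nabla^t f)+\dd f(v)\,\II_t(u,\nabla^t f)$, which are quadratic in $\dd f$.
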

\begin{remark}We underline that we have used here the notion introduced in Appendix \ref{ss:Notation}. The above equalities thus hold pointwise at every $p \in \Omega$, with remainders that are smooth in all arguments including $p$, and contain factors of $J^1f$ and $J^2f$. More precisely, there exists, for example, a smooth function $R : \mathcal{J}^1\Omega \to \opEnd(\R^2)$ such that, for all $p$,
\begin{equation*}
\opI_{\hat f}(p) = g(p)+2f(p)\II(p)+ R(p, J^1f)\ ,
\end{equation*}
and, furthermore, for all $p$ and for all $\xi$,
\begin{equation*}
R(p,\xi) = \sum_{i,j} \xi_i \xi_j R_{ij}(p,\xi)
\end{equation*}
for some smooth functions $R_{ij} : \mathcal{J}^1\Omega \to \opEnd(\R^2)$. The remaining two equations in \eqref{eq.graph_I_II_expansion} may also be expanded in a similar manner.
\end{remark}

The proof of Lemma \ref{lemma:FormulaeForIAndII} will require the following standard formula for the Hessian operator over immersed hypersurfaces. For any smooth function $\phi \in C^\infty\big(\Omega \times (-\eps,\eps)\big)$,
\begin{equation}
\label{equation:hessian-utile}
\overline\Hess~\phi=\Hess_t~\phi +\dd \phi(\nu)\cdot\II_t\ ,
\end{equation}
where $\Hess_t$ denotes the Hessian of $g_t$ on the slice $S_t=\Omega \times \{t\}$. Indeed, denoting the Levi-Civita connection of $g_t$ by $\nabla^t$, we recall that
\begin{equation*}
\bar\nabla_u v = \nabla^t_u v  - \II_t(u,v)\cdot\partial_t\ .
\end{equation*}
Hence
\begin{equation*}
\begin{split}
\overline\Hess~\phi(u,v) &= \dd_u\dd_v\phi - \dd \phi(\overline\nabla_u v)\\
&= \dd_u\dd_v\phi-\dd \phi(\nabla^t_u v) + \dd \phi\big(\II_t(u,v)\cdot\partial_t\big) \\
&= \Hess_t~\phi(u,v) + \dd \phi(\nu)\cdot\II_t(u,v)\ ,
\end{split}
\end{equation*}
from which \eqref{equation:hessian-utile} follows. We now prove Lemma \ref{lemma:FormulaeForIAndII}.
\begin{proof}
Denote
\begin{equation*}
F(x,t) := t-f(x)\ ,
\end{equation*}
so that the graph of $f$ is $F^{-1}(0)$. Along this graph, the upward-pointing unit normal is
\begin{equation*}
n_{\hat f}=\frac{\overline\nabla F}{|\overline\nabla F|}=\frac{\partial_t-\nabla_{g_f}f}{\sqrt{1+|\nabla_{g_f}f|_{g_f}^2}}\ ,
\end{equation*}
where $g_f$ here denotes $g_t$ evaluated at $t=f(x)$. \\

\emph{The first fundamental form.} The first fundamental form is given by
\begin{equation*}
\opI_{\hat f}(u,v)=\bar g(u^f,v^f)=g_f(u,v)+\dd f(u)\cdot\dd f(v)\ .
\end{equation*}
Applying $g_f=g+2f\II+O(f^2)$ yields
\begin{equation*}
\opI_{\hat f}=g+2f\II+\cO\big(f^2+\dd f^2\big)\ ,
\end{equation*}
and the first identity follows.\\

\emph{The second fundamental form.} Note first that, by \eqref{equation:hessian-utile},
\begin{equation*}
\overline\Hess~ F(u^f,v^f) =\overline g(\overline\nabla_{u^f}\overline\nabla F,v^f) = |\overline\nabla F|\cdot\overline g(\overline\nabla_{u^f}n_{\hat f},v^f) = |\overline\nabla F|\cdot\opII_{\hat f}(u,v)\ .
\end{equation*}
Hence
\begin{equation}
\label{eq.II_graph_HessF}
\opII_{\hat f}(u,v) =|\overline\nabla F|^{-1}\cdot\overline\Hess~ F(u^f,v^f)\ .
\end{equation}
We now compute $\overline\Hess~ F(u^f,v^f)$. For each fixed $t$, we denote respectively by $\nabla^t$ and $\Hess_t$ the Levi--Civita covariant derivative and Hessian operator of $(S_t,g_t)$. The difference between the ambient covariant derivative and the product covariant derivative $\nabla^t\oplus d_t$ is given by
\begin{equation*}
\overline\nabla_uv=\nabla^t_uv-\II_t(u,v)\cdot\partial_t\ ,
\qquad\qquad\overline\nabla_u\partial_t = A_tu\ ,\qquad\text{and}
\qquad\qquad\overline\nabla_{\partial_t}\partial_t=0\ .
\end{equation*}
For $u,v\in T\Omega$, we now claim that
\begin{equation}
\label{eq.HessianOfFHat_intrinsic}
\begin{aligned}
\overline\Hess~ F(u^f,v^f)&=-\Hess_t f(u,v)+\II_t(u,v) \\
&\qquad\qquad+\dd f(u)\cdot\II_t(v,\nabla^t f)+\dd f(v)\cdot\II_t(u,\nabla^t f)\ .
\end{aligned}
\end{equation}
Indeed,
\begin{equation*}
\begin{split}
\overline\Hess~ F(u^f,v^f) & = \overline\Hess~ F(u,v) + \dd f(u)\cdot\dd f(v)\cdot\overline\Hess~ F(\partial_t,\partial_t) \\
&\qquad\qquad + \dd f(u)\cdot\overline\Hess~ F(\partial_t,v) + \dd f(v)\cdot\overline\Hess~ F(u,\partial_t)\ .
\end{split}
\end{equation*}
Since $F(x,t)=t-f(x)$,
\begin{equation*}
\overline\Hess~ F(\partial_t,\partial_t)=0\ .
\end{equation*}
Next,
\begin{equation*}
\overline\Hess~ F(\partial_t,v) = \dd_v\cdot(\partial_t F) - \dd F(\overline\nabla_v \partial_t) = 0-\bar g(\overline\nabla  F, A_t v) = \bar g(\nabla^t f, A_t v) = \II_t(v,\nabla^t f)\ .
\end{equation*}
Finally, by \eqref{equation:hessian-utile},
\begin{equation*}
\overline\Hess~ F(u,v) = \Hess_t~F(u,v) + \dd F(\partial_t)\cdot\II_t(u,v) = -\Hess_t f(u,v) + \II_t(u,v)\ .
\end{equation*}
Combining the previous identities yields \eqref{eq.HessianOfFHat_intrinsic}.

The last two terms in \eqref{eq.HessianOfFHat_intrinsic} are $\cO(\dd f^2)$. Likewise,
\begin{equation}
\label{equation:lalaventilo}
|\overline\nabla F|^{-1}=\bigl(1+|\nabla_{g_f}f|_{g_f}^2\bigr)^{-1/2}
=1+\cO\big(\dd f^2\big)\ .
\end{equation}
Since $\nabla^t-\nabla=\cO(t)$, setting $t=f(x)$ yields
\begin{equation}
\label{equation:lalaventilo2}
\Hess_t f = \Hess_g f+O(f\cdot \dd f)\ .
\end{equation}
and after setting $t=f(x)$, this gives an error $\cO\big(f,\dd f\big)$.

Equations \eqref{eq.HessianOfFHat_intrinsic}, \eqref{equation:lalaventilo} and \eqref{equation:lalaventilo2} together yield
\begin{equation*}
\II_{\hat f}(X,Y)=\big(1+\cO(\dd f^2)\big)\cdot\big(-\Hess_g f(X,Y) + \II_t(X,Y) + \cO\big((f,\dd f)^2\big)\big)\ .
\end{equation*}
Thus, by \eqref{eq.gt_IIt_expansions} with $t=f(x)$,
\begin{equation*}
\opII_{\hat f}=\II-\Hess_g f+f(\mathcal R_\nu+\III)+\cO\big((J^1f)\cdot(J^2f)\big)\ ,
\end{equation*}
as desired. \\

\emph{The shape operator.} Finally,
\begin{equation*}
\I_{\hat f}^{-1} = \big(g + 2f\II +\cO((J^1f)^2)\big)^{-1} = g^{-1} - 2f g^{-1}\II g^{-1} + \cO\big((J^1f)^2\big)\ .
\end{equation*}
Thus, by \eqref{eq.shape_operator_graph_definition},
\begin{equation*}
\begin{split}
A_{\hat f} & = \big(g^{-1} - 2f g^{-1}\II g^{-1} + \cO\big((J^1f)^2\big)\big)\cdot\\
&\qquad\qquad\big(\II-\Hess_g f+f(\mathcal R_\nu+\III)+\cO\big((J^1f)\cdot(J^2f)\big)\big)  \\
& = g^{-1}\II - g^{-1} \Hess_g f + f g^{-1}(\mathcal R_\nu+\III) - 2f g^{-1}\II g^{-1}\II + \cO\big((J^1 f)\cdot(J^2 f)\big) \\
& = A - \Hess_g^\sharp f + f(\mathcal R_\nu^\sharp + A^2)-2fA^2 +\cO\big((J^1 f)\cdot(J^2 f)\big) \\
& = A -\Hess_g^\sharp f + f(\mathcal R_\nu^\sharp-A^2)+\cO\big((J^1 f)\cdot(J^2 f)\big)\ ,
\end{split}
\end{equation*}
and this completes the proof.
\end{proof}

\subsection{Elliptic curvature functions}
\label{ss:CurvatureFunctions}

In order to address general Plateau problems, we will use the geometric framework of curvature functions introduced by Caffarelli–Niren\-berg–Spruck \cite{Caffarelli-Nirenberg-Spruck-88} (see also \cite{White-87}). Although this formalism naturally extends to higher dimensions, we will only be concerned in this paper with the $(2+1)$-dimensional case.

\subsubsection{Curvature functions of symmetric endomorphisms}\label{sss.Curvature_functions_def}  We furnish $\mathbb{R}^2$ with its natural Euclidean scalar product. We denote by $\Symm(\mathbb{R}^2)\dans\End(\R^2)$ the vector space of self adjoint linear operators on $\mathbb{R}^2$ which we identify with the space of real, symmetric $2\times 2$ matrices in the usual manner. The natural scalar product of this space is given by
\begin{equation}
\label{equation:trala}
\langle A,B\rangle := \tr(AB)\ .
\end{equation}
The orthogonal group $\mathrm{O}(2)$ acts isometrically on $\Symm(\mathbb{R}^2)$ by conjugation.

Let $\Lambda^+\subseteq\Symm(\mathbb{R}^2)$ denote the open cone of positive definite symmetric bilinear forms. Let $\Lambda\subseteq\Symm(\mathbb{R}^2)$ be an open subset such that
\begin{enumerate}[label=(\roman*)]
\item $\Lambda$ is convex;
\item $\Lambda$ is $\mathrm{O}(2)$-invariant;
\item for all $A\in\Lambda$ and all $t>0$, $tA\in\Lambda$; and
\item for all $A\in\Lambda$ and all $B\in\Lambda^+$, $A+B\in\Lambda$.
\end{enumerate}

Observe that these conditions imply that $\Lambda^+\subseteq\Lambda$. Moreover, if $\Lambda\neq\Symm(\mathbb{R}^2)$, then convexity implies that $\Lambda$ is contained in an open half-space, and therefore
\begin{equation}\label{eq.interior_disjoints_lambda}
\Lambda \cap(-\Lambda)=\vide.
\end{equation}

\begin{defi}[Curvature functions]
\label{definition:curvature-functions}
We say that a function $\Phi\in C^0(\overline{\Lambda})\cap C^\infty(\Lambda)$ is a \emph{curvature function} whenever
\begin{enumerate}[label=\emph{(\roman*)}]
\item $\Phi$ is $\mathrm{O}(2)$-invariant (which ensures that curvature is well-defined over immersed hypersurfaces);
\item $\Phi(\Id)=1$ (which ensures that the unit sphere has curvature $1$);
\item $\Phi$ vanishes on $\partial\Lambda$ (which ensures that the condition $\Phi(A)>0$ identifies with the convexity condition $A\in\Lambda$); and
\item $\Phi$ is $1$-homogeneous (which ensures that curvature scales correctly under dilatations).
\end{enumerate}
\end{defi}

\begin{remark}
Note that, by $\mathrm{O}(2)$-invariance, $\Phi$ is a function only of the two eigenvalues of $A$.
\end{remark}

Using the above scalar product \eqref{equation:trala}, the differential $\dd\Phi$ of $\Phi$ at any point $A\in\Lambda$ identifies with the symmetric bilinear form $B_A\in\Symm(\R^2)$ given by
\begin{equation}
\label{equation:ba}
\dd\Phi(A)C=\langle B_A,C\rangle = \tr(B_AC)\ .
\end{equation}

\begin{defi}[Elliptic curvature function]
We say that a curvature function $\Phi$ is \emph{elliptic} whenever, in addition
\begin{enumerate}[label=\emph{(\roman*)}]
\setcounter{enumi}{4}
\item $B_A=\dd \Phi(A)$ is positive-definite for all $A \in \Lambda$.
\end{enumerate}
\end{defi}

In the special case where $\Lambda=\Symm(\mathbb{R}^2)$, the only possible curvature function is
\begin{equation}
\label{eqn:LinearCase}
\Phi(A)=\frac12\tr(A)\ .
\end{equation}
Indeed, since $0\in\Lambda$, it follows by smoothness and homogeneity that $\Phi$ is linear. $\mathrm{O}(2)$-invariance and normalization then yield \eqref{eqn:LinearCase}. Note that, in this case $B_A = 1/2 \cdot \mathbf{1}$, and in particular, is independent of $A$.

Within this formalism, elliptic curvature problems are naturally associated with proper convex cones $\Lambda\subsetneq\Symm(\mathbb{R}^2)$ satisfying \eqref{eq.interior_disjoints_lambda}, with the unique exception of the case $(\Phi,k)=\big(\frac{1}{2}\tr,0\big)$. Note that, in the case $(\Phi,k)=(\frac{1}{2}\tr,k)$, with $k>0$, the natural ellipticity domain is the open half-space $\Lambda=\{\tr>0\}$.

\subsubsection{Normalization of curvature along positive directions}\label{sss.normalizaition} The following lemma allows us to reach a prescribed curvature level along any positive direction.

\begin{lemma}
\label{lemma:ChoiceOfC1}
Let $(\Phi,\Lambda)$ be an elliptic curvature function such that $\Lambda\subsetneq\Symm(\R^2)$. Let $k>0$ and $A\in \Lambda$ be such that $\Phi(A)>k$. For all $B\in \Lambda^+$, there exist $\lambda^\pm:=\lambda^\pm(A,B)\in\R$ such that
\begin{enumerate}[label=\emph{(\roman*)}]
\item $0<\lambda^+<\lambda^-$\ ;
\item $A-\lambda^+B \in \Lambda$ and $\lambda^-B-A \in \Lambda$\ ;
\item $\Phi(A-\lambda^+B)=\Phi(\lambda^-B-A)=k$\ .
\end{enumerate}
Moreover, $\lambda^\pm$ are unique and depend smoothly on $(A,B)$.
\end{lemma}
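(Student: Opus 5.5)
We study the two one-variable functions
\begin{equation*}
\phi^+(\lambda):=\Phi(A-\lambda B)\ ,\qquad \phi^-(\lambda):=\Phi(\lambda B-A)\ ,
\end{equation*}
each defined on the open interval of $\lambda$ for which the argument lies in $\Lambda$, and we obtain $\lambda^\pm$ as the unique solutions of $\phi^\pm(\lambda)=k$. Smooth dependence will then follow from the implicit function theorem.

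\emph{Construction of $\lambda^+$.} By conditions (iii)--(iv) on $\Lambda$, the set $I^+:=\{\lambda: A-\lambda B\in\Lambda\}$ is an interval of the form $(-\infty,\mu^+)$. It contains $0$, and it is convex because $\Lambda$ is convex. I claim $\mu^+<\infty$. Otherwise $A-\lambda B\in\Lambda$ for all large $\lambda$, so by homogeneity $A/\lambda-B\in\Lambda$. Letting $\lambda\to\infty$ gives $-B\in\overline\Lambda$. Adding a small positive multiple of $B$ and using (iv) together with openness, we get $-\tfrac12 B\in\Lambda$, hence $-B\in\Lambda$. Then $0=-B+B\in\Lambda$ by (iv). For any $C\in\Symm(\R^2)$ there is $t>0$ with $C+tB\in\Lambda^+$, hence $C+tB\in\Lambda$; running the same argument with $-tB$ shows $C\in\Lambda$. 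So $\Lambda=\Symm(\R^2)$, contradicting the hypothesis $\Lambda\subsetneq\Symm(\R^2)$. This is the main point where that hypothesis enters.

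On $I^+$ we have
\begin{equation*}
(\phi^+)'(\lambda)=-\tr(B_{A-\lambda B}B)<0 ,
\end{equation*}
since both $B_{A-\lambda B}$ and $B$ are positive definite, and the trace of a product of two positive definite matrices is positive. So $\phi^+$ is strictly decreasing and continuous on $(-\infty,\mu^+]$. It satisfies $\phi^+(0)=\Phi(A)>k$ and $\phi^+(\mu^+)=0$, the latter because $\Phi$ vanishes on $\partial\Lambda$. By the intermediate value theorem there is a unique $\lambda^+\in(0,\mu^+)$ with $\phi^+(\lambda^+)=k$. Uniqueness among all admissible $\lambda$ holds because any solution must lie in $I^+$, where $\phi^+$ is strictly monotone.

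\emph{Construction of $\lambda^-$.} The set $I^-:=\{\lambda:\lambda B-A\in\Lambda\}$ is an interval $(\mu^-,\infty)$; it is nonempty since $\lambda B-A\in\Lambda^+$ for $\lambda$ large. On $I^-$, $\phi^-$ is strictly increasing by the same trace argument. By $1$-homogeneity,
\begin{equation*}
\phi^-(\lambda)=\lambda\,\Phi(B-A/\lambda)\geq\lambda\,(\Phi(B)-o(1))\to\infty ,
\end{equation*}
using $\Phi(B)>0$ and continuity. At the left endpoint $\phi^-(\mu^-)=0$, so there is a unique $\lambda^-$ with $\phi^-(\lambda^-)=k$.

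\emph{The inequality $\lambda^+<\lambda^-$.} I expect this to be the main obstacle. First, $\mu^+\leq\mu^-$. Indeed, if $\lambda\in I^+\cap I^-$, then both $A-\lambda B$ and $\lambda B-A$ lie in $\Lambda$, contradicting \eqref{eq.interior_disjoints_lambda}. So the open intervals $(-\infty,\mu^+)$ and $(\mu^-,\infty)$ are disjoint, which gives $\lambda^+<\mu^+\leq\mu^-<\lambda^-$. Positivity $\lambda^+>0$ was already obtained above. As a fallback, if the endpoint bookkeeping proves awkward, I would use concavity of $\Phi$ on $\Lambda$: if it holds, it gives $\Phi(A-\lambda B)+\Phi(\lambda B-A)\le 2\Phi(0)$ along the segment, contradicting that both are positive. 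Concavity is not among the stated axioms, however, so the disjointness argument is preferable.

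\emph{Smoothness.} $\Phi$ is smooth on $\Lambda$, and $\partial_\lambda\phi^\pm=\mp\tr(B_{\cdot}B)\neq0$ at the solutions. The implicit function theorem applied to $(A,B,\lambda)\mapsto\Phi(\pm(A-\lambda B))-k$ then shows that $\lambda^\pm$ depend smoothly on $(A,B)$ in the open set $\{\Phi(A)>k\}\times\Lambda^+$. Uniqueness ensures that these local smooth solutions patch together into the global functions $\lambda^\pm(A,B)$.
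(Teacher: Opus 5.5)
Your proof is correct and follows essentially the same route as the paper: the intervals $I^+=\{\lambda: A-\lambda B\in\Lambda\}$ and $I^-=\{\lambda:\lambda B-A\in\Lambda\}$ are disjoint open intervals reaching $-\infty$ and $+\infty$ respectively, ellipticity makes $\Phi$ strictly monotone along them, and the intermediate value theorem and the implicit function theorem give existence, uniqueness and smoothness. Your separate proof that $\mu^+<\infty$ is redundant, since it already follows from $I^+\cap I^-=\emptyset$ (from $\Lambda\cap(-\Lambda)=\emptyset$), which you invoke later; that same disjointness also gives $\mu^-\geq\mu^+>0$, which your construction of $\lambda^-$ tacitly needs before you prove it.
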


In the special case where $(\Phi,k)=(\frac{1}{2}\tr,0)$, the above construction degenerates in the sense that $\lambda^-$ and $\lambda^+$ coincide. The resulting analogue of Lemma \ref{lemma:ChoiceOfC1} is trivial, but worth stating independently.
\begin{lemma}\label{lemma:ChoiceOfC2}
Let $A\in\End(\mathbb{R}^2)$ be such that $\tr(A)>0$, and choose $B\in\Lambda^+$. There exists a unique $\lambda:=\lambda(A,B)$ such that $\lambda>0$ and $\tr(A-\lambda B)=0$. Furthermore, $\lambda$ depends smoothly on $(A,B)$.
\end{lemma}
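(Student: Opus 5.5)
The statement to prove is the last one, Lemma \ref{lemma:ChoiceOfC2}, which is elementary. Since $B\in\Lambda^+$ is positive definite, $\tr(B)>0$. The map $\lambda\mapsto\tr(A-\lambda B)=\tr(A)-\lambda\tr(B)$ is affine in $\lambda$ with strictly negative slope $-\tr(B)$. It therefore has exactly one zero, namely
\begin{equation*}
\lambda(A,B):=\frac{\tr(A)}{\tr(B)}\ .
\end{equation*}

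This value is strictly positive because $\tr(A)>0$ and $\tr(B)>0$, which gives existence. Uniqueness holds among all real $\lambda$, not only positive ones, since the map is injective.

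For smoothness, $\lambda$ is a quotient of two linear (hence smooth) functions of $(A,B)$. The denominator does not vanish on the open set $\{\tr(A)>0\}\times\Lambda^+$, so $\lambda$ is smooth there; in fact it is real analytic. One may equally view this as the implicit function theorem applied to $F(\lambda,A,B)=\tr(A-\lambda B)$, using $\partial_\lambda F=-\tr(B)\neq 0$.

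No step presents a real obstacle. The only point to check is that $\tr(B)>0$, which follows from positive definiteness. The role of the hypothesis $\tr(A)>0$ is solely to force $\lambda>0$.
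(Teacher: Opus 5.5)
Your proposal is correct and is the paper's argument: the paper's proof consists only of setting $\lambda:=\tr(A)/\tr(B)$. Your checks that $\tr(B)>0$ by positive definiteness, that the zero is unique, and that $\lambda$ is smooth as a quotient with nonvanishing denominator fill in the details the paper leaves implicit.
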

\begin{proof}Indeed, $\lambda:=\tr(A)/\tr(B)$.
\end{proof}
\begin{proof}[Proof of Lemma \ref{lemma:ChoiceOfC1}.]
Define $f:\R\rightarrow\End(\mathbb{R}^2)$ by
\begin{equation*}
f(t) := A - Bt\ ,
\end{equation*}
and denote
\begin{equation*}
I^+ := f^{-1}(\Lambda)\qquad\text{and}\qquad I^-=f^{-1}(-\Lambda)\ .
\end{equation*}
Recall from \eqref{eq.interior_disjoints_lambda} that $\Lambda\cap (-\Lambda)=\vide$, and the sets $I^-$ and $I^+$ are thus open and disjoint. In addition, since $f$ is affine and $\Lambda$ and $-\Lambda$ are convex, $I_-$ and $I_+$ are both connected.

Since $B\in \Lambda^+$ and $A+\Lambda^+\subset \Lambda$, $f(t)=A-tB \in \Lambda$ for $t< 0$, and $I^+$ thus contains a neighbourhood of $-\infty$.
Similarly, for $t> 0$ large enough, the matrix $-f(t)=tB-A$ is positive definite, and hence lies in $\Lambda$,
and $I^-$ thus contains a neighbourhood of $+\infty$. Recall now that, for any two symmetric, positive definite matrices $S_1$, $S_2$,
\begin{equation*}
\tr(S_1S_2)>0\ .
\end{equation*}
Thus, by ellipticity, for all $t\in I^+$,
\begin{equation*}
\frac{d}{dt}\Phi(f(t)) = - \dd\Phi(f(t))B < 0\ ,
\end{equation*}
and, for all $t\in I_-$,
\begin{equation*}
\frac{d}{dt}\Phi(-f(t)) = \dd\Phi(-f(t))B > 0\ .
\end{equation*}
The function $t\mapsto\Phi(f(t))$ is thus strictly decreasing over $I^+$, whilst the function $t \mapsto \Phi(-f(t))$ is strictly increasing over $I^-$. Moreover, these functions extend continuously to the boundaries of their domains and vanish when $f(t)$ (resp. $-f(t)$) reaches $\partial \Lambda$. Finally, by homogeneity, they tend to $+\infty$ as $t \to \pm\infty$.

Existence of $\lambda_\pm$ now follows by the intermediate value theorem, uniqueness follows from strict monotonicity, and smooth dependence follows from the implicit function theorem.
\end{proof}

\subsubsection{Elliptic curvature functions in Riemannian $3$-manifolds}
\label{sss.curv_3mfds}

We now define elliptic curvature functions in this geometric setting. Note first that the bundle $\jetX$ can also be viewed as a fibre bundle over $SX$, whose fibre at $(x,n)$ is the space of self-adjoint endomorphisms of $n^\perp$.

\begin{defi}[Elliptic curvature function]
\label{definition:elliptic-curvature-function}
An \emph{elliptic curvature function} is a pair $(\Phi,\Lambda)$, where $\Lambda \subset \jetX$ is an open subset and $\Phi:\Lambda\to\R$ is a smooth function such that, on each fibre, $(\Phi,\Lambda)$ satisfies the conditions introduced above.
\end{defi}

\begin{remark} Note that, by $\mathrm{O}(2)$-invariance over each fibre, we may write any curvature function as a function $\Phi(p,n,\kappa_1,\kappa_2)$, where $\kappa_1$ and $\kappa_2$ denote the eigenvalues of $A$. We will see presently that $\kappa_1$ and $\kappa_2$ correspond to the principal curvatures of the surface.
\end{remark}

\subsection{$(\Phi,k)$-surfaces} \label{ssection:phiksurfaces}The notion of elliptic curvature functions on $3$-manifolds gives rise to the concept of $(\Phi,k)$-surfaces. This concept provides a general framework which includes minimal surfaces, constant mean curvature surfaces, and $k$-surfaces (that is, surfaces of constant extrinsic curvature equal to $k$) as special cases. Given a smooth immersion $e:D\rightarrow X$, we recall that its $2$-jet, denoted by $\jete(p)$, is defined in \S\ref{ssection:setting}.

\begin{defi}[$(\Phi,k)$-surfaces]
A smooth immersion $e:D\to X$ is said to be $\Lambda$-\emph{convex} if $\jete(p)\in \Lambda$ for all $p\in D$.
Given $k\in\R$, we say that $e$ is a \emph{$(\Phi,k)$-surface} if, in addition,
\begin{equation*}
\Phi(\jete(p)) = k \qquad \text{for all } p\in D\ .
\end{equation*}
If moreover $D$ is a disc, we call $e$ a \emph{$(\Phi,k)$-disc}.
\end{defi}

\subsubsection{The $\Phi$-Jacobi operator} Let $e:D\to X$ be a $(\Phi,k)$-surface, and let $n_e:D\to SX$ denote its unit normal vector field. Recall that $(X,\bar g)$ extends to a slightly larger open manifold $Y$ containing $X$. For $\varepsilon>0$ sufficiently small, consider the map
\begin{equation*}
E:D\times(-\varepsilon,\varepsilon)\to Y,\qquad (p,t)\mapsto \exp_{e(p)}\big(t\,n_e(p)\big)\ .
\end{equation*}
For $\varepsilon$ small enough, $E$ is an immersion. Indeed, it is precisely the \emph{Fermi parametrization} of some neighbourhood of $e(D)$ in $Y$. Given $u\in C^2(D)$ with $\|u\|_{L^\infty}<\varepsilon$, we define an immersion $e_u:D\to Y$ by
\begin{equation*}
e_u(p):=E\big(p,u(p)\big)\ .
\end{equation*}
In this manner, immersions sufficiently close to $e$ are represented as normal graphs over $e$. We recall that the geometry of graphs in Fermi coordinates has been studied in \S\ref{appendix:asymptotic}.

\begin{defi}[The $\Phi$-Jacobi operator]
The $\Phi$-\emph{Jacobi operator} $J^\Phi$ is defined as the linearization of $\Phi\circ\jete$ at $e$. That is for all $f\in C^2(D)$,
\begin{equation*}
J^\Phi f := \partial_t\Phi\circ\jete_{t f}|_{t=0}\ .
\end{equation*}
\end{defi}

We will see that this is a second-order elliptic differential operator with respect to the variation $f \in C^2(D)$. In order to describe it explicitly, we first introduce the operator
\begin{equation*}
W(v) := \overline{R}(n_e,v)n_e\ ,
\end{equation*}
where here we use the convention \eqref{equation:riemann} for the Riemann curvature tensor. Let $B(p)$ denote the fibrewise derivative of $\Phi$ in $\jetX$ at the point $\mathcal{J}^2e(p)$ as defined in \eqref{equation:ba}.

\begin{lemma}\label{lem.Jacobi_elliptic}
$J^\Phi$ is a second-order, elliptic, linear partial differential operator satisfying, for all $p\in D$,
\begin{equation}
\label{equation:jphi0}
J^\Phi f(p) = -\tr\big(B(p)\cdot\Hess_g(f)(p)\big) + Lf(p)\
\end{equation}
for some first-order, linear, partial differential operator $L$. Furthermore, when $\Phi$ only depends on (the conjugacy class of) $A$, for all $p\in D$,
\begin{equation}
\label{equation:jphi}
J^\Phi f(p) = \tr(B(p)\left(-\Hess_g(f)(p)+f(p)(W(p)-A_e^2(p)) \right)\ .
\end{equation}
\end{lemma}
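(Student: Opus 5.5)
The plan is to compute the linearization directly in Fermi coordinates around $e(D)$, using the graph expansions of Lemma \ref{lemma:FormulaeForIAndII}. In the Fermi parametrization $E$, the metric has the form $\bar g = g_t + \dd t^2$, and the perturbed immersion $e_{tf}$ is precisely the graph $\widehat{tf}$ over $D$. The $2$-jet $\jete_{tf}(p)$ is the triple
\[
\big(e_{tf}(p),\, n_{\widehat{tf}}(p),\, A_{\widehat{tf}}(p)\big).
\]
Its first two components depend only on $J^1(tf)$, while the last depends on $J^2(tf)$.

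Differentiating at $t=0$ and applying the chain rule gives
\[
J^\Phi f = \dd_{\mathrm{fib}}\Phi\big(\partial_t A_{\widehat{tf}}|_{t=0}\big) + (\text{derivatives of }\Phi\text{ in the base directions }(x,n))\cdot \partial_t(e_{tf},n_{\widehat{tf}})|_{t=0}.
\]
Here one must interpret $A_{\widehat{tf}}$, which lives on $n^\perp$ at a moving point, as an endomorphism of $T_pD$ through the identification furnished by $\dd\hat f$. Using the third line of \eqref{eq.graph_I_II_expansion} with $f$ replaced by $tf$, the quadratic remainder $\cO((J^1 tf)\cdot(J^2 tf))$ is $\cO(t^2)$. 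Hence
\[
\partial_t A_{\widehat{tf}}|_{t=0} = -(\Hess_g f)^\sharp + f(\mathcal R_\nu^\sharp - A^2).
\]
By \eqref{equation:ba}, the fibre derivative term is therefore
\[
\tr\big(B(p)(-\Hess_g f + f(\mathcal R_\nu^\sharp - A^2))\big).
\]
The base variation $\partial_t e_{tf} = f n_e$ involves only $f$. The normal variation $\partial_t n_{\widehat{tf}} = -\nabla_g f$ follows from the formula for $n_{\hat f}$ in the proof of Lemma \ref{lemma:FormulaeForIAndII}, and involves only $\dd f$. All terms other than $-\tr(B\Hess_g f)$ are thus of order at most one in $f$, and they define $L$. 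This proves \eqref{equation:jphi0}.

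Ellipticity follows because the principal symbol at $\xi\in T^*_pD$ is $\tr(B(p)\,\xi^\sharp\otimes\xi^\sharp) = B(p)(\xi^\sharp,\xi^\sharp)$. This is positive for $\xi\neq 0$, since $B(p)$ is positive definite by ellipticity of $(\Phi,\Lambda)$.

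For \eqref{equation:jphi}, when $\Phi$ depends only on the conjugacy class of $A$, the base-direction derivatives vanish, so only the fibre term survives. It remains to identify $\mathcal R_\nu^\sharp$ with $W$. By definition, $\mathcal R_\nu(u,v) = \bar g(\bar R(\nu,u)\nu,v)$ with $\nu=n_e$ at $t=0$, so $g^{-1}\mathcal R_\nu$ is exactly the endomorphism $v\mapsto \bar R(n_e,v)n_e = W(v)$.

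The one delicate point is the identification of the fibres $\Symm(n^\perp)$ along the variation. The shape operator of the graph is expressed via $\opI_{\hat f}^{-1}\opII_{\hat f}$ acting on $T_pD$, whereas $\Phi$ is defined on $\Symm(n_{\hat f}^\perp)$. I would handle this using $\mathrm{O}(2)$-invariance: $\Phi(A_{\hat f})$ depends only on the eigenvalues of $A_{\hat f}$, which are intrinsic to the $\opI_{\hat f}$-self-adjoint operator on $T_pD$. One can therefore transport $A_{\hat f}$ to a $g$-self-adjoint operator by conjugating with $\opI_{\hat f}^{1/2}g^{-1/2} = \Id + \cO(J^1 tf)$. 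The conjugation error is $\cO(t)$, so it is a commutator of the form $[\,\cdot\,,A]$ with zeroth-order coefficients in $f$. Its pairing with $B$ vanishes, because $B$ commutes with $A$ by invariance. This makes the derivation rigorous without changing the formula.
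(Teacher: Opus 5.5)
Your proposal is correct and follows the paper's argument. Like the paper, you expand the shape operator of the normal graph $e_{tf}$ using Lemma \ref{lemma:FormulaeForIAndII}, apply $\Phi$, differentiate at $t=0$, and get ellipticity from the positive-definiteness of $B$. Your additional bookkeeping fills in details the paper leaves implicit: the base-point and normal variations contribute only lower-order terms, and the commutator argument with $[A,B]=0$ handles the identification of the fibres $\Symm(n^\perp)$.
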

\begin{remark} Curvature functions which depend only on the conjugacy class of $A$ are trivially independent both of the position in the manifold and of the normal vector. By $\mathrm{O}(2)$-invariance, such functions thus only depend on the principal curvatures $\kappa_1$ and $\kappa_2$ of the surface. The only possibilities are therefore the mean curvature, the extrinsic curvature, and combinations of these functions.
\end{remark}
\begin{proof}
It suffices to determine the second-order term. By \eqref{eq.graph_I_II_expansion}, for all $p\in D$,
\begin{equation}
\label{eqn:VariationOfShapeOperator}
A_{e_{tf}}(p) = A_e(p) -t(\Hess_g f(p))^\sharp + tf(p)(W(p)-A_e^2(p)) + \cO(t^2)\ ,
\end{equation}
where $\sharp$ denotes Berger's musical isomorphism with respect to $g$. Let $B(p)$ denote the fibrewise derivative of $\Phi$ in $\jetX$ at the point $\mathcal{J}^2e(p)$. Applying $\Phi$ to the previous equation, and differentiating with respect to $t$, we obtain \eqref{equation:jphi0}. Finally, since $B$ is positive-definite, $J^\Phi$ is elliptic and the result follows.
\end{proof}

\subsubsection{Mean curvature}

Mean curvature corresponds to the choice
\begin{equation*}
\Lambda := \jetX \qquad\text{and}\qquad \Phi(A):=\tr(A)/2\ .
\end{equation*}
This corresponds to the case of minimal ($k=0$) and constant mean curvature ($k>0$) surfaces. Here, the associated $\Phi$-Jacobi operator reduces to the classical Jacobi operator. Indeed, since $B(p)=1/2\cdot \mathbf{1}$ is a multiple of the identity, for any immersed surface $e:D\to X$ with unit normal $n_e$ and shape operator $A_e$, up to a factor of $2$, \eqref{equation:jphi} yields
\begin{equation*}
J^\Phi f = - \Delta f + \tr(W - A_e^2)\,f\ ,
\end{equation*}
where $\Delta f = \tr\big(\Hess(f)\big)$ denotes the Laplace operator of the induced metric on $D$.

\subsubsection{Extrinsic curvature}
In the case of extrinsic curvature, we take
\begin{equation*}
\Lambda := \big\{ (p,n,A)\in\jetX \mid A>0 \big\} \qquad\text{and}\qquad \Phi(A) := \det(A)^{\frac{1}{2}}\ .
\end{equation*}
A direct computation shows that the fibrewise derivative of $\Phi$ at $A$ is
\begin{equation*}
\dd_2\Phi(A) = \frac{1}{2}\det(A)^{\frac{1}{2}}A^{-1}\ ,
\end{equation*}
which is positive-definite so that, once again, $\Phi$ is elliptic. The corresponding $(\Phi,k)$-surfaces are precisely Labourie's $k$-surfaces. Using \eqref{equation:jphi}, up to a multiplicative factor, the associated Jacobi operator is given by (see \cite[Proposition 3.0.2]{Labourie-00})
\begin{equation*}
J^\Phi f = -\tr(A_e^{-1}\Hess(f)) + \tr(A_e^{-1}W - A_e)\,f\ .
\end{equation*}

\subsubsection{More examples of curvature functions}\label{sss.more_examples} We highlight several other notions of curvature for which our results apply.\\

{\emph{(i) The curvature quotient $K/H$.}}\ In this case
\begin{equation*}
\Lambda := \big\{ (p,n,A)\in\jetX \mid A>0 \big\}\qquad\text{and}\qquad \Phi(A) := 2\det(A)/\tr(A)\ .
\end{equation*}
Surfaces for which this curvature quotient is constant form a special class of \emph{Weingarten surfaces}. General curvature functions of this and related types were studied by Guan--Spruck in \cite{GuanSpruck2004Locally}, using G\aa rding's theory of hyperbolic polynomials (see \cite{HarveyLawson_2013,Garding_1959}), and building on the work \cite{Caffarelli-Nirenberg-Spruck-88} of Caffarelli--Nirenberg--Spruck. Note that $2H/K=\rho_1+\rho_2$, where $\rho_1$ and $\rho_2$ denote the principal radii of curvature. For closed convex surfaces in Euclidean space, the problem of prescribing this quantity as a function of the unit normal is known as \emph{Christoffel's problem}. It was posed by Christoffel \cite{Christoffel} in 1865, and was solved independently by Firey \cite{Firey} and Berg \cite{Berg} a century later. For analogues of this problem in hyperbolic space and in Minkowski space, we refer to \cite{EspinarGalvezMira} and \cite{FillastreVeronelli} respectively. \\

{\emph{(ii) Special Lagrangian curvature.}}\ Special Lagrangian curvature was introduced by the fourth author in \cite{Smith2013SpecialLagrangian} as a higher-dimensional curvature notion exhibiting the same geometric properties as extrinsic curvature (see also \cite{HarveyLawson2021Pseudoconvexity}). Here
\begin{equation*}
\Lambda := \big\{ (p,n,A)\in\jetX \mid A>0 \big\}\qquad\text{and}\qquad \Phi(A) := R_\theta(A)^{-1}\ ,
\end{equation*}
where $r:=R_\theta$ is defined by the equation
$$\theta=\sum_i\arctan(r\kappa_i)\in[\pi/2,\pi),$$
as in \cite{Smith2013SpecialLagrangian}. In two dimensions, special Lagrangian curvature interpolates between extrinsic curvature (when $\theta=\pi/2$), and the curvature quotient $2\det(A)/\tr(A)$ (in the limit as $\theta$ tends to $\pi$). \\

{\emph{(iii) Elliptic regularizations.}}\ Elliptic regularizations are perturbations of curvature functions designed to improve their regularity. The following elliptic regularization of extrinsic curvature was introduced by Gerhardt in \cite{Gerhardt2003Hypersurfaces} in order to study curvature flows. For $\lambda\in(0,\infty)$, we define
\begin{equation*}
\Lambda_\lambda := \{ (p,n,A)\in\jetX \mid  \det(A)+\lambda\tr(A)^2>0\ ,\ \tr(A)>0 \}\ ,
\end{equation*}
and
\begin{equation*}
\Phi_\lambda(A) = \frac{\big(\det(A) + \lambda\tr(A)^2\big)^{\frac{1}{2}}}{\sqrt{1+\lambda}}\ .
\end{equation*}
This family interpolates between extrinsic curvature (when $\lambda=0$) and mean curvature (in the limit as $\lambda\rightarrow\infty$).

\subsection{Elliptic operators and strict stability}
\label{ss:Stability}
We now introduce notions of stability and strict stability for Jacobi operators, which extend to higher dimensions the concept of absence of conjugate points. We formulate this condition in the general setting of second-order elliptic operators.

\subsubsection{The maximum principle} Let $\closedtwoball$ be the closed unit disc in $\R^2$, and let
\begin{equation}
\label{eqn:GeneralEllipticOperator}
L:=a^{ij}(x)\partial_i\partial_j + b^i(x)\partial_i + c(x)
\end{equation}
be a second-order, linear, elliptic partial differential operator over $\closedtwoball$ with smooth, real-valued coefficients. Our analysis will rely on the structure of the zero sets of the eigenfunctions of such operators.

The following version of the maximum principle will be used repeatedly in the sequel (see \cite[Lemma~3.4 and Theorem~3.5]{Gilbarg-Trudinger-01}).

\begin{theorem}[Maximum principle]
\label{thm:MaxPrinc}
Let $f\in C^0(\closedtwoball)\cap C^2(\twoball)$ satisfy $Lf=0$.
\begin{enumerate}[label=\emph{(\roman*)}]
\item If $c\leq 0$, then $f$ cannot achieve a non-negative maximum (or a non-positive minimum) in $\twoball$ unless it is constant.
\item If $c= 0$, then $f$ cannot achieve its maximum or minimum in $\twoball$ unless it is constant.
\item If $f\geq 0$ and $f(p)=0$ for some boundary point $p\in\mathbb{S}^1$, then $(\partial_\nu f)(p)<0$, where $\nu$ denotes the outward-pointing unit normal to $\mathbb{S}^1$ at $p$.
\end{enumerate}
\end{theorem}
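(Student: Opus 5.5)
The three assertions are the classical weak maximum principle, strong maximum principle and Hopf boundary point lemma for the operator \eqref{eqn:GeneralEllipticOperator}. I would prove them in the order: a strict inequality argument, the Hopf lemma (iii), and then deduce the strong statements (i) and (ii) from (iii). Throughout I use that ellipticity on the compact set $\closedtwoball$ gives a uniform constant $\lambda_0>0$ with $a^{ij}\xi_i\xi_j\geq\lambda_0|\xi|^2$, and that $a^{ij},b^i,c$ are bounded.

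\emph{Step 1 (strict inequality at interior extrema).} If $Lw>0$ and $c\leq 0$, then $w$ cannot attain a non-negative interior maximum. At such a point $p$ we have $\dd w(p)=0$ and $\Hess w(p)\leq 0$, so $a^{ij}\partial_i\partial_j w(p)\leq 0$. Since $c(p)w(p)\leq 0$, this gives $Lw(p)\leq 0$, a contradiction. To pass to $Lf=0$, I perturb by $\eps e^{\gamma x_1}$. For $\gamma$ large, depending on $\lambda_0$ and the bounds on $b$ and $c$, we have $L e^{\gamma x_1}\geq(\lambda_0\gamma^2-|b|\gamma-|c|)e^{\gamma x_1}>0$. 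This yields the weak principle: $\sup f\leq\sup_{\partial}f^+$.

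\emph{Step 2 (Hopf lemma, item (iii)).} Let $p\in\mathbb{S}^1$ with $f(p)=0$ and $f\geq 0$. I assume $f>0$ in the interior, since otherwise interior zeros are handled by Step 3 applied to $-f$. Take a small interior ball $B$ tangent to $\mathbb{S}^1$ at $p$, with centre $y$ and radius $\rho$, and the annulus $A:=B\setminus B_{\rho/2}(y)$. Consider the barrier
\begin{equation*}
v:=e^{-\mu|x-y|^2}-e^{-\mu\rho^2}.
\end{equation*}
A direct computation shows $Lv>0$ on $A$ for $\mu$ large. Here I would handle the sign of $c$ by replacing $c$ with $c^-$, using $f\geq 0$. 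Since $f>0$ on the inner circle, there is $\eps>0$ with $f-\eps v\geq 0$ on $\partial A$. Applying the weak principle to $-(f-\eps v)$ gives $f\geq\eps v$ on $A$, and hence $\partial_\nu f(p)\leq\eps\,\partial_\nu v(p)<0$.

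\emph{Step 3 (strong principle, items (i) and (ii)).} Suppose $f$ attains a non-negative maximum $M$ in $\twoball$ but is not constant. The set $\{f<M\}$ is then nonempty and open. Choose a ball inside it whose closure touches $\{f=M\}$ at an interior point $q$. Apply the Hopf construction of Step 2 to $M-f$, which satisfies $L(M-f)=-cM\geq 0$ since $c\leq 0$ and $M\geq 0$. This gives $\dd f(q)\neq 0$, contradicting that $q$ is an interior maximum. The minimum case follows by replacing $f$ with $-f$. When $c=0$, constants lie in the kernel, so the sign restriction on $M$ disappears and we obtain (ii).

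The only real care needed is the barrier computation in Step 2 and the sign bookkeeping for $c$. Since this is standard, I would in practice cite \cite[Lemma~3.4 and Theorem~3.5]{Gilbarg-Trudinger-01}, as the statement does.
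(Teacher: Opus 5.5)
Your argument is the standard one (weak principle via the perturbation $\eps e^{\gamma x_1}$, Hopf's lemma via the barrier $e^{-\mu|x-y|^2}-e^{-\mu\rho^2}$ on an annulus, strong principle deduced from Hopf). This is exactly the content of \cite[Lemma~3.4 and Theorem~3.5]{Gilbarg-Trudinger-01}, which the paper cites instead of giving a proof, so the approach is the same. Two points need correcting.

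First, in Step 2 the case of an interior zero is not actually ``handled'' by Step 3. Since (iii) carries no sign condition on $c$, you must apply the strong principle to the non-negative supersolution $f$ of $L-c^+$, for which $(L-c^+)f=-c^+f\leq 0$ and the zeroth-order coefficient is $\min(c,0)\leq 0$. This forces $f\equiv 0$, and then $\partial_\nu f(p)=0$. So (iii) is false as literally stated and must be read with the extra hypothesis $f\not\equiv 0$, equivalently $f>0$ in $\twoball$. That is how the paper uses it in Lemma~\ref{lemma:BoundaryWeakMP}, where $f$ is assumed non-constant and $C^1$ up to the boundary, so that $\partial_\nu f(p)$ exists. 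Without $C^1$ regularity your barrier only gives the conclusion for the upper limit of the difference quotients.

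Second, in Step 3 one has $L(M-f)=cM\leq 0$, not $-cM$. Thus $M-f$ is a non-negative supersolution vanishing at $q$, which is precisely the sign the barrier argument of Step 2 needs. Your $-cM\geq 0$ is $L(f-M)$, the subsolution form in which Gilbarg--Trudinger state Hopf's lemma.
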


\begin{remark}\label{rem_max_principle}
In what follows, we will use the following explicit reformulation of Theorem~\ref{thm:MaxPrinc} (i): if $c\leq 0$, if $\Omega\subset\closedtwoball$, and if $f\in C^0(\overline{\Omega})\cap C^2(\Omega)$ satisfies $Lf=0$, then $f|_{\partial\Omega}\geq 0$ (resp. $f|_{\partial\Omega}\leq 0$) implies $f\geq 0$ in $\Omega$ (resp. $f\leq 0$ in $\Omega$).
\end{remark}

\subsubsection{Strict stability}
We introduce a notion of strict stability for the operator $L$. We note first that, in surface theory, one often restricts attention to self-adjoint operators, or to operators arising as Euler--Lagrange equations of variational problems. Since, the present setting requires a more general class of operators, we will use the following definition.

\begin{defi}
\label{defn:Stability}
We say that the operator $L$ is \emph{strictly stable} whenever there exists a smooth, positive function $u:\closedtwoball\rightarrow\R$ such that $Lu=0$.
\end{defi}

This notion extends to surfaces the classical concept of stability for geodesics. Indeed, a geodesic segment in a Riemannian manifold is stable whenever no two of its points are conjugate. In other words, a geodesic segment in a surface is stable whenever it possesses a non-vanishing Jacobi field, that is, a non-vanishing solution of the Jacobi equation. Definition~\ref{defn:Stability} is precisely the $2$-dimensional analogue of this concept.

Strict stability can be established by the following simple criterion. Recall first that $\lambda\in\mathbb{C}$ is a \emph{Dirichlet eigenvalue} of $L$ whenever there exists a non-trivial $f\in C^0(\closedtwoball)\cap C^2(\twoball)$, such that
\begin{equation*}
\begin{cases}
Lf + \lambda f = 0 & \text{in } \twoball\ \text{and}\\
f = 0 & \text{on } \partial \twoball\ .
\end{cases}
\end{equation*}
The \emph{Dirichlet spectrum} of $L$, which we denote by $\opSpec_D(L)$, is the set of all of its Dirichlet eigenvalues. It is a discrete subset of $\C$ and, when $L$ is self-adjoint (with respect to, say, the Lebesgue measure), it is contained in $\mathbb{R}$ and accumulates only at $+\infty$ (see \cite[Section 8]{Gilbarg-Trudinger-01}).

\begin{lemma}
\label{lemma:CharacterizationOfStability}
\begin{enumerate}[label=\emph{(\roman*)}]
\item If $c\leq0$, then $L$ is strictly stable in the sense of Definition \ref{defn:Stability}.
\item If $L$ is self-adjoint, then it is strictly stable in the sense of Definition \ref{defn:Stability} if and only if its Dirichlet spectrum satisfies
\begin{equation*}
\opSpec_D(L) \subseteq (0,\infty)\ .
\end{equation*}
\end{enumerate}
\end{lemma}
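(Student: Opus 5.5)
For (i), with $c\leq 0$, we take $u$ to be the solution of the Dirichlet problem $Lu=0$ in $\twoball$, $u=1$ on $\partial\twoball$. Existence and uniqueness of a smooth solution follow from standard Schauder theory \cite[Theorem 6.14]{Gilbarg-Trudinger-01}, which requires exactly $c\leq 0$. Remark \ref{rem_max_principle} then gives $u\geq 0$. If $u(p)=0$ at some interior point, then $u$ attains a non-positive minimum in $\twoball$, so by Theorem \ref{thm:MaxPrinc}(i) it is constant. This contradicts $u=1$ on the boundary. Hence $u>0$ on $\closedtwoball$.

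For (ii), assume $L$ is self-adjoint, and write $L=\partial_i(a^{ij}\partial_j)+c$ after absorbing the first-order terms. Let $\lambda_1$ be the first Dirichlet eigenvalue in the convention $Lf+\lambda f=0$, so $-L\geq\lambda_1$ in the Rayleigh-quotient sense. The first eigenfunction $\phi_1$ is simple and can be chosen positive in $\twoball$. This holds because $|\phi_1|$ is also a minimizer of the Rayleigh quotient, so the Harnack inequality applies.

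\emph{Stable $\Rightarrow$ spectrum positive.} Suppose $u>0$ and $Lu=0$. Integrating $\phi_1 Lu-uL\phi_1$ over the disc, self-adjointness and Green's formula leave only a boundary term:
\begin{equation*}
\lambda_1\int u\phi_1=-\int_{\partial}a^{ij}\nu_i\partial_j\phi_1\, u .
\end{equation*}
By the Hopf lemma (Theorem \ref{thm:MaxPrinc}(iii), applied to $L-\max c$), the conormal derivative of $\phi_1$ on the boundary is strictly negative. Since $u>0$ there, the right-hand side is positive, so $\lambda_1>0$. As $L$ is self-adjoint, every other Dirichlet eigenvalue is at least $\lambda_1$, and so $\opSpec_D(L)\subseteq(0,\infty)$.

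\emph{Spectrum positive $\Rightarrow$ stable.} If $\lambda_1>0$, then $0$ is not a Dirichlet eigenvalue, and the Fredholm alternative gives a unique solution of $Lu=0$ with $u=1$ on $\partial\twoball$. The main obstacle is showing $u>0$, because $c$ may be positive and the maximum principle cannot be used directly. My plan is a continuity argument. Set $L_t:=L-tM$ with $M\geq\max(c,0)$ a constant, and let $u_t$ solve $L_tu_t=0$, $u_t=1$ on the boundary. For $t\in[0,1]$ the first eigenvalue of $L_t$ is at least $\lambda_1>0$, so $u_t$ exists and depends continuously on $t$. At $t=1$ the zeroth-order coefficient $c-M$ is non-positive, so $u_1>0$ by part (i).

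Let $t_0$ be the infimum of the times $t$ such that $u_s>0$ on $\closedtwoball$ for all $s\in[t,1]$. Suppose $u_{t_0}$ fails to be positive. Then it is non-negative, vanishes at some interior point, and equals $1$ on the boundary. At that interior minimum $w=u_{t_0}$ satisfies $(L_{t_0}-C)w=-Cw\leq 0$ for $C$ large, and the strong maximum principle for supersolutions then forces $w\equiv 0$. This contradicts $w=1$ on the boundary. Hence $u_{t_0}>0$, openness extends positivity past $t_0$, so $t_0=0$ and $u=u_0>0$.

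A shorter alternative is to factor $u=\phi_1^{\epsilon}$-type comparisons, but I would go with the continuity argument above.
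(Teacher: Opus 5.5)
Your proof is correct. Part (i) is essentially the paper's argument, but in (ii) you take a different route in both directions.

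For stable $\Rightarrow$ positive spectrum, the paper never uses the first eigenfunction. It conjugates by the positive solution, $L'=M_u^{-1}LM_u$, whose zeroth-order coefficient vanishes. For any real Dirichlet eigenvalue $\lambda\leq 0$ with eigenfunction $f$, it then applies the maximum principle to $(L'+\lambda)(f/u)=0$ with $f/u|_{\mathbb{S}^1}=0$. This handles each real eigenvalue directly, so self-adjointness is needed only to know that the spectrum is real. Your pairing of $u$ with $\phi_1$ via Green's identity, plus Hopf, genuinely needs the divergence form and the variational characterisation of $\lambda_1$.

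There is a small precision point in your Hopf step. The inequality $(L-\max c)\phi_1=-(\lambda_1+\max c)\phi_1\leq 0$ relies on $\lambda_1\geq-\max c$, which comes from the Rayleigh quotient, and it also needs the supersolution form of the Hopf lemma. It is simpler to apply Theorem~\ref{thm:MaxPrinc}(iii) directly to $L+\lambda_1$, since $\phi_1\geq 0$ vanishes on the boundary.

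For the converse, the paper takes a component $\Omega$ of $\{u<0\}$ and notes that $\lambda_1(\Omega,L)\leq 0$. This contradicts strict monotonicity of the first Dirichlet eigenvalue under domain inclusion, and the paper then concludes positivity with Harnack. Your homotopy $L_t=L-tM$ uses $\lambda_1>0$ only to guarantee solvability of the Dirichlet problem along the whole path. It transports positivity from the $c\leq 0$ case by an open--closed argument resting on the strong maximum principle for non-negative supersolutions. This is more elementary: it avoids eigenvalue problems on possibly irregular nodal domains and the monotonicity lemma, at the cost of some extra length.

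Your closing remark about $\phi_1^{\epsilon}$-type comparisons is never used and can be dropped.
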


\begin{remark}
In Lemma \ref{lemma:CharacterizationOfStability}, (ii), self-adjointness is only required to guarantee that the spectrum is real.
\end{remark}

In will be convenient here and in the sequel to introduce the differential operator $L'$, defined as follows. Let $u$ be as in Definition \ref{defn:Stability}, and let $M_u$ denote the operator of multiplication by this function. Since $u$ is strictly positive, $M_u$ is invertible, and we define
\begin{equation}
\label{eqn:DefinitionOfLPrime}
L' := M_u^{-1} L M_u\ .
\end{equation}
Since $Lu=0$,
\begin{equation*}
L'=a^{ij}\partial_i\partial_j+b^i\partial_i+a^{ij}u^{-1}(\partial_i u\partial_j+\partial_j u\partial_i)\ .
\end{equation*}
In particular, the zeroth-order coefficient of this operator vanishes.

\begin{proof}
(i) By Theorem \ref{thm:MaxPrinc}, Item (i), $L$ has trivial Dirichlet kernel, for if $Lu=0$ and $u|_{\partial\closedtwoball}=0$ then (see Remark \ref{rem_max_principle}) $u\leq 0$ and $u\geq 0$, so that $u=0$. By the Fredholm alternative and elliptic regularity, there therefore exists a unique, smooth function $u:\closedtwoball\rightarrow\Bbb{R}$ solving the Dirichlet problem for $L$ with boundary value $1$. By Theorem \ref{thm:MaxPrinc}, Item (i) again, this function is positive over $\closedtwoball$, and the claim follows. \\

(ii) Suppose first that $L$ is strictly stable and let $u > 0$ be such that $Lu = 0$. Let $f \in C^\infty(\closedtwoball)$ be such that $f = 0$ on $\partial\closedtwoball$ and $Lf = -\lambda f$ for some $\lambda$. We now show $\lambda>0$. Indeed, otherwise, $\lambda\leq 0$. However
\begin{equation*}
L'(f/u) = u^{-1} L (u \cdot f/u) = u^{-1} L f = -\lambda (f/u)\ ,
\end{equation*}
so that
\begin{equation*}
(L'+\lambda)(f/u) = 0\ .
\end{equation*}
Since the zero'th order coefficient of $L'+\lambda$ is $\lambda\leq 0$, and since $f/u$ vanishes over $\partial\closedtwoball$, it follows by the maximum principle as before that $f/u=0$, and therefore $f=0$. This is absurd, and it follows that $\lambda>0$, as desired.

We now prove the converse. Since $0 \notin \opSpec_D(L)$, the Fredholm alternative \cite[Theorem 6.15]{Gilbarg-Trudinger-01} yields a smooth function $u$ satisfying $Lu = 0$ and $u = 1$ on $\partial \twoball$. We first show that $u\geq 0$. Indeed, suppose the contrary, and let $\Omega$ be a connected component of $\{u<0\}$. Since $Lu=0$ over $\Omega$ and $u=0$ over $\partial\Omega$, it follows that $0 \in \opSpec_D(\Omega, L)$, so that $\lambda_1(\Omega, L) \leq 0$. Since the least element of the Dirichlet spectrum is strictly monotone with respect to inclusion of sets, it follows that
\begin{equation*}
\lambda_1(\twoball,L)<\lambda_1(\Omega,L)\leq 0\ .
\end{equation*}
This is absurd, and it follows that $u\geq 0$, as asserted. Finally, by Harnack's inequality (see, for example, \cite[Theorem 8.20]{Gilbarg-Trudinger-01}), $u > 0$, and this completes the proof.
\end{proof}

We will apply strict stability in the sequel via the following version of the maximum principle.
\begin{lemma}
\label{lemma:WeakMP}
If $L$ is strictly stable in the sense of Definition \ref{defn:Stability}, then there exists $C>0$ such that for every open set $\Omega \subseteq \closedtwoball$, and for all $f \in C^0(\overline{\Omega}) \cap C^\infty(\Omega)$ such that $Lf = 0$ and $f|_{\partial\Omega} \geq 0$,
\begin{equation}
\label{eqn:WeakMP}
\frac{1}{C}\inf_{x\in\partial\Omega} f(x) \leq \inf_{x\in\overline{\Omega}} f(x)\qquad\text{and}\qquad
\sup_{x\in\overline{\Omega}} f(x) \leq C\sup_{x\in\partial\Omega} f(x)\ .
\end{equation}
In particular, $L$ has trivial Dirichlet kernel.
\end{lemma}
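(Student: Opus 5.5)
The plan is to reduce to an operator with vanishing zeroth-order coefficient by the conjugation \eqref{eqn:DefinitionOfLPrime}, apply the maximum principle to that operator, and then convert back using the bounds on the positive function $u$. Let $u>0$ be the smooth positive solution of $Lu=0$ on $\closedtwoball$ provided by Definition \ref{defn:Stability}. Since $\closedtwoball$ is compact and $u$ is continuous and positive, there exist constants $0<m\leq M$ with $m\leq u\leq M$ on $\closedtwoball$. We will take $C:=M/m\geq 1$, which visibly does not depend on $\Omega$ or $f$.

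Now fix an open set $\Omega\subseteq\closedtwoball$ and $f\in C^0(\overline\Omega)\cap C^\infty(\Omega)$ with $Lf=0$ and $f|_{\partial\Omega}\geq0$. Set $h:=f/u$. Then $L'h=u^{-1}L(uh)=u^{-1}Lf=0$ on $\Omega$, and $h$ is continuous on $\overline\Omega$ and smooth in $\Omega$. The zeroth-order coefficient of $L'$ vanishes, so case (ii) of Theorem \ref{thm:MaxPrinc} applies. More precisely, the weak form of the maximum principle for operators with $c=0$ on a bounded open set (Gilbarg--Trudinger, Theorem 3.1) gives
\[
\inf_{\partial\Omega}h=\inf_{\overline\Omega}h \qquad\text{and}\qquad \sup_{\overline\Omega}h=\sup_{\partial\Omega}h .
\]
Using the weak rather than the strong form matters here: $\Omega$ is an arbitrary open subset, possibly disconnected, and the weak principle applies componentwise without any connectedness assumption. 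Points of $\Omega$ lying on $\partial\closedtwoball$ cause no trouble, because the coefficients of $L'$ are smooth up to the boundary, so we may work in a slightly larger disc.

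Converting back to $f$ is then immediate. For $x\in\overline\Omega$ we have $f(x)=u(x)h(x)$. First, $h\geq\inf_{\partial\Omega}h\geq0$, and on $\partial\Omega$ we have $h\geq f/M$, so
\[
f(x)\geq m\inf_{\partial\Omega}h\geq \frac{m}{M}\inf_{\partial\Omega}f .
\]
Second, $\sup_{\partial\Omega}h\leq m^{-1}\sup_{\partial\Omega}f$, which is nonnegative, so
\[
f(x)\leq M\sup_{\partial\Omega}h\leq \frac{M}{m}\sup_{\partial\Omega}f .
\]
These are exactly the two inequalities in \eqref{eqn:WeakMP} with $C=M/m$.

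The final assertion follows by applying \eqref{eqn:WeakMP} with $\Omega=\twoball$ to both $f$ and $-f$. If $Lf=0$ and $f|_{\partial\twoball}=0$, then both $f$ and $-f$ satisfy the hypotheses, and each has supremum at most $0$, so $f\equiv0$. Hence $L$ has trivial Dirichlet kernel.

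The only delicate points are the sign bookkeeping, where $f|_{\partial\Omega}\geq0$ is used to multiply by $u$ without reversing inequalities, and the use of the weak maximum principle on a general open set; neither is a genuine obstacle.
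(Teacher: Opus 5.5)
Your proof is correct and follows the paper's argument: conjugate by the positive solution $u$ to obtain $L'$ with vanishing zeroth-order coefficient, apply the maximum principle to $f/u$, and convert back using positive upper and lower bounds for $u$ on $\closedtwoball$; the trivial Dirichlet kernel then follows by applying the estimate to $\pm f$. The explicit constant $C=M/m$, the sign bookkeeping, and your use of the weak maximum principle (valid on arbitrary, possibly disconnected, open $\Omega$, where the paper cites the strong form in Theorem~\ref{thm:MaxPrinc}(ii)) make precise what the paper leaves implicit.
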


\begin{proof}
Indeed, let $u$ be as in Definition \ref{defn:Stability}, and note that
\begin{equation*}
L'(M_u^{-1}f) = M_u^{-1}LM_uM_u^{-1}f = M_u^{-1}Lf = 0\ .
\end{equation*}
Thus, by Theorem \ref{thm:MaxPrinc}, Item (ii), applied to $L'$,
\begin{equation*}
\inf_{x\in\overline{\Omega}}(f/u)(x) = \inf_{x\in\partial\Omega}(f/u)(x)\qquad\text{and}\qquad
\sup_{x\in\overline{\Omega}}(f/u)(x) = \sup_{x\in\partial\Omega}(f/u)(x)\ .
\end{equation*}
Since $u$ is smooth and strictly positive on $\overline{\Omega}$, this implies \eqref{eqn:WeakMP}. In particular, every element of the Dirichlet kernel of $L$ vanishes over $\partial\Omega$, and thus vanishes identically over $\Omega$. $L$ then has trivial Dirichlet kernel, and this completes the proof.
\end{proof}

We will also make use of the analogous result over the boundary.
\begin{lemma}
\label{lemma:BoundaryWeakMP}
If $L$ is strictly stable in the sense of Definition \ref{defn:Stability}, then for every non-constant, non-negative function $f \in C^1(\closedtwoball)\cap C^2(\twoball)$ satisfying $Lf = 0$, and for every $p \in \mathbb{S}^1$, one has
\begin{equation*}
f(p) = 0\ \Rightarrow\ (\partial_\nu f)(p) < 0\ ,
\end{equation*}
where $\partial_\nu$ denotes the derivative in the outward-pointing, unit normal direction of $\mathbb{S}^1$.
\end{lemma}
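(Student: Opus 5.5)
The plan is to reduce to the Hopf boundary lemma (Theorem \ref{thm:MaxPrinc}, Item (iii)) by conjugating with the positive solution $u$ furnished by strict stability, exactly as in the proof of Lemma \ref{lemma:WeakMP}. Let $u:\closedtwoball\to\R$ be smooth and positive with $Lu=0$, and consider the operator $L'=M_u^{-1}LM_u$ of \eqref{eqn:DefinitionOfLPrime}. Its zeroth-order coefficient vanishes, and its principal part coincides with that of $L$, so $L'$ is again elliptic with smooth coefficients up to the boundary.

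First, set $h:=f/u$. Since $Lf=0$, we have $L'h=u^{-1}Lf=0$. Moreover $h\in C^1(\closedtwoball)\cap C^2(\twoball)$ and $h\geq 0$ because $u>0$. We also need $h$ non-constant in order to apply the boundary lemma. If $h$ were a constant $\lambda$, then $f=\lambda u$. Since $u>0$ everywhere, $\lambda u$ vanishes at $p$ only when $\lambda=0$, so $f\equiv 0$, contradicting the hypothesis that $f$ is non-constant. Hence, if $f(p)=0$ for some $p\in\mathbb{S}^1$, then $h$ is non-constant and $h(p)=0$.

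Second, apply Theorem \ref{thm:MaxPrinc}, Item (iii), to $L'$ and $h$. Because $h\geq 0$ attains its minimum $0$ at the boundary point $p$ and $L'$ has zero zeroth-order coefficient, the Hopf lemma gives $(\partial_\nu h)(p)<0$. The hypothesis $c=0$ (indeed $c\leq 0$ would suffice for a nonnegative function vanishing at $p$) is exactly what the conjugation buys us.

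Finally, translate back to $f$. We have $\partial_\nu f=\partial_\nu(uh)=h\,\partial_\nu u+u\,\partial_\nu h$, and since $h(p)=0$ this gives $(\partial_\nu f)(p)=u(p)(\partial_\nu h)(p)<0$, because $u(p)>0$.

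The only delicate point is verifying the hypotheses of the Hopf lemma. Continuity of $h$ up to the boundary follows from $f\in C^1(\closedtwoball)$ and the smoothness and positivity of $u$ on the closed disc. The disc satisfies the interior sphere condition at every boundary point, and strict positivity of $h$ near $p$ in the interior is not needed separately: by Theorem \ref{thm:MaxPrinc}, Item (ii), a non-constant $h$ with $L'h=0$ cannot attain its minimum $0$ in the interior, so $h>0$ in $\twoball$. This is the same strong maximum principle argument already used in Lemma \ref{lemma:WeakMP}.
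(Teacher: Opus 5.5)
Your proposal is correct and follows the paper's own proof: you conjugate by the positive solution $u$ to get $L'$ with vanishing zeroth-order term, apply the Hopf lemma (Theorem~\ref{thm:MaxPrinc}, Item (iii)) to $f/u$, and recover $(\partial_\nu f)(p)=u(p)\,\partial_\nu(f/u)(p)<0$ from $f(p)=0$. You also supply two checks the paper leaves implicit: that $f/u$ is non-constant, and that $f/u>0$ in $\twoball$ by the strong maximum principle.
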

\begin{proof}
Using the same construction as in the proof of Lemma \ref{lemma:WeakMP}, Theorem \ref{thm:MaxPrinc} (iii) yields
\begin{equation*}
\partial_\nu(f/u)(p) < 0\ .
\end{equation*}
and the result then follows by the quotient rule.
\end{proof}

\subsection{Plateau problems}  \label{ssection:plateau-problems}

We now introduce the Plateau problem for $(\Phi,k)$-surfaces, where $(\Phi,\Lambda)$ is an elliptic curvature function and $k \geq 0$. This defines the class of boundary value problems that will be considered in the sequel.
\begin{defi}[$(\Phi,k)$-Plateau problems] Let $c\subseteq\partial X$ be a smooth, oriented, simple, closed curve, and choose $k>0$.
\label{defn:StableSolutions}
\begin{enumerate}[label=\emph{(\roman*)}]
\item A solution to the $(\Phi,k)$-\emph{Plateau problem} with boundary $c$ is a smooth, oriented, embedded $(\Phi,k)$-disc $D \subset X$ with oriented boundary $c$.
\item A solution $D$ is said to be \emph{strictly stable} whenever its $\Phi$-Jacobi operator is strictly stable in the sense of Definition \ref{defn:Stability}.
\end{enumerate}

In the case where $\Phi=(1/2)\tr$, this definition also applies when $k=0$.
\end{defi}

When $\Phi=\tr$, $\Lambda=\jetX$, and $k=0$, this corresponds to the classical Plateau problem for minimal surfaces (see, for example, \cite{Colding-Minicozzi-11}). Hypotheses \hyperlink{AA1}{\rm(A1-A4)} provide explicit conditions on the metric $\bar g$ and the boundary curve $c$ for existence, uniqueness, and strict stability of solutions to hold. When $k>0$, it corresponds to the Plateau problem for constant mean curvature surfaces, see \cite{Lopez-13} for instance.

When $\Phi=\det^{\frac{1}{2}}$ and $\Lambda=\{(x,n,A)\in\jetX : A>0\}$, the corresponding $(\Phi,k)$-surfaces are Labourie's $k$-surfaces. The compact Plateau problem in this setting was studied in \cite{Smith-20}, where existence, uniqueness, and stability of solutions were established.

When $\Phi$ is either the curvature quotient or the special Lagrangian curvature (see \S \ref{sss.more_examples}), the $(\Phi,k)$-Plateau problem also admits a unique strictly stable solution, as proven in \cite{Smith-20}. These curvature functions, as well as extrinsic curvature, give rise to elliptic Monge--Ampère equations within our general framework. We have not studied under which circumstances uniqueness and stability hold for the class of curvature functions obtained by elliptic regularization.

In what follows, we restrict to the three-dimensional space of boundary conditions given by oriented round circles $c \in \cC$ and study the associated ``phase space'' of the Plateau problem in the sense of \cite{Labourie-02}. More precisely, we will show that the space of pointed $(\Phi,k)$-discs spanned by round circles naturally identifies with $SX^\bullet$, provided that for every oriented round circle the $(\Phi,k)$-Plateau problem admits a unique solution which, in addition, is strictly stable (see Theorem \ref{th.Foliated_Plateau_posta}).

\section{The asymptotic analysis of small $(\Phi,k)$-discs}
\label{section:asymptotic}

\subsection{Small $(\Phi,k)$-surfaces}
\label{s.asymptotic_Analysis}
We now begin our study of the foliated $(\Phi,k)$-Plateau problem. In this section, we use perturbation theory to solve the Plateau problem for small circles. Our main objective will be to show that the resulting foliation extends smoothly (even analytically in the case of analytic data) across the boundary. This property, which will be shown using a careful asymptotic analysis of the solutions constructed, will allow us in \S\ref{s.Radon} to establish boundary ellipticity of the normal operator of the Radon transform.

\subsubsection{The main result} Let $\bar g$ be a Riemannian metric on $X$. We identify $X$ with the closed $3$-ball via some smooth diffeomorphism $\alpha:\closedthreeball\to X$, and we extend $\bar g$ beyond $\partial X=\mathbb{S}^2$ to some slightly larger ball. Let $\nu$ denote the outward-pointing unit normal vector field over $\partial X$, and let $A$ denote the corresponding shape operator.

For the statement of the following theorem, it will be convenient to suppose that $\mathcal{C}$ is the family of standard circles over $\partial X=\mathbb{S}^2$. Note that this always holds up to suitably modifying the diffeomorphism $\alpha$. We denote by $d_\sph$ the spherical distance over $\mathbb{S}^2$ and, for all $p\in\mathbb{S}^2$ and for all $r\in(0,\pi)$, we denote by $c^+_r(p)$ (resp. $c^-_r(p)$) the positively (resp. negatively) oriented circle of spherical radius $r$ about $p$ in $\mathbb{S}^2$.

\begin{theorem}
\label{thm:SmallCircles}
Let $(\Phi,\Lambda)$ be an elliptic curvature function, and choose $k>0$. Suppose that $\partial X$ is $\Lambda$-convex and that, for all $x\in\partial X$,
\begin{equation}
\label{eq_convexity_condition}
\Phi\big(x,\nu(x),A(x)\big) > k\ .
\end{equation}
For sufficiently small $s>0$, there exists a neighbourhood $U$ of the diagonal in $\partial X\times\partial X$ and smooth functions $\mathbf{f}^\pm:(-s_0,s_0)\times U\rightarrow\R$ such that, for all $(p,r)\in\partial X\times(0,\sqrt{s_0})$, the graph
\begin{equation}
\mathrm{D}_{p,r}^\pm :=\big\{ \exp_q\big(\mathbf{f}^\pm(r^2,p,q)\cdot\nu(q)\big)\ :\ d(p,q)\leq r\big\}\ ,
\end{equation}
oriented by its normal in the direction of $\pm\nu$, solves the $(\Phi,k)$-Plateau problem with boundary curve $c^\pm_r$. Furthermore,
\begin{enumerate}[label=(\roman*)]
\item for all $p$ and for all $r$, the disk $\mathrm{D}_{p,r}^\pm$ is strictly stable in the sense of Definition \ref{defn:StableSolutions};
\item for small $s$,
\begin{equation}
\label{equation:graal}
\mathbf{f}^\pm(s,p,q) = \frac{1}{2}\lambda^\pm(p)\big(d_\sph(p,q)^2 - s\big) + \opO\big(s^2 + sd(p,q)\big)\ ,
\end{equation}
where $\lambda^\pm:\partial X\rightarrow\R$ are smooth functions (given by \eqref{eqn:DefinitionOfLambdaForAsymptoticAnalysis}, below) which depend only on $(\Phi,k)$ and $\partial X$; and
\item when the metric $\bar g$ and the diffeomorphism $\alpha$ are analytic, $\mathbf{f}^\pm$ is also analytic.
\end{enumerate}
Finally, in the case where $\Phi(A)=\tr(A)/2$, the result also holds for $k=0$, and $\mathbf{f}^+=\mathbf{f}^-$ in this case.
\end{theorem}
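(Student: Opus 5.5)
We plan to prove Theorem \ref{thm:SmallCircles} by a blow-up and rescaling argument followed by the implicit function theorem. Fix $p\in\partial X$ and work in Fermi coordinates about $\partial X$: a point near $p$ is written $\exp_q(t\nu(q))$, so a candidate disc is a normal graph $t=f(q)$ over the geodesic disc $\{d_\sph(p,q)\le r\}$ with $f=0$ on $c_r(p)$. By Lemma \ref{lemma:FormulaeForIAndII}, the shape operator of the graph is
\begin{equation*}
A_{\hat f}=A-(\Hess_g f)^\sharp+f(\mathcal R^\sharp_\nu-A^2)+\cO\big((J^1f)\cdot(J^2f)\big).
\end{equation*}
We rescale by $q=\exp_p(r y)$ with $|y|\le 1$ and write $f(q)=r^2 h(y)$. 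Then $\Hess_g f$ becomes $\Hess h+\cO(r)$ in $y$, the terms $f$ and $J^1f$ are $\cO(r)$, and the equation $\Phi(\hat f)=k$ turns into
\begin{equation*}
\Phi\big(p,\pm\nu,\pm(A(p)-\Hess h)\big)=k+\cO(r),
\end{equation*}
where the sign comes from the orientation. The remainder is smooth in $(r,y,J^2h)$. The structure of the remainders (products of $J^1f$ and $J^2f$, each $J^1f$ carrying a factor of $r$) should show that the error depends smoothly on $r$.

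At $r=0$ we solve the limit problem: find $h$ on the unit disc with $h|_{\partial}=0$ and constant Hessian $\Hess h=\lambda^\pm\mathbf{1}$, i.e.\ $h_0=\tfrac12\lambda^\pm(|y|^2-1)$. The constants $\lambda^\pm(p)$ are the ones given by Lemma \ref{lemma:ChoiceOfC1} applied with $B=\Id$ and $A=A(p)$; this uses the hypothesis \eqref{eq_convexity_condition}. In the minimal case, where $k=0$, we use Lemma \ref{lemma:ChoiceOfC2} instead, and then $\lambda^+=\lambda^-$. The linearization of the rescaled operator at $h_0$ is $-\tr(B\,\Hess\,\cdot)$ with $B=d\Phi$ positive definite and constant, and it has no zeroth-order term. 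The maximum principle (Theorem \ref{thm:MaxPrinc}) therefore makes it an isomorphism from $C^{2,\alpha}_0$ to $C^{\alpha}$. The implicit function theorem in Hölder spaces then yields, for small $r$ and all $p$, a unique solution $h(r,p,\cdot)$ near $h_0$, depending smoothly on $(r,p)$.

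Strict stability (i) follows by the same perturbation argument. The rescaled Jacobi operator is, to leading order, $-\tr(B\Hess)$ plus $\cO(r^2)$ zeroth-order terms, and its Dirichlet solution with boundary value $1$ stays close to $1$, so it is positive. This solution is exactly the positive function required by Definition \ref{defn:Stability}.

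\textbf{Main obstacle: smoothness in $s=r^2$.} The implicit function theorem only gives smoothness in $r$. The claim that $\mathbf f^\pm$ is a smooth function of $(s,p,q)$ with $s=r^2$, extending to $s\le 0$, is the delicate point. We would first try a parity argument. Replacing $r$ by $-r$ together with $y\mapsto -y$ describes the same point $q=\exp_p(ry)$. Hence by uniqueness $h(-r,p,-y)=h(r,p,y)$, and the function $F(r,p,q)=r^2h(r,p,\exp_p^{-1}(q)/r)$ is even in $r$. We then need to show that $F$, viewed in the unscaled variables $(p,q)$, is smooth jointly, including at $r=0$. For this we would expand $h$ in a Taylor series in $r$, computing each coefficient by solving linear Dirichlet problems for $-\tr(B\Hess)$ with polynomial right-hand sides. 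Each coefficient is then a polynomial in $y$ of controlled degree. Consequently $r^{2+j}h_j(y)$ is a polynomial in $(ry)$ and $r^2$, i.e.\ in $(\exp_p^{-1}q,s)$, and Borel summation together with the remainder estimate gives smoothness in $(s,p,q)$.

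In the analytic case (iii), we would replace the Hölder implicit function theorem by its analytic version with complexified $r$. For analyticity in $s$, we would argue that a function even in $r$ and analytic in $r$ is analytic in $r^2$. The expansion \eqref{equation:graal} then follows by reading off the first coefficients: $h_0$ gives $\tfrac12\lambda^\pm(d^2-s)$, and the next correction is $\cO(s^2+s\,d)$.
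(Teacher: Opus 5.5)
Your overall route is the paper's: rescale to the unit disc, apply the implicit function theorem at $r=0$, get parity from the inversion $(r,y)\mapsto(-r,-y)$, show the Taylor coefficients are polynomials, then substitute $s=r^2$. However, the leading-order profile is misidentified, and as written the implicit function theorem has no base point.

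\textbf{The base point.} The circles $c^\pm_r(p)$ are round for the standard structure of $\mathbb{S}^2$ under the identification $\partial X=\mathbb{S}^2$ given by $\alpha$. The shape operator and $\Hess_g$ in Lemma \ref{lemma:FormulaeForIAndII}, by contrast, refer to the induced metric $g$ of $\partial X$, and the two structures are not conformal in general. Take coordinates $y$ in which $c_r(p)=\{|y|=1\}$, for instance spherical normal coordinates or the paper's stereographic chart. There the $r=0$ equation is $\Phi(p,\pm\nu,\pm(A(p)-G(p)^{-1}\dd_y^2h))=k$, where $G(p)$ is the matrix of $g(p)$ in these coordinates.

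So $\frac12\lambda(|y|^2-1)$ is a solution exactly when Lemma \ref{lemma:ChoiceOfC1} is applied with $B=G(p)^{-1}$. This is the paper's definition \eqref{eqn:DefnOfTildeA}; in the minimal case it gives $\lambda=\tr A(p)/\tr G(p)^{-1}$ as in \eqref{eqn:DefnLambda}, not $\tr A(p)/2$. With $B=\Id$ the argument fails in every reading of your coordinates:
\begin{itemize}
\item your $h_0$ is not a zero of the $r=0$ operator;
\item if you read $\Hess h=\lambda\Id$ as a statement about the $g$-Hessian, no such $h$ vanishes on $|y|=1$ unless $G(p)$ is scalar;
\item if you use $g$-geodesic normal coordinates to force $G(p)=\Id$, then $|y|=1$ is a $g$-geodesic circle rather than $c_r(p)$, and you solve the wrong boundary problem.
\end{itemize}
For example, take a round sphere of radius $R$ in $\R^3$ and a non-conformal $\alpha$ under which $c_r(p)$ is, to first order, an ellipse with semi-axes $ra,rb$. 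The minimal graph is then $\frac1R\frac{a^2b^2}{a^2+b^2}r^2(|y|^2-1)+\cO(r^3)$, not $\frac{r^2}{2R}(|y|^2-1)$. Once corrected, the linearization is $-\tr(B\,G(p)^{-1}\dd_y^2\,\cdot)$. This is still constant-coefficient and elliptic, so the rest of your scheme goes through.

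\textbf{The analytic step.} As phrased, this step does not work. The function $F(r,p,q)=r^2h(r,p,\exp_p^{-1}(q)/r)$ is only defined on the degenerate region $d(p,q)\le|r|$. For $q\neq p$ it is not even defined for $r$ near $0$, so evenness in $r$ cannot by itself give analyticity in $(s,q)$ near $s=0$. You need the structure of the coefficients. Write $h=\sum a_{k,\gamma}r^ky^\gamma$. The bound $|\gamma|\le k+2$ and the parity $|\gamma|\equiv k\pmod 2$ turn $r^2h(r,w/r)$ into $\sum a_{k,\gamma}s^{(k+2-|\gamma|)/2}w^\gamma$, a power series in $(s,w)$ with nonnegative integer exponents. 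Cauchy estimates $|a_{k,\gamma}|\le C\rho^{-k}\sigma^{-|\gamma|}$ then show it converges on a full polydisc about $(0,0)$; this is the paper's argument.

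The same degree bound underlies your smooth case, and you should state the two facts that make it true.
\begin{itemize}
\item The rescaled operator depends on $y$ only through $ry$ (cf.\ Lemma \ref{lemma:SecondAsymptoticsOfH}), so the order-$r^k$ right-hand sides lie in $\mathcal P^k$.
\item For a constant-coefficient operator $L$, the map $\psi\mapsto L((1-|y|^2)\psi)$ is an isomorphism of $\mathcal P^k$ (Lemma \ref{lemma:beta}). This is what makes the $k$-th coefficient a polynomial of degree at most $k+2$ vanishing on the unit circle.
\end{itemize}
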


The following corollary will be of use in establishing the global diffeomorphism property in \S\ref{section:plateau}.
\begin{corollary}
\label{cor:LimitsOfSmallCircles}
With the same hypotheses as in Theorem \ref{thm:SmallCircles}, for all $p$, and for all sufficiently small $r$, let $\hat{\mathrm{D}}^\pm_{p,r}$ denote the Gauss lift of $\mathrm{D}^\pm_{p,r}$. If $(p_m)_{m\in\mathbb{N}}$ tends to $p_\infty\in\partial X$, and if $(r_m)_{m\in\mathbb{N}}$ tends to $0$, then $\big(\mathrm{D}^\pm_{p,r}\big)_{m\in\mathbb{N}}$ Hausdorff converges towards the singleton $\{\pm\nu(p_\infty)\}$.
\end{corollary}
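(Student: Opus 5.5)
The plan is to read the corollary directly off the graph description and the expansion \eqref{equation:graal} in Theorem \ref{thm:SmallCircles}. The Hausdorff convergence is of the Gauss lifts $\hat{\mathrm{D}}^\pm_{p_m,r_m}$ in $SX$ towards the single point $(p_\infty,\pm\nu(p_\infty))$. It therefore suffices to prove two things. First, the base points of $\mathrm{D}^\pm_{p_m,r_m}$ converge uniformly to $p_\infty$. Second, the unit normals $n$ of these discs converge uniformly to $\pm\nu(p_\infty)$ under parallel transport, that is, in the Sasaki metric $G$.

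\emph{Base points.} Every point of $\mathrm{D}^\pm_{p_m,r_m}$ has the form $\exp_q(\mathbf{f}^\pm(r_m^2,p_m,q)\nu(q))$ with $d(p_m,q)\leq r_m$. Since $\mathbf{f}^\pm$ is smooth on $(-s_0,s_0)\times U$, and since $\mathbf{f}^\pm(0,p,p)=0$ by \eqref{equation:graal}, we get $|\mathbf{f}^\pm(r_m^2,p_m,q)|=\opO(r_m^2)$ uniformly. Indeed, the leading term $\tfrac12\lambda^\pm(p)(d_\sph^2-s)$ is $\opO(r^2)$ because $\lambda^\pm$ is bounded on the compact surface $\partial X$, and the remainder is $\opO(s^2+sd)$. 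Hence every point of the disc lies within distance $r_m+\opO(r_m^2)$ of $p_m$, which tends to $p_\infty$.

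\emph{Normals.} I would write the disc as a graph in Fermi coordinates about $\partial X$, using the setup of \S\ref{appendix:asymptotic} with $t$ the normal coordinate, so that the graph function is $q\mapsto F_m(q):=\mathbf{f}^\pm(r_m^2,p_m,q)$. The graph normal is $\mathbf{n}(q,J^1F_m(q))$, and $\mathbf{n}(q,0)=\nu(q)$ at $t=0$ by the formula $n=(\partial_t-\nabla f)/\sqrt{1+|\nabla f|^2}$. The orientation in Theorem \ref{thm:SmallCircles} is chosen in the direction of $\pm\nu$, so the Gauss lift is $\pm$ this normal. It therefore remains to show $\sup_{d(p_m,q)\le r_m}|\dd_q F_m|\to0$.

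\emph{Main obstacle.} Bounding the gradient is the delicate step, because \eqref{equation:graal} controls the values of $\mathbf{f}^\pm$ and not, a priori, its $q$-derivatives. I would not differentiate the $\opO$ term. Instead I would use smoothness of $\mathbf{f}^\pm$ jointly in $(s,p,q)$ and note that $\dd_q\mathbf{f}^\pm(0,p,q)$ vanishes at $q=p$. This holds because \eqref{equation:graal} at $s=0$ gives $\mathbf{f}^\pm(0,p,q)=\tfrac12\lambda^\pm(p)d_\sph(p,q)^2+\opO(0)$, which is quadratic in $q-p$, so its $q$-differential at $q=p$ is zero. By smoothness on a compact neighbourhood of $\{0\}\times\Delta$, the function $\dd_q\mathbf{f}^\pm$ is Lipschitz there. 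This gives $|\dd_qF_m(q)|\leq C(r_m^2+d(p_m,q))\leq C'r_m\to0$ uniformly. Combined with continuity of $\mathbf{n}$, of $\nu$ and of $\exp$, this yields uniform convergence of $\hat{\mathrm{D}}^\pm_{p_m,r_m}$ to $\{(p_\infty,\pm\nu(p_\infty))\}$ in $SX$, which is the claimed Hausdorff convergence.
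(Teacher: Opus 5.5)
Your argument is correct. It is essentially the direct reading-off of Theorem \ref{thm:SmallCircles} that the paper leaves implicit, since no separate proof of the corollary is written: the base points stay within $\opO(r_m)$ of $p_m$, and the normals are uniformly close to $\pm\nu$ because the graph function is $C^1$-small. One remark: the gradient bound you single out as the main obstacle is immediate in the rescaled picture of Theorem \ref{thm:AsymptoticPlateauProblem}, where \eqref{eqn:AsymptoticsOfF} gives $\dd_q f=\lambda^\pm(p)(q-p)+r\,\dd_x\phi=\opO(r)$ and Lemma \ref{lemma:RescaledNormal} gives $n_{p,r,\phi}=\partial_t+\opO(r)$ directly, so you need neither the $C^\infty$-in-$s$ extension of Theorem \ref{theorem:uniform-smoothness} nor the vanishing of $\dd_q\mathbf{f}^\pm(0,p,p)$.
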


\subsubsection{The Fermi chart} \label{sssection:fermi}
We begin by providing a local model for neighborhoods of boundary points. We choose two relatively compact open sets $\Omega'\dans\Omega\dans\R^2$ parametrizing open subsets of $\partial X$ in such a way that there exists $r_0>0$ satisfying the following properties.
\begin{enumerate}[label=(\roman*)]
\item The Euclidean $r_0$-neighborhood of $\overline{\Omega}'$ is contained in $\Omega$; and
\item for every $p\in\overline{\Omega}'$ and every $0<r<r_0$, the Euclidean circle centered at $p$ with radius $r$ is mapped onto a round circle $c\in\cC$ in $\partial X$.
\end{enumerate}
These open subsets can be obtained by restricting to a small open subset of $\partial X$ the composition of the map $\alpha^{-1}|_{\partial X}:\partial X\to\mathbb{S}^2$ with the stereographic projection. We henceforth work with \emph{Fermi coordinates} along the boundary. This is a parametrization of a neighbourhood of the boundary by $\Omega\times (-\eps,0]$ such that the vertical segments in $\Omega\times (-\varepsilon,0)$ correspond to unit-speed geodesics orthogonal to $\partial X$, and such that the vector field $\partial_t$ has constant unit length. In addition, $X$ can be extended to a slightly larger domain by adding to it (near $\Omega$) the set $\Omega \times [0,\eps)$.

\subsubsection{The asymptotics of the Plateau problem}Our first step towards proving Theorem \ref{thm:SmallCircles} consists in establishing the existence and low order regularity of the functions $\mathbf{f}^\pm$. Higher order regularity will then follow from a careful asymptotic analysis which we will carry out in \S\ref{ssection:smooth-coordinates}. Throughout what follows, we will make considerable use of the notation discussed in Appendix \ref{ss:Notation}.

For $p\in\Omega'$ and $r>0$, let $B_r(p)$ denote the Euclidean ball of radius $r$ centered at $p$. Here and in all that follows, by mild abuse of notation, for all $r$ and $p$, we denote by $\opc^\pm_r(p)$ the Euclidean circle of radius $r$ centered at $p$, endowed with the orientation induced by $\pm\partial_t$. The rest of this section will be devoted to proving the following result.

\begin{theorem}
\label{thm:AsymptoticPlateauProblem}
Let $(\Phi,\Lambda)$ be an elliptic curvature function, choose $k>0$, and suppose that the convexity condition \eqref{eq_convexity_condition} holds. Upon reducing $r_0$ if necessary, there exist smooth functions
\begin{equation}
\label{eqn:DefinitionOfLambdaForAsymptoticAnalysis}
\lambda^\pm:\Omega'\to [0,\infty), \qquad
\phi^\pm:\Omega'\times[0,r_0)\times \closedtwoball \to \R
\end{equation}
such that
\begin{equation*}
0<\lambda_+<\lambda_-\qquad\phi^\pm(p,0,\cdot)=0\qquad\text{and}\qquad\phi^\pm(p,r,\cdot)|_{\SS^1}=0\ ,
\end{equation*}
and such that the following holds. For every $(p,r)\in \Omega'\times(0,r_0)$, if
\begin{equation}
\label{eqn:MainAsymptoticResultFormulaForF}
f^\pm_{p,r,\phi}(q) := \frac{1}{2}\lambda_\pm(p)\big(\|q-p\|^2-r^2\big)+r^2 \phi^\pm\left(p,r,\frac{q-p}{r}\right), \qquad q\in B_r(p)\ ,
\end{equation}
then the graph
\begin{equation*}
\mathrm{D}_{p,r}^\pm:=\big\{\big(q, f_{p,r,\pm}(q)\big) ~:~q\in B_r(p)\big\}
\end{equation*}
of $f^\pm_{p,r,\phi}$ over $B_r(p)$, oriented by its unit normal in the direction of $\pm \partial_t$, is a solution of the $(\Phi,k)$-Plateau problem in $\Omega\times(-\varepsilon,0]$ with oriented boundary $\opc^\pm_r(p)$. Furthermore, these solutions are strictly stable in the sense of Definition~\ref{defn:StableSolutions}.

When the metric $\bar g$ is analytic, the functions $\lambda^\pm$ and $\phi^\pm:\Omega'\times[0,r_0)\times \closedtwoball \to \R$ are also analytic.

Finally, in the case where $\Phi=\tr/2$, $\lambda^+=\lambda^-$, and the result also holds for $k=0$ and $\phi^+=\phi^-$ in this case.
\end{theorem}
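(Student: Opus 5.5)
We prove Theorem \ref{thm:AsymptoticPlateauProblem} by a rescaled implicit function theorem argument. In the Fermi chart of \S\ref{sssection:fermi}, for $(p,r)$ we look for the disc as the graph of $f^\pm_{p,r,\phi}$ in the form \eqref{eqn:MainAsymptoticResultFormulaForF}. We change variables $q=p+ry$ with $y\in\closedtwoball$ and write $f(q)=r^2F(y)$, so that $\dd f=r\,\dd F$ and $\Hess f=\Hess_y F$ up to lower order terms. We then insert this into the expansion of Lemma \ref{lemma:FormulaeForIAndII}, with the boundary $\partial X$ as reference surface (shape operator $A(p)$ with respect to $\nu=\partial_t$).

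The resulting shape operator of the graph is
\[
A_{\hat f}(p+ry)=A(p)-\Hess_y F(y)+\cO(r),
\]
with a remainder that is smooth in $(p,r,y,J^2F)$, including at $r=0$. The reason is that every correction term carries a factor of $f=\cO(r^2)$, of $\dd f=\cO(r)$, of the Christoffel symbols times $\dd f$, or of $A(p+ry)-A(p)=\cO(r)$. For the orientation $+$ we consider the map
\[
\mathcal{G}^+(p,r,F):=\Phi\big(p+ry,n,A_{\hat f}\big)-k,
\]
acting from $C^{2,\beta}_0(\closedtwoball)$, the functions vanishing on $\SS^1$ shifted by the paraboloid, to $C^{0,\beta}(\closedtwoball)$. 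For the orientation $-$ the normal is reversed, so we use $\Phi(-A_{\hat f})$.

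At $r=0$ the equation is constant-coefficient:
\[
\Phi\big(p,\nu,A(p)-\Hess F\big)=k.
\]
The paraboloid $F_0=\tfrac12\lambda(|y|^2-1)$ has $\Hess F_0=\lambda\Id$, so by Lemma \ref{lemma:ChoiceOfC1} applied with $B=\Id$ we get exact solutions $\lambda=\lambda^+(p)$ and $\lambda=\lambda^-(p)$. These satisfy $0<\lambda^+<\lambda^-$, vanish on $\SS^1$, and depend smoothly on $p$; in the minimal case Lemma \ref{lemma:ChoiceOfC2} applies instead. The linearization in $F$ at $r=0$ is $\psi\mapsto-\tr(B\Hess\psi)$, where $B=\dd\Phi$ is positive definite and constant. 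This operator has no zeroth order term and trivial Dirichlet kernel by Theorem \ref{thm:MaxPrinc}, so it is an isomorphism $C^{2,\beta}_0\to C^{0,\beta}$ by Schauder theory.

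The implicit function theorem, applied in $(p,r)$ near $(p_0,0)$ and extended by compactness of $\overline{\Omega}'$, then yields $\phi^\pm(p,r,\cdot)$ with $\phi^\pm(p,0,\cdot)=0$ and $\phi^\pm|_{\SS^1}=0$. The dependence on $(p,r)$ is smooth, including the parameter $r\in[0,r_0)$. Smoothness in $y$ up to the boundary follows by elliptic bootstrapping. In the analytic case the same argument runs in the analytic implicit function theorem framework, either with analytic Banach maps or by Morrey's analyticity of solutions to analytic elliptic boundary problems.

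For $r>0$ small, $A_{\hat f}$ stays close to $\mp(\lambda^\pm\Id-A(p))\in\pm\Lambda$, which gives $\Lambda$-convexity of the graph. The boundary of the graph is $\opc^\pm_r(p)$ with the correct orientation. The graph lies in $t\le 0$ because $F<0$ in the interior, as $F$ is a small perturbation of a negative paraboloid vanishing on the boundary with nondegenerate normal derivative. Since the graph is a smooth graph, it is an embedded disc.

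Strict stability comes from rescaling the Jacobi operator. After multiplying by $r^2$, $J^\Phi$ in the $y$ variable equals $-\tr(B\Hess_y)+\cO(r)$ with small lower-order terms. Let $u$ solve this operator with $u=1$ on the boundary, which exists by invertibility. Then $u$ is close to $1$ in $C^0$ for small $r$, so $u>0$, and Definition \ref{defn:Stability} holds.

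The main obstacle is the first step: checking that the rescaled operator $\mathcal{G}$ is genuinely smooth (or analytic) in $r$ up to $r=0$ as a map between Hölder spaces. This requires tracking that every remainder term in Lemma \ref{lemma:FormulaeForIAndII} and every $x$-dependence of $\Phi$ and $\bar g$ enters only through $(p+ry,\,r^2F,\,r\,\dd F)$, and hence is a smooth function of $r$. I would make this precise by writing all coefficients as smooth functions of $(p,r,y,F,\dd F,\Hess F)$ via the functions $\mathbf{n}$ and $\mathbf{A}$.
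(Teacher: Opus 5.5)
You follow the paper's proof. You rescale $q=p+ry$ and write the graph function as $r^2$ times a function on $\closedtwoball$. You note that the rescaled curvature operator is smooth (resp.\ analytic) up to $r=0$ in $(p,r,y,J^2\phi)$, take the paraboloid from Lemma~\ref{lemma:ChoiceOfC1} as base point, invert the constant-coefficient linearization at $r=0$, and apply the implicit function theorem uniformly in $p$.

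There is one computational error to fix. The chart on $\Omega$ is dictated by the requirement that circles of $\cC$ be Euclidean circles, so it is not $g(p)$-orthonormal. The term $(\Hess_g f)^\sharp$ of Lemma~\ref{lemma:FormulaeForIAndII} therefore becomes $G(p)^{-1}\dd^2_yF$ at $r=0$, and the limiting equation is $\Phi\big(p,\partial_t,A_0(p)-G(p)^{-1}\dd^2_yF\big)=k$. Consequently $\lambda^\pm$ must come from Lemma~\ref{lemma:ChoiceOfC1} with $B=G(p)^{-1}$, which is a positive definite $g(p)$-self-adjoint endomorphism, exactly as in \eqref{eqn:DefnOfTildeA}; in the minimal case this gives $\lambda=\tr(A)/\tr(G^{-1})$. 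With your choice $B=\Id$, the paraboloid in general does not solve the $r=0$ equation, and the implicit function theorem has no base point. Likewise the linearization is $\psi\mapsto-\tr\big(B\,G(p)^{-1}\dd^2\psi\big)$. This is still elliptic with constant coefficients, so the rest of your argument goes through unchanged.

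Two smaller points. In your $\Lambda$-convexity remark, the shape operator relative to $\partial_t$ is close to $A_0(p)-\lambda^\pm G(p)^{-1}$ in both cases; this lies in $\Lambda$ for the $+$ disc and in $-\Lambda$ for the $-$ disc, so your $\mp$ sign is off. In the minimal case, $\phi^+=\phi^-$ follows from uniqueness in the implicit function theorem, because the equation $H=0$ is invariant under reversing the orientation.

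The only real difference from the paper is the stability step. The paper perturbs the positivity of the first Dirichlet eigenvalue of $L_p$. You instead solve the rescaled Jacobi equation with boundary value $1$ and show that the solution tends to $1$ as $r\to0$. Your version verifies Definition~\ref{defn:Stability} directly and does not need self-adjointness, which $J^\Phi$ need not have. You also check explicitly that the disc is $\Lambda$-convex and lies in $\{t\leq 0\}$, which the paper leaves implicit.
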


At leading order, the solutions are given explicitly by paraboloids determined by the functions $\lambda_\pm$. These in turn depend only on the extrinsic geometry of $\partial X$ and are characterized in Lemmas~\ref{lemma:ChoiceOfC1} and~\ref{lemma:ChoiceOfC2}. The higher-order terms, encoded in $\phi^\pm$, capture the finer dependence on the family of boundary circles.

\subsubsection{The rescaled family} Consider a smooth function $f:\Omega\rightarrow(-\eps,\eps)$, and let $\hat{f}(q):=\big(q,f(q)\big)$ denote the natural parametrization of its graph. Recall from Lemma \ref{lemma:FormulaeForIAndII} that, in the Fermi coordinates introduced above, the unit normal vector field, first and second fundamental form, and shape operator of $\hat{f}$ satisfy
\begin{equation}\label{eq.graph_expansions_minimal}
\begin{aligned}
n_{\hat f} &=\partial_t-\nabla^g f+\cO\big((J^1\phi)^2\big)\ ,\\
\I_{\hat f} & = g+2 f\,\II_0 + \cO\big((J^1\phi)^2\big)\ ,\\
\II_{\hat f} &= \II_0-\Hess^g\,f + f(\mathcal R_{0,\nu}+\III_0)+ \cO\big((J^1 f)\cdot(J^2 f)\big)\ ,\ \text{and}\\
A_{\hat f} &  = A_0 -(\Hess^g f)^\sharp + f(\mathcal R_{0,\nu}^\sharp-A_0^2) + \cO\big((J^1\phi)\cdot(J^2\phi)\big)\ .
\end{aligned}
\end{equation}
Here, $\nabla^g$ and $\Hess^g$ denote respectively the gradient and hessian operators of the restriction of $g$ to $\Omega=\{t=0\}$ and $\sharp$ denotes Berger's musical isomorphism with respect to this metric. Likewise, $\II_0$, $\III_0$, and $A_0$ denote respectively the second and third fundamental forms and shape operator of $\Omega=\{t=0\}$, and $\mathcal{R}_{0,\nu}$ denotes the normal curvature tensor of this surface. We recall that the remainder terms in \eqref{eq.graph_expansions_minimal} are expressed using the notation of Apppendix \ref{ss:Notation}

We emphasize that the above formulae are statements about the smooth dependence of these geometric objects on their arguments. More precisely, recalling the jet spaces introduced in \S\ref{sss:graphs}, Equations \eqref{eq.graph_expansions_minimal} state two things. Firstly, they state that $n_{\hat f}$ and $\I_{\hat f}$ are smooth functions of $\big(p,q,J^1f(q)\big)$, whilst $\II_{\hat f}$ and $A_{\hat f}$ are smooth functions of $\big(p,q,J^2f(q)\big)$. Secondly, they explicitly describe the lowest order terms of the Taylor expansions of these functions.

Let $G(p)$ denote the matrix of $g(p)$ with respect to the Euclidian metric over $\Omega$, that is, for all $p\in\Omega$, and for all vectors $u,v\in\R^2$,
\begin{equation*}
g(p)(u,v) := \langle G(p)u,v\rangle\ .
\end{equation*}
By hypothesis, the shape operator $A_0$ of the boundary satisfies, for all $p\in\Omega$,
\begin{equation*}
\Phi\big(p,\partial_t,A_0(p)\big)>k\ .
\end{equation*}
Hence by Lemma~\ref{lemma:ChoiceOfC1}, for all $p\in\Omega$ there exist unique constants $0<\lambda^+(p)<\lambda^-(p)$, depending smoothly on $p$, such that
\begin{equation}
\label{eqn:ChoiceOfC}
\Phi\bigl(p,\partial_t,A^\pm(p)\bigr)=k\ ,
\end{equation}
where
\begin{equation}
\label{eqn:DefnOfTildeA}
A^+(p) := A_0(p)-\lambda^+(p)G(p)^{-1}\quad\text{and}\quad A^-(p) := \lambda^-(p)G(p)^{-1}-A_0(p)\ .
\end{equation}
In the particular case where $\Phi=\frac{\tr}{2}$ and $k=0$,
\begin{equation}
\label{eqn:DefnLambda}
\lambda_+(p)=\lambda_-(p)=\frac{\tr(A(p))}{\tr(G(p)^{-1})}\ .
\end{equation}

For any fixed $(p,r)\in\Omega\times(0,r_0)$, consider now the reparametrization,
\begin{equation}
\label{eqn:ReparametrizationQX}
q := q(x) := p + rx\qquad x := x(q) := \frac{q-p}{r}\ .
\end{equation}
Given $\phi:\closedtwoball\rightarrow\R$, we consider the function
\begin{equation}
\label{eqn:ReparametrizedPhi}
\begin{aligned}
f^\pm_{p,r,\phi}(q) &:= \frac{1}{2}\lambda^\pm(p)r^2\big(\|x(q)\|^2-1\big) + r^2\phi\big(x(q)\big)\\
&=\frac{1}{2}\lambda^\pm(p)\big(\|q-p\|^2 - r^2\big) + r^2\phi\big((q-p)/r\big)\ .
\end{aligned}
\end{equation}
Trivally, denoting $f:=f^\pm_{p,r,\phi}$, for all $(i,j)$, the first and second derivatives of $f$ satisfy
\begin{equation}
\label{eqn:AsymptoticsOfF}
\begin{aligned}
(\dd_q f)\big(q(x)\big)\big(\partial_{q_i}\big) &= \lambda^\pm(q)\langle q-p,\partial_{q_i}\rangle + r(\dd_x\phi)(x)\big(\partial_{x_i}\big)\ \text{and}\\
(\dd_q^2 f)\big(q(x)\big)\big(\partial_{q_i},\partial_{q_j}\big)
&= \lambda^\pm(p)\delta_{ij} + (\dd^2_x\phi)(x)\big(\partial_{x_i},\partial_{x_j}\big)\ .
\end{aligned}
\end{equation}
We consider the parametrization $e^\pm_{p,r,\phi}:\closedtwoball\rightarrow\Omega\times(-\eps,\eps)$ of the graph of $f^\pm_{p,r,\phi}$ given by
\begin{equation}
\label{eqn:Reparametrization}
\begin{aligned}
e^\pm_{p,r,\phi}(x) &:= \hat{f}^\pm_{p,r,\phi}\big(q(x)\big)\\
&=\bigg(p+rx,\pm\frac{1}{2}\lambda^\pm(p)r^2\big(\|x\|^2-1\big) + r^2\phi(x)\bigg)\ ,
\end{aligned}
\end{equation}
and we note that the boundary condition $\phi|_{\mathbb{S}^1}=0$ corresponds to the geometric condition that the image of $e^\pm_{p,r,\phi}$ is bounded by $c_r^\pm(p)$.

We will henceforth restrict our attention to the study of $f^+_{p,r,\phi}$ and $e^+_{p,r,\phi}$, as the other case is addressed similarly, and we will therefore suppress the superscript ``$\pm$'' in all that follows. Let $n_{p,r,\phi}$ and $A_{p,r,\phi}$ denote respectively the unit normal vector field and shape operator of the immersion $e_{p,r,\phi}$. We view these as functions over $\closedtwoball$ taking values in $\R^3$ and $\opEnd(\R^2)$ respectively. We first ascertain how these functions depend on $\phi$.

\begin{lemma}
\label{lemma:RescaledNormal}
There exists a smooth function $\mathbf{n}:\Omega\times[0,r_0)\times\closedtwoball\times\mathcal{J}^1\closedtwoball\rightarrow\R^3$ such that, for all $p$, $r$, $\phi$, and $x$,
\begin{equation}
\label{eqn:RescaledNormalI}
n_{p,r,\phi}(x) = \mathbf{n}\big(p,r,x,J^1\phi(x)\big)\ .
\end{equation}
Furthermore
\begin{equation}
\label{eqn:RescaledNormalII}
\mathbf{n}\big(p,r,x,J^1\phi(x)\big) = \partial_t + \cO(r)\ .
\end{equation}
\end{lemma}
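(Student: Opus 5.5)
The plan is to compute the normal of the graph directly from the explicit formula in the proof of Lemma \ref{lemma:FormulaeForIAndII} and then substitute the rescaled quantities. Recall that, for $F(q,t)=t-f(q)$, the upward unit normal to the graph of $f$ is
\begin{equation*}
n_{\hat f}(q)=\frac{\partial_t-\nabla_{g_{f}}f}{\sqrt{1+|\nabla_{g_f}f|^2_{g_f}}}\ ,
\end{equation*}
where $g_f$ denotes $g_t$ evaluated at $t=f(q)$. The metric $g_t$ depends smoothly on $(q,t)$ in the Fermi chart. Hence there is a smooth map $\mathbf{N}:\Omega\times(-\eps,\eps)\times\R^2\to\R^3$ with $n_{\hat f}(q)=\mathbf{N}\big(q,f(q),\dd f(q)\big)$ and $\mathbf{N}(q,t,0)=\partial_t$. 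The map $\mathbf N$ is smooth because $1+|\xi|^2_{g_t}\geq 1$.

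Next, substitute the rescaled graph. By \eqref{eqn:ReparametrizationQX}, \eqref{eqn:ReparametrizedPhi} and \eqref{eqn:AsymptoticsOfF}, at the point $q=p+rx$ we have
\begin{equation*}
f\big(q(x)\big)=r^2\Big(\tfrac12\lambda^+(p)(\|x\|^2-1)+\phi(x)\Big),\qquad \dd_qf\big(q(x)\big)=r\Big(\lambda^+(p)\langle x,\cdot\rangle+\dd_x\phi(x)\Big)\ .
\end{equation*}
These identities come from the chain rule applied to $\phi((q-p)/r)$. The factor $1/r$ from differentiating in $q$ is absorbed by the prefactor $r^2$, leaving one power of $r$. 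Both expressions are therefore polynomial in $r$ and smooth in $(p,x,J^1\phi(x))$. Note also that $q(x)=p+rx$ is smooth in $(p,r,x)$. We then define
\begin{equation*}
\mathbf{n}(p,r,x,\xi_0,\xi_1):=\mathbf{N}\Big(p+rx,\ r^2\big(\tfrac12\lambda^+(p)(\|x\|^2-1)+\xi_0\big),\ r\big(\lambda^+(p)\langle x,\cdot\rangle+\xi_1\big)\Big)\ .
\end{equation*}
This composition is smooth on $\Omega\times[0,r_0)\times\closedtwoball\times\mathcal J^1\closedtwoball$. That holds provided $r_0$ is small enough that $p+rx\in\Omega$, which follows from the choice of $\Omega'$ and $r_0$ in \S\ref{sssection:fermi}. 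It also requires the second argument to lie in $(-\eps,\eps)$. This last point needs a little care, since it is not guaranteed for arbitrary $\xi_0$. To handle it, I would either restrict to $|\xi_0|$ bounded, shrinking $r_0$ accordingly, or extend the metric to a product-type extension of $g_t$ defined for all $t$. Either way, \eqref{eqn:RescaledNormalI} holds by construction.

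Finally, for \eqref{eqn:RescaledNormalII}, observe that the second and third arguments of $\mathbf N$ vanish when $r=0$. Therefore $\mathbf n(p,0,x,\xi)=\mathbf N(p,0,0)=\partial_t$. Since $\mathbf n$ is smooth, Taylor's formula in $r$ gives $\mathbf n=\partial_t+r\,\mathbf m(p,r,x,J^1\phi)$ with $\mathbf m$ smooth, which is exactly the $\cO(r)$ statement in the sense of Appendix \ref{ss:Notation}. The only genuine obstacle is the domain issue above, namely keeping $f$ within the Fermi slab uniformly in the jet variable. Everything else is a direct chain-rule computation.
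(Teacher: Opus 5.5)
Your proof is correct and follows the same route as the paper, which simply notes $f(q(x))=\cO(r^2)$ and $(\dd_qf)(q(x))=\cO(r)$ and then invokes the fact that the graph normal is a smooth function of $(q,J^1f(q))$ reducing to $\partial_t$ when $J^1f=0$. Your explicit composition with $\mathbf N$ just spells this out, and the paper glosses over the domain issue you flag, namely keeping the graph inside the Fermi slab uniformly in the jet variable.
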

\begin{remark} We emphasize here that $\mathbf{n}$ is smooth and well-defined even at $r=0$, and that \eqref{eqn:RescaledNormalII} is a statement about its Taylor expansion in $r$ at this point.
\end{remark}
\begin{proof} Denoting $f:=f_{p,r,\phi}$, \eqref{eqn:ReparametrizedPhi} and \eqref{eqn:AsymptoticsOfF} yield
\begin{equation*}
f\big(q(x)\big) = \cO(r^2)\qquad\text{and}\qquad (d_qf)\big(q(x)\big) = \cO(r)\ .
\end{equation*}
The result now follows immediately from \eqref{eq.graph_expansions_minimal} and the subsequent discussion.
\end{proof}

\begin{lemma}
\label{lemma.rescaled_shape_minimal}
There exists a smooth function $\mathbf{A}:\Omega\times[0,r^0)\times\closedtwoball\times\mathcal{J}^2\R^2\rightarrow\opEnd(\R^2)$ such that, for all $p$, $r$, $\phi$, and $x$,
\begin{equation}
A_{p,r,\phi}(x) = \mathbf{A}\big(p,r,x,J^2\phi(x)\big)\ .
\end{equation}
Furthermore
\begin{equation}\label{eq.rescaled_normal_shape_minimal}
\begin{aligned}
\mathbf{A}\big(p,r,x,J^2\phi(x)\big) = A_0(p)-\lambda(p)\cdot G(p)^{-1}- G(p)^{-1}(d_x^2\phi)(x)+\cO(r)\ .
\end{aligned}
\end{equation}
\end{lemma}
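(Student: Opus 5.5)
The plan is to follow the proof of Lemma \ref{lemma:RescaledNormal} and read off everything from the graph expansion \eqref{eq.graph_expansions_minimal}. There is one subtlety. Lemma \ref{lemma:FormulaeForIAndII} expresses $A_{\hat f}$ in the coordinates $q$, with Euclidean-identified matrices. We must check that after the substitution $q=p+rx$ the resulting expression is a smooth function of $(p,r,x,J^2\phi(x))$ up to and including $r=0$. Note that $A_{p,r,\phi}(x)$ is the shape operator of the image, which does not depend on parametrization. Viewing it as an endomorphism of $\R^2$ via $d\hat f$, it equals $\mathbf{A}^{\rm graph}\big(q,J^2f(q)\big)$ evaluated at $q=q(x)$, where $\mathbf{A}^{\rm graph}:\Omega\times\mathcal{J}^2\R^2\to\opEnd(\R^2)$ is the smooth function provided in \S\ref{sss:graphs}.

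The first step is to express $J^2f(q(x))$ smoothly in the new variables. From \eqref{eqn:ReparametrizedPhi} and \eqref{eqn:AsymptoticsOfF} we have
\[
f(q(x))=r^2\big(\tfrac12\lambda(p)(\|x\|^2-1)+\phi(x)\big),\qquad d_qf(q(x))=r\big(\lambda(p)x+d_x\phi(x)\big),
\]
\[
d^2_qf(q(x))=\lambda(p)\,\mathbf{1}+d^2_x\phi(x).
\]
The factor of $\lambda$ in the first-derivative formula of \eqref{eqn:AsymptoticsOfF} should be $\lambda(p)$, and $q-p=rx$. Each of these is a polynomial, hence smooth, function of $(p,r,x,J^2\phi(x))$, including at $r=0$. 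Also $q(x)=p+rx$ is smooth in $(p,r,x)$. We therefore define
\[
\mathbf{A}(p,r,x,\xi):=\mathbf{A}^{\rm graph}\big(p+rx,\ r^2\xi_0',\ r\xi_1',\ \xi_2'\big),
\]
where $(\xi_0',\xi_1',\xi_2')$ are the above affine expressions in the jet $\xi$. This is a composition of smooth maps. Here $r_0$ is small enough that $p+rx\in\Omega$, which holds by the choice of $\Omega'\subset\Omega$ in \S\ref{sssection:fermi}. It is also small enough that $f$ takes values in $(-\eps,\eps)$; this may require restricting $\xi_0$ to a bounded set, which is harmless.

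The second step computes the Taylor expansion. By \eqref{eq.graph_expansions_minimal},
\[
A_{\hat f}=A_0(q)-(\Hess^g f)^\sharp+f(\cdots)+\cO\big((J^1f)\cdot(J^2f)\big).
\]
Substituting, $f=\cO(r^2)$ and $J^1f=\cO(r)$, while $J^2f$ stays bounded. Hence the remainder and the $f(\mathcal R^\sharp-A_0^2)$ term are $\cO(r)$, and $A_0(q)=A_0(p)+\cO(r)$. For the Hessian, $\Hess^g f=d^2f-\Gamma\cdot df$, and $\Gamma\cdot df=\cO(r)$. Also $g(q)^{-1}=G(p)^{-1}+\cO(r)$. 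This gives
\[
(\Hess^gf)^\sharp=G(p)^{-1}\big(\lambda(p)\mathbf{1}+d_x^2\phi(x)\big)+\cO(r),
\]
which yields \eqref{eq.rescaled_normal_shape_minimal}.

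The only point requiring care, and the main obstacle, is justifying that the $\cO(\cdot)$ remainders of Lemma \ref{lemma:FormulaeForIAndII}, which are smooth functions of $(q,J^2f)$ with the stated factor structure, remain smooth functions of $(p,r,x,J^2\phi)$ carrying an explicit factor of $r$. This follows from the Remark after Lemma \ref{lemma:FormulaeForIAndII}. A remainder of the form $\sum \xi_i\zeta_j R_{ij}(q,\xi,\zeta)$, with $\xi=J^1f=r\cdot(\text{smooth})$, equals $r$ times a smooth function. Smoothness in $r$ at $r=0$ is exactly what the Remark after Lemma \ref{lemma:RescaledNormal} emphasizes.
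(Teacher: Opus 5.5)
Your proposal is correct and follows the paper's route. You substitute the rescaled jets $f=\cO(r^2)$, $d_qf=\cO(r)$ and $d_q^2f=\lambda(p)\mathbf{1}+d_x^2\phi$ into the graph expansion \eqref{eq.graph_expansions_minimal}, identify $A_{p,r,\phi}(x)$ with $A_{\hat f}(q(x))$ (the paper adds that $dq=r\,\mathrm{Id}$ is scalar, so the change of parametrization leaves the matrix unchanged), and expand $A_0(q)$, $G(q)^{-1}$ and $\Gamma\cdot d_qf$ to first order in $r$. Your explicit composition formula for $\mathbf{A}$, and your correction of the factor in \eqref{eqn:AsymptoticsOfF} to $\lambda(p)$, usefully spell out steps the paper leaves implicit.
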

\begin{remark} We emphasize again that $\mathbf{A}$ is smooth and well-defined even at $r=0$, and that \eqref{eq.rescaled_normal_shape_minimal} is a statement about its Taylor expansion in $r$ at this point.
\end{remark}
\begin{proof} Denoting $f:=f_{p,r,\phi}$, \eqref{eqn:ReparametrizedPhi} and \eqref{eqn:AsymptoticsOfF} yield
\begin{equation}
\label{eqn:ProofOfRescaledShapeI}
\begin{aligned}
f\big(q(x)\big) &= \cO(r^2)\ ,\\
(d_qf)\big(q(x)\big) &= \cO(r)\ ,\ \text{and}\\
(d^2_qf)\big(q(x)\big)\big(\partial_{q_i},\partial_{q_j}\big) &= \lambda^\pm(p)\delta_{ij} + (d_x^2\phi)(x)\big(\partial_{x_i},\partial_{x_j}\big)\ .
\end{aligned}
\end{equation}
In particular, $(d^2_qf)\big(q(x)\big)=\cO(1)$. Since the reparametrization $q$ is given by translation and rescaling by a constant factor, the shape operator of the immersion $e$ is given by
\begin{equation*}
A_e(x) = q^*\big(A_{\hat{f}}\big(q(x)\big)\big) = A_{\hat{f}}\big(q(x)\big)\ .
\end{equation*}
It now follows by \eqref{eq.graph_expansions_minimal} and the subsequent discussion that the function $\mathbf{A}$ given above is indeed smooth even at $r=0$. Furthermore,
\begin{equation}
\label{eqn:ProofOfRescaledShapeII}
\begin{aligned}
A_e(x) &= A_0\big(q(x)\big) - \Hess^g(f)\big(q(x)\big)^\sharp + \cO(r)\\
&= A_0\big(q(x)\big) - G\big(q(x)\big)^{-1}\cdot\Hess^g(f)\big(q(x)\big) + \cO(r)\ .
\end{aligned}
\end{equation}
By \eqref{eqn:ReparametrizationQX},
\begin{equation*}
A_0\big(q(x)\big) = A_0(p) + \cO(r)\qquad\text{and}\qquad G\big(q(x)\big)= G(p) + \cO(r)\ .
\end{equation*}
Denoting the Christoffel symbol of $g$ by $\Gamma$, the hessian of $f$ with respect to this metric is given by
\begin{equation*}
\Hess^g(f)\big(q(x)\big)\big(\partial_{q_i},\partial_{q_j}\big)
=(d_q^2f)\big(q(x)\big)\big(\partial_{q_i},\partial_{q_j}\big) - \sum_k\Gamma^k_{ij}\big(q(x)\big)\cdot (d_qf)\big(q(x)\big)\big(\partial_{q_k}\big)\ .
\end{equation*}
Hence, by \eqref{eqn:ProofOfRescaledShapeI},
\begin{equation*}
\Hess^g(f)\big(q(x)\big)\big(\partial_{q_i},\partial_{q_j}\big)
=\lambda^\pm(p)\delta_{ij} + (d^2_x\phi)(x)\big(\partial_{x_i},\partial_{x_j}\big)+\cO(r)\ ,
\end{equation*}
The result follows upon substituting these identities into \eqref{eqn:ProofOfRescaledShapeII}.
\end{proof}

We now define $H_{p,r,\phi}(x):\closedtwoball\rightarrow\R$ by
\begin{equation}
\label{eqn:DefinitionOfH}
H_{p,r,\phi}(x) := \Phi\big(e_{p,r,\phi}(x),n_{p,r,\phi}(x),A_{p,r,\phi}(x)\big) - k\ .
\end{equation}
The image of $e_{p,r,\phi}$ is thus a $(\Phi,k)$-disk if and only if $H_{p,r,\phi}$ vanishes identically.
\begin{lemma}
\label{lemma:smooth-functional}
There exists a smooth function $\mathbf{H}:\Omega\times[0,r_0)\times\closedtwoball\times\mathcal{J}^2\R^2\rightarrow\R$ such that, for all $p$, $r$, $\phi$, and $x$,
\begin{equation}
H_{p,r,\phi}(x) = \mathbf{H}\big(p,r,x,J^2\phi(x)\big)\ .
\end{equation}
Furthermore
\begin{equation}\label{eq:smooth-functionalII}
\begin{aligned}
\mathbf{H}\big(p,r,x,J^2\phi(x)\big) = \cO(r)\ .
\end{aligned}
\end{equation}
\end{lemma}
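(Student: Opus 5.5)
The plan is to deduce both assertions of Lemma \ref{lemma:smooth-functional} from Lemmas \ref{lemma:RescaledNormal} and \ref{lemma.rescaled_shape_minimal}, together with the choice of $\lambda^+$ made in \eqref{eqn:ChoiceOfC}. I read the expansion \eqref{eq:smooth-functionalII} as the Taylor expansion in $r$ at $r=0$ of the rescaled paraboloid ansatz, that is, about the zero section $J^2\phi=0$ of $\mathcal{J}^2\R^2$. This is the same reading as for \eqref{eq.rescaled_normal_shape_minimal}, in the notation of Appendix \ref{ss:Notation}. For jets away from the zero section, the argument below identifies the $r=0$ term explicitly.

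\emph{Step 1 (smoothness).} By \eqref{eqn:Reparametrization}, the base point is
\[
e_{p,r,\phi}(x)=\Big(p+rx,\ \tfrac12\lambda^+(p)r^2(\|x\|^2-1)+r^2\phi(x)\Big).
\]
This is a smooth function of $(p,r,x,J^0\phi(x))$, and it remains smooth at $r=0$, where it equals $(p,0)$. By Lemmas \ref{lemma:RescaledNormal} and \ref{lemma.rescaled_shape_minimal}, the normal $n_{p,r,\phi}(x)$ and the shape operator $A_{p,r,\phi}(x)$ are smooth functions $\mathbf{n}$ and $\mathbf{A}$ of $(p,r,x,J^2\phi(x))$ up to $r=0$. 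I then set
\[
\mathbf{H}(p,r,x,\xi):=\Phi\big(e(p,r,x,\xi),\mathbf{n}(p,r,x,\xi),\mathbf{A}(p,r,x,\xi)\big)-k .
\]
This is smooth wherever the argument of $\Phi$ lies in $\Lambda$, because $\Phi$ is smooth on the open set $\Lambda$. At $r=0$ and $\xi=0$ the argument is $(p,\partial_t,A^+(p))$, which lies in $\Lambda$ by Lemma \ref{lemma:ChoiceOfC1}(ii). Since $\Lambda$ is open, the argument stays in $\Lambda$ for small $r$ and small jets, so $\mathbf{H}$ is smooth there. In the case $\Phi=\tr/2$ we have $\Lambda=\jetX$, so there is no restriction at all.

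\emph{Step 2 (the $r=0$ term).} At $r=0$, Lemmas \ref{lemma:RescaledNormal} and \ref{lemma.rescaled_shape_minimal} give the base point $(p,0)$, the normal $\partial_t$, and the shape operator
\[
A_0(p)-\lambda^+(p)G(p)^{-1}-G(p)^{-1}\xi_2=A^+(p)-G(p)^{-1}\xi_2 ,
\]
where $\xi_2$ denotes the second-order component of the jet $\xi$. Hence
\[
\mathbf{H}(p,0,x,\xi)=\Phi\big(p,\partial_t,A^+(p)-G(p)^{-1}\xi_2\big)-k .
\]
On the zero section this equals $\Phi(p,\partial_t,A^+(p))-k$, which vanishes by \eqref{eqn:ChoiceOfC}. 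The zeroth-order Taylor coefficient in $r$ therefore vanishes. By Taylor's theorem with smooth remainder in $r$, we obtain $\mathbf{H}=r\,\mathbf{H}_1(p,r,x,\xi)$ with $\mathbf{H}_1$ smooth, which is \eqref{eq:smooth-functionalII}.

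The only delicate point is keeping the argument of $\Phi$ inside $\Lambda$ uniformly in the parameters. I will handle it by compactness of $\overline{\Omega}'\times\closedtwoball$ together with openness of $\Lambda$, shrinking $r_0$ if necessary. I will also record the explicit formula $\mathbf{H}(p,0,x,\xi)=\Phi(p,\partial_t,A^+-G^{-1}\xi_2)-k$ for later use. Its linearisation in $\xi_2$ at $\xi=0$ is $-\tr\big(B\,G^{-1}\dd^2\phi\big)$, which is elliptic by Lemma \ref{lem.Jacobi_elliptic}. This is what the subsequent implicit function theorem argument will need.
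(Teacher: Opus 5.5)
This is the paper's argument: smoothness by composing $\Phi$ with the smooth maps of Lemmas \ref{lemma:RescaledNormal} and \ref{lemma.rescaled_shape_minimal}, and vanishing at $r=0$ on the zero jet from the choice of $\lambda^+$ in \eqref{eqn:ChoiceOfC}, which is exactly the fact $\mathcal{H}(p,0,0)=0$ used afterwards. Your restriction of $\mathbf{H}$ to a neighbourhood of the zero section when $\Lambda\neq\jetX$ is a legitimate refinement that the paper glosses over.

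The one slip is the final factorization $\mathbf{H}=r\,\mathbf{H}_1(p,r,x,\xi)$ with $\xi$ free. Your own Step~2 formula refutes it, since $\mathbf{H}(p,0,x,\xi)=\Phi\big(p,\partial_t,A^+(p)-G(p)^{-1}\xi_2\big)-k\neq 0$ for generic $\xi_2$ by ellipticity. What you have actually shown is $\mathbf{H}(p,r,x,\mathbf{0})=r\,\mathbf{H}_1(p,r,x)$, equivalently $\mathbf{H}=\cO(r,\xi)$ in the joint variable. That is all the subsequent implicit function argument needs.
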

\begin{proof} It follows immediately from Lemmas \ref{lemma:RescaledNormal} and \ref{lemma.rescaled_shape_minimal} and the smoothness of $\Phi$ that $\mathbf{H}$ is smooth. By the definition \eqref{eqn:DefnLambda} of $\lambda$, together with \eqref{eqn:RescaledNormalII} and \eqref{eq.rescaled_normal_shape_minimal}, $\mathbf{H}(0,0,0,\mathbf{0})=0$, from which \eqref{eq:smooth-functionalII} follows. This completes the proof.
\end{proof}

\begin{lemma}
\label{lemma:smooth-functionalII}
Let $\mathbf{H}$ be as in Lemma \ref{lemma:smooth-functional}. $\mathbf{H}$ extends to a smooth function over $\Omega\times(-r_0,r_0)\times\R^2\times\mathcal{J}^2\R^2$ such that
\begin{equation}
\label{equation:inversion}
\mathbf{H}\big(p,-r,-x,(J^2I^*\phi)(-x)\big) = \mathbf{H}\big(p,r,x,J^2\phi(x)\big)\ ,
\end{equation}
where, for any $\phi$,
\begin{equation*}
(I^*\phi)(x) = \phi(-x)\ .
\end{equation*}
\end{lemma}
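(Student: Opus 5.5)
The plan is to observe that the identity \eqref{equation:inversion} is a purely geometric consequence of the fact that the reparametrization \eqref{eqn:ReparametrizationQX} depends on the pair $(r,x)$ only through the product $rx$. I will first make the extension explicit. The map $(p,r,x,J^2\phi(x))\mapsto e_{p,r,\phi}(x)$ of \eqref{eqn:Reparametrization} is given by the formula
\[
e_{p,r,\phi}(x)=\Big(p+rx,\ \tfrac12\lambda(p)r^2(\|x\|^2-1)+r^2\phi(x)\Big),
\]
which is polynomial in $(r,x,\phi(x))$. It therefore makes sense for all $r\in(-r_0,r_0)$ and all $x\in\R^2$, provided that $p+rx$ stays in $\Omega$. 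The same holds for the derivatives $\dd_q f$ and $\dd^2_q f$ in \eqref{eqn:AsymptoticsOfF}. These are polynomial in $(r,x,J^2\phi(x))$, once we write $\lambda(p)\langle q-p,\cdot\rangle=\lambda(p) r\langle x,\cdot\rangle$.

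Next, the functions $\mathbf{n}$ and $\mathbf{A}$ of Lemmas \ref{lemma:RescaledNormal} and \ref{lemma.rescaled_shape_minimal} are obtained by composing these expressions with the smooth jet maps $\mathbf{n}(q,J^1f(q))$ and $\mathbf{A}(q,J^2f(q))$ of \S\ref{sss:graphs}. I therefore simply define the extension of $\mathbf{H}$ by the same formula \eqref{eqn:DefinitionOfH}, now allowing negative $r$. Smoothness is then immediate, and it agrees with the previous $\mathbf{H}$ for $r\ge 0$.

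For the symmetry, write $(q,f(q),\dd_qf,\dd_q^2f)$ as functions of $(p,r,x,J^2\phi(x))$. Replace $(r,x)$ by $(-r,-x)$ and $\phi$ by $I^*\phi$, whose $2$-jet at $-x$ is $(\phi(x),-\dd\phi(x),\dd^2\phi(x))$. Then:
\begin{itemize}[label=$\cdot$]
\item $q=p+rx$ is unchanged;
\item $f=\tfrac12\lambda r^2(\|x\|^2-1)+r^2\phi(x)$ is unchanged;
\item $\dd_qf=\lambda r x+r\,\dd_x\phi$ becomes $\lambda(-r)(-x)+(-r)(-\dd\phi(x))$, which is unchanged;
\item $\dd_q^2f=\lambda\,\mathrm{Id}+\dd_x^2\phi$ is unchanged.
\end{itemize}
Hence the $2$-jet of the graph function at the point $q$ is invariant. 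Since $n$, $A$ and therefore $H$ depend only on $(q,J^2f(q))$, the identity \eqref{equation:inversion} follows.

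The only care needed is bookkeeping rather than analysis. One must check that $\mathbf{H}$ really factors through $(q,J^2_qf)$ with no hidden dependence on $r$ or $x$ separately; this holds because $A_e(x)=A_{\hat f}(q(x))$, as noted in the proof of Lemma \ref{lemma.rescaled_shape_minimal}. One must also track the sign conventions for the jet of $I^*\phi$ at the point $-x$. Finally, I would check that the domain extension (all $x\in\R^2$) is compatible with $p+rx\in\Omega$, for instance by restricting to $|rx|<r_0$ or by extending $\bar g$ and the Fermi chart as in \S\ref{sssection:fermi}.
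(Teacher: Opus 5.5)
Your proof is correct and is essentially the paper's argument. The paper observes that $e_{p,-r,I^*\phi}(-x)=e_{p,r,\phi}(x)$ and invokes invariance of curvature under reparametrization; your check that $(q,J^2_qf)$ is unchanged under $(r,x,\phi)\mapsto(-r,-x,I^*\phi)$ is the same observation made explicit at the jet level, and your caveat about restricting the $x$-domain so that $p+rx\in\Omega$ is a fair point that the paper dismisses as ``trivial''.
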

\begin{proof} Smooth extendibility is trivial. In order to study parity, denote $i(x):=-x$ and observe that
\begin{equation*}
e_{p,-r,I^*\phi}\big(i(x)\big) = e_{p,-r,I^*\phi}(-x) = e_{p,r,\phi}(x)\ .
\end{equation*}
Since curvature is invariant under reparametrization, it follows that
\begin{equation*}
\mathbf{H}\big(p,-r,-x,(J^2I^*\phi)(-x)\big) = \mathbf{H}\big(p,-r,i(x),(J^2I^*\phi)(i(x))\big) = \mathbf{H}\big(p,r,x,J^2\phi(x)\big)\ ,
\end{equation*}
as desired.
\end{proof}

\subsubsection{Proof of Theorem \ref{thm:AsymptoticPlateauProblem}}
We continue to work with the case of $f^+_{p,r,\phi}$ and $e^+_{p,r,\phi}$, and hence suppress the superscript ``$+$'' in all that follows. We define the non-linear operator $\mathcal{H}:\Omega'\times[0,r_0)\times C^{2,\alpha}_0(\closedtwoball)\rightarrow C^{0,\alpha}(\closedtwoball)$ by
\begin{equation*}
\mathcal{H}(p,r,\phi)(x) := \mathbf{H}\big(p,r,x,J^2_x\phi(x)\big)\ .
\end{equation*}
By Lemma \ref{lemma:smooth-functionalII}, $\mathcal{H}$ is obtained as a finite combination of differentiations and composition by smooth functions. It thus defines a smooth functional between Banach spaces (see, for example, \cite{delaLlaveObaya1999,BourdaudLanzaDeCristoforis2002}, or \cite[Appendix A]{Smith2024AsymptoticGeometry}). Furthermore, by \eqref{eq:smooth-functionalII}, for all $p$ and for all $x$,
\begin{equation*}
\mathcal{H}(p,0,0)(x) = 0\ .
\end{equation*}
Theorem \ref{thm:AsymptoticPlateauProblem} will follow upon applying the implicit function theorem to this functional.

\begin{proof}[Proof of Theorem~\ref{thm:AsymptoticPlateauProblem}]
By ellipticity of the curvature function $\Phi$, the differential $\dd_3\Phi\big(p,\partial_t,A(p)\big)$ is given by a symmetric, positive-definite matrix $B^+(p)$, say. That is, for every matrix $M$, self-adjoint with respect to $g$ (see \eqref{equation:ba}),
\begin{equation}\label{eqn.derivative_third_variable}
\dd_3\Phi\big(p,\partial_t,A(p)\big)\cdot M=\tr\big(B(p)\cdot M\big)\ .
\end{equation}
By \eqref{eq.rescaled_normal_shape_minimal}, for all $p$ and for all $\phi$,
\begin{equation*}
d_3\mathcal{H}(p,0,0)\cdot\phi = L_p\phi\ ,
\end{equation*}
where the linear partial differential operator $L_p$ is given by
\begin{equation*}
L_p\phi:= -\tr\big(B(p)\cdot G(p)^{-1}\cdot d_x^2\phi\big)\ .
\end{equation*}
This operator is second order and elliptic, with constant coefficients. By Lemma \ref{lemma:CharacterizationOfStability}, Item (i), it has trivial Dirichlet kernel over $\closedtwoball$ and hence, by Fredholm's alternative, it defines a linear isomorphism $L_p:C^{2,\alpha}_0(\closedtwoball)\longrightarrow C^{0,\alpha}(\closedtwoball)$.

After reducing $r_0$ if necessary, the implicit function theorem yields a unique smooth function
\begin{equation*}
\phi:\Omega'\times[0,r_0)\longrightarrow C^{2,\alpha}_0(\closedtwoball)
\end{equation*}
such that $\phi(p,0)=0$ and, for all $p\in\Omega'$ and $r\in[0,r_0)$,
\begin{equation*}
\mathcal{H}\big(p,r,\phi(p,r)\big)=0\ .
\end{equation*}
In particular, for all $(p,r)$, $\phi(p,r)$ vanishes over $\mathbb{S}^1$. Furthermore, by elliptic regularity, $\phi$ is a smooth function of all its arguments.

We now verify stability. For small $r$, the linearization of $\mathcal{H}$ about $\phi(p,r)$ is a small $C^{0,\alpha}$ perturbation of $L_p$. Since the first Dirichlet eigenvalue of $L_p$ is strictly positive, it follows that the same holds for this linearized operator provided that $r$ is sufficiently small. Strict stability follows.
\end{proof}

\subsection{Smoothness and analytic extendibility}
\label{ssection:smooth-coordinates}
We now address higher-order regularity and extendibility of the functions constructed in Theorem \ref{thm:AsymptoticPlateauProblem}. Before proceeding, we review an elementary example which illustrates in the local setting the phenomenon that we wish to reproduce. The family of graphs
\begin{equation}
\label{equation:trivial-example}
\Omega \times [0,r_0) \times B_r(p) \ni (p,r,y) \mapsto \big(y, \lambda(p)(|y|^2-r^2)/2\big) \in X
\end{equation}
extends smoothly across the boundary $\partial X$ using the change of variables $s = r^2$ and by allowing $y \in B_1(p)$. That is, with respect to the coordinates $s=r^2$, we consider the family
\begin{equation*}
\Omega \times [-s_0,s_0] \times B_1(p) \ni (p,s,y) \mapsto (y,\lambda(p)\big(|y|^2-s)/2\big)\ .
\end{equation*}
This family is smooth with respect to all parameters, extends the initial family \eqref{equation:trivial-example}, and lies outside $X$ for $s \leq 0$. This is the structure that we aim to extend to general small $(\Phi,k)$-discs. Thus, for $s_0 > 0$, we consider the set
\begin{equation*}
E := \{ (p,s,y) ~:~ p \in \Omega, s \in [-s_0,s_0], y \in \Omega, |y-p| \leq 1\}\ .
\end{equation*}
Let $f^\pm_{p,r,\phi}$ be as in Theorem \ref{thm:AsymptoticPlateauProblem}.
\begin{theorem}
\label{theorem:uniform-smoothness}
For sufficiently small $s_0>0$, there exist smooth functions $\tilde{f}^\pm : E \to \R$ such that, for all $s \geq 0$, and for all $|y|^2 \leq s$,
\begin{equation*}
f^\pm_{p,\sqrt{s},\phi}(q) = \tilde{f}^\pm(p,s,q)\ .
\end{equation*}
In addition, if $\bar g$ is analytic, then, upon reducing the size of $E$, $\tilde{f}^\pm$ are analytic and uniquely determined by $f^\pm$ on $\{p \in \partial X, s \geq 0, |y|^2 \leq s\}$.
\end{theorem}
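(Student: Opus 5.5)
The plan is to exploit the parity symmetry of Lemma \ref{lemma:smooth-functionalII}, together with uniqueness in the implicit function theorem. The aim is to show that the rescaled correction $r^2\phi(p,r,(q-p)/r)$ is in fact a smooth function of $(p,s=r^2,q)$. Then it, and with it the whole graph function, extends to the region $E$, which also covers negative $s$.

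\emph{Step 1: parity of $\phi$ in $r$.} The functional $\mathcal H$ extends to $r\in(-r_0,r_0)$ by Lemma \ref{lemma:smooth-functionalII}. Apply the implicit function theorem on this symmetric interval, at $(p,0,0)$, to get a unique smooth family $\phi(p,r)\in C^{2,\alpha}_0(\closedtwoball)$ for $r\in(-r_0,r_0)$. By \eqref{equation:inversion}, $r\mapsto I^*\phi(p,-r)$ is also a solution passing through $0$ at $r=0$, so uniqueness gives
\[
\phi(p,-r,-x)=\phi(p,r,x).
\]
Hence the full graph function $F(p,r,q):=f_{p,r,\phi}(q)$ is even in $r$ for fixed $q$. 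Here the paraboloid part is manifestly even, and $r^2\phi(p,r,(q-p)/r)$ is invariant under $r\mapsto -r$, since $x=(q-p)/r$ flips sign as well.

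\emph{Step 2: removing the $1/r$ scaling.} The obstacle is that $x=(q-p)/r$ is singular as $r\to 0$. I would expand $\phi(p,r,x)=\sum_{j\le N} r^j\phi_j(p,x)+O(r^{N+1})$ in $C^\infty$. This expansion exists because $\phi$ is smooth in all variables, by elliptic regularity. I would then show inductively, by differentiating $\mathcal H(p,r,\phi)=0$ in $r$, that $\phi_j$ is a polynomial in $x$ of degree at most $j+2$. The equation for $\phi_j$ has the form $L_p\phi_j=$ (a polynomial in $x$ built from $\phi_{<j}$ and from Taylor coefficients of $\mathbf H$ in $(r,x)$), with zero Dirichlet data. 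The reason for polynomiality is that the $x$-dependence of $\mathbf H$ enters only through $q=p+rx$, so each power of $r$ comes with at most one power of $x$. By parity, $\phi_j(p,-x)=(-1)^j\phi_j(p,x)$.

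The key subtlety is that $L_p$ has constant coefficients but the Dirichlet condition sits on the unit circle. A polynomial source therefore gives a polynomial solution only if one subtracts a multiple of $(|x|^2-1)\cdot(\text{polynomial})$. This works because, for constant-coefficient elliptic $L_p$, the Dirichlet problem on a disc preserves polynomials of bounded degree. This is the Fischer decomposition, after a linear change of coordinates turning $L_p$ into the Laplacian and the disc into an ellipse, for which the polynomial-preservation property still holds.

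Consequently $r^{2+j}\phi_j(p,(q-p)/r)$ is a sum of monomials $r^{2+j-|\beta|}(q-p)^\beta$ with $|\beta|\le j+2$ and $|\beta|\equiv j \pmod 2$. Each exponent $2+j-|\beta|$ is therefore even and nonnegative, so each term is a polynomial in $(s,q-p)$.

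\emph{Step 3: summing the expansion, and the main obstacle.} This gives a formal Taylor series of $\tilde f$ in $(s,q-p)$. The main difficulty is converting it into a genuine smooth function on $E$, including negative $s$ and $|q-p|\le 1$ rather than $\le r$. In the smooth case I would use Borel's lemma to produce $\tilde f$ with the correct jet at $s=0$. Then I would check that $F-\tilde f$ is flat and vanishes on $\{s\ge0,|q-p|^2\le s\}$. The cleanest way is to redo the implicit function argument directly in the variables $(p,s,y)$: set the problem up for a function of $y$ on a fixed ball $|y-p|\le 1$, with the boundary condition imposed on $|y-p|^2=s$. This amounts to a blow-up, and I expect it to be the genuinely delicate point.

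In the analytic case I would instead prove Cauchy-type bounds $\|\phi_j\|\le C^{j+1}$. These come from analytic dependence of the implicit function solution on $r$: $\phi$ is analytic in $r$ with values in polynomial-dominated spaces. Combining them with Step 2's degree bound gives convergence of the double series in $(s,q-p)$ on a small polydisc. Uniqueness of $\tilde f$ then follows from analytic continuation off the open set $\{s>0,|q-p|^2<s\}$.
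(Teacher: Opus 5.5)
Your Steps 1 and 2 follow the paper's own route. Parity comes from the inversion symmetry \eqref{equation:inversion} plus uniqueness. Then an induction shows $\phi_k(p,\cdot)\in\mathcal{P}^{k+2}_0$. For the polynomiality of the Dirichlet solution, Lemma \ref{lemma:beta} just observes that $f\mapsto L_p\big((1-|x|^2)f\big)$ maps $\mathcal{P}^k$ injectively, hence bijectively, into itself; this is quicker than a Fischer decomposition on ellipses. Your analytic argument, combining geometric bounds on the $\phi_j$ with the degree bound, is also a sound way to resum the series in $(s,q-p)$.

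The gap is the smooth case of Step 3. It is the actual content of the theorem, and you leave it open.
\begin{itemize}
\item The Borel route does not work as written. A Borel realization $\tilde f_B$ of the formal series has the correct jet at the vertex $\{s=0,\ q=p\}$. But $F-\tilde f_B$ does not vanish on $\{|q-p|^2\le s\}$. Showing that it is smooth up to the boundary of this region near $s=0$ requires exactly the derivative bounds on $F$ that are to be proved.
\item Redoing the implicit function theorem in the variables $(p,s,y)$ is not viable either. The boundary circle $|y-p|=\sqrt s$ collapses at $s=0$, so there is no fixed function space in which to solve through $s=0$. This degeneration is the very reason the problem was rescaled to the unit disc.
\end{itemize}

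The missing observation is that the remainder is harmless on the parabolic region. For this you must write it as a power of $r$ times a \emph{jointly smooth} function, not merely estimate it. Since $\phi$ is smooth on $\Omega'\times[0,r_0)\times\closedtwoball$, Taylor's theorem in $r$ gives, for every $N$,
\begin{equation*}
r^2\phi(p,r,x)=\sum_{k=0}^{2N}r^{k+2}\phi_k(p,x)+r^{2N+2}\psi_N(p,r,x)\ ,
\end{equation*}
with $\psi_N$ smooth up to $r=0$ and up to $\partial\closedtwoball$.

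On $\hat\Omega:=\{(p,s,q): s>0,\ |q-p|^2\le s\}$, the rescaled variable $x=(q-p)/\sqrt s$ stays in $\closedtwoball$. Consider the remainder term $s^{N+1}\psi_N\big(p,\sqrt s,(q-p)/\sqrt s\big)$.
\begin{itemize}
\item Each $p$- or $q$-derivative of it costs at most a factor $s^{-1/2}$.
\item Each $s$-derivative costs at most a factor $s^{-1}$, using $|q-p|\le\sqrt s$.
\end{itemize}
Hence $\dd_p^i\dd_s^j\dd_q^k$ of this term is bounded on $\hat\Omega$ whenever $i+2j+k\le 2N+2$. The finite sum is a polynomial in $(s,q-p)$ by your Step 2. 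Since $N$ is arbitrary, all derivatives of $\tilde f(p,s,q):=f_{p,\sqrt s,\phi}(q)$ are bounded on $\hat\Omega$.

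In the variable $s=r^2$, this region is the epigraph $\{s\ge|q-p|^2\}$. Its boundary is a smooth hypersurface even at the vertex; in the variables $(r,q)$ it would be a cone. Whitney's extension theorem therefore produces the smooth functions $\tilde f^\pm$ on $E$ directly, with neither Borel's lemma nor a new implicit-function argument.
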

\noindent As in the previous section, we will henceforth restrict attention to the study of $f^+_{p,r,\phi}$, and we will suppress the superscript ``$\pm$'' in all that follows.

\subsubsection{The Taylor expansion}
Let $\phi$ be as in Theorem \ref{thm:AsymptoticPlateauProblem}. We consider the Taylor expansion of this function about $r=0$, that is
\begin{equation}
\label{eqn:TaylorSeriesOfPhi}
\phi(p,r,x) = \sum_{i=0}^N r^k\phi_k(p,x) + \cO\big(r^{N+1}\big)\ .
\end{equation}
We first show that each term of this Taylor series has a well-defined parity.
\begin{prop}
\label{prop:ParityOfTaylorTerms}
For all $p$, for all $k$, and for all $k$,
\begin{equation}
\label{eqn:ParityOfTaylorTerms}
\phi_k(p,-x) = (-1)^k\phi_k(p,x)\ .
\end{equation}
\end{prop}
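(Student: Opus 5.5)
The plan is to exploit the symmetry \eqref{equation:inversion} of Lemma \ref{lemma:smooth-functionalII} together with uniqueness in the implicit function theorem. First, I will extend the construction of $\phi$ to negative values of $r$. Since $\mathbf{H}$ extends smoothly to $\Omega\times(-r_0,r_0)\times\R^2\times\mathcal{J}^2\R^2$, the functional $\mathcal{H}(p,r,\phi)(x):=\mathbf{H}(p,r,x,J^2\phi(x))$ is smooth on $\Omega'\times(-r_0,r_0)\times C^{2,\alpha}_0(\closedtwoball)$. Its linearization at $(p,0,0)$ is the same isomorphism $L_p$ as before. The implicit function theorem therefore yields a unique smooth map $r\mapsto\phi(p,r)$ on $(-r_0,r_0)$, with $\phi(p,0)=0$, solving $\mathcal{H}(p,r,\phi(p,r))=0$ and lying in a small neighbourhood of $0$. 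Restricted to $r\geq 0$, this map coincides with the function constructed in the proof of Theorem \ref{thm:AsymptoticPlateauProblem}, again by uniqueness. In particular, the Taylor coefficients $\phi_k$ in \eqref{eqn:TaylorSeriesOfPhi} are the same whether one expands the one-sided or the two-sided function.

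Second, I will define $\psi(p,r):=I^*\phi(p,-r)$, that is, $\psi(p,r)(x)=\phi(p,-r,-x)$. Since $I^*$ is an isometry of $C^{2,\alpha}_0(\closedtwoball)$ (the unit disc and its boundary are invariant under $x\mapsto -x$), $\psi$ is smooth in $r$, satisfies $\psi(p,0)=0$, and stays in the same small neighbourhood of $0$. Applying \eqref{equation:inversion} with $r$ replaced by $-r$ and $\phi$ replaced by $\psi(p,r)$ gives
\begin{equation*}
\mathcal{H}\big(p,r,\psi(p,r)\big)(x)=\mathbf{H}\big(p,r,x,J^2\psi(p,r)(x)\big)=\mathbf{H}\big(p,-r,-x,J^2\phi(p,-r)(-x)\big)=\mathcal{H}\big(p,-r,\phi(p,-r)\big)(-x)=0\ .
\end{equation*}
Here one must check that $I^*\psi(p,r)=\phi(p,-r)$, so that the jets match under the identity of Lemma \ref{lemma:smooth-functionalII}. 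By uniqueness in the implicit function theorem, $\psi(p,r)=\phi(p,r)$, that is,
\begin{equation*}
\phi(p,-r,-x)=\phi(p,r,x)\qquad\text{for all } r\in(-r_0,r_0),\ x\in\closedtwoball\ .
\end{equation*}

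Finally, I will expand both sides in $r$ and compare coefficients. This is legitimate because $\phi$ is smooth in $(r,x)$ jointly, so the Taylor coefficients are the smooth functions $\phi_k(p,x)=\frac{1}{k!}\partial_r^k\phi(p,0,x)$. Comparing coefficients gives $(-1)^k\phi_k(p,-x)=\phi_k(p,x)$, which is \eqref{eqn:ParityOfTaylorTerms}.

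I expect the main obstacle to be the first step: justifying that the extension of $\mathbf{H}$ to $r<0$ still gives a functional to which the implicit function theorem applies on a full interval about $r=0$, with a uniqueness neighbourhood uniform in $p$. This should follow because $r=0$ is an interior point and $L_p$ depends continuously on $p$ over the relatively compact set $\Omega'$.
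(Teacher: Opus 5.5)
Your proposal is correct and matches the paper's proof: the paper likewise sets $\tilde\phi(p,r,x):=\phi(p,-r,-x)$, uses \eqref{equation:inversion} to see that $\tilde\phi$ solves the same equation, deduces $\tilde\phi=\phi$ from uniqueness in the implicit function theorem, and compares Taylor coefficients in $r$. You add two checks the paper leaves implicit: extending $\phi$ to $r<0$, and verifying that the reflected solution lies in the uniqueness neighbourhood.
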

\begin{proof}
Indeed, denote $\tilde{\phi}(p,r,x):=\phi(p,-r,-x)$. By \eqref{equation:inversion},
\begin{equation*}
\mathbf{H}\big(p,r,x,(J^2\tilde{\phi})(p,r,x)\big) = \mathbf{H}\big(p,-r,-x,(J^2\phi)(p,-r,-x)\big) = 0\ .
\end{equation*}
It follows by uniqueness that $\tilde{\phi}=\phi$. Comparing coefficients of the Taylor series \eqref{eqn:TaylorSeriesOfPhi} yields, for all $k$,
\begin{equation}
\phi_k(p,-x) = (-1)^k\phi_k(p,x)\ ,
\end{equation}
as desired.
\end{proof}

In order to prove Theorem \ref{theorem:uniform-smoothness}, it will suffice to show that, for all $p$, and for all $k$, the function $\phi_k(p,\cdot)$ is a polynomial in $x$ of degree at most $(k+2)$. We note first that the functional $H_{p,r,\phi}$ be can be expressed as a non-linear partial differential operator in a slightly different manner to that used in Section \ref{s.asymptotic_Analysis}.
\begin{lemma}
\label{lemma:SecondAsymptoticsOfH}
There exists a smooth function $\mathbf{H}':\Omega\times[0,r_0)\times\R^2\times\mathcal{J}^2\R^2\rightarrow\R$ such that, for all $p$, $r$, $\phi$, and $x$,
\begin{equation}
\label{eqn:SecondAsymptoticsOfHI}
H_{p,r,\phi}(x) = \mathbf{H}'\big(p,r,\big(rx,r^2\phi(x),r\dd_x\phi(x),\dd^2_x\phi(x)\big)\big)\ .
\end{equation}
Furthermore, or all $p$,
\begin{equation}
\label{eqn:SecondAsymptoticsOfHII}
\mathbf{H}'(p,0,\mathbf{0}) = 0\ .
\end{equation}
\end{lemma}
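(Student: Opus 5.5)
The plan is to go back to the construction of $H_{p,r,\phi}$ in \eqref{eqn:DefinitionOfH}. Make every dependence on $r$, $x$, $\phi$ explicit, and check that these quantities only ever enter through the combinations $rx$, $r^2\phi(x)$, $r\,\dd_x\phi(x)$ and $\dd^2_x\phi(x)$, with the remaining $r$-dependence smooth.

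\emph{Step 1: write the jets of $f$ in the new variables.} Put $f:=f_{p,r,\phi}$ and $q=p+rx$. By \eqref{eqn:ReparametrizedPhi} and \eqref{eqn:AsymptoticsOfF}:
\begin{itemize}
\item $q-p=rx$;
\item $f(q)=\tfrac12\lambda(p)\big(\|rx\|^2-r^2\big)+r^2\phi(x)$;
\item $\dd_qf=\lambda(p)\langle rx,\cdot\rangle+r\,\dd_x\phi(x)$;
\item $\dd^2_qf=\lambda(p)\,\Id+\dd^2_x\phi(x)$.
\end{itemize}
Thus the full $2$-jet $\big(q,J^2_qf(q)\big)$ is a smooth (indeed polynomial) function of $\big(p,r,\,rx,\,r^2\phi(x),\,r\,\dd_x\phi(x),\,\dd^2_x\phi(x)\big)$. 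The term $-\tfrac12\lambda r^2$ is smooth in $r$ alone, and $\|rx\|^2$ is a function of $rx$.

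\emph{Step 2: substitute into the geometry.} By Lemma \ref{lemma:FormulaeForIAndII} and the remark after it, $n_{\hat f}(q)$ and $A_{\hat f}(q)$ are smooth functions of $\big(q,J^2f(q)\big)$. Here smoothness in $q$ comes from the smoothness of the Fermi metric coefficients $g_t$, $\II_t$ and of the Christoffel symbols. The point $e_{p,r,\phi}(x)=\big(q,f(q)\big)$ is likewise a function of these variables. As in the proof of Lemma \ref{lemma.rescaled_shape_minimal}, the shape operator is unchanged by the affine reparametrization, so $A_{p,r,\phi}(x)=A_{\hat f}(q(x))$. Composing with the smooth function $\Phi$ on $\Lambda$ and subtracting $k$ defines $\mathbf{H}'$ and gives \eqref{eqn:SecondAsymptoticsOfHI}.

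The domain requires a little care. Near the zero argument, the shape operator is close to $A_0(p)-\lambda(p)G(p)^{-1}$, which lies in $\Lambda$ by \eqref{eqn:ChoiceOfC}. So after shrinking to a neighbourhood of $\mathbf{0}$ (or extending $\mathbf{H}'$ arbitrarily but smoothly outside it), $\mathbf{H}'$ is smooth. I expect this to be the only mildly delicate point: making the domain of $\mathbf{H}'$ precise while $\Lambda$ is merely open.

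\emph{Step 3: evaluate at the origin.} At $(p,0,\mathbf{0})$ we have $q=p$, $f=0$, $\dd f=0$ and $\dd^2f=\lambda(p)\Id$. So the normal is $\partial_t$ and the shape operator is $A_0(p)-\lambda(p)G(p)^{-1}=A^+(p)$. Then \eqref{eqn:ChoiceOfC} gives $\Phi=k$, hence $\mathbf{H}'(p,0,\mathbf{0})=0$, which is \eqref{eqn:SecondAsymptoticsOfHII}. This is the same computation as in Lemma \ref{lemma:smooth-functional}.

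The $-$ case and the case $(\tr/2,0)$ are handled identically, using $A^-$ and \eqref{eqn:DefnLambda} respectively.
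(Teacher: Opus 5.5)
Your proposal is correct and follows the paper's own (one-line) proof, which likewise substitutes the jets \eqref{eqn:ReparametrizedPhi} and \eqref{eqn:AsymptoticsOfF} of $f_{p,r,\phi}$ into the graph formulae \eqref{eq.graph_expansions_minimal}, then evaluates at the origin via \eqref{eqn:ChoiceOfC} as in Lemma \ref{lemma:smooth-functional}. Your remark about the domain is a fair refinement of a point the paper glosses over: $\mathbf{H}'$ is only naturally defined near $(p,0,\mathbf{0})$, because $\Lambda$ is open and the Fermi chart is thin, and a globally defined version needs a cutoff rather than an arbitrary extension from an open set.
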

\begin{proof} This follows in the same manner as in the proofs of Lemmas \ref{lemma:RescaledNormal}, \ref{lemma.rescaled_shape_minimal} and \ref{lemma:smooth-functional} using the formulae \eqref{eq.graph_expansions_minimal} for the normal and shape operator, the formulae \eqref{eqn:ReparametrizedPhi} for $f_{p,r\phi}$ and the formula \eqref{eqn:AsymptoticsOfF} for the derivatives of this function.
\end{proof}

For all $k$, we denote by $\mathcal{P}^k$ the space of polynomials of degree at most $k$ on $\BB^2$, and by $\mathcal{P}^k_0$ the subspace consisting of those polynomials which vanish along $\mathbb{S}^1$. We will use induction to prove the following key technical result.
\begin{prop}
\label{proposition:key}
For all $k$, and for all $p$, $\phi_k(p,\cdot)\in\mathcal{P}^{k+2}_0$.
\end{prop}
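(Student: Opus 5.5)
Proof by strong induction on $k$. The idea is to expand the equation $\mathbf{H}'\big(p,r,(rx,r^2\phi,r\dd_x\phi,\dd_x^2\phi)\big)=0$ of Lemma \ref{lemma:SecondAsymptoticsOfH} in powers of $r$. At order $r^k$ we will get a linear elliptic equation
\begin{equation*}
L_p\phi_k = F_k(p,x),
\end{equation*}
where $L_p\phi = -\tr\big(B(p)G(p)^{-1}\dd_x^2\phi\big)$ is the constant-coefficient operator from the proof of Theorem \ref{thm:AsymptoticPlateauProblem}. The source $F_k$ depends only on $\phi_0,\dots,\phi_{k-1}$. The base case is $\phi_0=0$, since $\phi(p,0,\cdot)=0$, and this lies trivially in $\mathcal{P}^2_0$.

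\textbf{Degree of the source.} Assume $\phi_j\in\mathcal{P}^{j+2}_0$ for all $j<k$. Taylor-expand the smooth function $\mathbf{H}'$ about $(p,0,\mathbf{0})$ in all arguments except $p$. Its arguments are $r$, $y_1:=rx$, $y_2:=r^2\phi$, $y_3:=r\dd_x\phi$ and $y_4:=\dd^2_x\phi$. Assign each a weight of ``degree in $x$ minus power of $r$'':
\begin{itemize}
\item $r$ and $rx$ have weight $\leq 0$;
\item $r^2\phi_j r^j$ has degree $j+2$ and $r$-power $j+2$, so weight $0$;
\item $r\cdot r^j\dd\phi_j$ has degree $\leq j+1$ and $r$-power $j+1$, so weight $\leq 0$;
\item $r^j\dd^2\phi_j$ has degree $\leq j$ and $r$-power $j$, so weight $\leq 0$.
\end{itemize}
A monomial of the Taylor expansion is a product of these quantities, so its total weight is $\leq 0$. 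Hence every contribution to the $r^k$ coefficient is a polynomial in $x$ of degree $\leq k$. The only exception is the linear term $\partial_{y_4}\mathbf{H}'\cdot r^k\dd^2\phi_k$, which is exactly $L_p\phi_k$ (using $\partial_{y_4}\mathbf{H}'(p,0,0)$ from Lemma \ref{lemma.rescaled_shape_minimal}). Expanding to finite order $k$ with a remainder $\cO(r^{k+1})$ is enough, because $\phi$ is smooth in $r$.

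Two points need checking here:
\begin{itemize}
\item the $r^k$ coefficient involves $\phi_j$ only for $j\le k$, and $\phi_k$ enters only through the $\dd^2\phi$ slot at zeroth order in the other variables;
\item in every other monomial containing $r^k\dd^2\phi_k$, the accompanying factor carries a positive power of $r$ and therefore does not reach order $r^k$.
\end{itemize}
Together these give $L_p\phi_k=F_k$ with $F_k\in\mathcal{P}^k$.

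\textbf{Solving with polynomials.} It remains to show that the Dirichlet problem $L_p\psi=F$ on $\closedtwoball$, $\psi|_{\SS^1}=0$, with $F\in\mathcal{P}^k$, has its unique solution in $\mathcal{P}^{k+2}_0$. Uniqueness holds because $L_p$ has trivial Dirichlet kernel.

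For existence, write the solution as $\psi=(|x|^2-1)Q$ with $Q\in\mathcal{P}^k$. The map
\begin{equation*}
T: Q\mapsto L_p\big((|x|^2-1)Q\big)
\end{equation*}
sends $\mathcal{P}^k$ into $\mathcal{P}^k$. It is injective because $L_p$ has trivial Dirichlet kernel, so by finite dimensionality it is surjective. Therefore $\phi_k=(|x|^2-1)Q\in\mathcal{P}^{k+2}_0$.

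This step is clean, but it uses the fact that the Dirichlet boundary is the round circle $\SS^1$ and that $L_p$ has constant coefficients. This is why the rescaling is by the Euclidean circle and why $p$, not $q(x)$, appears in the leading operator.

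\textbf{Main obstacle.} The hardest part is the bookkeeping in the degree count: making rigorous that the smooth dependence of $\mathbf{H}'$ on $p$ and on the combined variables never introduces a term of weight $>0$. In particular, the coefficients of the leading operator must be frozen at $p$, with their $x$-dependence coming only through $rx$. I would organize this by substituting $r\mapsto\tau r$, $x\mapsto x/\tau$ formally. Equivalently, I would note that each argument of $\mathbf{H}'$, viewed as a formal series in $r$ with polynomial coefficients in $x$, lies in the ring of series whose $r^m$ coefficient has degree $\leq m$ (with $\dd^2\phi$'s higher terms included by induction). This ring is closed under products and under composition with smooth functions, which settles the count.

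The parity relation of Proposition \ref{prop:ParityOfTaylorTerms} provides a consistency check: $\phi_k$ has parity $(-1)^k$. It is not needed for the induction itself, but it is the input that will later convert the result into smoothness in $s=r^2$.
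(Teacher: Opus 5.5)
Your proposal is correct and follows the paper's proof essentially step for step. Your weight count is the content of Lemma \ref{lemma:InductiveStepOfTaylorSeriesForPhi} applied with $Y=(rx,r^2\phi,r\dd_x\phi,\dd_x^2\phi)$ and $F=\mathbf{H}'$, your isolation of the linear term $L_p\phi_k$ matches the paper's use of $\dd_5\mathbf{H}'(0,\mathbf{0})$, and your polynomial solving step is exactly Lemma \ref{lemma:beta}. The ``main obstacle'' you flag, namely that the coefficients have no $x$-dependence except through $rx$, is already built into the form of $\mathbf{H}'$ in Lemma \ref{lemma:SecondAsymptoticsOfH}, so no further bookkeeping is needed there.
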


In order to prove this proposition, we first consider smooth functions $F:\R\times\R^N\rightarrow\R$ and $Y:[0,r_0]\times\closedtwoball\rightarrow\R^N$ such that
\begin{enumerate}[label=(\roman*)]
\item $F(0,\mathbf{0})=0$;
\item $Y(0,x)=0$ for all $x \in \BB^2$; and
\item $F\big(r,Y(r,x)\big)=0$ for all $r \in [0,r_0]$ and for all $x \in \BB^2$.
\end{enumerate}
We consider the Taylor expansion of $F$ about $(0,\mathbf{0})$, that is
\begin{equation*}
F(r,y) = \sum_{n=0}^NF_n(r,y) + \cO\big((r,y)^{N+1}\big)\ ,
\end{equation*}
where, for all $n$, $F_n$ is a homogeneous polynomial of order $n$. By hypothesis,
\begin{equation*}
F_0=0\ ,
\end{equation*}
and, by Taylor's theorem,
\begin{equation*}
F_1(r,y) = \dd_1F(0)\cdot r + \dd_2F(0)\cdot y\ .
\end{equation*}
We likewise consider the Taylor expansion of $Y$ in $r$ about $r=0$, that is
\begin{equation*}
Y(r,x) = \sum_{n=0}^N r^n Y_n(x) + \cO\big(r^{N+1}\big)\ .
\end{equation*}
By hypothesis
\begin{equation*}
Y_0=0\ .
\end{equation*}
The inductive step in the proof of Proposition \ref{proposition:key} is proven as follows.
\begin{lemma}
\label{lemma:InductiveStepOfTaylorSeriesForPhi}
For all $k\geq 1$, if $Y_l\in\mathcal{P}^l$ for all $0\leq l\leq k-1$, then
\begin{equation*}
\dd_2 F(0)\cdot Y_k(x)\in\mathcal{P}^k\ ,
\end{equation*}
\end{lemma}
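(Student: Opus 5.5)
We will extract the coefficient of $r^k$ in the identity $F\big(r,Y(r,x)\big)=0$ by composing Taylor expansions. First, truncate the expansion of $F$ at order $k$. The remainder $\cO\big((r,y)^{k+1}\big)$, evaluated at $y=Y(r,x)=\cO(r)$ (recall $Y_0=0$), is $\cO(r^{k+1})$. Hence it contributes nothing to the coefficient of $r^k$. We are therefore left with
\begin{equation*}
0=\sum_{n=1}^{k} F_n\big(r,Y(r,x)\big) + \cO(r^{k+1})\ ,
\end{equation*}
where we substitute $Y(r,x)=\sum_{l=1}^{k} r^l Y_l(x)+\cO(r^{k+1})$.

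Next, we isolate the terms involving $Y_k$. Since $Y_0=0$, every entry of $Y$ is $\cO(r)$. For $n\geq 2$, a homogeneous polynomial $F_n(r,y)$ is a sum of monomials $r^a y^\beta$ with $a+|\beta|=n\geq2$. In such a monomial, $Y_k$ can only appear multiplied by at least one further factor that is $\cO(r)$, so $Y_k$ contributes only at order $r^{k+1}$ or higher. The same reasoning bounds the $Y_l$ with $l\geq k$. Consequently, the only occurrence of $Y_k$ in the $r^k$-coefficient is the linear one, $\dd_2F(0)\cdot Y_k(x)$, coming from $F_1$. The remaining contributions to the $r^k$-coefficient are:
\begin{itemize}
\item the constant $\dd_1F(0)$ when $k=1$, which is of degree $0$;
\item for each $n\geq2$ and each monomial $r^a y^\beta$ with $a+|\beta|=n$, the coefficient of $r^k$ in $r^a\prod_j\big(\sum_l r^l Y_{l}\big)_{\beta_j}$.
\end{itemize}
The second type of term is a sum of products $\prod_{j=1}^{|\beta|}(Y_{l_j})_{i_j}(x)$ with $a+\sum_j l_j=k$ and all $l_j\geq1$, hence all $l_j\leq k-1$. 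Note that when $|\beta|\geq2$ or $a\geq1$, each $l_j<k$ automatically.

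Then we apply the inductive hypothesis $Y_l\in\mathcal P^l$ for $l\leq k-1$. Each product $\prod_j (Y_{l_j})_{i_j}$ is a polynomial of degree at most $\sum_j l_j=k-a\leq k$, since degrees add under multiplication. Therefore
\begin{equation*}
\dd_2F(0)\cdot Y_k(x) = -\big(\text{$r^k$-coefficient of the other terms}\big)\in\mathcal P^k\ .
\end{equation*}
This settles the claim, with the coefficients of the $F_n$ being constants because $F$ does not depend on $x$.

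The only real subtlety is the bookkeeping in the second step. We must check that $Y_k$ enters the $r^k$-coefficient only linearly through $F_1$, and that the Taylor remainders of both $F$ and $Y$ are $\cO(r^{k+1})$ uniformly in $x\in\BB^2$. The first point holds because $Y_0=0$. The second holds because $Y$ is smooth on the compact set $[0,r_0]\times\BB^2$, so the remainder estimates are uniform and coefficient comparison is legitimate.
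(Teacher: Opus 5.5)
Your proposal is correct and follows essentially the same route as the paper. Both arguments extract the $r^k$-coefficient of $F\bigl(r,Y(r,x)\bigr)\equiv 0$, use $Y_0=0$ to see that $Y_k$ enters only through the linear term $\dd_2F(0)\cdot Y_k$, and bound the degree of every remaining monomial in $Y_1,\dots,Y_{k-1}$ by the weighted count $\sum_l l\,\beta_l\leq k$. Your version also makes explicit the bookkeeping the paper leaves implicit: the constant contributions from pure powers of $r$ (including $\dd_1F(0)$ when $k=1$) and the Taylor remainders.
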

\begin{proof}
Let $C_k$ denote the coefficient of $r^k$ in the Taylor expansion of $F\big(r,Y(r,x)\big)$. Since $Y_0$ vanishes,
\begin{equation*}
C_k = \dd_2F(0)\cdot Y_k(x) + \sum_{\alpha\in A} Q_\alpha\big(Y_1(x),\cdots,Y_{k-1}(x)\big)\ ,
\end{equation*}
for a suitable family of monomials $(Q_\alpha)_{\alpha\in A}$. Since $F\big(r,Y(r,x)\big)$ vanishes, this coefficient also vanishes, and hence
\begin{equation*}
\dd_2F(0)\cdot Y_k(x) = -\sum_{\alpha\in A} Q_\alpha\big(Y_1(x),\cdots,Y_{k-1}(x)\big)\ .
\end{equation*}

Now choose $\alpha\in A$, and let $\beta_1,\cdots,\beta_{k-1}$ denote the respective exponents of $Y_1,\cdots,Y_{k-1}$ in the monomial $Q_\alpha$. Since, for all $l$, $Y_l$ contributes a factor of $r^l$,
\begin{equation*}
\sum_{l=1}^{k-1}l\cdot\beta_l \leq k\ .
\end{equation*}
It follows by hypothesis that, for all $\alpha\in A$,
\begin{equation*}
Q_\alpha\big(Y_1(x),\cdots,Y_{k-1}(x)\big) \in \mathcal{P}^k\ ,
\end{equation*}
so that
\begin{equation*}
\dd_2 F(0)\cdot Y_k(x)\in\mathcal{P}^k\ ,
\end{equation*}
as desired.
\end{proof}
We will also require the following technical lemma.
\begin{lemma}
\label{lemma:beta}
Consider the second order, linear, elliptic partial differential operator
\begin{equation*}
Lu:=a^{ij}\partial_i\partial_j u\ ,
\end{equation*}
where the coefficients $a^{ij}$ are constant. For all $k$, the operator
\begin{equation*}
T_k : \mathcal{P}^k \to \mathcal{P}^k;\ f\mapsto L\big((1-|x|^2)f\big)
\end{equation*}
is a linear isomorphism.
\end{lemma}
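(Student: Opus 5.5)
First observe that $T_k$ does map $\mathcal{P}^k$ into itself. Multiplication by $(1-|x|^2)$ raises the degree by two, and $L$, having constant coefficients and only second-order terms, lowers it by two. Since $\mathcal{P}^k$ is finite-dimensional, it then suffices to show that $T_k$ is injective. So suppose $f\in\mathcal{P}^k$ satisfies $L\big((1-|x|^2)f\big)=0$, and set $u:=(1-|x|^2)f$. Then $u$ is a polynomial, hence smooth on $\closedtwoball$, and it solves $Lu=0$ in $\twoball$ with $u|_{\mathbb{S}^1}=0$.

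The operator $L$ is elliptic with zeroth-order coefficient $c=0$. By the maximum principle (Theorem \ref{thm:MaxPrinc}, Item (ii), or equivalently Remark \ref{rem_max_principle}), a solution of $Lu=0$ on the disc that vanishes on the boundary circle must vanish identically. Alternatively one can invoke Lemma \ref{lemma:CharacterizationOfStability}, Item (i): $L$ is strictly stable, so by Lemma \ref{lemma:WeakMP} it has trivial Dirichlet kernel. Either way $u\equiv 0$ on $\closedtwoball$.

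Since $1-|x|^2>0$ on the open disc, this forces $f=0$ on $\twoball$. A polynomial vanishing on an open set is the zero polynomial, so $f=0$ in $\mathcal{P}^k$. Hence $T_k$ is injective, and by rank–nullity it is a linear isomorphism of $\mathcal{P}^k$.

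No step is a real obstacle. The only points needing care are checking that the degree is preserved exactly (an inclusion $T_k(\mathcal{P}^k)\subseteq\mathcal{P}^k$ is all that is required) and that the maximum principle applies to the constant-coefficient operator without lower-order terms. Both are immediate.
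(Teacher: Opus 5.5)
Your proof is correct and is the same as the paper's: $T_k$ maps $\mathcal{P}^k$ into itself because $L$ has only constant second-order terms, the trivial Dirichlet kernel of $L$ on the disc gives injectivity, and finite-dimensionality upgrades this to an isomorphism. You also spell out the step the paper leaves implicit, namely that $(1-|x|^2)f\equiv 0$ forces $f=0$ because $1-|x|^2>0$ on $\twoball$ and a polynomial vanishing on an open set is zero.
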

\begin{proof}
For all $k$, since the zero'th and first order coefficients of $L$ vanish, $T_k$ defines a linear map from $\mathcal{P}^k$ to itself. Since the Dirichlet kernel of $L$ is trivial, $T_k$ is a linear isomorphism for all $k$, and the result follows.
\end{proof}

\begin{proof}[Proof of Proposition \ref{proposition:key}.] The result is proven by induction. We therefore suppose that, for all $0\leq l<k$, $\phi_l(p,\cdot)\in\mathcal{P}^{l+2}_0$. To simplify presentation, we suppress the base point $p$ in all that follows. Consider now the function
\begin{equation*}
Y(r,x) := \big(rx,r^2\phi(x),r\dd_x\phi(x),\dd_x^2\phi(x)\big)\ ,
\end{equation*}
with Taylor expansion
\begin{equation*}
Y(r,x) = \sum_{n=0}^N r^n Y_n(x) + \cO\big(r^{N+1}\big)\ .
\end{equation*}
For all $n\geq 1$,
\begin{equation}
\label{eqn:DecompositionOfY}
Y_n(x) = (\delta_{n1}x,\phi_{n-2}(x),\dd_x\phi_{n-1}(x),\dd^2_x\phi_n(x)\big)\ ,
\end{equation}
where, by convention $\phi_n=0$ for $n<0$. By hypothesis, for all $0\leq l<k$, $Y_l\in\mathcal{P}^l$. Furthermore, since $\phi(0,x)=0$,
\begin{equation*}
Y_0(x) = \big(0,0,0,\dd_x^2\phi_0(x)\big) = \mathbf{0}\ .
\end{equation*}

We now denote
\begin{equation*}
F(r,Y) := \mathbf{H}'\big(r,rx,r^2\phi(x),r\dd_x\phi(x),\dd_x^2\phi(x)\big)\ ,
\end{equation*}
and we consider the Taylor expansion of this function in $(r,Y)$, that is
\begin{equation*}
F(r,Y) := \sum_{n=0}^N F_n(r,Y) + \cO\big((r,Y)^{N+1}\big)\ ,
\end{equation*}
where, for all $n$, $F_n$ is a homogeneous polynomial of order $n$ in its arguments. By \eqref{eqn:SecondAsymptoticsOfHII}, $F(0,\mathbf{0})=0$, so that
\begin{equation*}
F_0 = 0\ .
\end{equation*}
It follows by Lemma \ref{lemma:InductiveStepOfTaylorSeriesForPhi} that
\begin{equation*}
\dd_2F(0,\mathbf{0})\cdot Y_k \in\mathcal{P}^k\ .
\end{equation*}
However, by \eqref{eqn:DecompositionOfY},
\begin{equation*}
\begin{aligned}
\dd_2F(0,\mathbf{0})\cdot Y_k &= \dd_2\mathbf{H}'(0,\mathbf{0})\cdot\delta_{k1}x + \dd_3\mathbf{H}'(0,\mathbf{0})\cdot\phi_{k-2}(x)\\
&\qquad + \dd_4\mathbf{H}'(0,\mathbf{0})\cdot\big(\dd_x\phi_{k-1}(x)\big) + \dd_5\mathbf{H}'(0,\mathbf{0})\cdot\big(\dd_x^2\phi_k(x)\big)\ .
\end{aligned}
\end{equation*}
By the inductive hypothesis, $\phi_{k-2},\dd_x\phi_{k-1}\in\mathcal{P}^k$, and hence
\begin{equation*}
\dd_5\mathbf{H}'(0,\mathbf{0})\cdot\big(\dd_x^2\phi_k(x)\big) \in \mathcal{P}^k\ .
\end{equation*}
Finally, as in the proof of Theorem \ref{thm:AsymptoticPlateauProblem}, for all $\psi$,
\begin{equation*}
d_5\mathbf{H}'(0,\mathbf{0})\cdot(\dd_x^2\psi) = L_p\psi\ ,
\end{equation*}
where the linear partial differential operator $L_p$ is given by
\begin{equation*}
L_p\psi:= -\tr\big(B\cdot G^{-1}\cdot d_x^2\psi\big)\ ,
\end{equation*}
for some positive-definite matrices $B$ and $G$. It follows by Lemma \ref{lemma:beta} that $\phi_k\in\mathcal{P}_0^{k+2}$, and this completes the proof.
\end{proof}

\subsubsection{Proof of smoothness and analyticity}

We are now ready to prove Theorem \ref{theorem:uniform-smoothness}.
\begin{proof}[Proof of Theorem \ref{theorem:uniform-smoothness}]
\emph{The smooth case.} Consider the domain
\begin{equation*}
\hat{\Omega} := \big\{(p,s,q)\ \big|\ p\in\Omega'\ ,\ s>0\ ,\ \|p-q\|^2\leq s\big\}\ ,
\end{equation*}
and consider the function
\begin{equation*}
\tilde{f}(p,s,q) := f_{p,\sqrt{s},\phi}(q)\ .
\end{equation*}
We will show that all derivatives of $\tilde{f}$ to all orders are uniformly bounded over $\hat{\Omega}$. Since the boundary of $\hat{\Omega}$ is smooth, Theorem \ref{theorem:uniform-smoothness} then follows immediately by Whitney's extension theorem.

By Theorem \ref{thm:AsymptoticPlateauProblem}, for all $N$,
\begin{equation*}
f_{p,r,\phi}(q) = \frac{1}{2}\lambda_\pm(p)\big(\|q-p\|^2 - r^2\big) + \sum_{k=0}^{2N}r^{k+2}\phi_k\bigg(p,\frac{q-p}{r}\bigg) + r^{2N+2}\psi_N\bigg(p,\frac{q-p}{r}\bigg)\ ,
\end{equation*}
for some smooth function $\psi_N:\Omega'\times[0,r_0)\times\closedtwoball\rightarrow\R$. By Proposition \ref{proposition:key}, for all $k$, $\phi_k$ is a polynomial of order at most $(k+2)$ in $(q-p)/r$. Furthermore, by \eqref{eqn:ParityOfTaylorTerms}, when $k$ is even (resp. odd), $\phi_k$ only contains terms of even (resp. odd) order in $r$. There therefore exists a sequence $P_0,\cdots,P_{2N}:\Omega'\times\R\times\closedtwoball\rightarrow\R$ of functions, which are polynomials in $\R\times\closedtwoball$ such that
\begin{equation*}
f_{p,r,\phi}(q) = \frac{1}{2}\lambda_\pm(p)\big(\|q-p\|^2-r^2\big) + \sum_{k=0}^{2N}P_k(p,r^2,q-p) + r^{2N+2}\psi_N\bigg(p,\frac{q-p}{r}\bigg)\ .
\end{equation*}
Setting $s:=r^2$ yields
\begin{equation}
\label{eqn:AsymptoticsOfTildeF}
\tilde{f}(p,s,q) = \frac{1}{2}\lambda_\pm(p)\big(\|q-p\|^2-s\big) + \sum_{k=0}^{2N}P_k(p,s,q-p) + s^{N+1}\psi_N\bigg(p,\frac{q-p}{\sqrt{s}}\bigg)\ .
\end{equation}

Consider now the last term of \eqref{eqn:AsymptoticsOfTildeF}. Differentiating with respect to $q$ and $p$ introduces a factor of at most $1/\sqrt{s}$, and differentiating with respect to $s$ introduces a factor of at most $(q-p)/\sqrt{s}^3$. Since $\|q-p\|\leq\sqrt{s}$ over $\hat{\Omega}$, it follows that, for all $i,j,k$ such that $(i+2j+k)\leq2(N+1)$, there exists $C\geq 0$ such that, over $\hat{\Omega}$,
\begin{equation*}
\big\|\dd_p^i \dd_s^j \dd_q^k\big(s^{N+1}\psi_N\big(p,(q-p)/\sqrt{s}\big)\big)\big\|_{C^0} \leq C\ .
\end{equation*}
For all such $i,j,k$, there therefore exists $C'\geq 0$ such that, over $\hat{\Omega}$,
\begin{equation*}
\big\|\dd_p^i \dd_s^j \dd_q^k\tilde{f}\big\|_{C^0} \leq C'\ .
\end{equation*}
Since $N$ is arbitrary, it follows that all derivatives of $\tilde{f}$ to all orders are uniformly bounded over $\hat{\Omega}$, and this completes the proof in the smooth case. \\

\emph{The analytic case.} This in fact follows immediately from the preceding discussion. To see this, consider the analytic function $h:(-r_0,r_0)\times\closedtwoball\rightarrow\mathbb{R}$ given by
\begin{equation*}
h(r,x) = \sum_{k\geq 1, \atop{\left|\gamma\right|\leq k+2\atop\left|\gamma\right| = k\ \mathrm{mod}\ 2}}a_{k,\alpha}r^kx^\gamma\ .
\end{equation*}
By analyticity, $h$ has non-trivial radius of convergence about $(0,0)$. Consider now the function
\begin{equation}
\label{eqn:AnalyticityComposedFunction}
\tilde{h}(s,x) := sh\big(\sqrt{s},x/\sqrt{s}\big)
=\sum_{k\geq 1, \atop{\left|\gamma\right|\leq k+2\atop\left|\gamma\right| = k\ \mathrm{mod}\ 2}}a_{k,\alpha}s^{1+(k-\left|\gamma\right|)/2}x^\gamma
=\sum_{m\geq 0,\gamma}a_{\left|\gamma\right|+2(m-1),\gamma}s^m x^\gamma\ .
\end{equation}
Note, in particular, that by Propositions \ref{proposition:key} and \ref{proposition:key}, the above Taylor series contains no singular terms. By analyticity of $h$, there exists $s_0$ and $\epsilon>0$ such that $\tilde{h}$ converges absolutely over $\{s_0\}\times B_\epsilon(0)$. For all $x\in B_\epsilon(0)$, the radius of convergence with respect to $s$ of the series in \eqref{eqn:AnalyticityComposedFunction} is thus at least $s_0$, and $\tilde{h}$ therefore extends analytically to $(-s_0,s_0)\times B_\epsilon(0)$.

By \eqref{eqn:ParityOfTaylorTerms}, Proposition \ref{proposition:key}, and elliptic regularity for analytic functions, the function $(r,q) \mapsto \phi(p,r,q)$ satisfies all of the above properties, and analyticity follows.
\end{proof}

\subsubsection{Proof of Theorem \ref{thm:SmallCircles}}

We now complete the proof of Theorem \ref{thm:SmallCircles}.
\begin{proof}[Proof of Theorem \ref{thm:SmallCircles}]
Theorem \ref{thm:SmallCircles} is a straightforward combination of Theorems \ref{thm:AsymptoticPlateauProblem} and \ref{theorem:uniform-smoothness}, where $\mathbf{f}^\pm(s,p,q)$ correspond to the functions $\tilde{f}^\pm(p,s,q)$ expressed in the original coordinates on $\Ss^2$.
\end{proof}

\section{The foliated Plateau problem, double fibrations, and the global Bolker condition}

\label{section:plateau}

This section is devoted to the proof of Theorem~\ref{th.Foliated_Plateau_posta}. The proof is divided into several parts. In the first part, we study an abstract two-dimensional foliated Plateau problem in the closed unit $2$-ball. In the second part, we apply this to the study of Jacobi operators of $(\Phi,k)$-discs spanned by round circles, and thereby prove Theorem~\ref{th.Foliated_Plateau_posta}, item (i), except for the Bolker condition, which is established in \S\ref{ssection:bolker}. The analytic case is treated in \S\ref{ssection:analytic}. Finally, we prove that the foliation extends smoothly (resp. analytically) across the boundary in \S\ref{ssection:extended-foliation} and prove Theorem~\ref{th.Foliated_Plateau_posta}, item (ii).

\subsection{The two-dimensional foliated Plateau problem}

\label{ss:TwoDSetting}

Let $L$ be a second-order linear elliptic partial differential operator on $\closedtwoball$ with smooth coefficients, as in \eqref{eqn:GeneralEllipticOperator}. We assume furthermore that $L$ is strictly stable in the sense of Definition~\ref{defn:Stability}.

\subsubsection{The boundary conditions for the Dirichlet problem}
\label{sss.boundary_Dirichlet}

Recall the space $\cT$ of trigonometric polynomials from \S\ref{sss.First_order_variations}.
Let $g:\mathbb{S}^1 \to \mathbb{R}$ be a smooth, positive function, and define
\begin{equation}\label{def_boundary_condition}
E := \big\{\theta \mapsto g(\theta)f(\theta) ~:~ f \in \cT\big\}\qquad\text{and}\qquad
E^\ast := \big\{\theta \mapsto g(\theta)f(\theta) ~:~ f \in \cT^\ast\big\}\ ,
\end{equation}
for $\ast \in \{-,0,+\}$. The vector space $E$ will be used to describe infinitesimal variations of circles in $\mathcal C$. We denote by $P^+E$ and $P^+E^\ast$ the corresponding oriented projectivizations. Up to identifying $\cT$ with $E$ via multiplication by the positive function $g$, the sign of the discriminant $\Delta$ defined on $\cT$ extends naturally to $E$ and $P^+E$. Trivially, it is positive on $E^+$ and $P^+E^+$, negative on $E^-$ and $P^+E^-$, and vanishes on $E^0$ and $P^+E^0$.

\subsubsection{The zero sets of solutions}
\label{sss.zero_sets}
By strict stability, $L$ has trivial Dirichlet kernel. Hence, by Fredholm's alternative, for every $f \in E$ there exists a unique solution
$\hat{f} \in C^\infty(\closedtwoball)$ of the Dirichlet problem
\begin{equation*}
L\hat{f} = 0
\qquad\text{and}\qquad
\hat{f}|_{\mathbb{S}^1} = f\ .
\end{equation*}
For every non-zero $f \in E$, we define
\begin{equation*}
Z_f := \hat{f}^{-1}\big(\{0\}\big)
\end{equation*}
to be the zero set of its Dirichlet extension. Note that $Z_f$ is a smooth curve precisely at those points where $\dd\hat{f} \neq 0$.

Let $\hat{Z}_f \subseteq S\closedtwoball$ denote the set of unit vectors which are normal to $Z_f$ at its smooth points and which point in the direction in which $\hat{f}$ increases. For every non-zero $f$, the set $\hat{Z}_f$ is a smooth (not necessarily compact) embedded curve in $S\closedtwoball$. Finally, note that both $Z_f$ and $\hat{Z}_f$ depend only on the class of $f$ in $P^+E$.

\subsubsection{The two-dimensional foliated Plateau problem}
The topological properties required for our construction are summarized in the following theorem. Recall that, by \eqref{def_boundary_condition}, every $f \in E^+$ has exactly two zeros on $\mathbb{S}^1$, which depend only on the class of $f$ in $PE^+$.

\begin{theorem}
\label{thm:coveringofunitbundle}
Let $L$ be strictly stable in the sense of Definition~\ref{defn:Stability}. Then:
\begin{enumerate}[label=\emph{(\roman*)}]
\item for every $f \in E^+$, the zero set $Z_f$ is a smooth embedded curve in $\closedtwoball$;
\item the family $(Z_f)_{f \in E^+}$ depends smoothly on $f$;
\item the lifted family $(\hat{Z}_f)_{f \in E^+}$ covers $S\twoball$;
\item for any pair of distinct points $x,y \in \twoball$, there exists $f \in E^+$ such that $x \in Z_f$ and $y \notin Z_f$.
\end{enumerate}
\end{theorem}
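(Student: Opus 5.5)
Reduce everything to the operator $L'=M_u^{-1}LM_u$ of \eqref{eqn:DefinitionOfLPrime}, where $u>0$ is the positive solution given by strict stability. Since $L'$ has no zeroth-order term and $\hat f/u$ solves $L'(\hat f/u)=0$ with boundary value $f/u$, and since $Z_f$, $\hat Z_f$ are unchanged when $\hat f$ is divided by $u$ (at the zero set the gradients of $\hat f$ and $\hat f/u$ are positive multiples of each other), we may assume $c=0$. The boundary space is then $E'=\{gf/u\}$, again of the form \eqref{def_boundary_condition}. Theorem~\ref{thm:MaxPrinc}(ii) is now available on every subdomain, and it will be the main tool.

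\textbf{Items (i) and (ii).} For $f\in E^+$, the boundary datum has exactly two simple zeros $a,b$ on $\mathbb{S}^1$. The key claim is that $\hat f$ has no interior critical point on $Z_f$. Suppose $\dd\hat f(x)=0$ at some $x\in Z_f\cap\twoball$. Near $x$, the nodal set consists of at least two arcs crossing at $x$ (a Bers/Hartman--Wintner local structure), so at least four nodal arcs leave $x$. Follow these arcs. They cannot close into a loop in $\twoball$, since a loop would bound a nodal domain on which $\hat f$ vanishes on the boundary, and the maximum principle would force $\hat f\equiv0$ there. Hence each arc must end on $\mathbb{S}^1$ at a zero of $f$. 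At least four endpoints are needed, up to pairs of arcs merging into loops, which are again excluded by the same argument applied to planar graph faces. Only two zeros are available, which gives a contradiction. The same counting, together with the Hopf lemma (Theorem~\ref{thm:MaxPrinc}(iii)) to show that $Z_f$ meets $\mathbb{S}^1$ transversally at $a,b$, shows that $Z_f$ is a single embedded arc from $a$ to $b$. Smooth dependence on $f$ then follows from the implicit function theorem, since $\hat f$ depends linearly on $f$ and $\dd\hat f\neq0$ on $Z_f$.

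\textbf{Item (iii).} This is the main obstacle. Fix $(x,v)\in S\twoball$. The map $[f]\mapsto\hat f(x)$ has zero set in $P^+E^+\cong\mathbb{S}^1\times(-1,1)$. We want to find $[f]$ in this set with $\dd\hat f(x)$ a positive multiple of $v$. Consider $F:[f]\mapsto(\hat f(x),\dd\hat f(x))$, a linear map $E\to\R^3$ restricted to the cone $E^+$.

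First show that $\ker F=0$. Otherwise some $f\neq0$ has $\hat f(x)=0$ and $\dd\hat f(x)=0$, so $x$ is a critical zero. If $f\in E^+$, this is excluded by (i). If $f\in E^-$, then $f$ has constant sign, and the maximum principle gives $\hat f\neq0$. If $f\in E^0$, then $f\geq0$ with a single boundary zero, so $\hat f>0$ in $\twoball$. Hence $F$ is an isomorphism, and the preimage $\ell$ of the ray $\{(0,tv):t>0\}$ is a single open ray in $E$. It remains to show $\ell\subset E^+$. Elements of $E^\pm$ and $E^0$ with $\hat f(x)=0$ must change sign in the interior, which is impossible for $E^-\cup E^0$ by the positivity just established. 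Thus $\ell\subset E^+$.

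If the global argument is delicate, a fallback is a degree/winding argument on the annulus, using the foliation $\cW$ and the symmetry of Lemma~\ref{lem.invariant_-1}. This would show that the map $[f]\mapsto\dd\hat f(x)$ restricted to $\{\hat f(x)=0\}$ is a degree-one map onto $S_x\twoball$.

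\textbf{Item (iv).} By the isomorphism $F$ at $x$, choose $f$ with $\hat f(x)=0$ and $\dd\hat f(x)=w$. The space of such $f$ as $w$ varies is two-dimensional, so the condition $\hat f(y)=0$ is a single linear condition on it. This condition cannot hold identically, because otherwise two independent members would both vanish at $x$ and $y$, and a combination of them would vanish at $x$ with zero gradient or $\hat f\equiv0$, contradicting injectivity. Choosing $w$ off this line gives $y\notin Z_f$, and $f\in E^+$ by the argument in (iii).
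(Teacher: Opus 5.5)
Your treatment of (i) and (ii) is essentially the paper's graph-theoretic argument. Your route to (iii) is correct and simpler than the paper's. Since $\dim E=3$, and no nonzero $\hat f$ has an interior critical zero (by (i) on $E^+$, and by the maximum principle on $E^-\cup E^0$), the map $F\colon f\mapsto(\hat f(x),\dd\hat f(x))$ is a linear isomorphism. This gives surjectivity of the normal map on $C(x)$, which the paper obtains via Lemma~\ref{lemma:SmoothNDFoliation} and an odd-degree argument, and it gives injectivity as well.

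The gap is in (iv). Write $K_x:=\{f\in E:\hat f(x)=0\}$. This is a $2$-plane, and your argument shows $K_x\setminus\{0\}\subset E^+$. What must be excluded is that $\hat f(y)=0$ for every $f\in K_x$, i.e.\ that $K_x=K_y$. Your justification is that two independent elements of $K_x$ vanishing at $y$ would have a combination with zero gradient at $x$. This does not follow. By injectivity of $F$, the gradients at $x$ of elements of $K_x$ fill out $T^*_x\twoball$ isomorphically, so no nontrivial combination has vanishing gradient at $x$. Vanishing at $y$ imposes nothing on the gradient at $x$. Linear algebra alone gives no contradiction here: $K_x=K_y$ would only force $f\mapsto\hat f(y)$ to be proportional to $f\mapsto\hat f(x)$ on $E$, which is not absurd in itself. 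A topological input is needed.

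The missing idea is the ordering argument of Lemma~\ref{lemma:DistinctPointsOfdisc}, which fits directly into your framework. Suppose $K_x=K_y$. Then for each $f\in K_x\setminus\{0\}$, the embedded arc $Z_f$ passes through both $x$ and $y$. Orient it so that $\hat f>0$ on its left, and split $K_x\setminus\{0\}$ according to whether $y$ comes after or before $x$ along $Z_f$. By the smooth dependence in (ii), both pieces are open, and $K_x\setminus\{0\}\cong\R^2\setminus\{0\}$ is connected. But $f\mapsto -f$ preserves $Z_f$ and reverses its orientation, so it swaps the two pieces, which is a contradiction. Once $K_x\neq K_y$ is established, the rest of your (iv) goes through: choose $f\in K_x$ off the line $\{\hat f(y)=0\}$.
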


The rest of the section is devoted to the proof of this theorem. Properties (i) and (ii) are proven in Lemma \ref{lemma:SmoothVariation}. Properties~(iii) and~(iv), which encode the key geometric features of the family $(Z_f)_{f\in E^+}$, are proven respectively in Lemmas \ref{lemma:CoversUnitBundleOfdisc} and \ref{lemma:DistinctPointsOfdisc}. Property~(iii) asserts that the lifted curves $(\hat{Z}_f)$ cover the unit tangent bundle, in analogy to the way in which lifts of oriented geodesics between pairs of distinct ideal points foliate the unit tangent bundle of the hyperbolic disc. Property~(iv) ensures that the family separates points in $\twoball$. Together, these two properties will play a central role in the proof of the global Bolker condition.

\subsubsection{The geometry of zero sets} \label{sssection:geometry-zero-set}
We will use the following local description of zero sets (see \cite[Section 6.2]{Colding-Minicozzi-11}).

\begin{lemma}
\label{lemma:structureofnullset}
Let $\hat f:\twoball \to \mathbb{R}$ be a non-constant solution of $L\hat f=0$. Then, for every $x \in \twoball$ such that $\hat f(x)=0$, there exist an integer $k \geq 1$, a neighbourhood $U$ of $0$ in $\mathbb{C}$, a neighbourhood $V$ of $x$ in $\twoball$, and a $C^1$ real diffeomorphism $\phi:U \to V$ such that $\phi(0)=x$ and
\begin{equation*}
(\hat f \circ \phi)(z) = \Re(z^k)\ .
\end{equation*}
\end{lemma}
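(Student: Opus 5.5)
The plan is to follow the classical route of Bers and Cheng for nodal sets of two-dimensional elliptic equations (cf.\ \cite[Section 6.2]{Colding-Minicozzi-11}). Fix $x\in\twoball$ with $\hat f(x)=0$. After translating $x$ to the origin, we apply a linear change of coordinates $z=Mw$ that turns the frozen principal symbol $a^{ij}(x)$ into the identity. In these coordinates $L=\Delta+\tilde a^{ij}(z)\partial_i\partial_j+\tilde b^i\partial_i+\tilde c$ with $\tilde a^{ij}(0)=0$, so $\tilde a^{ij}(z)=O(|z|)$. Since linear maps are smooth diffeomorphisms, it suffices to prove the statement in these coordinates.

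\textbf{Step 1 (finite vanishing order).} Because $\hat f$ is a non-constant solution of $L\hat f=0$ on the connected disc, strong unique continuation for second-order elliptic operators with smooth coefficients (Aronszajn--Carleman) shows that $\hat f$ cannot vanish to infinite order at $0$. Let $k\geq1$ be its vanishing order and $P_k$ the degree-$k$ homogeneous part of its Taylor polynomial. Comparing degree-$(k-2)$ homogeneous terms in $L\hat f=0$, all perturbation terms contribute only in degree $\geq k-1$. Hence $\Delta P_k=0$, and after a rotation and a positive rescaling $P_k=\Re(z^k)$.

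\textbf{Step 2 (Bers-type expansion with derivative control).} Next we show
\begin{equation*}
\hat f=\Re(z^k)+R,\qquad |R|\leq C|z|^{k+1},\qquad |\nabla R|\leq C|z|^{k}.
\end{equation*}
By Taylor's theorem this is immediate for smooth $\hat f$ once $k$ is finite, since the remainder after the degree-$k$ part is $O(|z|^{k+1})$ together with its gradient bound. This is where smoothness of the coefficients simplifies Bers' original H\"older argument.

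\textbf{Step 3 (construction of $\phi$).} The idea is to straighten $\hat f$ to $\Re(w^k)$ with $w=z(1+o(1))$. In polar coordinates $z=\rho e^{i\theta}$, the function $\hat f/\rho^k=\cos(k\theta)+O(\rho)$ has $\theta$-derivative $-k\sin(k\theta)+O(\rho)$. Away from the $2k$ zero directions $\theta_j=\frac{\pi}{2k}+\frac{j\pi}{k}$, we define the new angle $\vartheta$ by $\cos(k\vartheta)=\cos(k\theta)+O(\rho)$, choosing the branch continuously; near the $\theta_j$ we instead use that $\partial_\theta$ of this quantity is bounded away from $0$ and invert via the implicit function theorem. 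This produces a map $\psi(z)=\rho e^{i\vartheta}$, so that $\hat f=\Re(\psi^k)$. It is a diffeomorphism on a punctured neighbourhood, and $|\psi(z)-z|=O(|z|^2)$. Alternatively, and more cleanly, set $\psi:=(\hat f+i\tilde g)^{1/k}$, where $\tilde g$ is built from $\Im(z^k)$ plus a correction chosen so that $\nabla\hat f$ and $\nabla\tilde g$ remain independent; Step 2 gives $\hat f+i\tilde g=z^k(1+O(|z|))$, so a $k$-th root $\psi=z(1+O(|z|))$ is well defined. We then take $\phi:=\psi^{-1}$.

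\textbf{Main obstacle.} The key difficulty is proving that $\psi$, and hence $\phi$, is $C^1$ at the origin with $d\psi(0)=\Id$, rather than merely a homeomorphism. Differentiating $\psi=z(1+h)$ with $h=O(|z|)$ gives $d\psi=\Id+h+z\,dh$. So we need $|dh|=O(1)$, which follows from the gradient bound $|\nabla R|\leq C|z|^k$ of Step 2 divided by $|z|^{k-1}$, once the branch-of-root computations are carried out. Away from $0$, smoothness and invertibility of $d\psi$ follow from $d\psi=\Id+O(|z|)$, and the inverse function theorem then yields the neighbourhoods $U,V$.
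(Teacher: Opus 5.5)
The paper gives no argument of its own for this lemma. It only cites the classical Bers--Cheng local structure theorem for nodal sets from \cite[Section 6.2]{Colding-Minicozzi-11}. Your proposal is a self-contained proof of that result, and it is correct provided Step 3 uses your second construction.

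Steps 1 and 2 are fine:
\begin{itemize}
\item Strong unique continuation gives a finite vanishing order $k$.
\item The perturbation terms of $L$ only enter in degree $\geq k-1$, so the leading Taylor part is a nonzero harmonic homogeneous polynomial, which normalises to $\Re(z^k)$.
\item Because $\hat f$ is smooth, Taylor's theorem gives $R=\hat f-\Re(z^k)$ with $|R|\leq C|z|^{k+1}$ and $|\nabla R|\leq C|z|^{k}$ directly. This is what lets you skip Bers' H\"older-space asymptotics.
\end{itemize}
Compared with the citation, your route costs half a page but does not depend on the exact hypotheses or form of conclusion in the cited source. It applies verbatim to general non-self-adjoint operators with first-order terms, which is what the $\Phi$-Jacobi operators here are. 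It also yields the exact conjugacy $\hat f\circ\phi=\Re(z^k)$, with $\phi$ smooth away from the origin, which is more than the later graph-theoretic arguments need.

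\textbf{Step 3, first construction: discard it.} If you freeze $\rho$ and solve only for the angle, the method breaks down near the directions $\theta=j\pi/k$, where $\cos(k\theta)=\pm1$. These are exactly the directions where you propose to ``choose the branch continuously''. There $\hat f/\rho^k$ can exceed $1$ in modulus, so no real $\vartheta$ exists. Where one does exist, it is not $C^1$ in general. The radius has to move as well.

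\textbf{Step 3, second construction: no correction is needed.} Your remark about keeping $\nabla\hat f$ and $\nabla\tilde g$ independent is vacuous at the origin for $k\geq2$, since both gradients vanish there. Simply take $\tilde g=\Im(z^k)$ and
\[
\psi:=z\,(1+R/z^k)^{1/k}
\]
with the principal branch. This is legitimate because $|R/z^k|\leq C|z|$. Then $\psi^k=z^k+R$, so $\Re(\psi^k)=\hat f$ exactly. Differentiating $\psi^k=z^k+R$ gives, on the punctured disc,
\[
d\psi=\Big(\frac{z}{\psi}\Big)^{k-1}dz+\frac{dR}{k\,\psi^{k-1}}=\Id+\cO(|z|)\ .
\]
Also $\psi(z)=z+\cO(|z|^2)$, so $d\psi(0)=\Id$. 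Hence $\psi$ is $C^1$ with invertible differential at $0$, and the $C^1$ inverse function theorem applies. The map $\phi:=\psi^{-1}$, composed with your affine normalisations, is the required map.

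A small bookkeeping point: the division by $|\psi|^{k-1}\sim|z|^{k-1}$ appears in the formula above. If you instead bound $dh$ for $1+h=(1+R/z^k)^{1/k}$, as in your ``main obstacle'' paragraph, you divide by $|z|^k$, not $|z|^{k-1}$.
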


\begin{remark}\label{rem_locally_connected}
This lemma implies in particular that $Z_f$ is locally connected.
\end{remark}

We now describe the topology of $Z_f$, depending on the number of zeroes of $f$ in $\mathbb{S}^1$. As for first-order trigonometric polynomials on $\mathbb{S}^1$, elements $f\in E$ satisfy
\begin{equation*}
\# (Z_f\cap\mathbb{S}^1)\in\{0,1,2\}\ ,
\end{equation*}
and furthermore, when $\# (Z_f\cap\mathbb{S}^1)=2$, these zeroes are non-degenerate, in the sense that $f'\neq 0$ at those points.

\begin{lemma}
\label{lemma:ZeroOrOneZero}
Let $f$ be an element of $E$.
\begin{enumerate}[label=\emph{(\roman*)}]
\item If $\# (Z_f\cap\mathbb{S}^1)=0$, then $Z_f=\emptyset$.
\item If $\# (Z_f\cap\mathbb{S}^1)=1$, then $Z_f\cap\twoball=\emptyset$.
\end{enumerate}
\end{lemma}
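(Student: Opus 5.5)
The plan is to reduce both items to the generalized maximum principle for strictly stable operators, Lemma \ref{lemma:WeakMP}, together with its boundary version, Lemma \ref{lemma:BoundaryWeakMP}. Since $f=g\cdot h$ with $g>0$ and $h\in\cT$, the number and type of zeroes of $f$ on $\mathbb{S}^1$ are those of the trigonometric polynomial $h$. Thus, if $\#(Z_f\cap\mathbb{S}^1)=0$, then $f$ has constant sign on $\mathbb{S}^1$. If $\#(Z_f\cap\mathbb{S}^1)=1$, then $\Delta(h)=0$ and $h$ has a single zero, which is a non-degenerate extremum; hence $f$ again has constant sign on $\mathbb{S}^1$ and vanishes at exactly one point $p$. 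Replacing $f$ by $-f$ if necessary (this does not change $Z_f$), we may assume $f\geq 0$ on $\mathbb{S}^1$ in both cases.

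For item (i), we apply Lemma \ref{lemma:WeakMP} with $\Omega=\closedtwoball$ to the Dirichlet extension $\hat f$. This gives
\[
\inf_{\closedtwoball}\hat f\geq \tfrac1C\inf_{\mathbb{S}^1}f>0,
\]
since $f$ is continuous and positive on the compact set $\mathbb{S}^1$. Hence $\hat f>0$ everywhere and $Z_f=\emptyset$.

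For item (ii), the same lemma gives $\hat f\geq \tfrac1C\inf_{\mathbb{S}^1} f=0$, so $\hat f\geq0$ on $\closedtwoball$. To rule out interior zeroes, it is better to work with $w:=\hat f/u$, where $u>0$ is the function furnished by strict stability. We have $L'w=0$, where $L'$ has vanishing zeroth-order coefficient, and $w\geq0$. If $w(x)=0$ at some interior point $x$, then $w$ attains its minimum in $\twoball$. By Theorem \ref{thm:MaxPrinc}(ii), $w$ is then constant, hence identically $0$, so $f\equiv0$ on $\mathbb{S}^1$, contradicting the fact that $f$ has exactly one zero. Therefore $\hat f>0$ on $\twoball$, that is, $Z_f\cap\twoball=\emptyset$.

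The only delicate point is this use of the strong maximum principle, which needs $L'$ to have zero zeroth-order term. We obtain this by conjugating with $u$ rather than applying the principle to $L$ directly. The nondegeneracy of the boundary zero in (ii) is not needed here; it will presumably matter later via Lemma \ref{lemma:BoundaryWeakMP}, which gives $\partial_\nu\hat f(p)<0$.
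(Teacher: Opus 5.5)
Your proof is correct and follows the paper's argument. Item (i) comes directly from Lemma \ref{lemma:WeakMP}, and item (ii) comes from passing to $\hat f/u$ and applying the maximum principle to $L'$, whose zeroth-order coefficient vanishes; you simply make explicit the two steps (the weak principle giving $\hat f\geq 0$, then the strong principle excluding an interior zero) that the paper compresses into one sentence, and, cosmetically, Lemma \ref{lemma:WeakMP} should be applied with $\Omega=\twoball$ rather than $\closedtwoball$, so that $\partial\Omega=\mathbb{S}^1$.
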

\begin{proof}
Assertion (i) follows directly from Lemma~\ref{lemma:WeakMP}. For (ii), let $u$ be as in Definition~\ref{defn:Stability}. Let $L'$ be as in \eqref{eqn:DefinitionOfLPrime}, and note that its zero'th order coefficient vanishes. Without loss of generality, $f/u\geq 0$ over $\mathbb{S}^1$. Since $L'(f/u)=0$, it follows by the maximum principle (Theorem~\ref{thm:MaxPrinc}) that $\hat{f}/u>0$ over $\mathbb{B}^2$. Consequently, $\hat{f}>0$ over $\mathbb{B}^2$ and so $Z_f\cap\mathbb{B}^2=\emptyset$, as desired.
\end{proof}

We now address the case where $\# (Z_f\cap\mathbb{S}^1)=2$.

\begin{lemma}
\label{lemma:Connected}
If $Z_f \neq \emptyset$, then $Z_f \cup \mathbb{S}^1$ is connected.
\end{lemma}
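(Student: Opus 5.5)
We argue by contradiction, using the maximum principle for the conjugated operator $L'=M_u^{-1}LM_u$ of \eqref{eqn:DefinitionOfLPrime}, whose zeroth-order coefficient vanishes. Since $Z_f\cup\mathbb{S}^1$ is closed, it suffices to show that every connected component of $Z_f$ meets $\mathbb{S}^1$. Indeed, $\mathbb{S}^1$ is connected, so if every component of $Z_f$ touches it, the union $Z_f\cup\mathbb{S}^1$ is connected. Note that $Z_f$ is compact and locally connected by Remark \ref{rem_locally_connected}. If $Z_f\cap\mathbb{S}^1=\emptyset$, then Lemma \ref{lemma:ZeroOrOneZero} (i) gives $Z_f=\emptyset$, so there is nothing to prove. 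We may therefore assume $Z_f\cap\mathbb{S}^1\neq\emptyset$.

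Suppose then that some component $K$ of $Z_f$ is contained in the open disc $\twoball$. Because $Z_f$ is compact and locally connected, $K$ is open and closed in $Z_f$. Hence $K$ and $Z_f\setminus K$ are disjoint compact sets, and we can choose a small open neighbourhood $W$ of $K$ with $\overline W\subset\twoball$ and $\partial W\cap Z_f=\emptyset$. Thus $\hat f$ does not vanish on $\partial W$. The key step is to produce an open set $\Omega\subset\twoball$ with $\partial\Omega\subset Z_f$ on which $\hat f$ is nonzero. To do this, take any point $x\in K$. By Lemma \ref{lemma:structureofnullset}, $\hat f$ changes sign near $x$, so both $\{\hat f>0\}$ and $\{\hat f<0\}$ meet $W$.

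Consider the connected components of $\twoball\setminus K$. Since $K$ is a compact subset of the open disc, exactly one of these components, call it $V_\infty$, contains a neighbourhood of $\mathbb{S}^1$ in $\twoball$. Every other component is relatively compact in $\twoball$ and has boundary contained in $K$. If some bounded component $\Omega$ exists, then $\hat f=0$ on $\partial\Omega$ and $L\hat f=0$ in $\Omega$. Lemma \ref{lemma:WeakMP} then gives $\hat f\equiv 0$ on $\Omega$, and unique continuation (or Lemma \ref{lemma:structureofnullset}, which excludes open zero sets for a non-constant solution) gives a contradiction. Hence $\twoball\setminus K=V_\infty$ is connected.

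It remains to rule out this case, in which $K$ does not separate the disc. Here we use the sign structure near $K$. Let $k\geq1$ be the local index at $x$ from Lemma \ref{lemma:structureofnullset}; then $K$ consists locally of $2k$ arcs emanating from $x$. These arcs must continue: by the implicit function theorem at regular points and the same lemma at singular ones, $K$ has no endpoints in $\twoball$. A compact graph in the open disc all of whose vertices have even degree at least $2$ contains a cycle. By the Jordan curve theorem, this cycle bounds a disc $\Omega\subset\twoball$, which is a bounded component of $\twoball\setminus K$ (or contains one). This contradicts the previous paragraph. Thus every component of $Z_f$ reaches $\mathbb{S}^1$, proving the lemma.

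The main obstacle is this last step: justifying that $K$ is a finite graph without endpoints. I would first try to obtain this by covering $K$ with finitely many charts from Lemma \ref{lemma:structureofnullset}, noting that the singular points (where $k\geq 2$) are isolated. The fallback, which avoids the graph structure entirely, is a direct argument: a point of $K$ has a neighbourhood where $\hat f$ takes both signs, so the component of $\{\hat f>0\}$ adjacent to $K$ inside $W$ must have some boundary in $K$. One then extracts a bounded nodal domain with boundary in $K$ by taking a component of $\{\hat f>0\}$ not reaching $\mathbb{S}^1$. Such a component has $\hat f=0$ on its boundary, so Lemma \ref{lemma:WeakMP} again gives the contradiction.
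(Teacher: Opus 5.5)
Your main line is correct, but it takes a different and heavier route than the paper. The paper never uses the graph structure of the interior component. It takes a connected open neighbourhood $U$ of the component $Z_0$, with smooth boundary disjoint from $Z_f$ (essentially your $W$), and fills in the holes of $U$ so that $\partial U$ is a single closed curve. Then $\hat f$ has a constant sign on $\partial U$, and Lemma~\ref{lemma:WeakMP} forces $\hat f$ to have that strict sign throughout $U$, contradicting $Z_0\subset U$. You argue in two steps. First you rule out nodal domains with boundary in $K$; this is essentially the paper's later Lemma~\ref{lemma:NoInternalRegions}, and it costs you unique continuation. Then you show, via the graph structure of $K$ and the Jordan curve theorem, that $K$ must enclose such a domain. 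This works, and it gives more structural information, which the paper itself uses just afterwards in the Euler-characteristic count. Its price is exactly the step you flag: proving that $K$ is a finite graph with all degrees $\ge 2$. That needs singular points to be isolated and hence finite by compactness, and each open arc of $K$ minus the singular set to limit onto a single singular point at each end (or $K$ to be a circle). This is routine but more than the statement needs. You were one step from the paper's proof: once you had $W$ with $\partial W\cap Z_f=\emptyset$, passing to the component of $W$ containing $K$ and filling its holes gives a connected boundary on which $\hat f$ has one sign. Lemma~\ref{lemma:WeakMP} then finishes immediately, with no Jordan curve theorem or unique continuation. Your fallback sketch is not yet an argument, since it does not explain why a component of $\{\hat f>0\}$ avoiding $\mathbb{S}^1$ must exist; the natural way to close it is again this connected-boundary observation.
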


\begin{proof}
Suppose the contrary, so that $Z_f \cup \mathbb{S}^1$ is not connected. Let $Z_0\subseteq Z_f\cap\mathbb{B}^2$ be a connected component not meeting $\mathbb{S}^1$. Since $Z_f\cap\mathbb{B}^2$ is locally connected, $Z_0$ is closed, and hence compact. Let $U \subset \twoball$ be a connected open neighbourhood of $Z_0$ whose closure is disjoint from the other connected components of $Z_f$, and such that $\partial U$ is smooth. Note that $U$ is topologically a disk with finitely many holes removed. Upon filling in the inner components of the complement of $U$, we may suppose that $U$ has a single boundary component. Since $\partial U \cap Z_f = \emptyset$, the function $\hat{f}$ has a constant sign on $\partial U$. Upon replacing $f$ by $-f$ if necessary, we may assume that $\hat{f} > 0$ on $\partial U$. It follows by the maximum principle (Lemma~\ref{lemma:WeakMP}) that $\hat{f} > 0$ in $U$, contradicting the fact that $Z_0 \subset U$. It follows that $Z_f \cup \mathbb{S}^1$ is connected, as desired.
\end{proof}

It follows from Lemma~\ref{lemma:structureofnullset}, Lemma~\ref{lemma:Connected}, and the non-degeneracy of the points of $Z_f \cap \mathbb{S}^1$, that $Z_f \cup \mathbb{S}^1$ is a compact, connected graph. Let $V$, $E$, and $F$ denote respectively the numbers of vertices, edges, and faces of this graph. We decompose the sets of vertices and edges into interior and boundary parts. We denote respectively by $V_i$ and $E_i$ the numbers of vertices and edges contained in $\twoball$, and respectively by let $V_e$ and $E_e$ the numbers of vertices and edges contained in $\mathbb{S}^1$.

\begin{lemma}
\label{lemma:NoInternalRegions}
Every face of $\twoball \setminus Z_f$ shares at least one edge with $\mathbb{S}^1 = \partial \twoball$.
\end{lemma}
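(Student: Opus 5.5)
The plan is a maximum principle argument on a hypothetical interior face. Suppose, for contradiction, that some face $\Omega$ of $\twoball \setminus Z_f$ has boundary $\partial\Omega$ contained entirely in $Z_f$, sharing no edge with $\mathbb{S}^1$. Since $Z_f\cup\mathbb{S}^1$ is a compact connected graph (by Lemma~\ref{lemma:structureofnullset}, Lemma~\ref{lemma:Connected} and the non-degeneracy of the boundary zeroes), $\Omega$ is an open region whose closure lies in $\twoball$, except possibly for finitely many boundary vertices. Its topological boundary consists of edges of $Z_f$ together with vertices. In particular, $\hat f$ vanishes identically on $\partial\Omega$.

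The next step is to apply Lemma~\ref{lemma:WeakMP} to $\Omega$. Since $\hat f \in C^\infty(\closedtwoball)$ satisfies $L\hat f=0$ and $\hat f|_{\partial\Omega}=0$, applying \eqref{eqn:WeakMP} to both $\hat f$ and $-\hat f$ gives
\begin{equation*}
\sup_{\overline\Omega}|\hat f| \le C\sup_{\partial\Omega}|\hat f| = 0\ .
\end{equation*}
Equivalently, one may pass to $L'$ via \eqref{eqn:DefinitionOfLPrime} and apply Theorem~\ref{thm:MaxPrinc}(ii) to $\hat f/u$ on $\Omega$. Either way, $\hat f\equiv 0$ on the open set $\Omega$.

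This contradicts the nature of $\Omega$ as a face of $\twoball\setminus Z_f$, on which $\hat f\neq 0$ by definition. If one wants an argument that does not rely on the definition of a face, one can instead invoke unique continuation for second-order elliptic equations in the plane (implicit in Lemma~\ref{lemma:structureofnullset}, which presupposes a non-constant solution). Vanishing on an open set would then force $\hat f\equiv 0$ on $\twoball$, hence $f=0$, which is excluded since $f$ is non-zero.

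The only delicate point is the case where $\partial\Omega$ touches $\mathbb{S}^1$ at isolated vertices without sharing an edge with it. In that situation $\overline\Omega$ meets $\mathbb{S}^1$ only at zeroes of $f$, so $\hat f$ still vanishes on all of $\partial\Omega$. Lemma~\ref{lemma:WeakMP} is stated for arbitrary open $\Omega\subseteq\closedtwoball$ with $f\in C^0(\overline\Omega)$, so this case is covered without modification. I expect no real obstacle: the lemma is essentially a direct consequence of strict stability through Lemma~\ref{lemma:WeakMP}.
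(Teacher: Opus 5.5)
Your proof is correct and follows the paper's argument. An interior face $\Omega$ would have $\hat f|_{\partial\Omega}=0$, so Lemma~\ref{lemma:WeakMP} applied to $\pm\hat f$ forces $\hat f\equiv 0$ on $\Omega$, which gives the contradiction. You are also right that this already contradicts $\Omega\subseteq\twoball\setminus Z_f$, so the paper's appeal to unique continuation is not needed, and your handling of faces that touch $\mathbb{S}^1$ only at vertices spells out a point the paper leaves implicit.
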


\begin{proof}
Suppose the contrary, so that there exists a face $U$ of $\twoball \setminus Z_f$ whose boundary does not contain any edge of $\mathbb{S}^1$. Then $\partial U \subseteq Z_f$, and hence $\hat{f}|_{\partial U} = 0$. It follows by the maximum principle (Lemma~\ref{lemma:WeakMP}) that $\hat{f}$ vanishes identically on $U$. By unique continuation (see \cite{Aronszajn-57} or \cite[Chapter XVII, Section 17.2]{Hormander-07}), $\hat{f}$ must then vanish identically on $\twoball$, which contradicts the assumption that $f \neq 0$.
\end{proof}

\begin{lemma}
\label{lemma:EulerI}
If $\#(Z_f\cap\mathbb{S}^1) = 2$, then $F\geqslant 2 + V_i$.
\end{lemma}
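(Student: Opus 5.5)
The plan is to apply Euler's formula to the compact connected planar graph $Z_f\cup\mathbb{S}^1$, embedded in the closed disc, and to bound the number of edges using the local structure of the nodal set.

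We first pin down the vertex and edge counts. The boundary circle carries exactly $V_e=2$ vertices, namely the two non-degenerate zeros of $f$, and is cut by them into $E_e=2$ boundary arcs. At each of these boundary vertices exactly one interior edge of $Z_f$ emanates. This follows from Lemma~\ref{lemma:structureofnullset} applied after a local extension across the boundary, using that $f'\neq 0$ so that $\dd\hat f\neq 0$ there; an implicit function argument then shows $Z_f$ is a smooth arc meeting $\mathbb{S}^1$ transversally. The interior vertices are the singular points of $Z_f$, where $\hat f$ vanishes to order $k\geq 2$. By Lemma~\ref{lemma:structureofnullset}, exactly $2k\geq 4$ edge-ends meet at each such vertex. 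If $Z_f$ contains a closed component with no vertex, we will either insert an artificial vertex of degree $2$ or use Lemma~\ref{lemma:Connected} to rule it out, since such a loop would be a component disjoint from $\mathbb{S}^1$. By the same lemma, every interior edge ends at vertices.

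Next we count edges by summing degrees. Each interior edge has two endpoints, each of which is an interior vertex or a boundary vertex. Summing edge-ends gives $2E_i=\sum_{v\ \mathrm{interior}}\deg v+2\geq 4V_i+2$, so $E_i\geq 2V_i+1$. Any artificially inserted degree-$2$ vertices must either be excluded or handled by noting that they do not affect $E-V$.

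Euler's formula for a connected planar graph, counting the outer face, gives $V-E+F_{\mathrm{tot}}=2$. The faces inside the disc number $F=F_{\mathrm{tot}}-1$, so $F=1+E-V$. Substituting $E=E_i+2$ and $V=V_i+2$ gives $F=1+E_i-V_i\geq 1+2V_i+1-V_i=2+V_i$, as required.

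The main obstacle is the careful justification of the local structure at boundary points and the exclusion of vertex-free closed components. For the boundary points, we need a single smooth arc entering transversally. For the vertex-free components, we rely on Lemma~\ref{lemma:Connected}, and these components are exactly what would otherwise break the degree count.
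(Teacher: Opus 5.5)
Your proof is correct and is essentially the paper's argument: Euler's formula for the connected graph $Z_f\cup\mathbb{S}^1$ in the disc gives $F-E_i+V_i=1$. Your interior edge-end count (interior vertices of degree at least $4$, each boundary vertex contributing exactly one interior edge-end) gives $E_i\geq 2V_i+1$, and adding the two yields $F\geq 2+V_i$. The paper phrases the count as $4V_i+3V_e\leq 2E_i+2E_e$; also, as you note, vertex-free closed components are excluded by Lemma~\ref{lemma:Connected}, so the artificial-vertex alternative is unnecessary.
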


\begin{proof}
Note first that $E_e=V_e=2$. Euler's formula therefore yields
\begin{equation*}
F - (E_i+2) + (V_i+2) = 1\ .
\end{equation*}
Hence
\begin{equation}
\label{eqn:EulerIStepI}
F - E_i + V_i = 1\ .
\end{equation}
However, by Lemma \ref{lemma:structureofnullset}, every internal vertex has valency at least $4$, whilst every external vertex has valency at least $3$. Since every edge meets exactly $2$ vertices, it follows that
\begin{equation*}
4V_i + 3V_e = 4V_i + 6 \leq 2E_i + 2E_e = 2E_i + 4\ .
\end{equation*}
Thus
\begin{equation}
\label{eqn:EulerIStepII}
E_i - 2V_i \geq 1\ .
\end{equation}
Summing \eqref{eqn:EulerIStepI} and \eqref{eqn:EulerIStepII} yields $F - V_i \geq 1 + 1=2$ and the result follows.
\end{proof}

\begin{lemma}
\label{lemma:CaseManyIntersections}
If $\#(Z_f\cap\mathbb{S}^1)=2$, then $V_i =0$.
\end{lemma}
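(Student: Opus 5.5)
The plan is to combine the lower bound $F\geq 2+V_i$ of Lemma~\ref{lemma:EulerI} with an upper bound $F\leq 2$ that comes from Lemma~\ref{lemma:NoInternalRegions}. Together these give $V_i\leq 0$, hence $V_i=0$.

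I will use the same conventions as in the proof of Lemma~\ref{lemma:EulerI}. There $Z_f\cup\mathbb{S}^1$ is viewed as a finite connected planar graph embedded in $\closedtwoball$. Its faces are the connected components of $\twoball\setminus Z_f$, which is the convention under which Euler's formula reads $F-E+V=1$. Since $\#(Z_f\cap\mathbb{S}^1)=2$ and the two boundary zeros are non-degenerate, they are the only vertices on $\mathbb{S}^1$. Thus $V_e=2$, and $\mathbb{S}^1$ is cut into exactly $E_e=2$ open arcs $\sigma_1,\sigma_2$.

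The key step is an injective map from faces to boundary edges. By Lemma~\ref{lemma:NoInternalRegions}, every face $U$ has at least one boundary edge $\sigma_j$ in its closure, and I assign to $U$ one such $\sigma_j$. To see that this assignment is injective, I will show that each boundary arc $\sigma_j$ lies in the closure of exactly one face. Take any point $y$ in the interior of $\sigma_j$. Then $y\notin Z_f$, because $\hat f=f\neq0$ there. By continuity, $\hat f$ is nonzero on a half-disc neighbourhood $\{|x-y|<\delta\}\cap\closedtwoball$, and this half-disc is connected, so it lies in a single component of $\twoball\setminus Z_f$. Since $\sigma_j$ is connected, this component does not depend on the choice of $y$. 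Hence distinct faces are assigned distinct arcs, and $F\leq E_e=2$.

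Combining $2\geq F\geq 2+V_i$ then gives $V_i\leq 0$, so $V_i=0$.

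The only delicate point is to make sure the graph and face counts used here match those used in Lemma~\ref{lemma:EulerI}. Two things need checking. First, $Z_f\cap\twoball$ has finitely many vertices and edges, with no closed loops lacking a vertex; this follows from Lemma~\ref{lemma:structureofnullset}, compactness, and Lemma~\ref{lemma:Connected}. Second, the faces counted by Euler's formula are exactly the components of $\twoball\setminus Z_f$. Once these conventions are fixed, the argument is purely combinatorial.
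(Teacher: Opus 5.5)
Your argument is correct and is essentially the paper's proof. The paper argues by contradiction: $V_i\geq 1$ would give $F\geq 3$ by Lemma~\ref{lemma:EulerI}, while each of the two boundary arcs borders exactly one face, so some face would miss $\mathbb{S}^1$, contradicting Lemma~\ref{lemma:NoInternalRegions}. You write the same combination directly as $2\geq F\geq 2+V_i$, and you also supply the local half-disc argument for why each boundary arc borders exactly one face, which the paper states without proof.
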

\begin{proof}
Suppose the contrary, so that $V_i \geq 1$. Then, by Lemma~\ref{lemma:EulerI}, $Z_f$ divides $\twoball$ into at least $F \geq 3$ regions. On the other hand, the boundary $\mathbb{S}^1$ is divided into $2$ edges, and each such edge is adjacent to exactly one face of the complement of $Z_f$, so that at most $2$ faces can meet an edge of $\mathbb{S}^1$. There therefore exists a face $U$ that does not meet any edge of $\mathbb{S}^1$. This contradicts Lemma~\ref{lemma:NoInternalRegions}, and the result follows.
\end{proof}

We are now ready to prove Items (i) and (ii) of Theorem \ref{thm:coveringofunitbundle}.
\begin{lemma}
\label{lemma:SmoothVariation}
For all $f\in E^+$, $Z_f$ is a smooth, embedded curve in $\closedtwoball$ which connects the two zeros of $f$. Furthermore, $Z_f$ varies smoothly with $f\in E^+$.
\end{lemma}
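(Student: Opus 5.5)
Here is the plan. For $f\in E^+$, I will combine the graph structure of $Z_f\cup\mathbb{S}^1$ obtained above with nondegeneracy at the boundary, and then obtain smooth dependence from the implicit function theorem.

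\emph{Step 1: topology.} By Lemma~\ref{lemma:CaseManyIntersections}, $V_i=0$. Euler's relation \eqref{eqn:EulerIStepI} then gives $F=1+E_i$. Since only two faces can meet $\mathbb{S}^1$, Lemma~\ref{lemma:NoInternalRegions} forces $F\le 2$, hence $E_i\le 1$. The two boundary zeros $a,b$ of $f$ are vertices of valency at least $3$, so at least one interior edge leaves each of them. An edge with no interior vertex must end at $a$ or $b$, and a closed loop edge is excluded because it would bound a face disjoint from $\mathbb{S}^1$. Hence $E_i=1$, and the unique interior edge is an embedded arc from $a$ to $b$. By Lemma~\ref{lemma:structureofnullset} with $k=1$ (no interior vertices), $d\hat f\neq0$ along the open arc, so the arc is smooth there.

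\emph{Step 2: the endpoints.} I need $d\hat f(a)\ne0$. The tangential derivative at $a$ is $f'(a)\neq0$, because the boundary zeros of $f\in E^+$ are simple. Thus $Z_f$ is a smooth curve up to and including its endpoints, and it is transverse to $\mathbb{S}^1$ there. I expect the main (though mild) obstacle to be the boundary regularity: I will argue that $\hat f\in C^\infty(\closedtwoball)$ by elliptic regularity, so the implicit function theorem applies in a collar of $a$. Uniqueness of the arc near $a$ then also yields valency exactly $3$ at $a$, consistent with Step 1.

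\emph{Step 3: smooth dependence.} The map $E\to C^\infty(\closedtwoball)$, $f\mapsto\hat f$, is linear and continuous (Schauder estimates together with trivial Dirichlet kernel). So $(f,x)\mapsto\hat f(x)$ is smooth on $E\times\closedtwoball$. Since $d_x\hat f\ne0$ on $Z_f$, which is compact, the implicit function theorem shows that $\{(f,x):\hat f(x)=0\}$ is locally a smooth family of curves near $E^+\times Z_f$. Using continuity in $C^1$, for $f'$ near $f$ there are no zeros of $\hat f'$ away from a tubular neighbourhood of $Z_f$. Inside that tube, $Z_{f'}$ is a graph over $Z_f$ in normal coordinates, extended slightly past $\mathbb{S}^1$ to the extension of $\hat f'$. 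I will parametrize $Z_f$ by the normal-graph function, and this function depends smoothly on $f$, which proves smooth variation.
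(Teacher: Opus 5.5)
Your proposal is correct and follows essentially the paper's argument: Lemma~\ref{lemma:CaseManyIntersections} rules out interior vertices, which gives $d\hat f\neq 0$ on the open arc; the simple boundary zeros of $f$ give smoothness and transversality at the endpoints; and the implicit function theorem applied to $(f,x)\mapsto\hat f(x)$ gives smooth dependence. The only differences are that you pin down the single-arc structure via the count $F=1+E_i\le 2$ rather than citing Lemma~\ref{lemma:Connected}, and that you make explicit the normal-graph parametrization that the paper's one-line appeal to the implicit function theorem leaves implicit.
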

\begin{proof}
Indeed, by Lemma \ref{lemma:CaseManyIntersections}, $Z_f$ has no internal vertices. It is therefore a smoothly embedded curve in $\twoball$. By Lemma \ref{lemma:Connected} it is connected. By non-degeneracy, $Z_f$ is also smooth at its two end-points. This proves the first assertion.  To prove the second assertion, consider the function $e:E^+\times\closedtwoball\rightarrow\Bbb{R}$ given by
\begin{equation*}
e(f,x):=\hat{f}(x)\ ,
\end{equation*}
and let $\dd_2 e$ denote its differential with respect to the second component. For all $f\in E^+$, the absence of internal vertices implies that, for all $x\in Z_f$,
\begin{equation*}
\dd\hat{f}(x)\neq 0\ ,
\end{equation*}
that is
\begin{equation*}
(\dd_2 e)(f,x)=\dd\hat{f}(x)\neq 0
\end{equation*}
at every point of $e^{-1}(\{0\})$. It now follows by the implicit function theorem that the zero set $Z_f = \{x : e(f,x)=0\}$ depends smoothly on $f$, and this completes the proof.
\end{proof}

\subsubsection{The structure of the foliation}

Recall that elements of $E^+$ are those elements of $E$ with two non-degenerate zeros, and that elements of $E^0$ are those with only one zero. In what follows, we will make use of the foliation $\cW$ of $P^+E^+$ by open arcs defined in \S\ref{sss.foliating_space_like}, as well as the map
\begin{equation*}
\zeta:PE^0 \to \mathbb{S}^1,\quad [f]\mapsto \Zero(f)\ .
\end{equation*}
Recall that these objects are invariant under multiplication by $-1$ (see Lemma~\ref{lem.invariant_-1}).

Given a leaf $W$ of $\cW$, let $\partial W$ denote its two extremities in $P^+E^0$. A priori, $\zeta(\partial W)$ consists of either one or two points of $\mathbb{S}^1$. We now show that it always consists of two distinct points, and that each leaf of $\mathcal{W}$ gives rise to a foliation of the disc by nodal sets, analogous to the foliation of the hyperbolic plane by geodesics orthogonal to some fixed geodesic.

\begin{lemma}
\label{lemma:SmoothNDFoliation}
For every leaf $W$ of $\mathcal{W}$, the set $\zeta(\partial W)$ consists of exactly two distinct points of $\mathbb{S}^1$. Moreover, the family $(Z_f)_{f\in W}$ defines a smooth, non-singular foliation of $\closedtwoball \setminus \zeta(\partial W)$.
\end{lemma}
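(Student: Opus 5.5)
Fix a leaf $W$ of $\cW$. In normalized representatives it is $\{f_\gamma := g\cdot(\gamma - \alpha\cos\theta - \beta\sin\theta) : \gamma\in(-1,1)\}$ for fixed $(\alpha,\beta)$ with $\alpha^2+\beta^2=1$. Write $\alpha\cos\theta+\beta\sin\theta=\cos(\theta-\theta_0)$. The two extremities of $W$ in $P^+E^0$ are then $f_{\pm1}$. These vanish exactly at $\theta_0$ (for $\gamma=1$) and at $\theta_0+\pi$ (for $\gamma=-1$). So $\zeta(\partial W)=\{\theta_0,\theta_0+\pi\}$ consists of two distinct points. This settles the first assertion, directly from the explicit trigonometric description; the positive factor $g$ does not change the zero sets.

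For the foliation I would exploit linearity of the Dirichlet problem in $\gamma$. We have $f_\gamma=\gamma g - h$, where $h:=g\cos(\theta-\theta_0)$. Writing $\hat g$ and $\hat h$ for the Dirichlet extensions, we get $\hat f_\gamma=\gamma\hat g-\hat h$. Since $g>0$, Lemma \ref{lemma:WeakMP} gives $\hat g>0$ on $\closedtwoball$. Hence
\[
x\in Z_{f_\gamma}\iff \psi(x)=\gamma,\qquad \psi:=\hat h/\hat g .
\]
Thus the family $(Z_{f_\gamma})_{\gamma\in(-1,1)}$ is exactly the family of level sets of the single smooth function $\psi$ on $\closedtwoball$. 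On $\mathbb S^1$ we have $\psi=\cos(\theta-\theta_0)$, so $\psi$ takes values in $[-1,1]$ there. By the maximum principle for $L'=M_{\hat g}^{-1}LM_{\hat g}$ (which has no zeroth order term and annihilates $\psi$), $\psi$ maps $\twoball$ into $(-1,1)$. Therefore every point of $\closedtwoball\setminus\zeta(\partial W)$ lies on exactly one $Z_{f_\gamma}$ with $|\gamma|<1$, and the sets are pairwise disjoint. Each $Z_{f_\gamma}$ is a smooth embedded arc by Lemma \ref{lemma:SmoothVariation}, so the family partitions $\closedtwoball\setminus\zeta(\partial W)$ into smooth arcs.

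It remains to show non-singularity, i.e.\ $\dd\psi\neq 0$ on $\closedtwoball\setminus\{\theta_0,\theta_0+\pi\}$; this is the step needing care. At interior points $x$ with $\psi(x)=\gamma$, we have
\[
\dd\psi(x)=\dd\hat f_\gamma(x)/\hat g(x),
\]
because $\hat f_\gamma(x)=0$. The proof of Lemma \ref{lemma:SmoothVariation} (via Lemma \ref{lemma:CaseManyIntersections}, no interior vertices) shows $\dd\hat f_\gamma\neq0$ on $Z_{f_\gamma}$. At boundary points other than $\theta_0,\theta_0+\pi$, the tangential derivative $-\sin(\theta-\theta_0)$ of $\psi|_{\mathbb S^1}$ is already nonzero. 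Hence $\psi$ is a submersion on $\closedtwoball\setminus\zeta(\partial W)$, and its level sets form a smooth non-singular foliation. The only point that seems delicate is whether $\psi$ could attain the values $\pm1$ at some interior point. This is ruled out by the strong maximum principle (Theorem \ref{thm:MaxPrinc}(ii)), since $\psi$ is non-constant.
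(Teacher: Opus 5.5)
Your proof is correct. It rests on the same key fact as the paper's argument, namely that $\partial_\gamma\hat f_\gamma=\hat g$ is strictly positive on $\closedtwoball$ by Lemma \ref{lemma:WeakMP}, but it packages that fact differently.

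\emph{How the paper argues.} The paper keeps the one-parameter family $\hat h_t$. It uses $\partial_t\hat h_t\geq\eps$ together with Lemma \ref{lemma:SmoothVariation} to see that the $Z_{h_t}$ foliate some open set $U$. It then identifies $U$ by a continuity argument as $t\to\pm1$, which tacitly uses that $Z_{h_{\pm1}}$ is a single boundary point (Lemma \ref{lemma:ZeroOrOneZero}(ii)). For the first assertion it argues via $h_1-h_{-1}\geq2\eps$ rather than computing the two zeros explicitly.

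\emph{How you argue.} You turn the whole family into the level sets of the single function $\psi=\hat h/\hat g$. Disjointness of the leaves is then automatic. The exact region swept out follows from the strong maximum principle for $M_{\hat g}^{-1}LM_{\hat g}$. Non-singularity, and transversality of the leaves to $\mathbb{S}^1$, both reduce to $\dd\psi\neq0$.

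\emph{What each buys.} Your version makes the identification of the foliated region and the boundary behaviour completely explicit. The paper's parametrized form $t\mapsto\hat h_t$ with $\partial_t\hat h_t>0$ is the form reused directly in the subsequent lemma.

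One harmless slip: since $\hat f_\gamma/\hat g=\gamma-\psi$, at a zero of $\hat f_\gamma$ one has $\dd\psi=-\dd\hat f_\gamma/\hat g$, not $+\dd\hat f_\gamma/\hat g$. This does not affect the non-vanishing conclusion.
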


\begin{proof}
Let $W$ be such a leaf. Any $[f_0] \in W$ has a unique representative of the form
\begin{equation*}
f_0(\theta)=g(\theta)(\gamma - \alpha\cos(\theta) - \beta\sin(\theta))
\end{equation*}
where $(\alpha,\beta)\in\mathbb{R}^2$ satisfies $\alpha^2+\beta^2=1$ and $|\gamma|<1$. The leaf $W$ is thus parametrized by
\begin{equation*}
h_t(\theta)=g(\theta)\big(t - \alpha\cos(\theta) - \beta\sin(\theta)\big),
\qquad (t,\theta)\in(-1,1)\times\mathbb{S}^1\ .
\end{equation*}
Since $g$ is positive and smooth on $\mathbb{S}^1$, there exists $\varepsilon>0$ such that, for all $(t,\theta)\in(-1,1)\times\mathbb{S}^1$,
\begin{equation*}
\partial_t h_t(\theta)=g(\theta)\geq \varepsilon\ .
\end{equation*}
In particular,
\begin{equation*}
h_1(\theta) - h_{-1}(\theta) \geq 2\eps\ ,
\end{equation*}
so that $\zeta\big([h_1]\big)\neq\zeta\big([h_{-1}]\big)$, and $\zeta(\partial W)$ therefore consists of $2$ distinct points. For all $t$, let $\hat{h}_t\in C^\infty(\closedtwoball)$ denote the unique solution of
\begin{equation*}
L\hat{h}_t = 0\qquad\text{and}\qquad \hat{h}_t|_{\mathbb{S}^1} = h_t\ .
\end{equation*}
Since
\begin{equation*}
L\partial_t\hat{h}_t = \partial_tL\hat{h}_t=0\qquad\text{and}\qquad \partial_t\hat{h}_t|_{\mathbb{S}^1} = g \geq\eps\ ,
\end{equation*}
so that, by Lemma \ref{lemma:WeakMP}, upon reducing $\eps$ if necessary,
\begin{equation*}
\partial_t\hat{h}_t \geq \eps\ .
\end{equation*}
It follows by Lemma \ref{lemma:SmoothVariation} that $Z_{h_t}$ smoothly foliates some open subset $U$ of $\closedtwoball$. It remains only to show that $U=\closedtwoball\setminus\zeta(\partial W)$. However, by continuity, every point $x\in\partial U$ is an element of $Z_f$, for some $f\in\partial W$, so that $\partial U\subseteq\zeta(\partial W)$. Since the converse inclusion is trivial, this completes the proof.
\end{proof}

For $x \in \twoball$, let $C(x)$ denote the set of all $[f] \in P^+E^+$ such that $x \in Z_f$.
\begin{lemma}
For every $x \in \twoball$, the set $C(x)$ satisfies
\begin{enumerate}[label=\emph{(\roman*)}]
\item $C(x)$ is a smooth, embedded submanifold of $P^+E^+$, diffeomorphic to $\mathbb{S}^1$;
\item $C(x)$ intersects each leaf of $\mathcal{W}$ transversally and in exactly one point;
\item $C(x)$ is invariant under multiplication by $-1$.
\end{enumerate}
\end{lemma}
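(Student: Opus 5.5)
The plan is to realise $C(x)$ as the zero set of a linear functional and read off (i)–(iii) from the structure of $\cW$. For fixed $x\in\twoball$, the map $E\to\R$, $f\mapsto \hat f(x)$, is linear, because the Dirichlet extension $f\mapsto\hat f$ is linear. Hence
\[
C(x)=\{[f]\in P^+E^+ : \hat f(x)=0\}
\]
is the intersection of $P^+E^+$ with the projectivisation of a linear hyperplane $K_x\subset E\simeq\R^3$. We first check that this functional is nonzero. The representative $f=g$ (constant $\gamma=1$) lies in $E^-$, and Lemma~\ref{lemma:WeakMP} gives $\hat g>0$, so $\hat g(x)\neq 0$. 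The set $P^+K_x$ is therefore a great circle in the sphere $P^+E$. Since $K_x$ is a linear subspace, $C(x)$ is automatically invariant under $f\mapsto -f$, which gives (iii). It also follows that $C(x)$ is a smooth embedded submanifold wherever it is nonempty, as an open subset of a great circle.

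For (ii), fix a leaf $W$ of $\cW$ and parametrise it as in the proof of Lemma~\ref{lemma:SmoothNDFoliation} by $h_t=g(t-\alpha\cos\theta-\beta\sin\theta)$, with $t\in(-1,1)$. The function $t\mapsto\hat h_t(x)$ is affine in $t$, and its derivative is $\partial_t\hat h_t(x)=\hat g(x)\ge\eps>0$. It therefore has at most one zero, and that zero is transverse; this is exactly transversality of $C(x)$ to $W$. Existence of the zero comes from Lemma~\ref{lemma:SmoothNDFoliation}: the curves $(Z_{h_t})_{t\in(-1,1)}$ foliate $\closedtwoball\setminus\zeta(\partial W)$, and $x$ is an interior point, so it lies on exactly one leaf $Z_{h_t}$. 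Hence $C(x)\cap W$ is a single point, meeting $W$ transversally.

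To finish (i), we use that $P^+E^+$ is an annulus $\SS^1\times(-1,1)$, whose leaves $W$ are parametrised by $(\alpha,\beta)\in\SS^1$. By (ii), $C(x)$ is the graph of a function $t_x:\SS^1\to(-1,1)$, namely $(\alpha,\beta)\mapsto t$, which solves the linear equation
\[
t\,\hat g(x)=\alpha\,\widehat{g\cos}(x)+\beta\,\widehat{g\sin}(x).
\]
This gives $t_x(\alpha,\beta)$ explicitly and shows it is smooth. So $C(x)$ is the graph of a smooth function over $\SS^1$, and hence an embedded circle. The one point needing care is that $|t_x|<1$, i.e.\ that the graph never leaves $P^+E^+$ into $P^+E^0$. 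This is precisely the existence statement in (ii), which relies on $x\notin\zeta(\partial W)\subset\SS^1$. I expect that to be the only delicate step, and it is already supplied by Lemma~\ref{lemma:SmoothNDFoliation}.
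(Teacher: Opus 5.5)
Your argument is correct, and it takes a more direct route than the paper. The paper's proof does not use that $f\mapsto\hat f(x)$ is a linear functional on $E$. Instead it builds a smooth map $\phi:P^+E^+\times[0,1]\to\closedtwoball$ parametrizing each nodal curve $Z_f$ at constant speed. It then uses Lemma~\ref{lemma:SmoothNDFoliation} to show that the restriction $\psi_W$ of $\phi$ over each leaf $W$ is a diffeomorphism onto $\twoball$, and deduces that $\phi$ is a submersion, so that $\hat C(x)=\phi^{-1}(\{x\})$ is a smooth curve. Compactness comes from the collapse of $\phi([f],\cdot)$ to a point as $[f]\to P^+E^0$. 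Finally it projects to $P^+E^+$: connectedness follows from identifying $C(x)$ with the leaf space of $\cW$, and (iii) from the $\pm$-symmetry of the whole construction.

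You instead observe that $C(x)=P^+K_x\cap P^+E^+$ for the plane $K_x=\{f:\hat f(x)=0\}$. In the coordinates $(\alpha,\beta,t)\in\SS^1\times(-1,1)$, $C(x)$ is then the zero set of $t\,\hat g(x)-\alpha\,\widehat{g\cos}(x)-\beta\,\widehat{g\sin}(x)$, whose $t$-derivative $\hat g(x)$ is positive. With this, (iii) is tautological and uniqueness and transversality are immediate. The only genuinely analytic input is the existence of a zero with $|t|<1$, which you correctly take from Lemma~\ref{lemma:SmoothNDFoliation}. It also follows at once from the intermediate value theorem, since $\hat h_{1}>0$ and $\hat h_{-1}<0$ on $\twoball$ by the argument of Lemma~\ref{lemma:ZeroOrOneZero}(ii).

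Your route also gives more. The graph is compact, nonempty and open in the great circle $P^+K_x$, so in fact $C(x)=P^+K_x$. That is, $K_x$ is a spacelike plane for the Minkowski structure given by $\Delta$, and the explicit formula for $t_x$ makes smooth dependence on $x$ obvious.

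What the paper's construction buys is the lifted curve $\hat C(x)$ together with $\phi$, which records where $x$ sits along $Z_f$, continuously in $[f]$. This is the sort of information used to see that $I^\pm$ are open in Lemma~\ref{lemma:DistinctPointsOfdisc}. It is also a topological argument that does not rely on the nodal curves being cut out by a linear family.
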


\begin{proof}
We first construct a smooth map $\phi:P^+E^+\times[0,1]\rightarrow\closedtwoball$ such that, for all $[f]\in P^+E^+$, $\phi([f],\cdot)$ parametrizes $Z_f$. Note first that, for all $[f]\in P^+E^+$, since $\nabla\hat{f}\neq 0$ over $Z_f$, $Z_f$ carries a natural orientation, defined such that $\nabla\hat{f}$ points leftwards along this curve. We now fix an arbitrary riemannian metric over $\closedtwoball$ and we define $\phi$ such that, for all such $[f]$, $\phi([f],\cdot)$ is the unique positively oriented, constant speed parametrization of $Z_f$ with respect to this metric. By Lemma \ref{lemma:SmoothNDFoliation}, $\phi$ is smooth. We will henceforth only be interested in the restriction of this map to the open subset $P^+E^+\times(0,1)$. The result will follow upon studying the properties of this restriction.

Consider now a leaf $W$ of $\mathcal{W}$, and let
\begin{equation*}
h_s(\theta) := g(\theta)\big(s-\alpha\cos(\theta)-\beta\sin(\theta)\big)
\end{equation*}
denote its normalized parametrization, is in the proof of Lemma \ref{lemma:SmoothNDFoliation}. We introduce the function
\begin{equation*}
\psi:=\psi_W:(0,1)^2\rightarrow\mathbb{B}^2;(s,t)\mapsto\phi\big([h_s],t\big)\ .
\end{equation*}
We first claim that $\psi$ is bijective. Indeed, by Lemma \ref{lemma:SmoothNDFoliation}, the family $(Z_{h_s})_{s\in(-1,1)}$ foliates $\mathbb{B}^2$. Every point of $\mathbb{B}^2$ therefore lies on $Z_{h_s}$, for some $s$, and is thus equal to $\psi(s,t)$, for some $t$. This proves surjectivity. Suppose now that $\psi(s,t)=\psi(s',t')$ for some $(s,t)$ and $(s',t')$. Since these points trivially lie on the same leaf, $s=s'$, and since $\psi(s,\cdot)$ parametrizes this leaf, $t=t'$. This proves injectivity, so that $\psi$ is bijective, as asserted.

We now claim that $\psi$ is a smooth diffeomorphism. To this end, choose $(s,t)\in(0,1)^2$, and consider the curve
\begin{equation*}
\gamma(r) := \psi(s+r,t) = \phi\big([h_{s+r}],t\big)\ .
\end{equation*}
For all $r$, $\gamma(r)$ is an element of $Z_{h_{s+r}}$, so that
\begin{equation*}
\hat{h}_{s+r}\big(\gamma(r)\big) = 0\ .
\end{equation*}
Differentiating this identity yields
\begin{equation*}
\frac{\partial\hat{h}}{\partial r}\big(\gamma(0)\big)\bigg|_{r=0} + d\hat{h}_s\big(\gamma(0)\big)\cdot\dot{\gamma}(0) = 0\ ,
\end{equation*}
so that, as in the proof of Lemma \ref{lemma:SmoothNDFoliation},
\begin{equation*}
d\hat{h}_s\big(\psi(s,t)\big)\cdot\dot{\gamma}(0) = -\frac{\partial\hat{h}}{\partial r}\big(\psi(s,t)\big)\bigg|_{r=0} < 0\ .
\end{equation*}
It follows that $\partial_s\psi(s,t)=\dot{\gamma}(0)$ is non-zero and transverse to the kernel of $d\hat{h}_s(\psi(s,t))$. Since this kernel is the tangent line of $Z_{h_s}$, it follows that $\partial_s\psi(s,t)$ is transverse to $Z_{h_s}$. Since, by construction, $\partial_t\psi(s,t)$ is non-zero and tangent to $Z_{h_s}$, it follows that $D\psi(s,t)$ is a linear isomorphism. Finally, it follows by bijectivity and the inverse function theorem that $\psi$ is a smooth diffeomorphism, as asserted.

Let $\pi:P^+E^+\times(0,1)\rightarrow P^+E^+$ denote the projection onto first factor. Note that, for any leaf $W$ of $\mathcal{W}$, $\psi_W$ is, up reparametrization, the restriction of $\psi$ to the surface $\pi^{-1}(W)$. In particular, since $\psi_W$ is a diffeomorphism for all such $W$, it follows that $\phi$ is a smooth submersion, and the set $\hat{C}(x):=\phi^{-1}(\{x\})$ is therefore a smooth, $1$-dimensional, embedded submanifold of $P^+E^+\times(0,1)$.

Since $\phi([f],\cdot)$ converges uniformly to the constant map $\zeta([f_\infty])$ as $[f]\to [f_\infty]\in P^+E^0$, the closure of $\hat{C}(x)$ in $\overline{P^+E^+}\times[0,1]$ does not meet $\partial(P^+E^+\times[0,1])$. The set $\hat{C}(x)$ is thus compact, and therefore diffeomorphic to a disjoint union of finitely many copies of $\mathbb{S}^1$.

Since $\phi$ is constant over $\hat{C}(x)$, but restricts to a smooth embedding over every fibre of $\pi$, it follows that $\hat{C}(x)$ meets every such fibre transversally at at most one point. The projection $C(x)=\pi(\hat{C}(x))$ is therefore a smooth, $1$-dimensional, embedded submanifold of $P^+E^+$, also diffeomorphic to a disjoint union of finitely many copies of $\mathbb{S}^1$.

Now let $W$ be a leaf of $\mathcal{W}$. We claim that $C(x)$ meets $W$ transversally at exactly one point. Indeed, since $\phi$ is constant over $\hat{C}(x)$, but restricts to a smooth diffeomorphism over $\pi^{-1}(W)$, the curve $\hat{C}(x)$ meets the surface $\pi^{-1}(W)$ transversally at exactly one point, and the assertion follows upon taking projections. This proves (ii). We see also that $C(x)$ is homeomorphic to the leaf space of $\mathcal{W}$. It is therefore connected, and thus diffeomorphic to $\mathbb{S}^1$. This proves (i). Finally, since all elements of this construction are invariant under multiplication by $-1$, (iii) follows immediately, and this completes the proof.
\end{proof}

We are now ready to prove Items (iii) and (iv) of Theorem \ref{thm:coveringofunitbundle}. Recall that, for each $f$, $\hat{Z}_f \subseteq S\closedtwoball$ denotes the set of unit normal vectors over $Z_f$ which point in the direction in which $\hat{f}$ increases.

\begin{lemma}
\label{lemma:CoversUnitBundleOfdisc}
For all $v\in S\twoball$, there exists $[f]\in PE^+$ such that $v\in\hat{Z}_f$.
\end{lemma}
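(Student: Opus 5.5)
Fix $v\in S_x\twoball$ at an interior point $x\in\twoball$. By the preceding lemma, $C(x)\subset P^+E^+$ is a smooth embedded circle, invariant under $[f]\mapsto[-f]$, which meets every leaf of $\mathcal W$ transversally in exactly one point. By Lemma~\ref{lemma:SmoothVariation}, for each $[f]\in C(x)$ the zero set $Z_f$ is a smooth curve through $x$ with $\dd\hat f(x)\neq 0$. We can therefore define a continuous map
\begin{equation*}
\nu_x: C(x)\to S_x\twoball\simeq\mathbb{S}^1,\qquad [f]\mapsto \frac{\nabla\hat f(x)}{|\nabla\hat f(x)|}\ ,
\end{equation*}
computed with respect to any fixed background metric. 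By definition, $\nu_x([f])$ is exactly the element of $\hat Z_f$ lying over $x$. The lemma is therefore equivalent to the surjectivity of $\nu_x$.

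Surjectivity will follow from a degree argument. Replacing $f$ by $-f$ leaves $Z_f$ unchanged and reverses the gradient, so $\nu_x([-f])=-\nu_x([f])$. Thus $\nu_x$ is an odd map from the circle $C(x)$, equipped with the free involution $[f]\mapsto[-f]$, to $\mathbb{S}^1$ equipped with the antipodal map. By the one-dimensional Borsuk--Ulam theorem, such an equivariant map between circles with free involutions has odd degree. Concretely, one chooses a path $\gamma$ in $C(x)$ from $[f]$ to $[-f]$ and a lift of $\nu_x\circ\gamma$ to $\R$. The total change of angle along $\gamma$ is $\pi$ modulo $2\pi$. Traversing the second half of the circle, from $[-f]$ back to $[f]$, contributes the same change by oddness. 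The total degree is therefore an odd multiple of $\pi$ divided by $\pi$, that is, an odd integer, and in particular nonzero. A map $\mathbb{S}^1\to\mathbb{S}^1$ of nonzero degree is surjective, so there exists $[f]\in C(x)$ with $\nu_x([f])=v$, and then $v\in\hat Z_f$.

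The only points that need checking are the following. First, $C(x)$ must be connected, which is item (i) of the preceding lemma; otherwise the involution could exchange two components and the degree argument would collapse. Second, the involution must be free on $C(x)$, which is immediate since $[f]\neq[-f]$ in oriented projective space. Third, $\nu_x$ must be well defined and continuous, which follows from the non-vanishing of $\dd\hat f(x)$ together with the smooth dependence of $\hat f$ on $f$, via the Dirichlet solution operator. The main subtlety is the first point, the connectedness of $C(x)$, and this has already been established. The remaining topology is elementary.
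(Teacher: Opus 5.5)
Your proposal is correct and follows essentially the same route as the paper: you restrict to the circle $C(x)$, note that the normal map $C(x)\to S_x\twoball$ is odd under $[f]\mapsto[-f]$, deduce that it has odd and hence nonzero degree, and conclude that it is surjective. The paper only asserts the odd-degree step, and your lifting argument spells out that one-dimensional Borsuk--Ulam fact.
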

\begin{proof}
Fix $x\in\twoball$, and consider the map
\begin{equation*}
N:C(x)\rightarrow S_x\twoball
\end{equation*}
defined such that, for each $[f]\in C(x)$, $N([f])$ is the unit normal vector to $Z_f$ at $x$, pointing in the direction in which $\hat{f}$ increases. This map is smooth, and hence continuous. Furthermore, it satisfies $N([-f]) = -N([f])$ for all $[f]\in C(x)$. Since $C(x)\simeq \mathbb{S}^1$ and $S_x\twoball\simeq \mathbb{S}^1$, it follows that the degree of $N$ is odd, and therefore non-zero. $N$ is thus surjective, and the result follows.
\end{proof}

\begin{lemma}
\label{lemma:DistinctPointsOfdisc}
For all $x\neq y\in\twoball$, $C(x)\neq C(y)$.
\end{lemma}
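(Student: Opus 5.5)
The plan is to argue by contradiction, using the linear structure of $E$. We show that $C(x)=C(y)$ forces the pair $(\hat f(x),\hat f(y))$, as $f$ ranges over a suitable $2$-plane, to vanish simultaneously at two distinct points. We then rule this out by an index count against the boundary winding number.

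\textbf{Step 1: $C(x)$ is a full projective great circle.} For $x\in\twoball$ consider the linear functional $\mathrm{ev}_x:E\to\R$, $f\mapsto\hat f(x)$. It is non-zero: if $f\in E^-$ has constant sign, then $\hat f$ has constant sign by Lemma~\ref{lemma:ZeroOrOneZero}. So $K_x:=\ker\mathrm{ev}_x$ is a $2$-plane, and by definition $C(x)=P^+K_x\cap P^+E^+$. By Lemma~\ref{lemma:ZeroOrOneZero}, no non-zero element of $K_x$ lies in $E^-$ or $E^0$, since those have $Z_f\cap\twoball=\emptyset$. Hence $K_x\setminus\{0\}\subset E^+$ and $C(x)=P^+K_x$, which is consistent with its being diffeomorphic to $\SS^1$. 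Consequently $C(x)=C(y)$ gives $K_x=K_y=:K$. In particular every $f\in K$ satisfies $\hat f(x)=\hat f(y)=0$.

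\textbf{Step 2: non-degeneracy at common zeros.} Fix a basis $f_1,f_2$ of $K$ and set $F:=(\hat f_1,\hat f_2):\closedtwoball\to\R^2$. The linear map $K\to T^*_x\twoball$, $f\mapsto \dd\hat f(x)$, is injective. Indeed, every non-zero $f\in K$ lies in $E^+$, so Lemma~\ref{lemma:SmoothVariation} gives $\dd\hat f\neq0$ on $Z_f$. It is therefore an isomorphism, and $\dd F(x)$ is invertible; the same holds at $y$ and at every common zero of $K$. So the common zero set $F^{-1}(0)\cap\twoball$ is finite. Each of its points has local index $\pm1$ and contains both $x$ and $y$. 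Moreover $F$ has no zeros on $\SS^1$: a common boundary zero $\theta_0$ would produce a non-zero $f\in K$ with a degenerate zero at $\theta_0$, hence $f\in E^0$, which is impossible.

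\textbf{Step 3: index count.} By the argument principle, the sum of the local indices of $F$ over its interior zeros equals the winding number of $F|_{\SS^1}=g\cdot(t_1,t_2)$ around $0$. Here $t_1,t_2\in\cT$ and $g>0$, so this is the winding number of the affine image of the unit circle
\begin{equation*}
\theta\mapsto(\gamma_1,\gamma_2)-M(\cos\theta,\sin\theta),
\end{equation*}
for some $2\times 2$ matrix $M$. This winding number lies in $\{-1,0,1\}$. It would then suffice to show that all common zeros have the \emph{same} index sign, because two zeros $x\neq y$ would give total index $\pm2$, a contradiction.

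\textbf{Main obstacle: the sign claim.} I expect this sign claim to be the hardest step. The first thing I would try is to use the nodal foliation of the pencil. The curves $Z_{\cos\phi f_1+\sin\phi f_2}$ are the level sets of the map $[F]:\closedtwoball\setminus F^{-1}(0)\to\R P^1$. By Lemma~\ref{lemma:SmoothVariation} each is a single embedded arc joining two boundary points and passing through every common zero. Near a common zero $p$, these arcs rotate monotonically through all directions as $\phi$ increases, in a sense fixed by the sign of $\det\dd F(p)$. Follow one arc from $x$ to $y$. Two consecutive common zeros along it bound, together with a neighbouring arc, a bigon. I would then show, using Lemma~\ref{lemma:WeakMP} and Lemma~\ref{lemma:NoInternalRegions} applied to the face structure of $Z_{f_1}\cup Z_{f_2}$, that the rotation sense must agree at the two endpoints of the bigon. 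If it did not, one would find a face whose boundary lies entirely in nodal sets of elements of $K$, which is excluded in the same way as in Lemma~\ref{lemma:NoInternalRegions}.

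Should this comparison fail, the fallback is a Sturm-type argument on that bigon. There one would use $L'$ from \eqref{eqn:DefinitionOfLPrime} and strict stability to exclude two nodal arcs of independent solutions bounding a region, in the spirit of Lemma~\ref{lemma:WeakMP}.
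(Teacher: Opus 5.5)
Your Step~1 is correct and neat: $\mathrm{ev}_x\neq 0$, and every non-zero element of $K_x$ lies in $E^+$ by Lemma~\ref{lemma:ZeroOrOneZero}. Hence $C(x)=P^+K_x$, and $C(x)=C(y)$ if and only if $K_x=K_y$. Steps~2 and~3 are also correct, but they do not reduce the problem. Every local index of $F$ is $\pm1$ and their sum lies in $\{-1,0,1\}$. So the sign claim holds exactly when $K$ has at most one common zero in $\twoball$. It is a restatement of the lemma for $K=K_x$, and your proposal stops where the real content begins.

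The attack you sketch cannot supply this content, for three reasons.
\begin{itemize}
\item The argument of Lemma~\ref{lemma:NoInternalRegions} needs a \emph{single} solution vanishing on the whole boundary of a face. Take an interior face $U$ of $Z_{f_1}\cup Z_{f_2}$ on which $\hat f_i$ has sign $\epsilon_i$. The natural candidate $\epsilon_1\hat f_1+\epsilon_2\hat f_2$ is positive on $U$ and on the open boundary arcs, and vanishes only at the corners. This is perfectly compatible with Lemma~\ref{lemma:WeakMP}, so no contradiction arises.
\item At the two corners of a lens-shaped face (both necessarily convex), the indices of $F$ are opposite by elementary planar topology. So ``the rotation sense must agree at the two endpoints'' fails in exactly the configuration you would need to rule out.
\item A local Sturm-type argument is hopeless as well. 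For the strictly stable operator $\Delta$ on a disc of radius $3$, the harmonic functions $x_1^2-x_2^2-1$ and $x_1-2$ have nodal curves bounding a lens. Whatever excludes the lens must therefore use the whole pencil.
\end{itemize}

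The missing idea is the antipodal symmetry of the pencil combined with connectedness, which is the paper's proof. Suppose $K_x=K_y=K$ and put $f_\phi:=\cos\phi\, f_1+\sin\phi\, f_2$. By Lemma~\ref{lemma:SmoothVariation}, each $Z_{f_\phi}$ is an embedded arc through $x$ and $y$ depending smoothly on $\phi$. So the unit tangent $\tau_\phi$ at $x$ of its sub-arc running to $y$ is continuous in $\phi$. It equals $s$ times the rotation by $\pi/2$ of $\nabla\hat f_\phi(x)/|\nabla\hat f_\phi(x)|$, for a sign $s\in\{\pm1\}$ that is continuous, hence constant. Since $\hat f_{\phi+\pi}=-\hat f_\phi$, this gives $\tau_{\phi+\pi}=-\tau_\phi$. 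But $Z_{f_{\phi+\pi}}=Z_{f_\phi}$ forces $\tau_{\phi+\pi}=\tau_\phi$, a contradiction.

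The paper phrases this as follows. Orient $Z_f$ with $\hat f>0$ on its left. The connected circle $C(x)$ splits into two open sets, according to whether $y$ comes after or before $x$ along $Z_f$. The map $f\mapsto -f$ swaps these two sets, which is impossible. Combined with your Step~1, this closes the proof at once, and the index count of Steps~2--3 is unnecessary.
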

\begin{remark} Since $C(x)$ and $C(y)$ both meet each leaf of $\mathcal{W}$ exactly once, it follows that both $C(x)\setminus C(y)$ and $C(y)\setminus C(x)$ are non-empty. This proves Theorem \ref{thm:coveringofunitbundle}, Item $(iv)$.
\end{remark}
\begin{proof}
Suppose the contrary, so that $C(x)=C(y)$. Then, for every $[f]\in C(x)$, the curve $Z_f$ passes through both $x$ and $y$. For each such $f$, we orient $Z_f$ so that $\hat{f}$ is positive on its left-hand side. Since $Z_f$ is a smoothly embedded curve, this orientation induces an ordering along $Z_f$. In particular, exactly one of the following two possibilities holds. Either $y$ lies after $x$ along $Z_f$, or it lies before $x$. We thus obtain a partition of $C(x)$ into two open subsets, namely
\begin{equation*}
\begin{aligned}
I^+ &:= \big\{[f]\in C(x)\ : \ y \text{ lies after } x \text{ along } Z_f\big\}\ \text{and}\\
I^- &:= \big\{[f]\in C(x)\ : \ y \text{ lies before } x \text{ along } Z_f\big\}\ .
\end{aligned}
\end{equation*}
These sets are clearly disjoint, and their union is $C(x)$. Thus, since $C(x)$ is connected, one of the two is empty. However, for every $f$, the curve $Z_{-f}$ coincides with $Z_f$ with the opposite orientation, and multiplication by $-1$ thus maps $I^+$ bijectively onto $I^-$. This is absurd, and the result follows.
\end{proof}

This completes the proof of Theorem~\ref{thm:coveringofunitbundle}. The two key properties, namely covering of the unit tangent bundle and separation of points, hold for any strictly stable operator $L$. This reflects the topological analogy between the family $\mathcal{Z}:=(Z_f)_{f\in E^+}$ of nodal sets and the space $\mathcal{G}^+$ of oriented geodesics of the hyperbolic disc. The space $\mathcal{G}^+$ itself naturally identifies with $(1+1)$-dimensional de Sitter space $\dS^{1,1}$, with its spacetime structure being manifested by its incidence geometry. Indeed, two geodesics are in spacelike relation whenever they intersect non-trivially, they are in null relation whenever they are asymptotic to one another, and are in timelike relation otherwise. This is the intuition behind the manner in which the spacetime structure discussed in \S\ref{sss.First_order_variations} influences the incidence geometry of $\mathcal{Z}$, and the key property making this possible is precisely the strict stability of $L$.

\subsection{The three-dimensional foliated Plateau problem} \label{ssection:plateau-3d}

We now return to our geometric setting. Let $\bar g$ be a smooth (resp. analytic) Riemannian metric on $X$ which we extend smoothly (resp. analytically) to a larger smooth (resp. analytic) Riemannian ball $(X_e, \bar g_e)$ containing $(X,\bar g)$. See Appendix \ref{section:extension}. Recall that $\partial X$ is oriented by the outward-pointing unit normal vector field $\nu$. Let $(\Phi,\Lambda)$ be an elliptic curvature function, as in Definition \ref{definition:elliptic-curvature-function}. Throughout this section, we assume that the metric $\bar g$ is $(\Phi,k)$-simple in the sense of Definition \ref{definition:phik-simplicity}.

\subsubsection{The phase space}
We define the space $\MD$ of \emph{marked $(\Phi,k)$-discs} by
\begin{equation*}
\MD := \big\{ (x,c)\in X\times\cC : x\in\opD(c)\big\}\ ,
\end{equation*}
and we consider the diagram
\begin{equation}
\label{equation:diagram_MD}
\begin{tikzcd}
& \MD \arrow[rd, "\pi_2"] \arrow[ld,"\pi_1"'] & \\
X & & \cC
\end{tikzcd}
\end{equation}
where $\pi_i$ denotes the projection onto the $i$-th factor.

We recall that $SX$ denotes the unit tangent bundle of $X$, and we denote by $\pi:SX\rightarrow X$ the canonical projection. This is a fibre bundle, with typical fibre $\mathbb{S}^2$. We define the Gauss map $N:\MD\rightarrow SX$ by assigning to each $(x,c)$ the unit normal vector of $\opD(c)$ at $x$ which is compatible with its orientation. Recall the open subset $SX^{\bullet}$ of $SX$ introduced in \S\ref{sss_unit_tangent_bundle}, namely
\begin{equation*}
SX^{\bullet}=SX\setminus\big\{(x,\pm \nu(x))\ : \ x\in\partial X\big\}\ .
\end{equation*}
The main result of this section is the following theorem.

\begin{theorem}[Structure of the phase space]
\label{thm:StructureOfdiscSpace}
If, for all $c\in\cC$, $\opD(c)$ is strictly stable in the sense of Definition~\ref{defn:StableSolutions}, then
\begin{enumerate}[label=\emph{(\roman*)}]
\item $\MD$ is a smoothly embedded hypersurface of $X \times\cC$;
\item the projection $\pi_2:\MD\rightarrow\cC$ makes $\MD$ into a smooth bundle with fibre $\closedtwoball$; and
\item the Gauss map $N$ defines a smooth diffeomorphism from $\MD$ onto $SX^{\bullet}$ that conjugates~\eqref{equation:diagram_MD} to the double fibration
\begin{equation}
\label{equation:diagram}
\begin{tikzcd}
& SX^{\bullet} \arrow[rd, "\pi_R"] \arrow[ld,"\pi"'] & \\
X & & \cC
\end{tikzcd}
\end{equation}
where $\pi$ denotes the basepoint projection, and $\pi_R:=\pi_2\circ N^{-1}$, that is, if $(x,c)=N^{-1}(x,v)$, then $\pi_R(x,v)=c$.
\end{enumerate}
\end{theorem}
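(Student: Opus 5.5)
The plan follows the three-step strategy outlined in the introduction: regularity of $\MD$ via the implicit function theorem, a local diffeomorphism property for $N$ obtained from the two-dimensional analysis of \S\ref{ss:TwoDSetting}, and properness of $N$ coming from the small-disc asymptotics of \S\ref{section:asymptotic}.

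\textbf{Items (i) and (ii).} Fix $c_0\in\cC$ and parametrize the disc $\opD(c_0)$ by the closed unit disc $\closedtwoball$. Nearby discs with boundary in $\partial X$ are written as normal graphs $e_u$ over $\opD(c_0)$, after first straightening a collar of $\partial X$ so that the boundary condition ``$\partial e_u = c$'' becomes a Dirichlet condition $u|_{\SS^1}=F_c$. Here $F_c$ depends smoothly on $c$, and its derivative at $c_0$ is $g\cdot\cT_{c_0}$ for some positive function $g$ coming from the angle between $\opD(c_0)$ and $\partial X$. Consider the map $(c,u)\mapsto \Phi(\jete_u)-k$. Its linearization in $u$ is $J^\Phi$ with Dirichlet conditions, which is invertible by strict stability (Lemma \ref{lemma:WeakMP}). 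The implicit function theorem then gives a smooth family $c\mapsto u_c$ near $c_0$, and by uniqueness in the definition of $(\Phi,k)$-simplicity these graphs are exactly the discs $\opD(c)$. The map $(c,p)\mapsto e_{u_c}(p)$ then locally trivializes $\pi_2$ with fibre $\closedtwoball$, and its image is $\MD$. This gives (ii). It also shows that $\MD$ is an embedded $5$-dimensional submanifold of $X\times\cC$: the map is injective because each disc is embedded, and it is an immersion because the $p$-directions already span.

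\textbf{Local diffeomorphism.} Both $\MD$ and $SX$ are $5$-dimensional, so it suffices to show $dN$ is injective. At interior points $(x,c_0)$, a tangent vector is $(\xi,\dot c)$ with $\dot c\in T_{c_0}\cC$. Its variation field $\hat f$ is the $J^\Phi$-harmonic extension of $\dot c\in E=g\cT$, plus a tangential component. The normal vector of the perturbed surface changes by $-\nabla\hat f+(\text{shape operator})\cdot\xi$, and the base point moves by $\xi+\hat f(x)n$. If $dN(\xi,\dot c)=0$, then $\hat f(x)=0$ and $\xi=0$, so $\nabla \hat f(x)=0$. By Lemma \ref{lemma:ZeroOrOneZero} this forces $\dot c\in E^+$. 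For $f\in E^+$, however, Lemma \ref{lemma:SmoothVariation} gives $\nabla\hat f\neq0$ on $Z_f$, so $\dot c=0$. At boundary points $x\in\partial X$, the same argument applies using the boundary version of the maximum principle (Lemma \ref{lemma:BoundaryWeakMP}) together with the nondegeneracy of the zeros of $f\in E^+$ on $\SS^1$.

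It remains to locate the image. It avoids $\pm\nu$ on $\partial X$, since every $\opD(c)$ meets $\partial X$ transversally; this uses the $\Phi$-curvature condition on $\partial X$ and the boundary maximum principle. Thus $N$ is a local diffeomorphism $\MD\to SX^\bullet$.

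\textbf{Global step and main obstacle.} I expect properness of $N:\MD\to SX^\bullet$ to be the hardest part. Take $(x_m,c_m)$ with $N(x_m,c_m)\to (x,v)\in SX^\bullet$. If the $c_m$ stay in a compact subset of $\cC$, a subsequence converges in $\MD$. Otherwise the $c_m$ degenerate to a point, because circles in $\cC$ can only leave compact sets by shrinking. Corollary \ref{cor:LimitsOfSmallCircles} then forces $N(x_m,c_m)\to\pm\nu(p_\infty)$, which lies outside $SX^\bullet$, a contradiction. So $N$ is a proper local diffeomorphism onto the connected manifold $SX^\bullet$, hence a finite covering.

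To show the covering has one sheet, I would count preimages of a point near $\nu(p)$ for $p\in\partial X$. Small discs near $p$ account for exactly one preimage each, by the explicit description in Theorem \ref{thm:SmallCircles}. Large discs cannot produce normals close to $\nu(p)$ at points near $p$, by compactness and transversality. Hence $N$ is a diffeomorphism. The diagram conjugation in (iii) is then immediate, since $\pi\circ N=\pi_1$ by construction.
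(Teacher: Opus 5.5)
Your argument is correct. Items (i)--(ii) and the properness step follow the paper: the implicit function theorem via invertibility of $J^\Phi$ together with uniqueness, and Corollary \ref{cor:LimitsOfSmallCircles}. The local-diffeomorphism step and the one-sheet step take different routes.

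\textbf{Local diffeomorphism.} The paper proves \emph{surjectivity} of $\dd N$. For the vertical directions it appeals to Theorem \ref{thm:coveringofunitbundle}(iii), whose proof needs the circle $C(x)\subset P^+E^+$ and an odd-degree argument. Your kernel computation is more economical. You rule out $E^-$ and $E^0$ by Lemma \ref{lemma:ZeroOrOneZero}, and $E^+$ by the absence of critical interior zeros (Lemmas \ref{lemma:CaseManyIntersections}--\ref{lemma:SmoothVariation}). At points of $c_0$ you use Lemma \ref{lemma:BoundaryWeakMP} and the non-degeneracy of boundary zeros. This shows that $\dot c\mapsto(\hat f(x),\nabla\hat f(x))$ is a linear isomorphism $T_{c_0}\cC\to\R\oplus T_x\opD(c_0)$. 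In passing it re-proves Theorem \ref{thm:coveringofunitbundle}(iii) by linear algebra, although the $C(x)$ machinery is still needed for item (iv), which feeds the Bolker condition. You also rightly flag that $\opD(c)$ must meet $\partial X$ transversally, which the paper uses tacitly.

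\textbf{One sheet.} The paper simply notes that $SX^\bullet$ is simply connected: $SX\cong X\times\SS^2$, and removing the two codimension-$3$ Gauss lifts of $\partial X$ does not change $\pi_1$. Your preimage count also works, but the step ``small discs contribute exactly one preimage'' does not follow from \eqref{equation:graal} alone. Consider the Gauss map $(p,s,q)\mapsto N$ of the small-disc family, extended across $s=0$ in the variable $s=r^2$ by Theorem \ref{thm:SmallCircles}. You need it to have invertible differential along the compact collapsed set $\{s=0,\ q=p\}$. It does: the Jacobian is block-triangular, and its diagonal blocks are nondegenerate because $\lambda^\pm>0$. Hence the map is injective on a neighbourhood of that set. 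In the $(p,r,x)$ coordinates the map degenerates at $r=0$, so the smoothness in $s$ is essential. Surjectivity then comes from the covering property, and large discs are excluded by compactness, as you say. This is essentially the argument the paper uses later for Theorem \ref{theorem:extension-foliation}. It gives extra local information near $\pm\nu$, but for the present theorem the topological one-liner is enough.
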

\begin{remark} Note that $\pi:SX^\bullet$ trivially restricts to a fibre bundle over $\mathrm{Int}(X)$ with typical fibre $\mathbb{S}^2$. On the other hand, the fibre over each boundary point $x\in\partial X$ is the double-punctured sphere $S_xX\setminus\{\pm\nu(x)\}$.
\end{remark}

\subsubsection{The smooth structure}
We first prove Theorem~\ref{thm:StructureOfdiscSpace}, Items (i) and (ii).
\begin{lemma}
\label{lemma:MDIsSubmanifold}
If, for all $c\in\cC$, $\opD(c)$ is strictly stable in the sense of Definition~\ref{defn:Stability}, then
\begin{enumerate}[label=\emph{(\roman*)}]
\item $\MD$ is a smoothly embedded hypersurface of $X \times \cC$; and
\item the projection $\pi_2:\MD\rightarrow\cC$ makes $\MD$ into a smooth bundle with fibre $\closedtwoball$.
\end{enumerate}
\end{lemma}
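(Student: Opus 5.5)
The plan is to show that the family of discs $c\mapsto \opD(c)$ varies smoothly with $c$ by applying the implicit function theorem at each $c_0\in\cC$. The key input is strict stability: the $\Phi$-Jacobi operator has trivial Dirichlet kernel. Both assertions then follow by realizing $\MD$ locally as the image of a smooth trivialization $U\times\closedtwoball\to X\times\cC$.

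\textbf{Step 1: setting up the nonlinear problem.} Fix $c_0\in\cC$ and a parametrization $e_0:\closedtwoball\to X$ of $\opD(c_0)$. Work in the extension $(X_e,\bar g_e)$. Choose a smooth family of diffeomorphisms $\Psi_c$ of $X_e$, for $c$ near $c_0$, with $\Psi_{c_0}=\Id$ and $\Psi_c(c_0)=c$, and such that $\Psi_c$ preserves $\partial X$. These can be built from the normal-graph description of nearby circles in \S\ref{sss.First_order_variations}, extended by a cutoff along the normal flow to $\partial X$. Consider immersions
\begin{equation*}
e_{c,u}:=\Psi_c\circ\exp_{e_0}(u\,n_{e_0}), \qquad u\in C^{2,\alpha}_0(\closedtwoball).
\end{equation*}
These have boundary exactly $c$. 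Define
\begin{equation*}
\mathcal{G}(c,u):=\Phi(\jete_{c,u})-k\in C^{0,\alpha}(\closedtwoball).
\end{equation*}
As in the proof of Theorem~\ref{thm:AsymptoticPlateauProblem}, $\mathcal{G}$ is a smooth map between Banach spaces, and $\mathcal{G}(c_0,0)=0$. By \eqref{equation:jphi0}, its partial derivative in $u$ at $(c_0,0)$ is the $\Phi$-Jacobi operator $J^\Phi$ of $\opD(c_0)$ with Dirichlet conditions. By strict stability and Lemma~\ref{lemma:WeakMP}, $J^\Phi$ has trivial Dirichlet kernel. Since it is an elliptic operator with smooth coefficients on the disc, the Fredholm alternative makes it an isomorphism $C^{2,\alpha}_0\to C^{0,\alpha}$. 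The implicit function theorem then gives a smooth map $c\mapsto u(c)$ on a neighbourhood $U$ of $c_0$, with $u(c_0)=0$ and $\mathcal{G}(c,u(c))=0$. Elliptic regularity, applied jointly in $(c,x)$, upgrades $(c,x)\mapsto e_{c,u(c)}(x)$ to a $C^\infty$ map.

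\textbf{Step 2: identifying the family with $\opD(c)$.} I expect this to be the main obstacle. The difficulty is to show that $e_{c,u(c)}$ actually parametrizes the unique disc $\opD(c)$, which requires it to be an embedded disc lying in $X$. Embeddedness is open, so it survives for $c$ near $c_0$. For containment in $X$, I would use that the interior of $\opD(c_0)$ lies in $X^\circ$. This should follow from the condition that $\partial X$ has $\Phi$-curvature $>k$, via the maximum principle comparing with the boundary. It must also be checked that $\opD(c_0)$ meets $\partial X$ transversally along $c_0$. I would derive this from the strict inequality using Hopf's boundary lemma (Theorem~\ref{thm:MaxPrinc}(iii)) applied to the graph-difference equation. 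Transversality is an open condition, so nearby perturbations with boundary on $\partial X$ stay inside $X$, after shrinking $U$. Uniqueness in Definition~\ref{definition:phik-simplicity} then gives $e_{c,u(c)}(\closedtwoball)=\opD(c)$.

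\textbf{Step 3: conclusion.} The map
\begin{equation*}
F:U\times\closedtwoball\to X\times\cC, \qquad (c,x)\mapsto\bigl(e_{c,u(c)}(x),c\bigr)
\end{equation*}
is a smooth injective immersion, since each $e_{c,u(c)}$ is an embedding. Its image is $\MD\cap(X\times U)$. Because $\closedtwoball$ is compact, $F$ is a proper embedding onto this set, and it respects the projection to $\cC$. This shows that $\MD$ is an embedded submanifold with boundary, of codimension one since $\dim(\MD)=3+2=5=\dim(X\times\cC)-1$. The same maps $F$ serve as local trivializations of $\pi_2$ with fibre $\closedtwoball$, which proves both (i) and (ii).
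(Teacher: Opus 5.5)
Your proposal is correct and follows essentially the same route as the paper. Both arguments apply the implicit function theorem at each $c_0$, with the linearization made invertible by strict stability, then identify the resulting discs with $\opD(c)$ by uniqueness and read off local trivializations over $U\times\closedtwoball$. The one real difference is the gauge: the paper deforms $\opD(c_0)$ along the flow of a vector field $\xi$ transverse to $\opD(c_0)$ and tangent to $\partial X$, so its linearization is $h\mapsto J^\Phi(\langle\xi,N\rangle h)$ and the perturbed discs stay in $X$ automatically, whereas your normal-graph-plus-$\Psi_c$ gauge gives exactly $J^\Phi$ but forces your Step~2 — and the two geometric facts it rests on (interior of $\opD(c_0)$ in $X^\circ$, transversality to $\partial X$ along $c_0$) are also used tacitly by the paper, since they are needed for such a $\xi$ to exist.
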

\begin{proof}
We will prove (i) by showing that $\MD$ is everywhere locally the zero set of some submersion. In the process, this will yield, over a neighbourhood $U$ of every point $c_0\in\cC$, a natural parametrization of $\MD$ by $U\times\closedtwoball$, from which (ii) will follow.

Fix $c_0\in\cC$ and denote $D_0:=\opD(c_0)$. We first construct a smooth parametrization of a tubular neighbourhood $\Omega$ of $D_0$ in $X$ as follows. Let $e:\closedtwoball\rightarrow D_0$ be a smooth parametrization, let $\xi$ be a smooth vector field over $X$ which is transverse to $D_0$ and tangent to $\partial X$, and let $(\phi_t)_{t \in \R}$ denote its flow. For sufficiently small $\eps>0$, the map
\begin{equation*}
E:\closedtwoball\times(-2\eps,2\eps)\rightarrow X,\qquad (p,t)\mapsto\phi_t\big(e(p)\big)\ ,
\end{equation*}
is a smooth parametrization of a neighbourhood $\Omega:=\mathrm{Im}(E)$ of $D_0$ in $X$. We henceforth identify $\Omega$ via this parametrization with $\closedtwoball\times(-2\eps,2\eps)$. In particular, $D_0$ identifies with $\closedtwoball\times\{0\}$ and its boundary $c_0$ identifies with $\mathbb{S}^1\times\{0\}$.

Every curve $c\in\cC$ sufficiently close to $c_0$ is a graph over $c_0$. More precisely, there exists a neighbourhood $U$ of $c_0$ in $\cC$ and a smooth function
\begin{equation*}
f:U\times\mathbb{S}^1\rightarrow(-\eps,\eps)
\end{equation*}
such that each $c\in U$ is parametrized by the function $p\mapsto \phi_{f(c,p)}\big(e(p)\big)$. We extend $f$ to a smooth function on $U\times\closedtwoball$ such that, for all $p\in\closedtwoball$, $f(c_0,p)=0$.

For all $(k,\alpha)$, let $C^{k,\alpha}:=C^{k,\alpha}(\closedtwoball)$ denote the H\"older space of functions over $\closedtwoball$ with $k$'th derivatives of H\"older class $\alpha$, let $C^{k,\alpha}_0$ denote the closed subspace consisting of those functions which vanish along the boundary, and let $\mathcal{V}_0^{k,\alpha}$ denote the open subset of $C_0^{k,\alpha}$ consisting of those functions which take values in $(-\eps,\eps)$. Define
\begin{equation*}
F:U\times\closedtwoball\times\mathcal{V}_0^{k+2,\alpha}\rightarrow\closedtwoball\times(-2\eps,2\eps)
\end{equation*}
by
\begin{equation*}
F(c,p,g) := \big(p,f(p,c) + g(p)\big)\ .
\end{equation*}

We aim to construct a function $u:U\times\closedtwoball\rightarrow(-2\eps,2\eps)$ such that, for all $c\in U$, $F(c,\cdot,u(c,\cdot))$ parametrizes the disk $\opD(c)$. To this end, we will first apply the implicit function theorem over $C^{2,\alpha}$ to construct a function $u$, which depends smoothly on $c$, but only $C^{2,\alpha}$ on $p$, such that, for all $c$, $F(c,\cdot,u(c,\cdot))$ parametrizes some $(\Phi,k)$-surface. It will then follow by elliptic regularity that $u$ depends smoothly on all its arguments and by uniqueness that, for all $c$, $F(c,\cdot,u(c,\cdot))$ parametrizes $\opD(c)$.

For all $(c,g)$, the function $F_{c,g}:=F(c,\cdot,g)$ is a smooth immersion. We thus define
\begin{equation*}
\mathcal{F}:U\times\mathcal{V}_0^{2,\alpha}\rightarrow C^{0,\alpha}
\end{equation*}
such that
\begin{equation*}
\mathcal{F}(c,g) := \Phi\circ\mathcal{J}^2F_{c,g}\ ,
\end{equation*}
where, we recall, $\mathcal{J}^2F_{c,g}$ denotes the $2$-jet of the immersion $F_{c,g}$. Since $\mathcal{F}$ is obtained by a finite combination of differentiations and compositions with smooth functions, it defines a smooth map between open subsets of Banach spaces (see, for example, \cite{delaLlaveObaya1999,BourdaudLanzaDeCristoforis2002}, or \cite[Appendix A]{Smith2024AsymptoticGeometry}). Moreover, since the value of $\mathcal{F}(c,g)$ at a point depends only on the $2$-jet of $g$ at that point, it is a second-order, non-linear partial differential operator. Let $L:=\dd_2\mathcal{F}(c_0,0)$ denote its partial derivative with respect to the second component at $(c_0,0)$.

We now compute $L$. Let $N$ denote the unit normal vector field along $D_0$. If $\xi$ were everywhere normal to $D_0$, then by definition of the $\Phi$-Jacobi operator, we would have, for all $h$,
\begin{equation}
\label{eqn:FormulaForDerivative}
Lh = \dd_2\mathcal{F}(c_0,0)\cdot h = J^\Phi\bigl(\langle \xi,N\rangle h\bigr)\ .
\end{equation}
On the other hand, if $\xi$ were tangent to $D_0$, then, for any $h$, the variation $F(c_0,\cdot,sh)$ would be, up to first order in $s$, a reparametrization of $D_0$. Hence,
\begin{equation*}
\frac{d}{ds}\Phi\circ \mathcal{J}^2F_{c_0,sh}\bigg|_{s=0} = 0\ .
\end{equation*}
Since any vector field $\xi$ decomposes into normal and tangential components, it follows by linearity of the differential that \eqref{eqn:FormulaForDerivative} holds in general.

Since the function $\langle \xi,N\rangle$ is everywhere strictly positive, it follows that $Lh = J^\Phi(\langle \xi,N\rangle h)$ is invertible whenever $J^\Phi$ is. However, by hypothesis, $J^\Phi$ is strictly stable, and thus has trivial kernel. By Fredholm's alternative (see \cite[Theorem 6.15]{Gilbarg-Trudinger-01}), $J^\Phi$ defines a surjective map from $C^{2,\alpha}_0$ to $C^{0,\alpha}$. By the closed graph theorem, this map is invertible, and therefore so too is $L:C^{2,\alpha}_0\to C^{0,\alpha}$. It now follows by the implicit function theorem for smooth functions over Banach manifolds that, upon reducing $U$ if necessary, there exists a smooth function $u:U\rightarrow\mathcal{V}_0^{2,\alpha}$ such that, for all $c\in U$,
\begin{equation*}
\mathcal{F}\big(c,u(c)\big) = k\ .
\end{equation*}
We denote $u(c,p):=u(c)(p)$ and we observe that, by elliptic regularity, this function is smooth. Finally, by the uniqueness assumption of Hypothesis (ii), the graph of $f(c,\bullet)+u(c,\bullet)$ is the unique solution $\opD(c)$ of the $(\Phi,k)$-Plateau problem with boundary $c$.

We now define the function $\Phi:U\times\closedtwoball\times(-2\eps,2\eps)\rightarrow\Bbb{R}$ by
\begin{equation*}
\Phi(c,p,t) := f(c,p) + u(c,p) - t\ .
\end{equation*}
This function is trivially a submersion, and its zero set is precisely the intersection of $\MD$ with $U\times\Omega$. It follows that $\MD$ is a smoothly embedded hypersurface of $X\times\cC$, and this proves (i). Finally, this intersection is explicitly parametrized by
\begin{equation*}
(c,p)\in U\times\closedtwoball\mapsto\big(\phi_{f(p,c)+u(p,c)}(p),c\big)\in X\times\cC\ .
\end{equation*}
In particular, $\pi_2$ restricts to a trivial fibre bundle over $U$ with fibre $\closedtwoball$, and this proves (ii).
\end{proof}

The remainder of this section is devoted to proving Item (iii) of Theorem~\ref{thm:StructureOfdiscSpace}, that is, that $N:\MD\to SX^\bullet$ is a smooth diffeomorphism. Observe that it suffices to prove the following properties.
\begin{enumerate}[label=(\roman*)]
\item The target space $SX^\bullet$ is simply connected.
\item The Gauss map $N$ is smooth and proper.
\item The Gauss map is a local diffeomorphism.
\end{enumerate}
Indeed, Properties (ii) and (iii) imply that $N$ is covering map. Since, by Property (i), $SX^\bullet$ is its own universal cover, it then follows that $N$ is a diffeomorphism, as desired. The proof of Property (i) is trivial. Indeed, since $X$ is a closed $3$-ball and since $\mathbb{S}^2$ is simply connected, the unit tangent bundle $SX$ is also simply connected. Since the two Gauss lifts of $\partial X$ have codimension $3$ in $SX$, they do not contribute to the fundamental group. That is, any homotopy of curves in $SX$ passing through one or other Gauss lift of $\partial X$ can be perturbed off this set. This proves Property (i), namely simple connectedness of $SX^\bullet$. Property (ii), namely properness of $N:\MD\rightarrow SX^\bullet$, is an immediate consequence of Corollary \ref{cor:LimitsOfSmallCircles}. It thus remains only to prove Property (iii), namely that $N$ is a local diffeomorphism. We first make the following observation.

\begin{remark}\label{rem_vector_to_Dirichlet}
Let $(c_t)_{t\in(-\eps,\eps)}$ be a smooth variation of curves in $\cC$ centred on $c_0$. As in \S\ref{sss.First_order_variations}, up to first order in $t$, $c_t$ has a parametrization of the form
\begin{equation*}
\theta\mapsto\exp_{c_0(\theta)}\big(t f(\theta)\,n_{c_0}(\theta)\big)\ ,
\end{equation*}
where $\exp$ denotes the exponential map of $\SS^2$, $f\in\cT$ is a first-order trigonometric polynomial, and $n_{c_0}$ is the unit normal vector field over $c_0$ in $\partial X$ compatible with the orientation.

As in the proof of Lemma~\ref{lemma:MDIsSubmanifold}, the corresponding normal variation of $\opD(c_s)$ is parametrized by
\begin{equation}
\label{equation:sortie}
\opD(c_0) \ni p\mapsto\exp_p\big(s\hat{f}(p)\,N(p)\big) \in X\ ,
\end{equation}
where $N$ denotes the unit normal vector field over $\opD(c_0)$ compatible with the orientation, and $\hat{f}$ denotes the unique solution of the Dirichlet problem
\begin{equation*}
J^\Phi\hat{f}=0\qquad\text{and}\qquad\hat{f}|_{\mathbb{S}^1}=\langle N,n_{c_0}\rangle\,hf\ .
\end{equation*}
Note that \eqref{equation:sortie} is not necessarily well-defined for all $p \in \opD(c_0)$ since the image of $p$ may lie outside of $X$. This is however readily addressed by working in a slightly larger manifold $(X_e,g_e)$ containing $(X,g)$ in its interior. In what follows, we will apply the results of \S\ref{ss:TwoDSetting} with the operator $L:=J^\Phi$ and the space of boundary data
\begin{equation*}
E:=\big\{\langle N,n_{c_0} \rangle h_{c_0} f:f\in\cT\big\}\ .
\end{equation*}
\end{remark}

\subsubsection{The Gauss map is a local diffeomorphism}
Recall that $X^\circ$ denotes the interior of $X$ and that $S(X^\circ)$ denotes the unit tangent bundle of this interior.

\begin{lemma}
\label{lemma:LocalDiffeomorphismI}
If, for all $c$, $\opD(c)$ is strictly stable in the sense of Definition \ref{defn:StableSolutions}, then every point of $S(X^\circ)$ is a regular value of the Gauss map $N:\MD\to SX^\bullet$.
\end{lemma}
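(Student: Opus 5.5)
The plan is to show directly that, at every point $(x,c_0)\in\MD$ with $x\in X^\circ$, the differential $\dd N_{(x,c_0)}$ is injective. Since $\MD$ is a $5$-dimensional manifold by Lemma \ref{lemma:MDIsSubmanifold} ($\dim\cC=3$ plus the $2$-dimensional disc fibres) and $SX^\bullet$ is also $5$-dimensional, injectivity gives a linear isomorphism. Hence every point of $S(X^\circ)$ is a regular value. Write $D_0:=\opD(c_0)$, let $N_0$ be its oriented unit normal, and let $A_0$ be its shape operator. Throughout I use the splitting $T(SX)=\cH\oplus\cV$, so that $\dd N$ is recorded by its horizontal part $\dd\pi\circ\dd N$ and its vertical part $\cK\circ\dd N$.

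\emph{Step 1: describe $T_{(x,c_0)}\MD$.} By the local parametrization in the proof of Lemma \ref{lemma:MDIsSubmanifold} and by Remark \ref{rem_vector_to_Dirichlet}, a tangent vector is a pair $(w,\dot c)$. Here $\dot c\in T_{c_0}\cC$ corresponds to boundary data $f\in E:=\{\langle N_0,n_{c_0}\rangle f':f'\in\cT\}$. The positive weight $\langle N_0,n_{c_0}\rangle$ plays the role of the function $g$ of \S\ref{sss.boundary_Dirichlet}; it is positive because the disc meets $\partial X$ transversally. The vector $w\in T_xX$ has the form $w=w^\top+\hat f(x)N_0(x)$, where $\hat f$ is the $J^\Phi$-Dirichlet extension of $f$ and $w^\top\in T_xD_0$ is arbitrary. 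The pairs with $f=0$ are exactly the tangent vectors to the fibre $D_0$.

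\emph{Step 2: compute $\dd N$.} Along such a variation, the point moves by $w$, so $\dd\pi\circ\dd N(w,\dot c)=w$. The normal of a normal graph $p\mapsto\exp_p(s\hat f N_0)$ varies to first order by $-\nabla^{D_0}\hat f$, by the first line of \eqref{eq.graph_expansions_minimal} (equivalently Lemma \ref{lemma:FormulaeForIAndII}). Moving the base point tangentially along $D_0$ rotates the normal by $A_0w^\top$. Hence
\begin{equation*}
\cK\circ\dd N(w,\dot c)=A_0w^\top-\nabla\hat f(x)\ ,
\end{equation*}
up to the sign convention, which is irrelevant for the kernel. I will check this carefully via Fermi coordinates, although only its structure is needed.

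\emph{Step 3: identify the kernel and conclude.} If $\dd N(w,\dot c)=0$, the horizontal part gives $w=0$, so $w^\top=0$ and $\hat f(x)=0$. The vertical part then gives $\dd\hat f(x)=0$. So it suffices to show the following: for nonzero $f\in E$ and interior $x$, the conditions $\hat f(x)=0$ and $\dd\hat f(x)=0$ cannot both hold. I split by the sign of the discriminant $\Delta$. If $f\in E^-$ or $f\in E^0$, Lemma \ref{lemma:ZeroOrOneZero} gives $Z_f\cap\twoball=\emptyset$, so $\hat f(x)\neq0$. If $f\in E^+$, Lemma \ref{lemma:CaseManyIntersections} shows that $Z_f$ has no interior vertices, and by Lemma \ref{lemma:structureofnullset} a critical zero would be a vertex of valency at least $4$. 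Therefore $\dd\hat f\neq0$ on $Z_f\cap\twoball$ (this is also Lemma \ref{lemma:SmoothVariation}). These results apply because $J^\Phi$ on $D_0\cong\closedtwoball$ is strictly stable by $(\Phi,k)$-simplicity. Hence $f=0$, so $(w,\dot c)=0$ and $\dd N$ is injective.

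The main obstacle is Step 2. One must justify the identification of $T\MD$ with pairs $(w,f)$ and compute the vertical component of $\dd N$ correctly, including the contribution of the non-normal flow field $\xi$ used in Lemma \ref{lemma:MDIsSubmanifold}. The tangential part of $\xi$ only reparametrizes the disc. So I would first reduce to the normal-graph description \eqref{equation:sortie} and then read off $\cK\circ\dd N$ from the graph formula for the unit normal. Once that is in place, the rest is the nodal-set analysis already established in \S\ref{ss:TwoDSetting}.
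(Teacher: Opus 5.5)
Your proof is correct, and it runs dual to the paper's argument.

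The paper proves surjectivity of $\dd N$ at $(p_0,c_0)$ by exhibiting preimages. Variations along $D_0$ at fixed $c_0$, together with a variation whose boundary datum lies in $E^-$ (so $\hat f(p_0)\neq 0$ by Lemma \ref{lemma:WeakMP}), fill the horizontal directions. For the vertical fibre, given $\xi\in T_{p_0}D_0$, it invokes the covering property of Theorem \ref{thm:coveringofunitbundle} to find $f\in E^+$ with $\hat f(p_0)=0$ and $\nabla\hat f(p_0)$ colinear with $\xi$. That property is item (iii), although the text cites item (iv). This yields the purely vertical vector $-\nabla\hat f(p_0)$ in the image.

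You instead prove injectivity and use $\dim\MD=\dim SX^\bullet=5$. Your route needs strictly less of \S\ref{ss:TwoDSetting}: only Lemma \ref{lemma:ZeroOrOneZero} and the absence of critical interior zeros (Lemmas \ref{lemma:CaseManyIntersections} and \ref{lemma:SmoothVariation}). It needs none of the foliation and degree machinery of Lemmas \ref{lemma:SmoothNDFoliation} and \ref{lemma:CoversUnitBundleOfdisc}.

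In fact your argument shows that the covering property is a linear-algebra consequence of item (i). Evaluation at $x$ is a nonzero functional on the $3$-dimensional space $E$, since it is nonzero on $E^-$. Your argument says $f\mapsto\nabla\hat f(x)$ is injective on the $2$-dimensional kernel of that functional, hence an isomorphism onto $T_xD_0$.

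The paper's route is constructive and fits the structure it builds anyway. The circles $C(x)$ and item (iv) remain indispensable for the global Bolker condition (Theorem \ref{theorem:bolker}), so your shortcut removes that machinery only from this lemma.

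The obstacle you flag in Step 2 is also smaller than you fear. Once the horizontal component forces $w=0$, hence $w^\top=0$ and $\hat f(x)=0$, you only need the vertical derivative of the normal along a variation whose base point is stationary to first order. That derivative is $\tilde N_t(x)=N_0(x)-t\nabla\hat f(x)+o(t)$, exactly the computation the paper uses, and the $A_0w^\top$ term never enters. The normal-graph identification of $T\MD$ via Remark \ref{rem_vector_to_Dirichlet} is relied upon equally by both arguments.
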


\begin{proof}
Choose $(p_0,c_0)\in\MD$ such that $N(p_0,c_0)\in S(X^\circ)$. Denote $D_0:=D(c_0)$, let $N_0$ denote the unit normal vector field over $D_0$, and let $J_0$ denote the $\Phi$-Jacobi operator of $D_0$. By hypothesis, $J_0$ is strictly stable in the sense of Definition \ref{defn:Stability}. Furthermore, since $N(p_0,c_0)$ is an element of $S (X^\circ)$, $p_0$ is an interior point of $D_0$. Let $v$ be a tangent vector of $\cC$ at $c_0$, let $(c_t)_{t\in]-\delta,\delta[}$ be a smooth curve in $\cC$ passing through $c_0$ with derivative $v$ at this point, and, for all $t$, denote $D_t:=\opD(c_t)$.

We first show that $\dd(\pi\circ N)$ is surjective at $(p_0,c_0)$, were $\pi:SX\rightarrow X$ denote the base-point projection. To this end, consider the first order approximation of $D_t$ discussed in Remark \ref{rem_vector_to_Dirichlet}, and note that the curve $t\mapsto \exp_{p_0}\big(t\hat{f}(p_0)\,N_0(p_0)\big)$ is tangent at $t=0$ to a smooth curve $t\mapsto p_t$ in $X$ such that $(p_t,c_t)\in\MD$ for all $t$. Since
\begin{equation*}
\partial_t p_t|_{t=0} = \hat{f}(p_0)\,N_0(p_0)\ ,
\end{equation*}
it follows that
\begin{equation*}
\hat{f}(p_0)N_0(p_0)\in\OpIm\big(\dd(\pi\circ N)(p_0,c_0)\big)\ .
\end{equation*}

Suppose now that $v$ is chosen in such a way that the corresponding first-order variation $f$ of the boundary curve is an element of $E^-$. When this holds, $f$ is nowhere vanishing, and, by strict stability and Lemma \ref{lemma:WeakMP}, $\hat{f}$ is also nowhere vanishing, so that
\begin{equation*}
N_0(p_0)\in\OpIm\big(\dd(\pi\circ N)(p_0,c_0)\big)\ .
\end{equation*}
On the other hand, by varying $p$ whilst keeping $c_0$ fixed, we see that
\begin{equation*}
T_{p_0}D_0 \subseteq\OpIm\big(\dd(\pi\circ N)(p_0,c_0)\big)\ ,
\end{equation*}
from which it follows that $\dd(\pi\circ N)$ is surjective, as asserted. In other words,
\begin{equation*}
\OpIm\big(\dd(\pi\circ N)(p_0,c_0)\big)=T_{p_0}X\ .
\end{equation*}

It remains only to show that the vertical fibre of $SX$ at $N(p_0,c_0)$ is contained in the image of $\dd N(p_0,c_0)$. Note that this fibre identifies with $N(p_0,c_0)^\perp=T_{p_0}D_0$. Let $\xi$ be a unit vector in $T_{p_0}D_0$. By Theorem \ref{thm:coveringofunitbundle}, Item (iv), there exists $f\in E^+$ such that $p_0\in Z_f$ and $\nabla\hat f(p_0)$ is colinear with $\xi$. Let $D_t$ be a variation of disks associated to $f$. As before, these discs are, up to first order, given by the normal graphs of functions of the form
\begin{equation*}
p\mapsto\exp_p(t\hat f(p)N_0(p))\ .
\end{equation*}
Let $\tilde N_t$ denote the unit normal vector fields along these normal graphs. Since $\hat f(p_0)=0$, for all $t$, $\exp_{p_0}\big(t\hat f(p_0)N_0(p_0)\big)=p_0$, and
\begin{equation*}
\tilde {N}_t(p_0)=N_0(p_0)-t\nabla\hat f(p_0)+o(t)\ .
\end{equation*}
There therefore exists a smooth curve $t\mapsto p_t$ in $X$ such that $(p_t,c_t)\in\MD$ for all $t$,
\begin{equation*}
\partial_t p_t|_{t=0}=\hat f(p_0)N_0(p_0)=0\ ,
\end{equation*}
and
\begin{equation*}
\partial_t N(p_t,c_t)|_{t=0}=-\nabla\hat f(p_0)\ .
\end{equation*}
Thus
\begin{equation*}
-\nabla\hat f(p_0)\in\text{Im}\big(\dd N(p_0,c_0)\big)\ .
\end{equation*}
Since $\xi$ is colinear with $\nabla\hat f(p_0)$, it also belongs to $\text{Im}\big(\dd N(p_0,c_0)\big)$. Since $\xi$ is arbitrary, it follows that the whole vertical fibre at $N(p_0,c_0)$ is included inside $\text{Im}\big(\dd N(p_0,c_0)\big)$. We conclude that $\dd N(p_0,c_0)$ is surjective, and $(p_0,c_0)$ is thus a regular point of $N:\MD\to SX^\bullet$, as desired.
\end{proof}

We now consider the case of vectors based at boundary points, which are not orthogonal to the boundary, that is $S(\partial X)^\bullet$.

\begin{lemma}
\label{lemma:LocalDiffeomorphismII}
If, for all $c$, $\opD(c)$ is strictly stable in the sense of Definition \ref{defn:StableSolutions}, then every point of $S(\partial X)^\bullet$ is a regular value of $N$.
\end{lemma}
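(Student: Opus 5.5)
We follow the scheme of the proof of Lemma \ref{lemma:LocalDiffeomorphismI}, adapting each step to a base point $p_0\in c_0=\partial\opD(c_0)$. Fix $(p_0,c_0)\in\MD$ with $v_0:=N(p_0,c_0)\in S(\partial X)^\bullet$. Since $v_0\neq\pm\nu(p_0)$, the disc $D_0:=\opD(c_0)$ meets $\partial X$ transversally along $c_0$, so $\langle N_0,n_{c_0}\rangle$ does not vanish on $c_0$. As in Remark \ref{rem_vector_to_Dirichlet}, we may therefore work in the extension $(X_e,\bar g_e)$, where the Dirichlet problem for $J^\Phi$ on $D_0$ with boundary data in $E$ describes first-order variations of the discs. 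Because $\MD$ is $5$-dimensional (Lemma \ref{lemma:MDIsSubmanifold}), it suffices to show that $\dd N(p_0,c_0)$ is surjective onto $T_{v_0}SX$, a space of dimension $5$.

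\textbf{Horizontal part.} Varying $p$ along $D_0$ with $c_0$ fixed puts $T_{p_0}D_0$ into the image of $\dd(\pi\circ N)$. To capture the normal direction, we choose a variation whose boundary datum lies in $E^-$. Such a datum is nowhere zero on $\mathbb{S}^1$, and in particular $\hat f(p_0)=f(p_0)\neq0$. Following a point $p_t\in\opD(c_t)$ along the normal graph gives $\partial_tp_t=\hat f(p_0)N_0(p_0)$. We must check that this vector is tangent to $\MD$ at boundary points. We expect this to hold because $c_t\subset\partial X$ moves by $f\,n_{c_0}$, and $\hat f N_0$ differs from $f\,n_{c_0}$ only by a tangential term. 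Hence $\dd(\pi\circ N)$ is onto $T_{p_0}X$.

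\textbf{Vertical part.} We must show that every $\xi\in T_{p_0}D_0=v_0^\perp$ lies in the image of $\dd N$, up to horizontal terms. In the interior proof this came from Theorem \ref{thm:coveringofunitbundle}(iii), which applies only on the open disc $\twoball$. At $p_0\in\mathbb{S}^1$ we instead use $E^+$ and $E^0$ directly. For $f\in E^+\cup E^0$ vanishing at $p_0$, the nodal curve $Z_f$ ends at $p_0$. There $\nabla\hat f(p_0)$ has tangential component $f'(p_0)$ along $\mathbb{S}^1$, which we can prescribe freely, and normal component $\partial_\nu\hat f(p_0)$.

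\textbf{Two cases.} Take first $f\in E^0$ with its double zero at $p_0$. Then $f\geq0$, and Lemma \ref{lemma:BoundaryWeakMP} gives $\partial_\nu\hat f(p_0)<0$, so $\nabla\hat f(p_0)$ is a nonzero normal (inward) vector. Now take $f\in E^+$ with a simple zero at $p_0$, for which $f'(p_0)\neq0$. Such a $\nabla\hat f(p_0)$ is not parallel to the conormal. Exactly as in Lemma \ref{lemma:LocalDiffeomorphismI}, these variations fix $p_0$ to first order and give $\partial_tN=-\nabla\hat f(p_0)$. The two resulting vectors are linearly independent and span $T_{p_0}D_0$.

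\textbf{Main obstacle.} The remaining concern is whether moving the marked point along $\partial X$ spoils this argument. Since $\hat f(p_0)=0$, the base point stays fixed to first order, so the interior argument transfers verbatim. The main obstacle is therefore the first step: justifying, for boundary points, that these first-order variations are genuine tangent vectors to $\MD$ rather than to its extension in $X_e$. We would handle this by noting that $\MD$ is the restriction of the extended hypersurface $\{(x,c):x\in\opD_e(c)\}$, which is a manifold near $(p_0,c_0)$ by Lemma \ref{lemma:MDIsSubmanifold} applied in $X_e$.
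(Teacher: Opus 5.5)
Your proposal is correct and follows the paper's proof: horizontal surjectivity comes from $T_{p_0}D_0$ plus one transverse direction (the paper writes this as $T_{p_0}X=T_{p_0}(\partial X)+T_{p_0}D_0$, which is exactly what your $E^-$ boundary displacement $f\,n_{c_0}=\hat f N_0+(\text{tangential})$ supplies), and vertical surjectivity comes from two data. An $E^0$ datum with its double zero at $p_0$ gives the conormal direction, via the maximum principle (Lemma \ref{lemma:ZeroOrOneZero}(ii), which you should cite to get $\hat f\geq0$) and Lemma \ref{lemma:BoundaryWeakMP}, and an $E^+$ datum with a simple zero at $p_0$ gives a vector with nonzero component along $c_0$. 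The ``main obstacle'' you flag is not really one: $\MD$ is a manifold with boundary whose tangent space at $(p_0,c_0)$ is already the full five-dimensional space, since the local parametrization by $U\times\closedtwoball$ from Lemma \ref{lemma:MDIsSubmanifold} is smooth up to $\partial\closedtwoball$, so no appeal to an extended family of discs in $X_e$ is needed.
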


\begin{proof} We continue to use the construction and notations of the proof of Lemma \ref{lemma:LocalDiffeomorphismI}. Assume that $N(p_0,c_0)\in S(\partial X)^\bullet$ so in particular $p_0\in\partial X$, and note that
\begin{equation*}
T_{p_0}X \subseteq T_{p_0}(\partial X)+T_{p_0}D_0 \subseteq \OpIm\big(\dd(\pi\circ N)(p_0,c_0)\big)\ ,
\end{equation*}
so that $\dd(\pi\circ N)$ is surjective at $(p_0,c_0)$.

We now show that the vertical fibre of $TSX$ at $N(p_0,c_0)$ is contained in the image of $\dd N(p_0,c_0)$. Indeed, let $\xi$ be a non-vanishing, unit vector tangent to $D_0$ at $p_0$. By linearity, it suffices to address the cases where $\xi$ is normal to $c_0$ and where $\xi$ is tangent to $c_0$. In the former case, we choose a variation of circles such that the corresponding $f\in E$ vanishes only at $p_0$, and is positive at every other point. By Lemma \ref{lemma:ZeroOrOneZero}, Item (ii), $\hat{f}$ is positive over $D_0$, and by Lemma \ref{lemma:BoundaryWeakMP}, $\nabla \hat{f}(p_0)$ is non-vanishing and colinear with $\xi$. Repeating the argument of the previous lemma yields a smooth curve $(p_t,c_t)$ such that  $\partial_t p_t|_{t=0}=0$ and
\begin{equation*}
\partial_t\big(N(p_t,c_t)\big)\big|_{t=0} = - \nabla\hat f(p_0)\ .
\end{equation*}
Since $\nabla\hat f(p_0)$ is colinear with $\xi$, it follows that $\xi\in\text{Im}(\dd N(p_0,c_0))$, as desired.

Finally, suppose that $\xi$ is tangent to $c_0$. We choose a variation of circles such that the corresponding $f\in E$ vanishes at $p_0$, and also at some other boundary point $q_0$, say. By hypothesis, the restriction of $\hat f$ to $c_0$ has a non-degenerate zero at $p_0$, so that $\nabla \hat f(p_0)=\lambda\xi+\mu$, for some non-zero $\lambda$, and for some $\mu$ normal to $c_0$ at $p_0$, and it follows as before that $\lambda\xi+\mu$ lies in the image of $\dd N(p_0,c_0)$. Since we have already shown that $\mu$ lies in the image of $\dd N(p_0,c_0)$, it follows that so too does $\xi$, and this completes the proof.
\end{proof}

Lemmas \ref{lemma:LocalDiffeomorphismI} and \ref{lemma:LocalDiffeomorphismII} together establish Property (iii) given above. It follows that $N$ is a smooth diffeomorphism, and this completes the proof of Theorem~\ref{thm:StructureOfdiscSpace}.

\subsection{The global Bolker condition}  \label{ssection:bolker}
We now state and prove the \emph{global Bolker condition}, which ensures that the Schwarz kernel of the normal operator of the Radon transform is smooth away from the diagonal (see \S\ref{s.Radon}). Given $p\in X^\circ$, we define
\begin{equation}
\label{equation:coiffeur}
\begin{aligned}
\Sigma(p) &:= \{ c\in\cC\ :\ p\in\opD(c)\}\ ,\ \text{and}\\
\hat{\Sigma}(p) &:= \{ (q,c)\in\MD\ :\ p,q\in\opD(c)\}\ .
\end{aligned}
\end{equation}

\begin{lemma}
\label{lemma:BolkerI}
For all $p\in X^\circ$,
\begin{enumerate}[label=\emph{(\roman*)}]
\item $\Sigma(p)$ is a smoothly embedded, $2$-dimensional submanifold of $\cC$ diffeomorphic to $\mathbb{S}^2$; and
\item $\hat{\Sigma}(p)$ is a smoothly embedded, $4$-dimensional submanifold of $\MD$ diffeomorphic to the unit ball in $T\mathbb{S}^2$.
\end{enumerate}
\end{lemma}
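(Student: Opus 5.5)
Both items will be derived from the diffeomorphism $N:\MD\to SX^\bullet$ of Theorem~\ref{thm:StructureOfdiscSpace}. For (i), fix $p\in X^\circ$. A disc $\opD(c)$ passes through $p$ exactly when some $(p,c)\in\MD$, and then $N(p,c)=(p,v)$ for a unique $v\in S_pX$. Conversely, every $v\in S_pX$ lies in $SX^\bullet$ because $p$ is interior. Hence the map
\begin{equation*}
\sigma_p:S_pX\to\cC,\qquad v\mapsto \pi_R(p,v)=\pi_2\circ N^{-1}(p,v)
\end{equation*}
is a smooth surjection onto $\Sigma(p)$.

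Injectivity of $\sigma_p$ follows from embeddedness. If $\pi_R(p,v)=\pi_R(p,v')=c$, then $(p,c)\in\MD$ is a single point, since $\opD(c)$ is embedded and so meets $p$ once. Therefore $v=N(p,c)=v'$. Because $S_pX\cong\mathbb{S}^2$ is compact, an injective immersion from it is an embedding. It then remains to show that $\dd\sigma_p$ has rank $2$.

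Write $\sigma_p=\pi_2\circ N^{-1}|_{S_pX}$. Since $N^{-1}$ is a diffeomorphism, the rank condition is equivalent to the transversality of $N^{-1}(S_pX)=\pi_1^{-1}(p)$ to the fibres of $\pi_2$, which are discs $\opD(c)\times\{c\}$. Concretely, a tangent vector to $\pi_1^{-1}(p)\subset\MD$ has zero $X$-component. If such a vector also had zero $\cC$-component it would vanish, so $\dd\pi_2$ is injective on $T\pi_1^{-1}(p)$. This holds provided $\pi_1^{-1}(p)$ is a smooth $2$-dimensional submanifold, i.e. provided $\pi_1:\MD\to X$ is a submersion at $(p,c)$. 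That submersion property is exactly the surjectivity of $\dd(\pi\circ N)$ established in Lemma~\ref{lemma:LocalDiffeomorphismI}. This gives (i), and also a rank-$2$ immersion $T_v S_pX\to T_c\cC$.

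For (ii), I would realise $\hat\Sigma(p)=\pi_2^{-1}(\Sigma(p))$ as the pullback of the $\closedtwoball$-bundle $\pi_2:\MD\to\cC$ of Lemma~\ref{lemma:MDIsSubmanifold} to the embedded sphere $\Sigma(p)$. Since $\pi_2$ is a submersion, $\pi_2^{-1}(\Sigma(p))$ is a smoothly embedded submanifold of dimension $2+2=4$, and it is a closed-disc bundle over $\Sigma(p)\cong S_pX\cong\mathbb{S}^2$.

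To identify this bundle with the unit ball bundle of $T\mathbb{S}^2$, I would use the distinguished section $c\mapsto(p,c)$ together with the fibrewise identification $T_p\opD(c)=N(p,c)^\perp=T_{v}S_pX$. Each fibre is a disc containing the marked interior point $p$. Radially contracting each disc from $p$, for instance via a fibrewise exponential map based at $p$ rescaled to the unit disc, should give a fibre-preserving diffeomorphism onto the unit ball bundle of the rank-$2$ bundle $v\mapsto v^\perp$ over $S_pX$, which is $T\mathbb{S}^2$.

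The main obstacle is this last identification. A radial map from an interior point of a disc with boundary need not reach the boundary smoothly, so I would instead produce the diffeomorphism by a smooth isotopy of fibrewise discs, e.g. via a smoothly varying family of star-shaped parametrisations. A fallback that is sufficient for what comes later is the statement up to bundle isomorphism: closed-disc bundles over $\mathbb{S}^2$ are classified by their Euler class, and the section $c\mapsto(p,c)$ has normal bundle $T_pD=T\mathbb{S}^2$.
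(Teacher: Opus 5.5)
Your proposal is correct and is essentially the paper's proof. Part (i) follows from $\pi_2$ being an injective immersion on the compact sphere $\pi_1^{-1}(p)=N^{-1}(S_pX)$; your observation $\ker\dd\pi_1\cap\ker\dd\pi_2=\{0\}$ (since $\MD\subset X\times\cC$) is the paper's $\ker(\dd\pi_1|_{TB(c)})=\{0\}$. Part (ii) follows from $\hat\Sigma(p)=\pi_2^{-1}(\Sigma(p))$ for the $\closedtwoball$-bundle $\pi_2$.

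The paper simply asserts the identification with the unit ball bundle of $T\mathbb{S}^2$. Your Euler-class fallback already proves it in full, not just "up to bundle isomorphism", since a smooth bundle isomorphism is a diffeomorphism of total spaces. Two facts make the fallback work. First, $\mathrm{Diff}^+(\closedtwoball)\simeq SO(2)$ by Smale. Second, the interior section $c\mapsto(p,c)$ reduces the structure group to diffeomorphisms fixing an interior point, and the derivative at that point is a homotopy equivalence onto $GL^+_2(\R)$. So the bundle is determined by the vertical bundle $v\mapsto v^\perp=T_vS_pX$ along that section, and no radial or star-shaped construction is needed.
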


\begin{proof} Let $\pi_1:\MD\rightarrow X$ and $\pi_2:\MD\rightarrow\cC$ be as in \eqref{equation:diagram_MD}. Let $\pi:SX\rightarrow X$ denote the base-point projection, and note that $\pi_1=\pi\circ N$. Since $N$ is a smooth diffeomorphism, it follows that $A(p):=\pi_1^{-1}(\{p\})$ is a smoothly embedded submanifold of $\MD$ diffeomorphic to $\mathbb{S}^2$.

Choose $c\in\cC$, and consider the fibre $B(c):=\pi_2^{-1}(\{c\})$. By definition, the restriction of $\pi_1$ to this fibre is a smooth embedding onto the disk $\opD(c)$. Consequently, for all $p\in\opD(c)$,
\begin{equation*}
\begin{aligned}
\#\big(\pi_2|_{A(p)}\big)^{-1}(\{c\})& =\#\big(A(p)\cap\pi_2^{-1}(\{c\})\big)\\
                             & =\#\big(\pi_1^{-1}(\{p\})\cap B(c)\big)\\
                             & =\#\big(\pi_1|_{B(c)}\big)^{-1}(\{p\}) \in \{0,1\}\ .
\end{aligned}
\end{equation*}
Likewise, at $(p,c)$,
\begin{equation*}
\begin{aligned}
\ker\big(\dd\pi_2|_{TA(p)}\big) &=TA(p)\cap\ker(\dd\pi_2)\\
                        &=\ker(\dd\pi_1)\cap TB(c)\\
                        &=\ker\big(\dd\pi_1|_{TB(c)}\big) = \{0\}\ .
\end{aligned}
\end{equation*}
Since $c\in\cC$ is arbitrary, it follows that $\pi_2$ restricts to a smooth embedding from $A(p)$ to $\cC$, and since $\Sigma(p)=\pi_2(A(p))$, this proves (i). Finally, since $\pi_2$ is a submersion with fibre $\twoball$, and since $\hat{\Sigma}(p)=\pi_2^{-1}(\Sigma(p))$, (ii) follows, and this completes the proof.
\end{proof}

This yields the \emph{global Bolker condition}.
\begin{theorem}[Global Bolker condition]
\label{theorem:bolker}
For all $p\in X^\circ$, the map
\begin{equation*}
\pi_1 : \hat{\Sigma}(p)\setminus\pi_1^{-1}(\{p\}) \to X \setminus \{p\}
\end{equation*}
is a surjective submersion.
\end{theorem}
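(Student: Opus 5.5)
The plan is to prove surjectivity and the submersion property separately, working fibrewise over $\cC$ via Lemma \ref{lemma:BolkerI} and reducing the submersion property to Theorem \ref{thm:coveringofunitbundle} applied to the $\Phi$-Jacobi operator of a single disc.

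\emph{Surjectivity.} Fix $q\in X\setminus\{p\}$. We need some $c$ with $p,q\in\opD(c)$. First suppose $q\in X^\circ$. Consider the sphere $\Sigma(p)\simeq\mathbb{S}^2$, which is the image of $S_pX$ under $\pi_R$. The goal is to find a point of $\Sigma(p)$ whose disc also passes through $q$. The cleanest route is a degree argument. Take any $c_0\in\Sigma(p)$ and the Gauss lift of $\opD(c_0)$. Over $\Sigma(p)$, define a continuous function measuring on which side of $\opD(c)$ the point $q$ lies; this is the signed ``$q$-side'' function of the disc, which is well defined because each disc separates $X$. Reversing orientation, $c\mapsto \bar c$, is an involution of $\Sigma(p)$ corresponding to $v\mapsto -v$ in $S_pX$. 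It fixes the disc as a set and flips the side, so the function changes sign under an antipodal-type map. Since $\Sigma(p)$ is connected, the function must vanish somewhere, which gives $q\in\opD(c)$. For $q\in\partial X$ the same argument works, or one can take a limit of interior points using compactness of $\Sigma(p)$ and closedness of $\MD$.

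\emph{Submersion.} Let $(q,c_0)\in\hat\Sigma(p)$ with $q\neq p$, and set $D_0=\opD(c_0)$. The fibre $\pi_2^{-1}(c_0)\cap\hat\Sigma(p)$ is all of $D_0$, so $\dd\pi_1$ already hits $T_qD_0$. It remains to produce a normal component at $q$. Tangent vectors of $\Sigma(p)$ at $c_0$ correspond, via Remark \ref{rem_vector_to_Dirichlet}, to boundary data $f\in E$ whose Jacobi extension satisfies $\hat f(p)=0$; that is, $[f]\in C(p)$ together with the degenerate classes. Moving $c$ in such a direction while keeping the marked point at $q$ gives the normal displacement $\hat f(q)N_0(q)$ in $\OpIm(\dd\pi_1)$. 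By Theorem \ref{thm:coveringofunitbundle}(iv), there exists $f\in E^+$ with $p\in Z_f$ and $q\notin Z_f$, so $\hat f(q)\neq0$. Combined with $T_qD_0$, this shows $\dd\pi_1$ is onto $T_qX$.

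When $q\in\partial D_0$, the same construction works; alternatively, Lemma \ref{lemma:BoundaryWeakMP} gives the required nonvanishing, since Theorem \ref{thm:coveringofunitbundle}(iv) is stated only for interior points of the disc. One can also extend the disc slightly into $X_e$ using strict stability on a slightly larger disc.

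\emph{Main obstacle.} The hard step is surjectivity, specifically making the sign function rigorous. I would define $\sigma(c)$ as the sign of the intersection number of a fixed path from $p$ to $q$ with $\opD(c)$, taken relative to $\partial X$. Alternatively, one can argue via the connectedness of $C(p)$ in the infinitesimal picture, integrated along $\Sigma(p)$.
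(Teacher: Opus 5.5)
Your submersion half is the paper's argument: restrict to variations in $T_{c_0}\Sigma(p)$, i.e. data with $\hat f(p)=0$; use Theorem \ref{thm:coveringofunitbundle}(iv) to get $\hat f(q)\neq 0$; then add $T_qD_0$. Two small corrections there. First, there are no ``degenerate classes'': by Lemma \ref{lemma:ZeroOrOneZero}, $\hat f(p)=0$ at an interior point forces $f\in E^+$. Second, for $q\in c_0$ the needed input is not Lemma \ref{lemma:BoundaryWeakMP}, which concerns normal derivatives. It comes from the leaf structure instead. If $q$ sits at angle $\theta_0$, take the point of $C(p)$ on the leaf of $\cW$ with $(\alpha,\beta)=(\cos\theta_0,\sin\theta_0)$. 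It is a positive multiple of $t-\cos(\theta-\theta_0)$ with $|t|<1$, so it does not vanish at $\theta_0$.

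The genuine gap is in surjectivity. Your antipodal argument needs $c\mapsto\bar c$ to preserve $\Sigma(p)$, to correspond to $v\mapsto -v$, and to fix $\opD(c)$ as a set. This holds only when $(\Phi,k)=(\tfrac12\tr,0)$. In every other case covered by the theorem, for instance any $k>0$, suppose $\opD(\bar c)=\opD(c)$ as sets. Then $\opD(\bar c)$ would carry the reversed orientation and hence shape operator $-A$. That is impossible: $\Lambda\cap(-\Lambda)=\emptyset$ forbids $A,-A\in\Lambda$, and when $\Lambda=\Symm$ one gets $\tr(-A)/2=-k\neq k$. So $\opD(\bar c)$ is a different disc; compare $\lambda^+<\lambda^-$ in Theorem \ref{thm:SmallCircles}. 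It need not pass through $p$, and in general $\pi_R(p,-v)\neq\overline{\pi_R(p,v)}$. Using $v\mapsto -v$ on $S_pX$ directly does not rescue the argument. Nothing relates the side of $q$ with respect to $\opD(\pi_R(p,v))$ to its side with respect to $\opD(\pi_R(p,-v))$. As written, your argument proves surjectivity only for minimal surfaces. There it is correct, and it does not even need the submersion property.

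The repair is that surjectivity is the easy step once you have the submersion, not the ``main obstacle''. The image $\pi_1\bigl(\hat\Sigma(p)\setminus\pi_1^{-1}(\{p\})\bigr)$ is open in $X\setminus\{p\}$ by the submersion property. It is closed because $\hat\Sigma(p)=\pi_2^{-1}(\Sigma(p))$ is a closed-disc bundle over $\Sigma(p)\simeq\mathbb{S}^2$, hence compact. So if $\pi_1(z_n)\to y_\infty\neq p$, a subsequence converges to some $z_\infty$ with $\pi_1(z_\infty)=y_\infty\neq p$. The image is nonempty, and $X\setminus\{p\}$ is connected, so the image is everything. This is the paper's proof, and it works uniformly for all $(\Phi,k)$.
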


\begin{proof}
We first prove the submersion property. Choose $(q_0,c_0)\in\hat{\Sigma}(p)\setminus\pi_1^{-1}(\{p\})$, denote $D_0:=\opD(c_0)$, and let $J_0$ denote its $\Phi$-Jacobi operator. Let $v$ be a tangent vector of $\cC$ at $c_0$, let $(c_t)_{t\in(-\eps,\eps)}$ be a smooth variation of $c_0$ in $\cC$ which is tangent to $v$, and, for all $t$, denote $D_t:=\opD(c_t)$. Recall from Remark \ref{rem_vector_to_Dirichlet} that the normal variation of $(D_t)_{t\in(-\eps,\eps)}$ at $D_0$ is coincides up to first order with the graph of $\hat{f}N_0$, where $\hat{f}$ solves the Dirichlet problem for $J_0$ with boundary value $f\in E$ corresponding to $v$.

The vector $(0,v)$ is tangent to $\hat{\Sigma}(p)$ whenever $\hat f(p)=0$. When this holds,
\begin{equation*}
\hat f(q_0)N_0(q_0)\in \dd \pi_1\big(T_{(q_0,c_0)}\hat{\Sigma}(p)\big)\ .
\end{equation*}
Since $J_0$ is stable, by Theorem \ref{thm:coveringofunitbundle}, Item (iv), $v$ may be chosen such that $\hat f(q_0)\neq 0$, so that
\begin{equation*}
N_0(q_0)\in \dd\pi_1\big(T_{(q_0,c_0)}\hat{\Sigma}(p)\big)\ .
\end{equation*}
On the other hand, by varying $q$ whilst keeping $c_0$ fixed, we see that
\begin{equation*}
T_{q_0}D_0 \subseteq \dd\pi_1\big(T_{(q_0,c_0)}\hat{\Sigma}(p)\big)\ .
\end{equation*}
The restriction of $\dd\pi_1(q_0,c_0)$ to $T\hat{\Sigma}(p)$ is thus surjective. Since $(q_0,c_0)$ is arbitrary, it follows that $\pi_1$ restricts to a submersion over $\hat{\Sigma}(p)$, as desired.

We now use a connectedness argument to prove surjectivity. Indeed, note first that, by the submersion property, the image $\pi_1\big(\hat{\Sigma}(p)\setminus\pi_1^{-1}(\{p\})\big)$ is open in $X \setminus \{p\}$. We now show that this image is closed in $X\setminus\{p\}$. Let $(z_n)_{n\in\mathbb{N}}$ be a sequence of points in $\hat{\Sigma}(p)$. For all $n$, denote $y_n := \pi_1(z_n)$, and suppose that the sequence $(y_n)_{n\in\mathbb{N}}$ converges to some limit $y_\infty$, say, in $X\setminus\{p\}$. Recall that $\pi_1=\pi\circ N$, where $\pi:SX\rightarrow X$ denotes the base-point projection. Since $\pi^{-1}(\{p\})$ is compact, it follows by properness of $N$ that $(z_n)_{n\in\mathbb{N}}$ subconverges to some limit $z_\infty$, say, in $\MD$ so that, by continuity, $y_\infty=\pi_1(z_\infty)\in\pi_1\big(\hat{\Sigma}(p)\setminus\pi_1^{-1}(\{p\})\big)$. This proves closedness, and it follows by connectedness of $X\setminus\{p\}$ that the restriction of $\pi_1$ to $\hat{\Sigma}(p)$ is surjective, as desired.
\end{proof}

For all $p\neq q\in X^\circ$, denote
\begin{equation*}
\Sigma(p,q) := \{ c\in\cC\ :\ p,q\in\opD(c) \}\ .
\end{equation*}

\begin{lemma}
\label{lemma:circle}
For all $p\neq q\in X^\circ$, $\Sigma(p,q)$ is an embedded, simple closed curve in $\cC$.
\end{lemma}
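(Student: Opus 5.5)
The plan is to realize $\Sigma(p,q)$ as the transverse intersection of the two surfaces $\Sigma(p)$ and $\Sigma(q)$ in $\cC$, each of which is an embedded sphere by Lemma \ref{lemma:BolkerI}. Equivalently, inside the sphere $A(p)=\pi_1^{-1}(\{p\})\subset\MD$ (identified with $S_pX$ via the Gauss map $N$), consider the function $F:A(p)\to X$ given by $F(p,c)$ is ``does $\opD(c)$ contain $q$''. More concretely, one works on $\hat\Sigma(p)$, viewed via Lemma \ref{lemma:BolkerI}(ii) as a disc bundle over $\Sigma(p)\simeq\mathbb{S}^2$. Then
\begin{equation*}
\Sigma(p,q)=\pi_2\big(\hat\Sigma(p)\cap\pi_1^{-1}(\{q\})\big).
\end{equation*}

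The key steps are as follows. First, by Theorem \ref{theorem:bolker}, $q$ is a regular value of $\pi_1:\hat\Sigma(p)\setminus\pi_1^{-1}(\{p\})\to X\setminus\{p\}$. Hence $\hat\Sigma(p,q):=\hat\Sigma(p)\cap\pi_1^{-1}(\{q\})$ is a smooth embedded $1$-dimensional submanifold of $\MD$; the dimension count is $4-3=1$.

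Second, $\pi_2$ restricted to $\pi_1^{-1}(\{q\})=A(q)$ is an embedding into $\cC$ by Lemma \ref{lemma:BolkerI}(i). Its image $\Sigma(p,q)$ is therefore an embedded $1$-manifold in $\cC$.

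Third, compactness follows from properness. Since $A(q)\cong S_qX$ is compact (as $q\in X^\circ$) and $\hat\Sigma(p,q)$ is closed in $A(q)$, the curve $\Sigma(p,q)$ is a compact $1$-manifold, i.e.\ a finite disjoint union of circles. It is nonempty by the surjectivity part of Theorem \ref{theorem:bolker}.

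The main obstacle is connectedness. For this, transport the picture to a single disc using the two-dimensional analysis. Fix $c_0\in\Sigma(p,q)$ and look at $\Sigma(p)\cong A(p)\cong S_pX\cong\mathbb{S}^2$. Inside $\Sigma(p)$, the curve $\Sigma(p,q)$ is the zero set of the smooth function $\sigma(c)$, defined as the signed side of $\opD(c)$ on which $q$ lies. This function is well defined since discs through $p$ are embedded and separate $X$ (boundary circles separate $\partial X$).

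The plan is to show that $\sigma$ takes each sign on a connected set: there should be exactly two regions, consisting of the discs leaving $q$ on the positive side and those leaving it on the negative side. Indeed, reversing orientation $c\mapsto\bar c$ is a fixed-point-free involution of $\Sigma(p)$, corresponding to the antipodal map on $S_pX$, and it swaps the two sign regions. Moreover, $0$ is a regular value of $\sigma$ by the first step.

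To exclude several components, I would argue via the Gauss map restricted to $\Sigma(p)\cap\Sigma(q)$. A degree argument, analogous to Lemma \ref{lemma:CoversUnitBundleOfdisc}, shows that each component of $\Sigma(p,q)$ maps to the circle of normals at $p$ orthogonal to the direction toward $q$. This circle should be obtained by deforming $q$ to $p$. As $q\to p$, Theorem \ref{thm:coveringofunitbundle} identifies $\Sigma(p,q)$ with the great circle of normals orthogonal to a tangent direction at $p$; this is connected by the description of $C(x)$ in the lemma on $C(x)$. Then, moving $q$ along a path in $X^\circ\setminus\{p\}$, the regular-value property gives an isotopy of $\Sigma(p,q)$ in $\cC$, so the number of components is constant and equal to one.

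The delicate point is justifying the limit $q\to p$: one needs to blow up the diagonal and invoke the infinitesimal statement $C(p)\cong\mathbb{S}^1$ on $\opD(c_0)$. I expect this to be the hardest step.
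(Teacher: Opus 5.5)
Your first three steps are correct and match the paper's own argument. By Theorem \ref{theorem:bolker}, $q$ is a regular value of $\pi_1$ on $\hat\Sigma(p)\setminus\pi_1^{-1}(\{p\})$. Then $\pi_2$ embeds $A(q)$ into $\cC$, and compactness makes $\Sigma(p,q)$ a nonempty finite disjoint union of embedded circles.

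The gap is connectedness, which is all that remains, and none of the tools you invoke supply it.
\begin{itemize}
\item The sign-function heuristic does not force two sign regions. An equator together with two antipodal small circles in $\Sigma(p)\cong\mathbb{S}^2$ is compatible with an involution swapping the signs.
\item For $k>0$ the map $c\mapsto\bar c$ does not even preserve $\Sigma(p)$, since $\opD(\bar c)\neq\opD(c)$ (compare $\lambda^+<\lambda^-$ in Theorem \ref{thm:SmallCircles}). The antipodal map of $S_pX$ corresponds to $\pi_R(p,v)\mapsto\pi_R(p,-v)$, not to orientation reversal.
\item Your claim that each component of $\Sigma(p,q)$ maps into the great circle of normals orthogonal to the direction of $q$ is false for $q\neq p$. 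It holds only in the limit $q\to p$.
\item Most importantly, Theorem \ref{thm:coveringofunitbundle} and the lemma describing $C(x)$ are infinitesimal statements on one fixed disc $\opD(c_0)$. For $c_0\in\Sigma(p)$, $C(p)$ is just the oriented projectivization of the $2$-plane $T_{c_0}\Sigma(p)\subset T_{c_0}\cC$, consisting of the boundary variations whose Jacobi field vanishes at $p$. They say nothing about the global set $\Sigma(p)\cap\Sigma(q)$, so they cannot ``identify $\Sigma(p,q)$ with the great circle''.
\end{itemize}
The limiting great circle is trivially connected. What is missing is a proof that $\Sigma(p,q)$ converges to it in a way that excludes several components.

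Your deformation strategy can still be completed. First make properness explicit. The restriction of $\pi_1$ to $\hat\Sigma(p)\cap\pi_1^{-1}(X^\circ\setminus\{p\})$ is a proper surjective submersion onto the connected set $X^\circ\setminus\{p\}$. By Ehresmann's theorem, the number of components of $\Sigma(p,q)$ is therefore independent of $q$. For $q=\exp_p(r\omega)$ with $r$ small, you then need three ingredients.
\begin{itemize}
\item[(a)] The uniform local graph description $\opD(\pi_R(p,v))\cap U_p=\{\exp_p(u+f(v,u)v) : u\in v^\perp\}$, with $f(v,u)=\cO(|u|^2)$.
\item[(b)] A compactness argument, using embeddedness of the discs and their continuous dependence on $c$ over the compact $\Sigma(p)$. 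It shows that every disc through $p$ and $q$ reaches $q$ inside this chart rather than on a distant sheet. This forces $\langle v,\omega\rangle=\cO(r)$ for every solution.
\item[(c)] The implicit function theorem in the rescaled variable $u=ru'$, exactly as in Lemma \ref{lemma:graph-peyresq}. It shows that the set of normals at $p$ of discs in $\Sigma(p,q)$ is a single graph over $S_pX\cap\omega^\perp$.
\end{itemize}

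The paper instead pushes $p$ and $q$ to two distinct points of $\partial X$. There $\Sigma(p,q)$ becomes the family of oriented round circles through two boundary points, i.e.\ oriented planes containing a fixed line, which is visibly one circle. That route avoids the blow-up analysis but needs the fibration controlled up to $\partial X$, where Theorem \ref{theorem:bolker} is not stated. Your route stays in the interior at the price of the diagonal asymptotics.
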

\begin{proof}
By Theorem \ref{theorem:bolker}, $\pi_1|_{\hat{\Sigma}(p)}$ is a submersion over $\hat{\Sigma}(p)\setminus\pi_1^{-1}(\{p\})$. The preimage
$A(p,q)=(\pi_1|_{\hat{\Sigma}(p)})^{-1}(\{q\})$ is thus a smoothly embedded $1$-dimensional submanifold of $\hat{\Sigma}(p)$, and since $\hat{\Sigma}(p)$ is compact, it is also compact.

It follows as in the proof of Lemma \ref{lemma:BolkerI}, that $\Sigma(p,q):=\pi_2(A(p,q))$ is a compact, smoothly embedded $1$-dimensional submanifold of $\cC$. Finally, upon continuously varying $p$ and $q$ to two distinct points of $\partial X$, we see that $\Sigma(p,q)$ has a single connected component, and is thus an embedded, simple closed curve, as desired.
\end{proof}

\subsection{Analytic regularity of the double fibration} \label{ssection:analytic}

In this subsection we assume that $X$ is a real-analytic ball with analytic boundary, that $\bar g$ is real analytic and extends analytically across
$\partial X$, and that $\mathcal C$ is an analytic family of circles on $\partial X$. We also assume that the curvature function $\Phi$ is real analytic on $\Lambda$.

\begin{theorem}[Analyticity for the foliated Plateau problem]
\label{thm:analytic-foliated-plateau}
Under the above hypotheses, the marked-disc space $\mathrm{MD}$ is a real-analytic manifold, and the Gauss map
\begin{equation*}
N:\mathrm{MD}\to SX^\bullet
\end{equation*}
is a real-analytic diffeomorphism. In particular, the projection
\begin{equation*}
\pi_R:SX^\bullet\to\mathcal C
\end{equation*}
and the foliation of $SX^\bullet$ by Gauss lifts of $(\Phi,k)$-discs are real analytic. Moreover, the Bolker maps appearing in \S\ref{ssection:bolker} are real analytic.
\end{theorem}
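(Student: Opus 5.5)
The plan is to rerun the proof of Lemma~\ref{lemma:MDIsSubmanifold} and Theorem~\ref{thm:StructureOfdiscSpace} in the real-analytic category. The only genuinely new input is analytic regularity for the solution map $c\mapsto \opD(c)$.

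\textbf{Step 1: local analytic chart for $\MD$.} Fix $c_0\in\cC$. Because $\bar g$ and $\partial X$ are analytic, I will choose the transverse vector field $\xi$ in the proof of Lemma~\ref{lemma:MDIsSubmanifold} to be analytic, tangent to $\partial X$ and transverse to $\opD(c_0)$. Its flow $\phi_t$ is then analytic.

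\begin{itemize}
\item \emph{Parametrization of $\opD(c_0)$.} This is smooth up to the boundary a priori. By analytic elliptic regularity for the nonlinear equation $\Phi\circ\mathcal J^2 e=k$ with analytic boundary data (Morrey's theorem for analytic nonlinear elliptic boundary problems), it is analytic up to $c_0$. This is the place where analyticity of $\alpha$, and hence of the curves in $\cC$, is used.
\item \emph{Boundary graph function.} Since $\cC$ is an analytic family, the function $f(c,p)$ describing nearby circles as graphs over $c_0$ is analytic in $(c,p)$. I will extend it analytically to $U\times\closedtwoball$, for instance by a harmonic extension in $p$.
\item \emph{Implicit function theorem.} The functional $\mathcal F(c,g)=\Phi\circ\mathcal J^2F_{c,g}$ is analytic between Banach spaces $U\times C^{2,\alpha}_0\to C^{0,\alpha}$, since composition with analytic functions is analytic on H\"older spaces. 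Its linearization is the invertible operator $L=J^\Phi(\langle\xi,N\rangle\,\cdot)$, as in the smooth case. The analytic implicit function theorem then gives $u:U\to C^{2,\alpha}_0$ analytic in $c$.
\item \emph{Joint analyticity.} To upgrade to joint analyticity in $(c,p)$ up to the boundary, I will treat $c$ as extra variables. I will use the Morrey--Nirenberg result on analyticity up to the boundary for solutions of nonlinear elliptic systems depending analytically on parameters, applied to the map $(c,p)\mapsto u(c,p)$. This step is the main obstacle. I expect to argue via Cauchy estimates: $\partial_c^\beta u$ solves linear elliptic Dirichlet problems with analytic coefficients and analytic boundary data, and one obtains factorial bounds uniformly.
\end{itemize}

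With $u$ jointly analytic, the defining function $\Phi(c,p,t)=f+u-t$ is an analytic submersion. Hence $\MD$ is an analytic submanifold, and $\pi_2$ is an analytic disc bundle.

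\textbf{Step 2: analyticity of $N$.} The Gauss map $N(x,c)$ is expressed through the first derivatives of the analytic parametrization $(c,p)\mapsto\phi_{f+u}(e(p))$ and the analytic metric, so it is analytic. By Theorem~\ref{thm:StructureOfdiscSpace}, $N$ is a bijective local diffeomorphism. The analytic inverse function theorem therefore shows that $N^{-1}$ is analytic, so $N$ is an analytic diffeomorphism. Consequently $\pi_R=\pi_2\circ N^{-1}$ is analytic. The foliation by Gauss lifts is analytic, since its leaves are the fibres of the analytic submersion $\pi_R$.

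\textbf{Step 3: the Bolker maps.} The objects of \S\ref{ssection:bolker} are preimages and restrictions of the analytic maps $\pi_1=\pi\circ N$ and $\pi_2$:
\begin{itemize}
\item $\Sigma(p)$ and $\hat\Sigma(p)$ are analytic submanifolds, since they are images and preimages under analytic embeddings and submersions, as in Lemma~\ref{lemma:BolkerI}.
\item The map $\pi_1|_{\hat\Sigma(p)}$ of Theorem~\ref{theorem:bolker} is a restriction of an analytic map to an analytic submanifold, hence analytic.
\item The curves $\Sigma(p,q)$ are regular level sets of analytic submersions, hence analytic.
\end{itemize}
No new transversality is needed: the submersion statements were already proven, and analyticity is inherited.
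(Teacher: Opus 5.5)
Your proposal is correct and follows essentially the same route as the paper. You redo Lemma~\ref{lemma:MDIsSubmanifold} with analytic tubular coordinates and the analytic implicit function theorem, obtain joint analyticity in the disc point and the parameter $c$ from Morrey-type analytic regularity for nonlinear elliptic boundary value problems, and then inherit analyticity of $N^{-1}$, $\pi_R$, the foliation and the Bolker maps from the diffeomorphism and submersion properties already proved. The joint-analyticity step you flag as the main obstacle is exactly the one the paper settles by simply citing Morrey, and Wang for the boundary version, so your Cauchy-estimate sketch adds detail rather than changing the argument.
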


We will use without further comment the analytic implicit-function theorem for Banach spaces, together with the analytic regularity theorem of Morrey for analytic nonlinear elliptic systems, including the boundary regularity version. We refer the reader to \cite{Morrey-58-1,Morrey-58-2}, as well as \cite{Wang-06} for a formulation of the problem with boundary.

\begin{proof}
The proof is the analytic analogue of the smooth construction of Lemma~\ref{lemma:MDIsSubmanifold}. We briefly recall the argument. Fix $c_0\in\mathcal C$ and write $D_0=\mathrm D(c_0)$. Since the metric, the boundary, and the family $\mathcal C$ are all analytic, the tubular coordinates used near $D_0$ may be chosen analytically, and that nearby boundary curves $c\in\mathcal C$ may be written as analytic graphs over $c_0$, also depending analytically on $c$.

In these coordinates, nearby discs are expressed as graphs $p\mapsto F(p,c,g)$, with $g\in C^{\ell+2,\alpha}_0(\closedtwoball)$ solving the implicit equation $\mathcal F(c,g):=\Phi\bigl(\mathcal J^2F_{c,g}\bigr)-k=0$. For $\ell\geq0$ and $0<\alpha<1$, the map $\mathcal F:U\times C^{\ell+2,\alpha}_0(\closedtwoball) \longrightarrow C^{\ell,\alpha}(\closedtwoball)$ is real analytic. Its derivative with respect to the second variable at $(c_0,0)$ is $D_g\mathcal F(c_0,0)h=J^\Phi\bigl(\langle \xi,N_0\rangle h\bigr)$, where $N_0$ denotes the unit normal of $D_0$ and $\xi$ denotes the transverse vector field used to parametrize the tubular neighbourhood. Since $\langle \xi,N_0\rangle>0$ and $D_0$ is strictly stable, this Dirichlet operator is an isomorphism $C^{\ell+2,\alpha}_0(\closedtwoball)\rightarrow C^{\ell,\alpha}(\closedtwoball)$. The analytic implicit-function theorem therefore yields, after reducing $U$ if necessary, a real-analytic map $c\mapsto g(c)\in C^{\ell+2,\alpha}_0(\closedtwoball)$ whose graph is the unique solution of the Plateau problem with boundary $c$.

By elliptic regularity for analytic nonlinear elliptic boundary value problems, these solutions are real analytic up to the boundary, and the above local parametrizations are analytic with respect to both the point of the disc and the parameter $c$. It follows that $\mathrm{MD}$ is a real-analytic submanifold of $X\times\mathcal C$, and that the projections $\mathrm{MD}\to X$ and $\mathrm{MD}\to\mathcal C$ are analytic.

The Gauss map $N:\mathrm{MD}\to SX^\bullet$ is obtained from the analytic parametrizations by first taking derivatives and normalizing the resulting normal vector. It is therefore also analytic. Since we have already shown that $N$ is a global diffeomorphism, it is also an analytic diffeomorphism. It follows that $\pi_R=\pi_2\circ N^{-1}:SX^\bullet\to\mathcal C$ is analytic, and that the leaves $\pi_R^{-1}(c)$ form a real-analytic foliation. The maps used in the proof of the Bolker condition are obtained from the same analytic maps by restriction and composition, and are therefore analytic as well. This completes the proof.
\end{proof}

\subsection{Extending the foliation}

\label{ssection:extended-foliation}

We now describe the smooth extension of the foliation of $SX^\bullet$ by $(\Phi,k)$-discs across the boundary $\partial X$. A preliminary step is to compactify the space of circles.

\subsubsection{Compactification of the space of circles}

\label{ssection:compactification-circles}

The coordinates $(p,s,\pm) \in \partial X \times (0,s_0)$ introduced in \S\ref{ssection:smooth-coordinates} are smooth (resp. real analytic) coordinates on an open subset of $\cC$. The $\pm$ here refers to the two possible orientations of the corresponding boundary circles in $\partial X$. This leads to the following definition.

\begin{defi}[Compactification of the space of circles]
\label{definition:compactification}
The compactified space of circles is defined by
\begin{equation*}
\overline{\cC} := \cC \sqcup \partial \cC_+ \sqcup \partial \cC_-\ , \qquad \text{where } \partial \cC_\pm := \big\{(p,s=0,\pm) ~:~ p \in \partial X\big\}\ .
\end{equation*}
It is endowed with the smooth (resp. analytic) structure induced by the set of smooth (resp. analytic) functions with respect the $(p,s) \in \Ss^2 \times [0,s_0)$ variables.
\end{defi}

In other words, $\overline{\cC}$ is the compactification of $\cC$ obtained by adding the slice $\{s=0\}$ corresponding to two disjoint copies of $\Ss^2$. Note that $\overline{\cC}$ is a smooth (resp. real analytic) manifold which is diffeomorphic to $\Ss^2 \times [-1,1]$. By construction, its boundary $\partial \cC := \overline{\cC} \setminus \cC$ is equal to the union
\begin{equation*}
\partial \cC = \partial \cC_+ \sqcup \partial \cC_-\ .
\end{equation*}

\subsubsection{Smooth and analytic extension} The following theorem asserts that the foliation by $(\Phi,k)$-discs can be smoothly (resp. analytically) extended across the boundary $\partial X$. It relies on the asymptotic analysis developed in \S\ref{section:asymptotic} and will play a key role in the verification of several analytic properties of the Radon transform that will be carried out in \S\ref{s.Radon}. Recall the notion of proper extensions from Appendix \ref{section:extension}.

\begin{theorem}
\label{theorem:extension-foliation}
There exists a proper smooth (resp. analytic) Riemannian extension $(X_e, \bar g_e)$ of $(X, \bar g)$, a proper extension $\cC_e$ of $\overline{\cC}$, and an open subset $U \subset SX_e$ such that $SX \subset U$ and
\begin{equation}
\label{equation:diagram2}
\begin{tikzcd}
& U \arrow[rd, "\pi_R"] \arrow[ld,"\pi"'] & \\
X_e & & \cC_e
\end{tikzcd}
\end{equation}
is a double fibration satisfying the Bolker condition.

Furthermore, it coincides with the double fibration \eqref{equation:diagram} when restricted to $SX^\bullet \subset SX_e$.
\end{theorem}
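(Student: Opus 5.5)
The plan is to build the extended double fibration by gluing two pieces: the interior part, which is already understood, and a collar near $\partial\cC$ supplied by the asymptotic analysis.

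\textbf{Step 1 (interior part).} Away from $\partial\cC$, i.e.\ over circles $c$ with $s(c)\geq s_1>0$, the discs $\opD(c)$ are strictly stable. I would rerun the implicit function argument of Lemma~\ref{lemma:MDIsSubmanifold} inside $(X_e,\bar g_e)$ for the enlarged boundary value problem. This continues each disc slightly past $c$, as a $(\Phi,k)$-surface with boundary on a nearby sphere $\partial X_\delta\subset X_e$. Strict stability is an open condition, so the continued discs remain strictly stable. Theorem~\ref{thm:coveringofunitbundle} therefore still applies to them, and Lemmas~\ref{lemma:LocalDiffeomorphismI}--\ref{lemma:LocalDiffeomorphismII} show that the Gauss map stays a local diffeomorphism near $\pi_R^{-1}(\{s\geq s_1\})$.

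\textbf{Step 2 (collar near $\partial\cC$).} Here I would use Theorem~\ref{theorem:uniform-smoothness}. In the coordinates $(p,s,\pm)$, the graphs $\tilde f^\pm(p,s,\cdot)$ are smooth (resp.\ analytic) in $(p,s,q)$ for $s\in(-s_0,s_0)$ and $|q-p|\leq 1$. This yields a family of $(\Phi,k)$-surfaces in $X_e$ parametrized by $\cC_e:=\overline{\cC}\cup\{s\in(-s_0,0)\}$.

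The key point is that these are genuine $(\Phi,k)$-surfaces also for $s\leq0$ and for $|q-p|^2>s$. Indeed, $\mathbf H'(p,r,\cdot)$ is analytic/smooth in $r$, and the identity $H=0$ is a Taylor identity in $(s,q-p)$ which holds on the open set $\{0<|q-p|^2<s\}$. In the analytic case it therefore persists by unique continuation. In the smooth case I would instead re-solve the Plateau problem in $X_e$ with boundary on a fixed slightly larger sphere, using the Step 1 argument, and identify the result with $\tilde f$ by uniqueness. This issue is the point I expect to be the real obstacle.

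For each $(p,s)$ the graph restricted to a fixed neighbourhood gives a leaf. Its Gauss lift, parametrized by $(p,s,q)$, gives a map into $SX_e$. At $s=0$ the leading term is $\frac12\lambda^\pm(p)(|q-p|^2-s)$, and I would compute the differential there. The normal at $q$ is $\pm\partial_t-\lambda^\pm(p)(q-p)+\cO(s,|q-p|^2)$. Varying $q$ at fixed $(p,s)$ moves the base point horizontally. Varying $s$ moves the height. Varying $p$ at $q=p$ tilts the normal by $-\lambda^\pm(p)\,dp$, with $\lambda^\pm>0$. Hence the map $(p,s,q)\mapsto(q,t,n)$ is a local diffeomorphism near $\{s=0,q=p\}$, with image a neighbourhood of $\{(x,\pm\nu(x))\}$. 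This gives the fibres claimed in Theorem~\ref{th.Foliated_Plateau_posta}(iii), and a smooth submersion $\pi_R$ there.

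\textbf{Step 3 (gluing and Bolker).} The two constructions agree on the overlap $0<s<s_0$: by the uniqueness in Theorem~\ref{thm:AsymptoticPlateauProblem} and in the implicit function theorem, both coincide with the restriction of $\opD(c)$. We then take $U$ to be a neighbourhood of $SX$ on which the resulting map $N^{-1}$ is defined and injective. Injectivity holds on $SX^\bullet$ by Theorem~\ref{thm:StructureOfdiscSpace}, near the poles by Step 2, and then on a neighbourhood by compactness of $SX$ and a standard tube argument.

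The Bolker condition is open in the $C^1$ topology of the fibration. On the compact set $SX$ it holds by Theorem~\ref{theorem:bolker} and Lemma~\ref{lemma:circle}, and near the boundary discs it can be checked on the explicit paraboloid model. Shrinking $X_e$, $\cC_e$ and $U$ then gives it on the extension. Analyticity follows verbatim from the analytic versions of each step, namely Theorem~\ref{thm:analytic-foliated-plateau} and Theorem~\ref{theorem:uniform-smoothness}.
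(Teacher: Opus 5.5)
Your skeleton matches the paper's. You glue the interior discs to the collar given by Theorem \ref{theorem:uniform-smoothness}, compute the differential of the Gauss parametrization at $s=0$, $q=p$ using $\lambda^\pm>0$, shrink by compactness, and use openness of the Bolker condition. The smooth case, however, has a genuine gap. You require the extended leaves to be genuine $(\Phi,k)$-surfaces. This is the ``key point'' of Step 2, and Step 1 needs it in order to apply Theorem \ref{thm:coveringofunitbundle} and Lemmas \ref{lemma:LocalDiffeomorphismI}--\ref{lemma:LocalDiffeomorphismII} to the continued discs.

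The mechanism you propose cannot produce such leaves. Running the implicit function theorem, or re-solving the Plateau problem, on a larger ball $X_\delta$ gives a disc determined by its own boundary data on $\partial X_\delta$. Its trace on $\partial X$ is some curve that you have no means of forcing to equal $c$. Only if it did would uniqueness identify its restriction to $X$ with $\opD(c)$. Nor is there anything to identify with $\tilde f$: outside $\{|q-p|^2\le s\}$ it is just a Whitney extension and solves no equation.

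Continuing $\opD(c)$ as a solution is an elliptic Cauchy problem, which generally has no solution. For instance, suppose $\bar g_e$ is analytic near $c$, $\Phi=\frac12\tr$, $k=0$, and $c$ is not an analytic curve (take $\alpha$ merely smooth). A minimal continuation of $\opD(c)$ would then be a real-analytic surface meeting the analytic surface $\partial X$ transversally along $c$, which forces $c$ to be analytic. So Steps 1--2 in the smooth case, and the identification on the overlap in Step 3, fail as written. Even in the analytic case, the large discs are continued by analyticity up to the boundary (Morrey) plus analytic continuation, as in your Step 2 argument, not by an implicit function theorem on a bigger domain.

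The missing observation is that nothing downstream requires the extended leaves to solve the equation outside $X$. Being a double fibration and satisfying the Bolker condition are properties of the smooth family of surfaces alone, and both are open. In the smooth case the paper takes $\tilde f^\pm$ from Theorem \ref{theorem:uniform-smoothness} for the small circles. For the remaining discs it takes arbitrary smooth extensions, chosen smoothly in $c$ (for instance via a Seeley-type extension of the graph functions, as in the proof of Lemma \ref{lemma:continuity}), and it stresses that these need not be $(\Phi,k)$-surfaces. The Gauss parametrization of the extended marked-disc space is a local diffeomorphism at every point of the compact set lying over $SX$. This follows from Theorem \ref{thm:StructureOfdiscSpace} together with your own computation at $s=0$, $q=p$. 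Hence, after shrinking, it is a diffeomorphism onto a neighbourhood $U$ of $SX$, and the Bolker condition follows by openness. Replace the ``genuine $(\Phi,k)$-surface'' requirement, and Step 1's appeal to Theorem \ref{thm:coveringofunitbundle}, by this openness argument. The rest of your proof then essentially coincides with the paper's, with the PDE used only, via analytic continuation, in the analytic case.
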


Observe that Theorem \ref{th.Foliated_Plateau_posta}, item (ii), is an immediate consequence of the previous statement.

\begin{proof} \emph{The analytic case.}
We first address the analytic case which is slightly easier. Upon composing with an analytic diffeomorphism $\psi : X \to \BB^3$ such that $\psi|_{\partial X} = \alpha$, we may further assume that $X = \BB^3 = \{x \in \R^3 ~:~ |x| \leq 1\}$, and that $\bar g$ is analytic over $\BB^3$. Denote $X'_e = \{x \in \R^3 ~:~ |x| \leq 1+\eps'\}$ for $\eps' > 0$, and let $\bar g_e$ denote the analytic extension to $X'_e$ which exists and is unique provided $\eps' > 0$ is small enough. The space of circles $\cC_X$ then coincides with $\cC_{\Ss^2}$. As in \S\ref{ssection:compactification-circles}, we use the coordinates $(p,s,\pm)$ with $p \in \partial X = \Ss^2, s \in [0,s_0)$ near the boundary of the compactified space of circles $\bar\cC$. We let $\cC_e := \bar \cC \sqcup \{(p,s \pm) ~:~p \in \Ss^2, s \in (-s_0,s_0)\}$.

We first address small discs. Let $\mathrm{D}^\pm(p,s)$ denote the unique $(\Phi,k)$-disc bounding the circle $c$ corresponding to $(p,s,\pm)$. Recall from Theorem \ref{thm:SmallCircles} that this disc can be written as the graph
\begin{equation*}
D_{p,s}^\pm :=\big\{ \exp_q\big(\mathbf{f}^\pm(s,p,q)\cdot\nu(q)\big)\ :\ d(p,q)\leq \sqrt{s}\big\}\ .
\end{equation*}
In addition, the functions $\mathbf{f}^\pm$ extend analytically from $\{p \in \partial X, s > 0, q \in B_{\sqrt{s}}(p)\}$ to an open neighbourhood of $\{s=0\}$. Upon reducing $s_0$ if necessary, we thus obtain an analytic family of analytic discs, given for $p \in \partial X, s \in (-s_0,s_0)$ by
\begin{equation*}
\mathrm{D}^\pm(p,s) = \big\{\exp_q\big(\mathbf{f}^\pm(s,p,q)\cdot\nu(q)\big)\ :\ q \in B_{s_0}(p)\big\} \subset X'_e
\end{equation*}
which coincides with the original family of discs in $X$ for $s \geq 0$. By analyticity, the $(\Phi,k)$-surface equation is still verified by the discs $\mathrm{D}^\pm(p,s)$ on $X'_e$, where all relevant geometric objects are computed with respect to the extended metric $\bar g_e$. That is, the discs $\mathrm{D}^\pm(p,s)$ are $(\Phi,k)$-surfaces in $X'_e$.

Observe that $X'_e$ may not be covered by $\mathrm{D}^\pm(p,s)$ if $s_0$ is too small compared with $\eps'$. That is, there may exist points $x \in X_e'$ not lying in $\mathrm{D}^\pm(p,s)$ for any $(p,s)$. However, this issue is addressed by considering the smaller analytic manifold $X_e = \{x \in \R^3 ~:~ |x| \leq 1+\eps\}$ for $\eps > 0$ sufficiently small, and the new family of discs $\mathrm{D}^\pm(p,s) \cap X_e \subset X_e$ for $(p,s) \in \cC_e$.

Away from $\partial \cC$, each disc $\mathrm{D}_{\bar g}(c) \subset X$ is an analytic submanifold intersecting $\partial X$ transversally. It thus uniquely extends to a smooth analytic submanifold of $X_e$ which satisfies the $(\Phi,k)$-surface equation. In addition, as in the proof of Theorem \ref{thm:analytic-foliated-plateau}, the dependence with respect to $c \in \cC$ is analytic.

Finally, we let $U \subset SX_e$ denote the open set consisting of all the Gauss lifts of the $(\Phi,k)$-discs for $c \in \cC_e$. We claim that \eqref{equation:diagram2} is a double fibration. Indeed, the Gauss parametrization $(p,s,q)\mapsto(x,N)$ from the extended space of marked discs $\mathrm{MD}_e$ to $SX_e$ has invertible differential at $s=0, q=p$: using \eqref{equation:graal}, the variation of $q$ supplies the two boundary directions, the variation of $s$ supplies the transverse position direction, and the variation of $p$ supplies the two vertical direction coordinates (corresponding to $x$ being fixed, and $N$ varying in $S_xX$), with nondegeneracy following from $\lambda^\pm(p)>0$. Together with the corresponding invertibility for noncollapsed discs, compactness allows us to shrink the extensions so that the Gauss parametrization is a diffeomorphism onto a neighborhood $U$ of $SX$.

Since $\pi_R$ is a submersion from the space of marked discs, it becomes immediate that $\pi_R : U \to \cC_e$ is a submersion. The map $\pi : U \to X_e$ is surjective by construction of $X_e$. To show that it is a submersion, it suffices to show that $d\pi$ is surjective over $\{(x,\pm\nu(x))\ |\ x \in \partial X\}$ since the range of this map can only increase locally; upon reducing $U$ and $\eps > 0$ if necessary, this then establishes that $d\pi$ is surjective on $U$. However, that $d\pi$ is a submersion on this locus follows from the above argument showing that the Gauss map is a diffeomorphism. Finally, the Bolker condition is still satisfied as it is an open condition: indeed, following \S\ref{ss:TwoDSetting} and \S\ref{ssection:plateau-3d}, the Bolker condition follows from the strict stability of the discs $\mathrm{D}_e(c), c \in \cC_e$; as strict stability holds uniformly for all the discs $\mathrm{D}(c)$, $c \in \cC$ (that is, the first eigenvalue of the Jacobi operator is uniformly bounded from below), the same holds for their extensions. \\

\emph{The smooth case.} The proof is similar, except that the objects do not have a canonical extension to a larger manifold. Indeed, by Theorem \ref{thm:SmallCircles}, the small discs can be smoothly extended, and the other discs as well, upon making arbitrary choices for the extensions. In particular, these do not need to satisfy the $(\Phi,k)$-surface equation anymore outside of $X$ (as opposed to the analytic case). The same proof as in the analytic case then applies.

\end{proof} 

\section{Radon transforms}\label{s.Radon} We are now ready to define and study the Radon transform associated with the foliated Plateau problem. Throughout this section, we assume that $\bar g$ is $(\Phi,k)$-simple (Definition \ref{definition:phik-simplicity}).

\subsection{Definition}\label{ss.def} We begin by introducing the Radon transform on the unit tangent bundle, and then specialize it to functions defined on $X$, or symmetric tensors.

\subsubsection{The Radon transform on $SX$} The \emph{Radon transform} is an integral transform initially defined on functions of $SX$. Given $c \in \cC$, recall that $\mathrm{D}(c) \subset X$ denotes the unique $(\Phi,k)$-disc such that $\partial \mathrm{D}(c) = c$. The surface $\mathrm{D}(c) \subset X$ carries a natural area form $\dd \mathrm{D}(c)$ given by the Riemannian volume of the restriction of the ambient metric $g$ in $X$ to $\mathrm{D}(c)$.

Let $\mathrm{L}(c) \subset \pi^{-1}(\mathrm{D}(c)) \subset SX$ denote the Gauss lift of $\mathrm{D}(c)$, where $\pi : SX \to X$ is the projection. Since $\pi : \mathrm{L}(c) \to \mathrm{D}(c)$ is a diffeomorphism, the area form on $\mathrm{D}(c)$ can be lifted to an area form $\dd \mathrm{L}(c) := \pi^* \dd \mathrm{L}(c)$. The general Radon transform is then defined as
\[
R : C(SX) \to C(\cC), \qquad R f(c) := \int_{\mathrm{L}(c)} f ~\dd \mathrm{L}(c).
\]
It is also easily seen to map $C^\infty(SX)$ into $C^\infty(\cC)$.

%

We use the coordinates $(p,s) \in \partial X \times (0,s_0)$ for small circles on $\cC$ defined in \S\ref{ssection:extended-foliation}. Observe that, for $f \in C^0(SX)$, as $c \to \partial \cC$, $Rf(c) \to 0$ since the size of the disc shrinks. Actually, in the compactified coordinates, $Rf$ vanishes to order $1$ on $\partial \cC$:

\begin{lemma}
There exist $C,s_0 > 0$ such that for all $f \in C^0(SX)$, for all $p \in \partial X, s \in [0,s_0)$:
\[
|Rf(p,s)| \leq C s \|f\|_{C^0(SX)}.
\]
\end{lemma}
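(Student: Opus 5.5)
The plan is to bound the integral trivially by $\|f\|_{C^0}$ times the area of the disc, so the statement reduces to showing $\Area_{\bar g}(\mathrm{D}(p,s)) \leq C s$ uniformly for $p\in\partial X$ and $s\in[0,s_0)$. Indeed,
\begin{equation*}
|Rf(p,s)| \leq \int_{\mathrm{L}(c)} |f|\,\dd\mathrm{L}(c) \leq \|f\|_{C^0(SX)}\,\Area_{\bar g}\big(\mathrm{D}(c)\big)\ ,
\end{equation*}
since $\dd\mathrm{L}(c)$ is the pullback of the area form of $\mathrm{D}(c)$ under the diffeomorphism $\pi:\mathrm{L}(c)\to\mathrm{D}(c)$.

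To estimate the area, I would use Theorem \ref{thm:SmallCircles} (equivalently Theorem \ref{thm:AsymptoticPlateauProblem} in the Fermi chart of \S\ref{sssection:fermi}). For $s=r^2$ small, $\mathrm{D}^\pm(p,s)$ is the graph of $f^\pm_{p,r,\phi}$ over the Euclidean ball $B_r(p)\subset\Omega$, in Fermi coordinates $\Omega\times(-\eps,0]$. By \eqref{eqn:AsymptoticsOfF} and the uniform bounds on $\phi^\pm$ (smooth on $\Omega'\times[0,r_0)\times\closedtwoball$, hence with $\dd_x\phi$ bounded), we get $|f|=\cO(r^2)$ and $|\dd_q f|=\cO(r)$ uniformly. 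Lemma \ref{lemma:FormulaeForIAndII} then gives $\opI_{\hat f}=g+\cO(r^2)$, so the area density $\sqrt{\det \opI_{\hat f}}$ relative to Lebesgue measure on $\Omega$ is bounded by a constant $C_0$ depending only on $\bar g$ on a compact neighbourhood of $\overline{\Omega'}$. Hence $\Area(\mathrm{D}^\pm(p,s))\leq C_0\pi r^2 = C_0\pi s$. Alternatively, one can use directly that the rescaled parametrization $e_{p,r,\phi}$ of \eqref{eqn:Reparametrization} has $\dd e = r(\mathrm{id}+\cO(r))$ on $\closedtwoball$, so its Jacobian is $r^2(1+\cO(r))$.

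To make the constants uniform in $p$, I would cover the compact surface $\partial X$ by finitely many charts $\Omega'$ as in \S\ref{sssection:fermi} and take $s_0$ less than the minimum of the corresponding $r_0^2$, with $C$ the maximum of the resulting constants. The circle coordinates $(p,s)$ of \S\ref{ssection:compactification-circles} differ from the chart's Euclidean radius only by a smooth change of coordinates preserving $\{s=0\}$ (the sphere versus stereographic-Euclidean radius squared are comparable), so the bound $\leq Cs$ persists. At $s=0$ the disc is a point and both sides vanish.

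There is no serious obstacle. The only point requiring care is uniformity, namely that the smooth dependence of $\phi^\pm$ on $(p,r)$ up to $r=0$ from Theorem \ref{thm:AsymptoticPlateauProblem} gives $C^1$ bounds independent of $(p,r)$, and I would state this explicitly.
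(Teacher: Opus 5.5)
Your argument is correct and is the same as the paper's: you bound $|Rf(c)|$ by $\|f\|_{C^0(SX)}$ times the area of $\mathrm{D}(c)$, then use the graph description of small discs from Theorem \ref{thm:AsymptoticPlateauProblem} to see that this area is $\cO(r^2)=\cO(s)$ uniformly in $p$, where the constructed graph is identified with $\mathrm{D}(c)$ by the uniqueness built into $(\Phi,k)$-simplicity. The extra care you take (a finite cover by charts, and comparing spherical with stereographic-Euclidean radii) supplies details that the paper leaves implicit.
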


\begin{proof}
The proof is straightforward using Theorem \ref{thm:AsymptoticPlateauProblem} and observing that the disc $\mathrm{L}(c)$ for $c \in \cC$ corresponding to the pair $(p,r) \in \partial X \times [0,r_0)$ (where $r^2=s$) has area $\mathcal{O}(r^2) = \mathcal{O}(s)$.
\end{proof}

\subsubsection{Adjoint} \label{sssection:adjoint}

Let $\dd c$ be an arbitrary smooth (resp. analytic) measure on $\cC$ (it does not need to extend smoothly up to $\partial \cC$). Then, there exists a unique smooth (resp. analytic) positive function $a \in C^\infty(SX^\bullet)$ such that the following Fubini-type formula holds for all $f$ with compact support in $SX^\bullet$:
\begin{equation}
\label{equation:fubini}
\int_{SX^\bullet} f(v) \dd \mu(v) = \int_{\cC}\left(\int_{\mathrm{L}(c)} f(v) ~ a(v)^{-1} \dd \mathrm{L}_c(v)\right) \dd c,
\end{equation}
where $\dd\mu$ denotes the Liouville measure on $SX$.

%

We let
\[
R^* : C^\infty_{\mathrm{comp}}(\cC) \to C^\infty(SX)
\]
be the formal adjoint of $R$ such that for all $f \in C^\infty(SX)$, $f' \in C^\infty_{\mathrm{comp}}(\cC)$:
\[
\langle Rf, f' \rangle_{L^2(\cC,\dd c)} = \langle f, R^*f' \rangle_{L^2(SX,\dd\mu)}.
\]
It follows immediately from \eqref{equation:fubini} that $R^* f (v) = a(v) f(\pi_R(v))$, where $\pi_R : SX \to \cC$ is the projection map.

\subsubsection{Radon transform of functions and symmetric tensors} \label{sssection:radon-tensors}

We are mostly interested in applying $R$ to specific functions, such as functions defined on $X$. We introduce
\begin{equation}
\label{equation:radon-transform}
R_0 : C^\infty(X) \to C^\infty(\cC), \qquad R_0 := R \circ \pi^*,
\end{equation}
where $\pi^* : C^\infty(X) \to C^\infty(SX)$ is the pullback operator $\pi^*f = f\circ\pi$.


Another interesting class of geometric operators will appear later on in the context of minimal surfaces. Given $f \in C^\infty(X,S^2 T^*X)$, a symmetric $2$-tensor, we introduce
\[
\pi_\perp^* f \in C^\infty(SX), \qquad \pi_\perp^*f(v) := \tr(f|_{v^\perp}),
\]
and define
\begin{equation}
\label{equation:s2}
R_2 : C^\infty(X,S^2 T^*X) \to C^\infty(\cC), \qquad R_2 := R \circ \pi_\perp^*.
\end{equation}
The operator $R_2$ turns out to be (one half of) the differential of the boundary area spectrum for minimal surfaces, see Lemma \ref{lemma:differential-area}.

\subsubsection{The normal operator} Finally, we can associate with these operators a corresponding \emph{normal operator}. We let
\[
\Pi_0 := R_0^*R_0 = \pi_* \circ R^* R \circ \pi^*,
\]
where $\pi_* : L^2(SX, \dd\mu) \to L^2(X, \dd\vol_X)$ is the adjoint of $\pi^*$. Observe that
\[
\pi_* f (x) = \int_{S_xX} f(x,v) \dd v,
\]
where $\dd v$ is the round measure on $S_x X$. This stems from the Fubini-type formula:
\[
\int_{SX} f(x,v) \dd\mu(x,v) = \int_X \left(\int_{S_xX} f(x,v) \dd v\right) \dd\vol_g(x).
\]
Here, the adjoint is computed with respect to the two natural $L^2$-scalar products $L^2(SX,\dd \mu)$ and $L^2(\cC,\dd c)$. A straightforward computation shows that:
\begin{equation}
\label{equation:r0star-r0}
\Pi_0 f (x) = R_0^*R_0 f(x) = \int_{S_x X} \left(\int_{\mathrm{D}(\pi_R(v))} f ~ \dd\mathrm{D}(\pi_R(v))\right) a(v) \dd v.
\end{equation}
Note that, due to the presence of $a$, the operator is only well-defined on compactly supported functions in $X^\circ$.

Similarly, we may introduce
\[
\Pi_2 := R_2^*R_2 := {\pi_\perp}_* R^* R \pi_\perp^*,
\]
where ${\pi_\perp}_* : L^2(SX, \dd\mu) \to L^2(X,S^2T^*X, \dd\vol_X)$ is the adjoint of $\pi_\perp^*$. Note that $\pi_\perp^* = \pi^* \circ \tr - \pi_2^*$, where $\pi_2^*f(x,v) = f_x(v,v)$. Taking the adjoint, we thus find that
\[
{\pi_\perp}_*f = (\pi_* f) g - {\pi_2}_*f.
\]
One can then derive an explicit expression, similar to \eqref{equation:r0star-r0}, using the previous computation. This will not be needed in what follows.

\subsection{Results} \label{ss.results_radon} We now state the main properties of the normal operator in the interior of $X$.

\subsubsection{Pseudodifferential behaviour} The first result is that the associated normal operators $\Pi_0$ and $\Pi_2$ are pseudodifferential operators in the interior $X^\circ$:

\begin{theorem}[Pseudodifferential behaviour] \label{th_pseudo_behaviour}
Let $(X,\bar g)$ be a smooth Riemannian ball such that $\bar g$ is $(\Phi,k)$-simple. Then the following holds:
\begin{enumerate}[label=\emph{(\roman*)}]
\item The operator $\Pi_0$ is a pseudodifferential operator of order $-2$ on $X^\circ$. It is elliptic with principal symbol $\sigma_{\Pi_0} \in S^{-2}(T^*X)$ given by
\[
\sigma_{\Pi_0}(x,\xi) = \dfrac{4\pi^2}{|\xi|^2}\left(a(x,\xi^\sharp/|\xi|) + a(x,-\xi^\sharp/|\xi|)\right), \qquad \xi \neq 0
\]
\item The operator $\Pi_2$ is a pseudodifferential operator of order $-2$ on $X^\circ$ with principal symbol $\sigma_{\Pi_2} \in S^{-2}(T^*X, \mathrm{Hom}(S^2T^*X))$ given by
\begin{equation}
\label{equation:symbol-theta2}
\sigma_{\Pi_2}(x,\xi)f :=\dfrac{4\pi^2}{|\xi|^2}\left(a(x,\xi^\sharp/|\xi|) + a(x,-\xi^\sharp/|\xi|)\right) \langle f,g_\xi\rangle g_\xi, \qquad \xi \neq 0
\end{equation}
where $g_\xi \in S^2 T^*_xX$ is defined by $g_\xi = g$ on $\ker \xi$ and $g_\xi \equiv 0$ on the orthogonal. Here $\langle \bullet, \bullet \rangle$ denotes the metric on $S^2 T^*X$ induced by $g$.

\end{enumerate}
\end{theorem}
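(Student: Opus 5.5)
The plan is to treat $\Pi_0$ as a Fourier integral operator built from the double fibration \eqref{equation:diagram}, and then to compute its Schwartz kernel explicitly near the diagonal. First, $R_0$ is a generalized Radon transform in the sense of Guillemin, attached to the incidence relation $\MD\subset X\times\cC$. By Theorem \ref{thm:StructureOfdiscSpace}, $\MD\cong SX^\bullet$ is a smooth hypersurface, and both projections are submersions. Hence $R_0$ is an FIO of order $-1/2$ with canonical relation $C=N^*\MD\setminus 0$ (twisted). The weight $a$ is smooth and positive, and we cut off with compactly supported functions in $X^\circ$, since $a$ may blow up near $\partial \cC$. This keeps us away from the bad boundary set.

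Next we check the Bolker condition in the Guillemin sense: $\pi_L:C\to T^*X\setminus0$ should be an injective immersion. Immersion follows from the fibrewise analysis in the proof of Lemma \ref{lemma:LocalDiffeomorphismI}. Given $(x,\xi)$, the covectors conormal to $\MD$ over $x$ come from discs through $x$ whose normal is $\pm\xi^\sharp/|\xi|$. Injectivity, modulo the $\pm$ pair, is Theorem \ref{theorem:bolker} together with Lemma \ref{lemma:circle}: the submersion property of $\pi_1$ on $\hat\Sigma(p)\setminus\pi_1^{-1}(p)$ shows that the kernel of $\Pi_0$, given by
\[
K(p,q)=\int_{\Sigma(p,q)}(\dots),
\]
is smooth off the diagonal. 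Then, by the clean composition calculus (Guillemin, Greenleaf--Uhlmann), $C^t\circ C\subset \Delta_{T^*X}$, so $\Pi_0=R_0^*R_0$ is a pseudodifferential operator of order $2\cdot(-1/2)-\tfrac{\dim \cC-\dim X}{2}\cdot 0$ shifted by the fibre dimension. For a codimension-one incidence with $2$-dimensional leaves in dimension $3$, this gives order $-(3-1)=-2$.

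To identify the symbol, the most concrete route is to expand near the diagonal directly. Using \eqref{equation:r0star-r0}, we change variables from $(v,y)\in S_xX\times\mathrm D(\pi_R(v))$ to $y$ near $x$. To leading order, the discs through $x$ with normal $v$ are the planes $v^\perp$. The kernel is then homogeneous of degree $-1$ in $y-x$:
\[
K(x,y)\sim \frac{a(x,w)+a(x,-w)}{|y-x|}\,\cdot(\text{Jacobian}),\qquad w\perp (y-x),
\]
more precisely an integral over the great circle $w\perp(y-x)$, with the two orientations $\pm v$ giving the same disc. Fourier transforming in $y-x$ uses the standard fact that the Euclidean plane transform normal operator $\int_{S^2}\int_{v^\perp}$ has symbol $c/|\xi|^2$ concentrated on $v=\pm\xi/|\xi|$. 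This gives $\sigma_{\Pi_0}=\frac{4\pi^2}{|\xi|^2}(a(x,\hat\xi)+a(x,-\hat\xi))$, after tracking the constant from the Fourier transform of $\delta_{v^\perp}$ integrated over $v$. Ellipticity follows from $a>0$. For $\Pi_2$, the same computation applies with the tensor weights $\pi_\perp^*$ and ${\pi_\perp}_*$ inserted, and evaluated at the concentrating normal $v=\pm\hat\xi$. This yields $\tr(f|_{\ker\xi})=\langle f,g_\xi\rangle$ and, dually, $g_\xi$, which is \eqref{equation:symbol-theta2}.

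The main obstacle is the precise symbol constant and the verification that the Jacobian between $\dd c$, the area forms and the Liouville measure reduces on the diagonal exactly to $a$. This requires care with the Fubini formula \eqref{equation:fubini}. I would first calibrate it on flat $\R^3$ with planes, and then argue that only the diagonal value of the kernel's leading homogeneous part matters.
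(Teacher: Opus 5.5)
Your strategy can be made to work: Guillemin's double-fibration calculus for pseudodifferentiality, then the leading singularity of the kernel for the symbol. However, the Bolker step is checked for the wrong projection, and as stated it does not justify $C^t\circ C\subset\Delta_{T^*X}$.

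Over each $(x,\xi)$ the canonical relation $C=(N^*\MD\setminus 0)'$ has exactly two points, coming from the discs through $x$ with normals $\pm\xi^\sharp/|\xi|$. So $C\to T^*X$ is two-to-one. This is not something to quotient out: it is exactly why the symbol is a sum of the two terms $a(x,\pm\xi^\sharp/|\xi|)$.

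What has to be an injective immersion is the projection $C\to T^*\cC$. If two points $(x,\xi)\neq(y,\zeta)$ lay over the same $(c,\eta)$, then $C^t\circ C$ would contain an off-diagonal point, and the kernel of $\Pi_0$ would in general be singular away from the diagonal. That injectivity is what Theorem~\ref{theorem:bolker} (equivalently Theorem~\ref{thm:coveringofunitbundle}(iv)) actually supplies. The conormal line to $\MD$ at $(x,c)$ is spanned by a covector whose $T^*_c\cC$-component is the evaluation functional $v\mapsto\hat f_v(x)$, with $\hat f_v$ the Jacobi field of Remark~\ref{rem_vector_to_Dirichlet}. For $x\neq y$ in $\mathrm{D}(c)$ these functionals are not proportional.

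For immersivity, Lemma~\ref{lemma:LocalDiffeomorphismI} makes $C\to T^*X$ a local diffeomorphism. You then need to add that, since $\dim X=\dim\cC$, a canonical relation projects locally diffeomorphically onto one factor if and only if it does onto the other.

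Your order count is also wrong. The kernel of $R_0$ is conormal to a hypersurface in the $6$-manifold $X\times\cC$, so $R_0$ is an FIO of order $\tfrac12-\tfrac64=-1$, not $-\tfrac12$. Then $\Pi_0$ has order $-2$ by transversal composition with a local canonical graph; no excess or fibre-dimension shift enters.

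With these repairs, your argument is a genuinely different route from the paper's. The paper never quotes the FIO composition theorem. It uses H\"ormander's oscillatory criterion \eqref{equation:pdo-expansion} and writes $\Pi_0(e^{iS/h}f)(x)$ as an oscillatory integral over $T(S_xX)$ with phase $S(\Psi(v,u))$. It uses Theorem~\ref{theorem:bolker} to rule out critical points with $u\neq 0$; this is also what underlies Lemma~\ref{lem_smooth_kernel}. The computation of Proposition~\ref{th_inf_bolker} then shows the remaining critical points are $(\pm v_0,0)$, with $|\det \dd^2\varphi|=|\xi|^4$. Stationary phase delivers pseudodifferentiality, the full expansion and the symbol $\frac{4\pi^2}{|\xi|^2}\sum_\pm a(x,\pm v_0)$ at once, because $\dd\mathrm{D}_{\pi_R(v)}(u)\,\dd v$ coincides with the Sasaki measure at $u=0$.

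This also disposes of the Jacobian worry you raise at the end. Formula \eqref{equation:r0star-r0} is exact, and the only Jacobian, that of the leafwise exponential map, equals $1$ on the diagonal.

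Your symbol computation through the leading term $r^{-1}\int_{C_{x,\omega}}a$ of the kernel is correct, including the constant $4\pi^2$. But it rests on the analysis of Theorem~\ref{theorem:wow}: the curves $C_{x,\omega,r}$ are smooth graphs over $C_{x,\omega,0}$ up to $r=0$, and the Jacobian is $\beta=-r+\cO(r^2)$. Your phrase ``to leading order the discs are planes'' only gestures at this. Note also that for general $(\Phi,k)$ the discs through $x$ with normals $v$ and $-v$ are different discs; they only share the tangent plane at $x$.

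In short, your route buys brevity by importing Guillemin's theorem. The paper's route is self-contained and verifies the needed nondegeneracy directly in the geometry.
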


The proof of this theorem relies on two \emph{Bolker conditions}: the global one was stated in Theorem \ref{theorem:bolker} and an infinitesimal one stated in \S\ref{sssection:infinitesimal-bolker}. In the case of the standard geodesic Radon transform, the infinitesimal Bolker condition corresponds to the \emph{twist property} of the vertical bundle on $SX$, see \cite[Section 2.2.1]{Bohr-Lefeuvre-Paternain-24} for instance; the global one is equivalent to the absence of conjugate points.

Note that $\Pi_2$ is not elliptic as $\ker \sigma_{\Pi_2}(x,\xi) \neq \{0\}$. The kernel and microlocal kernels of $\Pi_2$ will not be studied in the present paper and are left for future investigation, see \S\ref{ssection:non-conformal} for further discussions. We emphasize that $\Pi_0$ and $\Pi_2$ are seen as pseudodifferential operators on the open manifold $X^\circ$ here. That is they are seen as acting on smooth compactly supported functions (resp. tensors) in $X^\circ$ and yield smooth, non-compactly supported functions (resp. tensors).

\subsubsection{Analytic case} When the metric $\bar g$ is analytic, the operators turn out to be analytic pseudodifferential operators in the sense of Trèves \cite{Treves-80} in the interior of $X$:

\begin{theorem}
\label{theorem:pdo-analytic}
If $(X,\bar g)$ is analytic and $(\Phi,k)$-simple, then $\Pi_0$ and $\Pi_2$ are analytic pseudodifferential operators in $X^\circ$.
\end{theorem}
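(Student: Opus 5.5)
The plan is to compute the Schwartz kernel of $\Pi_0$ explicitly from the analytic double fibration and verify Trèves' criterion. An analytic pseudodifferential operator in $X^\circ$ is characterized by two properties. First, its Schwartz kernel is real analytic off the diagonal. Second, near the diagonal it admits a conormal representation whose amplitude is an analytic symbol. I will establish both properties for $\Pi_0$ and then deduce the result for $\Pi_2$. The latter differs only by inserting the analytic bundle maps $\pi_\perp^*$ and ${\pi_\perp}_*$, which preserve analyticity.

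\emph{Step 1: the kernel off the diagonal.} By Theorem \ref{thm:analytic-foliated-plateau}, $N:\MD\to SX^\bullet$, $\pi_R$ and the weight $a$ are real analytic. Using \eqref{equation:r0star-r0}, the kernel is
\[
K(x,y)=\int_{\Sigma(x,y)} a\,\omega_{x,y},
\]
an integral over the curve $\Sigma(x,y)\subset\cC$ of Lemma \ref{lemma:circle}. The density $\omega_{x,y}$ comes from disintegrating the measure on $\Sigma(x)\times\opD(c)$ along $\pi_1$. This is legitimate by the global Bolker condition (Theorem \ref{theorem:bolker}), which says that $\pi_1:\hat\Sigma(x)\setminus\pi_1^{-1}(\{x\})\to X\setminus\{x\}$ is an analytic submersion with compact fibres $\Sigma(x,y)$. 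Integrating an analytic density over the fibres of a proper analytic submersion yields an analytic function of $(x,y)$. Hence $K$ is analytic on $X^\circ\times X^\circ\setminus\Delta$.

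\emph{Step 2: the kernel near the diagonal, which I expect to be the main obstacle.} I will pass to the blow-up $[X\times X;\Delta]$ using polar coordinates $y=\exp_x(\rho\omega)$ with $\rho\ge0$ and $\omega\in S_xX$. The claim to prove is
\[
K(x,\exp_x(\rho\omega))=\rho^{-1}\,k(x,\rho,\omega),
\]
with $k$ real analytic up to $\rho=0$, even under $(\rho,\omega)\mapsto(-\rho,-\omega)$, and with $k(x,0,\omega)=2\pi\int_{\{v\perp\omega\}} a\,dv$ (up to normalisation). To prove it, parametrise $\Sigma(x,y)$ by normals $v\in S_xX$ with $v\perp\omega+O(\rho)$, that is, by discs through $x$ containing $y$. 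Points $y$ on a disc through $x$ with normal $v$ are reached by analytic leafwise geodesic polar coordinates on that disc. In these coordinates the area element is $\rho(1+O(\rho))\,d\rho\,d\theta$, and the Jacobian of the change from $\Sigma(x)\times\opD(c)$ to $(\rho,\omega,\text{fibre})$ is analytic in $(\rho,\omega,v)$. The nondegeneracy at $\rho=0$ comes from the infinitesimal Bolker condition, and the factor $\rho^{-2}$ from polar coordinates in $\R^3$ combines with the $\rho$ from the disc area to give $\rho^{-1}$.

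The delicate point is that the analytic implicit function theorem must be applied uniformly as $\rho\to0$ on the blown-up space. I will verify the required transversality at $\rho=0$ through the twist property, namely that $\partial_\rho$ of the leaf through $(x,v)$ spans $v^\perp$ at $\rho=0$.

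\emph{Step 3: from the kernel to an analytic symbol.} A kernel of the form $\rho^{-1}k(x,\rho,\omega)$ with $k$ analytic and having this parity is homogeneous of degree $-1=-3+2$ near the diagonal. Such kernels are exactly those of analytic classical pseudodifferential operators of order $-2$. To see this, I will expand $k$ in Taylor series in $\rho$ and take the Fourier transform term by term in $z=\rho\omega$. Each term $\rho^{j-1}k_j(x,\omega)$ is a homogeneous distribution with analytic angular part, and its Fourier transform is homogeneous of degree $-2-j$ with Cauchy-type bounds $C^{j+1}j!$ inherited from the analytic radius of $k$. These bounds are exactly Trèves' analytic symbol estimates. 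Together with Step 1, this gives an analytic pseudodifferential operator, whose principal symbol agrees with Theorem \ref{th_pseudo_behaviour}.
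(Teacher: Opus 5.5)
Your Steps 1 and 2 match the paper. Off-diagonal analyticity is Lemma~\ref{lem_smooth_kernel}. The blown-up form $K=\rho^{-1}k(x,\rho,\omega)$ is Theorem~\ref{theorem:wow}, which the paper obtains by writing the incidence curves $C_{x,\omega,r}$ as analytic graphs over the great circles $S_xX\cap\omega^\perp$. It does this via an implicit function argument in the rescaled variable $u'=u/r$ (Lemma~\ref{lemma:graph-peyresq}). Your leafwise polar coordinates achieve the same uniformity: there $\exp_x^{-1}(y)/\rho'$ is analytic in $(v,\theta,\rho')$ and equals $\theta$ at $\rho'=0$. They also make the evenness of $k$ evident, since $(v,\theta,\rho')$ and $(v,-\theta,-\rho')$ give the same point.

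You genuinely diverge in Step 3. The paper computes no symbol. Proposition~\ref{prop:analytic-pdo} applies the Baouendi--Goulaouic--M\'etivier kernel criterion:
\begin{itemize}
\item it continues $k(x,m)$ holomorphically to a complex cone $|\Im m|<h|\Re m|$ using a branch of the square root;
\item it checks $|\partial^\alpha_m k|\le C|m|^{-1-|\alpha|}$ for $|\alpha|\le 2$;
\item it bounds truncated integrals of second derivatives by integration by parts.
\end{itemize}

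Your term-by-term Fourier transform buys an explicit full symbol. However, the formal estimates $C^{j+1}j!$ do not by themselves make $\Pi_0$ an analytic pseudodifferential operator. A smoothing remainder whose kernel is analytic off the diagonal need not be analytic on it; the kernel $e^{-1/|x-y|^2}$ is an example. To finish, you must realize $\sum_j\sigma_j$ with cutoffs $\chi_j$ vanishing for $|\xi|\lesssim j$. You must then show that $\sum_j\mathcal{F}^{-1}\bigl((1-\chi_j)\tilde\sigma_j\bigr)$, with $\tilde\sigma_j:=\mathcal{F}(\rho^{j-1}k_j)$, converges to a function analytic near the diagonal. This is precisely the kernel--symbol correspondence that the paper imports from BGM rather than reproving.

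Finally, the parity of $k$ is true but not needed. BGM only uses the one-sided expansion continued analytically across $r=0$. In your kernel-to-symbol direction no logarithmic terms arise: each $\rho^{j-1}k_j$ is locally integrable, so its Fourier transform is automatically a homogeneous distribution.
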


The proof of the previous theorem is based on a careful study of the Schwartz kernel of these operators near the diagonal on a blown-up space, combined with general statement about analytic pseudodifferential operators.

\subsubsection{Extension of the normal operator}

To treat non-compactly supported functions and study the kernel of the Radon transform, we will use the following theorem which crucially relies on the possibility of smoothly (resp. analytically) extending the $(\Phi,k)$-discs across the boundary $\partial X$ (see \S\ref{ssection:extended-foliation}). Recall that $(X_e, \bar g_e)$ is a proper Riemannian extension of $(X, \bar g)$. Let $E_0 : L^2(X) \to L^2_{\mathrm{comp}}(X_e)$ be the extension operator by $0$ outside of $X$.

The following holds:

\begin{theorem}[Extension of the normal operator]
\label{theorem:extension-normal-operator}
Let $(X,\bar g)$ be a smooth Riemannian ball such that $\bar g$ is $(\Phi,k)$-simple. Then there exists a pseudodifferential operator $P$ of order $-2$ on $X_e^\circ$, elliptic on $X$, such that if $f \in C(X)$ and $R_0 f = 0$, then $P E_0 f = 0$. Furthermore, if $\bar g$ is analytic, then $P$ is analytic too.
\end{theorem}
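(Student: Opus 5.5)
The plan is to build $P$ as the normal operator of the \emph{extended} Radon transform attached to the extended double fibration of Theorem \ref{theorem:extension-foliation}. Let $\cC_e \supset \overline{\cC}$, $U\subset SX_e$ and $\pi_R:U\to\cC_e$ be as there. For $c\in\cC_e$, let $\mathrm{D}_e(c):=\pi(\pi_R^{-1}(c))\subset X_e$ be the extended disc. Define
\[
R_e f(c):=\int_{\mathrm{D}_e(c)} f\,\dd\mathrm{D}_e(c),\qquad f\in C_{\mathrm{comp}}(X_e^\circ).
\]
For $c\in\cC$, the extended disc satisfies $\mathrm{D}_e(c)\cap X=\mathrm{D}(c)$. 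For $c\in\cC_e\setminus\cC$, the intersection $\mathrm{D}_e(c)\cap X$ is empty or a single boundary point. The second claim holds for $s<0$ because the graph $\mathbf f^\pm(s,p,q)$ lies outside $X$ there, by \eqref{equation:graal} and $\lambda^\pm>0$. It holds for $s=0$ by Theorem \ref{th.Foliated_Plateau_posta}(iii). Hence, for $f\in C(X)$, we get $R_eE_0f=R_0f$ on $\cC$ and $R_eE_0f=0$ on $\cC_e\setminus\cC$. So $R_0f=0$ implies $R_eE_0f\equiv0$.

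Next choose a smooth (resp. analytic) positive measure $\dd c$ on $\cC_e$, now smooth across $\partial\cC$, and a cutoff $\chi\in C^\infty_{\mathrm{comp}}(\cC_e)$ equal to $1$ on $\overline\cC$. For real analytic $\bar g$, $\chi$ should be replaced by an analytic positive weight, or $\cC_e$ shrunk so that no cutoff is needed. Set
\[
P:=\psi\,R_e^*\chi R_e\,\psi,
\]
with $\psi\in C^\infty_{\mathrm{comp}}(X_e^\circ)$ equal to $1$ near $X$. Then $PE_0f=\psi R_e^*\chi R_eE_0f=0$ whenever $R_0f=0$.

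It remains to show that $P$ is a pseudodifferential operator of order $-2$, elliptic on $X$. For this I would repeat the proof of Theorem \ref{th_pseudo_behaviour} verbatim for the double fibration \eqref{equation:diagram2}. By Theorem \ref{theorem:extension-foliation}, this is a double fibration satisfying the global Bolker condition, at least after shrinking $X_e$ and $U$. The infinitesimal Bolker condition is open, holds on $SX^\bullet$, and I would check it at the collapsed fibres $(x,\pm\nu(x))$. There the Gauss parametrization is a local diffeomorphism, which Theorem \ref{theorem:extension-foliation} provides. So it again holds after shrinking. The Guillemin theory then makes $R_e^*\chi R_e$ a Fourier integral operator whose canonical relation is the diagonal, so it is a pseudodifferential operator of order $-2$. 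Its symbol is given by the same formula as in Theorem \ref{th_pseudo_behaviour}(i), with the positive density $a_e$ defined by the Fubini formula \eqref{equation:fubini} on $U$ for the new measure $\chi\,\dd c$. Positivity of $a_e$ on $SX$ gives ellipticity on $X$.

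The key point is that $a_e$ is now smooth and positive up to and across the fibres over $\partial\cC$, because $\dd c$ is smooth in the $(p,s)$ coordinates across $s=0$. By contrast, the original $\dd c$ could degenerate there. In the analytic case, analyticity of $P$ follows from Theorem \ref{theorem:pdo-analytic} applied to the analytic extended fibration, using that the extended discs are genuinely analytic $(\Phi,k)$-surfaces.

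The main obstacle I expect is the boundary locus $\{(x,\pm\nu(x)):x\in\partial X\}$ and the small-disc regime near it. There, one has to verify both Bolker conditions and the nondegeneracy of $a_e$ uniformly in the compactified coordinate $s=r^2$. This is exactly where Theorem \ref{thm:SmallCircles} enters. I would first compute the Gauss parametrization at $s=0$, $q=p$ from \eqref{equation:graal}, which reduces it to the paraboloid model $\frac12\lambda^\pm(|q-p|^2-s)$. For that model both Bolker conditions can be checked by hand, and openness extends them to a neighbourhood.
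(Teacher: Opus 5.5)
Your proposal is correct and follows the paper's own proof. The paper likewise takes $P$ to be the normal operator of the extended Radon transform attached to the double fibration of Theorem~\ref{theorem:extension-foliation}, with the adjoint taken for a measure that is smooth (resp.\ analytic) on $\cC_e$. It obtains the pseudodifferential property from the Bolker/stationary-phase argument of Theorem~\ref{th_pseudo_behaviour}, ellipticity on $X$ from positivity of $a$, analyticity from the argument of Theorem~\ref{theorem:pdo-analytic}, and $PE_0f=0$ ``by construction''. Your extra checks (the cutoffs, and that the extended discs over $\cC_e\setminus\cC$ meet $X$ in at most one point, so that $R_eE_0f$ vanishes on all of $\cC_e$) fill in details the paper leaves implicit.
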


The existence of such an extension is standard in the case of the geodesic Radon transform, see \cite{Stefanov-Uhlmann-05} for instance. It is much easier to obtain insofar as geodesics on $X$ can always be realized as restrictions of geodesics on $X_e$.

\subsubsection{Kernel of the Radon transform}

The previous theorem can be used to describe the kernel of the Radon transform:

\begin{corollary}
\label{corollary:kernel}
Let $(X,\bar g)$ be a smooth Riemannian ball such that $\bar g$ is $(\Phi,k)$-simple. Then:
\begin{itemize}
\item[(i)] The operator $R_0 : C(X) \to C(\cC)$ has finite-dimensional kernel;
\item[(ii)] If $f \in C(X)$ is such that $R_0f \equiv 0$, then $f \in C^\infty(X)$ and $f$ vanishes to infinite order on the boundary $\partial X$;
\item[(iii)] There exists $N \gg 1$ such that, for all analytic $\alpha$, there exists a subset $\cU_\alpha$ of the set of area simple metrics, open and dense in the $C^N$ topology, which contains all real analytic area simple metrics, and which has the following property: for all $\bar g \in \cU_\alpha$, the corresponding Radon transform $R_0 : C(X) \to C(\cC)$ is injective.
\end{itemize}
\end{corollary}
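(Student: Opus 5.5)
The plan is to deduce all three items from Theorem \ref{theorem:extension-normal-operator}, using the analytic results for (iii).

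\emph{Regularity and vanishing at the boundary.} Let $f\in C(X)$ with $R_0 f=0$. By Theorem \ref{theorem:extension-normal-operator}, the extension by zero satisfies $PE_0f=0$, where $P$ is a pseudodifferential operator of order $-2$ on $X_e^\circ$ that is elliptic on a neighbourhood of $X$. I will take a parametrix $Q$ of $P$ there and write $E_0f=QPE_0f+Kf$ with $K$ smoothing. This shows that $E_0f$ is smooth on a neighbourhood of $X$. Since $E_0f$ vanishes identically outside $X$, smoothness across $\partial X$ forces every derivative of $f$ to vanish on $\partial X$. This gives (ii).

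\emph{Finite-dimensional kernel.} The same identity, restricted to $\ker R_0$, gives $E_0f=Kf$ with $K$ smoothing. I will fix a cutoff $\chi\in C^\infty_{\mathrm{comp}}(X_e^\circ)$ equal to $1$ near $X$, so that $f=\chi K\chi f$ on $\ker R_0$. The operator $\chi K\chi$ is compact on $L^2$, so the unit ball of $\ker R_0$ is compact and the kernel is finite-dimensional. It is also closed, because $R_0$ extends continuously to $L^2$ and its kernel consists of smooth functions. This gives (i).

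\emph{Generic injectivity.} For an analytic $(\Phi,k)$-simple metric, Theorem \ref{theorem:extension-normal-operator} makes $P$ an analytic elliptic pseudodifferential operator. Analytic elliptic regularity (Trèves) then makes $E_0f$ real analytic on a neighbourhood of $X$. Since $E_0f$ vanishes outside $X$, $f\equiv0$. So $R_0$ is injective for analytic metrics. To pass to an open dense set, I will set up a stability estimate $\|f\|\le C\|\Pi f\|$, where $\Pi$ is a suitable normal operator built from the extended family. Such an estimate holds when $\Pi$ is Fredholm with trivial kernel, and it is stable under small $C^N$ perturbations of $\bar g$, since the discs, the map $\pi_R$ and the weight $a$ depend continuously on $\bar g$ in $C^N$ by the implicit function theorem in \S\ref{ssection:plateau-3d}. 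This makes the injectivity set open. It contains all analytic metrics, and analytic metrics are $C^N$-dense. The set $\cU_\alpha$ is then the union of these open neighbourhoods.

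I expect the main obstacle to be this last step. The difficulty is making the operators and parametrix constructions depend uniformly on $\bar g$, including the boundary extension of the discs, so that the compact perturbation argument applies with constants locally uniform in $\bar g$.
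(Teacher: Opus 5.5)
Your proposal is correct and follows essentially the paper's route. Items (i)--(ii) come from ellipticity of the extended operator $P$ near $X$ applied to $E_0f$; your explicit parametrix and compactness argument just unpacks the paper's one-line appeal to ellipticity. Item (iii) comes from analytic elliptic regularity for analytic metrics, followed by perturbing the stability estimate $\|f\|_{H^{-2}(X)}\le C\|PE_0f\|_{L^2(X_e)}$; the uniformity issue you flag is exactly what the paper handles in Lemma \ref{lemma:continuity}, via continuity of $\bar g\mapsto P_{\bar g}$ in $\Psi^{-2}(X_e^\circ)$ using the small-disc asymptotics and a Seeley-type extension of the foliation.
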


More generally, we expect $R_0$ to be injective for all $(\Phi,k)$-simple metrics (see Conjecture \ref{conj:r0}).

\subsection{Pseudodifferential behaviour} \label{ss.pseudo-interior} The proof of Theorem \ref{th_pseudo_behaviour} is split in two steps.

\subsubsection{Smoothness of the kernel of $\Pi_0$ outside of the diagonal} This is the first step in the proof of Theorem \ref{th_pseudo_behaviour}; it relies on the global Bolker condition (Theorem \ref{theorem:bolker}). We prove that the \emph{kernel} $K_0 \in \cD'(X^\circ \times X^\circ)$ of $\Pi_0$ is smooth outside the diagonal. This is the content of the next lemma. To deal with the operator $\Pi_2$, we introduce the vector bundle $E \to X \times X$ whose fibre above $(x,y) \in X \times X$ is given $E_{(x,y)} := \mathrm{Hom}(S^2 T^*_y X, S^2 T^*_x X)$.

\begin{lemma}\label{lem_smooth_kernel} There exists a smooth function $K_0 \in C^\infty(X^\circ \times X^\circ \moins\Delta)$
such that for every $x\in X^\circ$, $f\in C^\infty_{\mathrm{comp}}(X)$:
\[\Pi_0f(x) = \int_X K_0(x,y)f(y) \dd \vol_g(y).
\]
Similarly, there exists a smooth section $K_2 \in C^\infty(X^\circ \times X^\circ \moins \Delta, E)$ such that for every $x \in X^\circ$, $f \in C^\infty_{\mathrm{comp}}(X,S^2 T^*X)$:
\[
\Pi_2 f(x) = \int_X K_2(x,y) f(y) \dd \vol_g(y).
\]
Finally, if $\bar g$ is analytic, then the Schwartz kernels are also analytic on $X^\circ \times X^\circ \moins \Delta$.
\end{lemma}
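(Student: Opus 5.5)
The plan is to compute the Schwartz kernel of $\Pi_0$ explicitly from \eqref{equation:r0star-r0} and then use the global Bolker condition (Theorem \ref{theorem:bolker}) to see that, away from the diagonal, it is a smooth fibre integral. By \eqref{equation:r0star-r0} and the diffeomorphism $N:\MD\to SX^\bullet$ of Theorem \ref{thm:StructureOfdiscSpace}, we may rewrite $\Pi_0 f(x)$ as an integral over the fibre $A(x)=\pi_1^{-1}(\{x\})\simeq S_xX$. For each $c\in\Sigma(x)$ we integrate $f$ over $\mathrm D(c)$ against $a(N(x,c))$ and a smooth density on $\Sigma(x)$. This is an integral of $f\circ\pi_1$ over the $4$-manifold $\hat\Sigma(x)$ of Lemma \ref{lemma:BolkerI}, against a smooth positive density $\omega_x$. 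The density $\omega_x$ is built from $a$, the round measure $\dd v$ transported to $\Sigma(x)$ by the embedding $\pi_2|_{A(x)}$, and the leafwise area forms $\dd\mathrm D(c)$. It depends smoothly on $x\in X^\circ$ because $\hat\Sigma(x)$ varies smoothly with $x$, being the preimage under the submersion $\pi_2$ of the smoothly varying sphere $\Sigma(x)$.

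Next we push forward along $\pi_1:\hat\Sigma(x)\setminus\pi_1^{-1}(\{x\})\to X\setminus\{x\}$. By Theorem \ref{theorem:bolker} this map is a surjective submersion. Its fibres $A(x,y)$ are compact, and by Lemma \ref{lemma:circle} each is a simple closed curve mapping diffeomorphically onto $\Sigma(x,y)$. The coarea formula then gives
\[
\Pi_0 f(x)=\int_X K_0(x,y)f(y)\,\dd\vol_g(y),\qquad K_0(x,y)=\int_{A(x,y)}\frac{\omega_x}{\pi_1^*\dd\vol_g},
\]
where the integrand is the fibre density obtained by dividing $\omega_x$ by the pulled-back volume form. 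A fibre integral of a smooth density along a proper submersion is smooth.

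To get joint smoothness in $(x,y)$ off $\Delta$, we treat everything at once. Consider the incidence manifold $\mathcal I:=\{(x,y,c): x,y\in\mathrm D(c)\}\subset X^\circ\times X^\circ\times\cC$, which is the fibre product $\MD\times_\cC\MD$ restricted to the interior. The map $(x,y,c)\mapsto(x,y)$ is a submersion away from $x=y$; this is the pointwise content of Theorem \ref{theorem:bolker}, applied with $x$ free. It is also proper over compact subsets of $X^\circ\times X^\circ\setminus\Delta$, by properness of $N$ (Corollary \ref{cor:LimitsOfSmallCircles}) and because we stay in the interior, away from the collapsing circles. Hence $K_0$, the fibre integral of a smooth density on $\mathcal I$, is smooth there.

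The same argument gives $K_2$. We replace $f(y)$ by $\pi_\perp^*f$ evaluated at $N(y,c)$, which inserts into the integrand the smooth endomorphism-valued factor $\tr(\,\cdot\,|_{N(y,c)^\perp})$ and the adjoint factor at $x$. This produces a smooth section of $E$. In the analytic case every ingredient is analytic by Theorem \ref{thm:analytic-foliated-plateau}: $\MD$, $N$, $a$ (for an analytic $\dd c$), the incidence manifold $\mathcal I$, and the densities. Integrating an analytic density over compact fibres of an analytic proper submersion yields an analytic function, since we can complexify locally and integrate over a fixed compact real cycle.

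The main obstacle is the submersion property of $\mathcal I\to X^\circ\times X^\circ$ off the diagonal, uniformly in both variables. Theorem \ref{theorem:bolker} gives it with $x=p$ fixed, and varying $x$ only enlarges the image of the differential, so the step should follow. A smaller point to check is that $a$ stays bounded on the relevant fibres: since $x,y$ are interior, the circles $c\in\Sigma(x,y)$ stay away from $\partial\overline{\cC}$, so no degeneration occurs.
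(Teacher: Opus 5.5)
Your proposal is correct and follows essentially the same route as the paper: rewrite $\Pi_0 f(x)$ as an integral of $f\circ\pi_1$ over $\hat\Sigma(x)$, push forward along the Bolker submersion $\pi_1$ of Theorem~\ref{theorem:bolker} (whose fibres are the circles of Lemma~\ref{lemma:circle}), and read off $K_0(x,y)$ as the fibre integral of $\tilde a$ over $H_{x,y}$. Your incidence-manifold argument on $\MD\times_{\cC}\MD$ goes further than the paper by making explicit the joint smoothness and analyticity in $(x,y)$, and the construction of $K_2$, which the paper leaves implicit.
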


\begin{proof}
The second case being similar, we only deal with $\Pi_0$ and compute explicitly its kernel. Observe that the space $\{(v,y) ~:~ v \in S_xX, y \in \mathrm{D}(\pi_R(v))\}$ is diffeomorphic to $\hat{\Sigma}(x)$ (defined in \eqref{equation:coiffeur}) via the map $(v,y) \mapsto (\pi_R(v),y)$. We let $\dd\hat{\Sigma}(x)$ be the pushforward of the measure $\dd\mathrm{D}(\pi_R(v)) \dd v$ under this map.

For $f \in C^\infty_{\mathrm{comp}}(X)$, $x \in X^\circ$, using the maps $\pi_1$ and $\pi_2$ defined in \eqref{equation:diagram_MD}, we may rewrite \eqref{equation:r0star-r0} as:
\begin{equation}
\label{equation:ventilo}
\begin{split}
\Pi_0f(x)  = \int_{S_x X} \left(\int_{\mathrm{D}(\pi_R(v))} f \dd\mathrm{D}(\pi_R(v))\right)a(v) \dd v  = \int_{\hat{\Sigma}(x)} f \circ \pi_1 \cdot \tilde{a}~\dd \hat{\Sigma}(x),
\end{split}
\end{equation}
where $\tilde{a}(c) := a(\pi_R^{-1}(c))$, using that $\pi_R : S_xX \to \cC$ is a diffeomorphism. 

Our aim is to describe the kernel of $\Pi_0$ away from the diagonal. Hence, we may fix a point $y_0 \in X^\circ$, consider a function $f$ with support in a neighborhood $U \subset X^\circ$ around $y_0$ and take $x$ outside of $U$. Let $V := \pi_1^{-1}(U) \subset \hat{\Sigma}(x)$. The map $\pi_1 : V \to U$ is a smooth submersion with fibre above $y \in U$ ($y \neq x$) given by $H_{x,y} := \pi_1^{-1}(y) \subset  \hat{\Sigma}(x)$ and diffeomorphic to a circle as $y$ is uniformly away from $x$ (Lemma \ref{lemma:circle}). By Fubini, there is a smooth measure $\dd H_{x,y}$ on $H_{x,y}$ such that for all functions $F \in C^\infty_{\mathrm{comp}}(V)$:
\[
\int_U \left(\int_{H_{x,y}} F ~\dd H_{x,y} \right) \dd \vol_g = \int_{\hat{\Sigma}(x)} F~ \dd \hat{\Sigma}(x),
\]
where $y \mapsto \int_{H_{x,y}} F ~\dd H_{x,y}$ is seen as a function on $U$ (it is a function on $V$ which is constant along the $H_{x,y}$ fibres by construction). Going back to \eqref{equation:ventilo}, we find:
\[
\Pi_0f(x) = \int_X f(y) \left(\int_{H_{x,y}} ~\tilde{a}~ \dd H_{x,y}\right) \dd \vol_g(y).
\]
That is
\begin{equation}
\label{equation:noyau-pi0}
K_0(x,y) = \int_{H_{x,y}} ~\tilde{a}~ \dd H_{x,y}.
\end{equation}
This is a smooth function away from the diagonal.
\end{proof}

\subsubsection{The infinitesimal Bolker condition}  \label{sssection:infinitesimal-bolker}Our goal is to establish that the operator $\Pi_0$ is pseudodifferential. For that, we apply Hörmander's criterion \eqref{equation:pdo-expansion}. Let $f \in C^\infty_{\mathrm{comp}}(X^\circ)$ be a smooth function with compact support in a small open subset $U$ near $x_0 \in X^\circ$, let $S \in C^\infty(X)$ such that $\dd S \neq 0$ on the support of $f$. Since the kernel of $\Pi_0$ is smooth outside of the diagonal (Lemma \ref{lem_smooth_kernel}), it suffices to show that $e^{-i/hS}P(e^{i/h}S f)(x)$ admits a uniform asymptotic expansion for $x \in U$.

We first fix $x \in U$, and write $\xi := \dd S(x)$. Let $v_0 \in S_xX$ be defined by $v_0 := \xi^\sharp/|\xi|$. Notice that $\pm v_0$ are the unique points in $S_xX$ such that $\xi \in N^* D(\pi_R(x,v_0))$ (this will be crucial in applying the stationary phase lemma). We also further assume that $f$ is supported in a small neighborhood of $x$. We can rewrite \eqref{equation:r0star-r0} as
\begin{equation}
\label{equation:stationary}
\Pi_0(e^{iS/h}f)(x) =\int_{S_xX} \int_{T_v(S_xX)} e^{iS(\Psi(v,u))/h} f(\Psi(v,u)) a(v)\dd \mathrm{D}_{\pi_R(v)}(u)  \dd v,
\end{equation}
where $\Psi(v,u) := \exp_x^{v^\perp}(u)$ and $\exp_x^{v^\perp} : v^\perp \to \mathrm{D}(\pi_R(v))$ denotes the leafwise (Riemannian) exponential map, namely, the exponential map on $\mathrm{D}(\pi_R(v))$ for the metric $g$ induced by $\bar g$ on the disc. Observe that the exponential map is a diffeomorphism for small values of $u$; these are the only values that appear since $f$ has small support near $x$. The expression \eqref{equation:stationary} is an oscillatory integral with real-valued phase
\[
\fhi : T(S_xX) \to \R, \qquad \fhi(v,u) :=  S(\Psi(v,u)).
\]

\begin{prop}[Infinitesimal Bolker condition]\label{th_inf_bolker}
Let $\xi=\dd S(x)$ and $v_0=\xi^\sharp/|\xi|$. Then the phase $\fhi$ is only critical at $v = \pm v_0$ and $u=0$, and the Hessian $\dd^2\fhi (\pm v_0,0)$ is non-degenerate.
\end{prop}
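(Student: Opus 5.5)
We will compute the critical points of $\fhi(v,u)=S(\Psi(v,u))$ directly, using that $\Psi(v,0)=x$ for all $v\in S_xX$. The differential in $u$ is $\dd_u\fhi(v,u)=\dd S(\Psi(v,u))\circ \dd_u\Psi(v,u)$. Since $\dd_u\Psi(v,u)$ is an isomorphism from $v^\perp$ onto $T_{\Psi(v,u)}\mathrm{D}(\pi_R(v))$ for small $u$, the condition $\dd_u\fhi=0$ says that $\dd S$ annihilates the tangent plane of the disc at $\Psi(v,u)$.

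The differential in $v$ requires care with the variable: $u\in T_v(S_xX)=v^\perp$ moves with $v$, so we fix a local trivialization. Choose an orthonormal frame of $v^\perp$ depending smoothly on $v$, and write $u$ in that frame. At $u=0$ we have $\Psi(v,0)=x$ for every $v$, so $\dd_v\fhi(v,0)=0$ automatically. Hence at $u=0$ the critical condition reduces to $\xi|_{v^\perp}=0$, that is, $v=\pm v_0$.

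We then exclude critical points with $u\neq 0$. Since $f$ has support in a small neighbourhood of $x$, only small $u$ occur. Differentiating $\Psi(v,u)$ in $v$ at fixed frame coordinates $u$ gives, to first order in $u$, the variation of the leaves at $x$. Here the foliation structure of Theorem \ref{thm:StructureOfdiscSpace} enters. The Gauss map $N$ is a diffeomorphism, and by the construction in Lemma \ref{lemma:LocalDiffeomorphismI} an infinitesimal rotation $\delta v$ of the normal at $x$, with the base point held fixed, is realised by a Jacobi field $\hat f$ with $\hat f(x)=0$ and $\nabla\hat f(x)=-\delta v$. Therefore $\partial_v\Psi(v,u)\cdot\delta v=\hat f(\Psi(v,u))\,n+(\text{tangential})$, where $\hat f(\Psi(v,u))=-\langle\delta v,u\rangle+O(|u|^2)$.

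At a critical point, $\dd S(\Psi)$ is normal to the leaf with nonzero coefficient $\approx|\xi|$. Then $\dd_v\fhi\cdot\delta v\approx \mp|\xi|\langle \delta v,u\rangle+O(|u|^2)$, and this vanishes for all $\delta v\in v^\perp$ only if $u=0$ on a small enough neighbourhood. Uniformity in $x\in U$ follows by compactness.

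For the Hessian at $(\pm v_0,0)$, we use the same expansion. The block $\dd_u^2\fhi$ equals $\Hess S|_{v^\perp}\pm|\xi|\,\II$, the Hessian of $S$ restricted to the leaf. The mixed block $\dd_v\dd_u\fhi$ equals $\mp|\xi|\,\mathrm{Id}_{v_0^\perp}$ under the natural identification $T_{v_0}S_xX\simeq v_0^\perp$, by the computation above. The block $\dd_v^2\fhi$ vanishes, since $\fhi(v,0)\equiv S(x)$. The Hessian thus has the form $\begin{pmatrix}0 & -\epsilon|\xi|\Id\\ -\epsilon|\xi|\Id & M\end{pmatrix}$ with $\epsilon=\pm1$. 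Its determinant is $|\xi|^4\neq0$ and its signature is $0$, which will also produce the $4\pi^2/|\xi|^2$ factor in the symbol.

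The main obstacle is the mixed derivative, which is the infinitesimal Bolker statement proper. It requires identifying $\partial_v\partial_u\Psi$ at $u=0$ precisely with $-\nabla\hat f(x)$, i.e. showing that the rotation of the normal equals minus the gradient of the Jacobi field. This was established in the proof of Lemma \ref{lemma:LocalDiffeomorphismI} via $\tilde N_t(p_0)=N_0(p_0)-t\nabla\hat f(p_0)$. We would take care that leafwise exponential coordinates add only tangential terms, which are killed by $\xi$ at $v=\pm v_0$.
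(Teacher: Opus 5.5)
Your proof is correct and agrees with the paper at $u=0$. Two steps, however, take a different route.

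\emph{Critical points with $u\neq0$.} The paper invokes the global Bolker condition (Theorem~\ref{theorem:bolker}): $\dd\Psi(v,u)$ is onto $T_yX$ for every $u\neq0$, hence $\dd\fhi=\dd S(y)\circ\dd\Psi(v,u)\neq0$. You argue near the diagonal instead. There, $u$-criticality forces $\dd S(y)=\lambda\,n(y)^\flat$ with $\lambda\neq0$. Testing with $\delta v=u/|u|$ then gives $\dd_v\fhi\cdot\delta v=\lambda\bigl(-|u|+O(|u|^2)\bigr)\neq0$ for $0<|u|<\delta$. Here $\delta$ depends only on the foliation, not on $S$, once the Taylor remainder is taken uniform in $(x,v)$. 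This is exactly the expansion $\beta=-r+O(r^2)$ that the paper derives only later, in \eqref{equation:matrice-psi} within the proof of Theorem~\ref{theorem:wow}. Your version makes the step self-contained but covers only small $u$. That suffices, because $f$ is supported near $x$ and the off-diagonal kernel is treated separately in Lemma~\ref{lem_smooth_kernel}, which does rely on Theorem~\ref{theorem:bolker}.

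\emph{The mixed block.} The paper needs no Jacobi fields here. It differentiates $v\mapsto\dd_u\fhi(v,0)=\xi|_{v^\perp}$ along $v$. Since $v^\perp$ rotates with $v$, the $v$-component of the derivative of a frame vector $w$ is $-\langle\delta v,w\rangle$, which gives $\mp|\xi|\,\Id$ at once. So the step you single out as the main obstacle is elementary. Your identity $\nabla\hat f(x)=-\delta v$ is the same fact seen through the first variation of the normal. It is worth stating only because you reuse it for the $u\neq0$ step.

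\emph{Diagonal blocks.} Your placement (zero $v$--$v$ block, leafwise Hessian in the $u$--$u$ block) is the one consistent with $\fhi(v,0)\equiv S(x)$. The paper's displayed matrix lists them in the opposite order relative to its $\cH/\cV$ labels, which is harmless for the determinant $|\xi|^4$. Your observation that the signature is $0$ is what rules out an extra phase factor in the stationary phase.
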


\begin{proof}
The tangent space $T_{(v,u)}(TS_xX)$ splits into
\begin{equation}
\label{equation:decomp-s2}
T_{(v,u)}(TS_xX) = \cH_{(v,u)} \oplus \cV_{(v,u)},
\end{equation}
the horizontal and the vertical subspace of the $2$-sphere (corresponding respectively to a variation of $v$, and a variation of $u$). Notice that $\cV_{(v,u)} \simeq T_v S_xX \simeq v^\perp$. For $u \neq 0$, the map $\dd\Psi(v,u) : T_{(v,u)}(TS_x X) \to T_y X$ (where $y := \Psi(v,u)$) is surjective by Theorem \ref{theorem:bolker}. Since $\dd S \neq 0$ on the support of $f$, this implies that no point $u \neq 0$ can be critical for the phase. At $(v,0)$, one has $\fhi(v,0) = S(x)$ which is independent of $v$ and thus $\partial_v \varphi (v,0) = 0$. In addition, for $w \in \cV_{(v,0)} \simeq v^\perp$:
\[
\dd \varphi_{(v,0)}(w) =\dd S_x(w) = \xi(w).
\]
This is $0$ for all variations $w$ if and only if $w$ lies in the kernel of $\xi$, that is $w$ is orthogonal to the line $\R v_0$. This proves that the critical points are $(\pm v_0,0)$.

We now compute the Hessian of $\varphi$ at the two critical points $v=\pm v_0,u=0$. It will be convenient to express this matrix as a $2\times 2$ block matrix in terms of the decomposition \eqref{equation:decomp-s2}. We fix an arbitrary orthonormal basis $(\mathbf{e}_1,\mathbf{e}_2)$ of $v_0^\perp$. Using the Sasaki metric on $T(S_xX)$, we may consider the horizontal and the vertical lifts of this basis; this equips both $\cH_{(v_0,0)}$ and $\cV_{(v_0,0)}$ with orthonormal basis $(\mathbf{h}_1,\mathbf{h}_2)$ and $(\mathbf{v}_1,\mathbf{v}_2)$ (with respect to the Sasaki metric). We claim that
\[
\dd^2\fhi_{(\pm v_0,u)} = \begin{pmatrix}
\pi^* \dd^2 S|_{v_0^\perp} & \pm A \\ \pm A & 0
\end{pmatrix}, \qquad A := \begin{pmatrix} -|\xi| & 0 \\ 0 & -|\xi| \end{pmatrix},
\]
where the first diagonal block of $\dd^2\fhi$ corresponds to the Hessian restricted to $\cH_{(v_0,0)} \times\cH_{(v_0,0)}$, the second diagonal block is the restriction to $\cV_{(v_0,0)} \times\cV_{(v_0,0)}$, and the entries of $A$ are given by
\[
A_{11} = \dd^2\varphi(\mathbf{h}_1,\mathbf{v}_1)=-|\xi|= A_{22} = \dd^2\varphi(\mathbf{h}_2,\mathbf{v}_2), \quad A_{12}=A_{21}=\dd^2\phi(\mathbf{h}_1,\mathbf{v}_2)=0.
\]
In the first diagonal block, $d^2S|_{v_0^\perp}$ denotes the Hessian of $S$ computed with respect to the induced metric on the unique disc with normal given by $v_0$.

The diagonal blocks of $\dd^2\varphi$ are immediate to compute by taking either a path $u(t) \in T_{v_0}(S_xX)$ such that $u(0)=0$ and computing $\partial^2_t \varphi(v_0,u(t))$, or a path $v(t) \in S_xX$ such that $v(0)=v_0$ and computing $\partial^2_t \varphi(v(t),0)$.

Let us now turn to the computation of $A$. Let $\gamma(t,s)$ be a smooth path in $T(S_xX)$ such that $\gamma(t,0)=(v_t,0)$ and $\gamma(t,s)=(v_t,u_{t,s})$ for some $u_{t,s} \in T_{v_t}S_xX$. Assume that $\partial_t \gamma(0,0)=\mathbf{h}_i$ and $\partial_s \gamma(0,0) = \mathbf{v}_j$; then $\partial_t \partial_s \varphi(\gamma(t,s))|_{t=s=0} = \dd^2\fhi(\mathbf{h}_i,\mathbf{v}_j)$. Note that $\partial_s \gamma(t,0) \in \cV_{(v_t,0)} \simeq v_t^\perp$ can be identified with a vector in $v_t^\perp$ (via the connection map), and
\[
\partial_s\fhi(\gamma(t,s))|_{s=0} = \dd S_x(\partial_s\gamma(t,0)) = \xi(\partial_s\gamma(t,0)).
\]
Suppose that $i=1$, that is $v_t$ is moved in the direction $\mathbf{h}_1$ (which can be identified with $\mathbf{e}_1$). We may further assume that $v_t = \exp_{v_0}(t \mathbf{e}_1)$, where $\exp$ denotes the exponential map on $S_xX \simeq S^2$. We can then choose the path $\gamma(t,s)$ such that $\partial_s\gamma(t,0)$ is either constant equal to $\mathbf{e}_2$ (this corresponds to $\partial_s \gamma(0,0) = \mathbf{v}_2$) or $\partial_s\gamma(t,0) = \partial_t v_t$ (that is $\partial_s \gamma(0,0) = \mathbf{v}_1$). In the first case, we find $\fhi(\partial_s\gamma(t,0)) = \dd S_x(\mathbf{e}_2)$ so taking the derivative with respect to $t$ gives $\dd^2 \fhi(\mathbf{h}_1,\mathbf{v}_2) = 0$. In the second case, we can write the variation as $\partial_s \gamma(t,0) = \sin(\alpha_t) v_0 + \cos(\alpha_t) w_t$, where $\alpha_0 = 0$, $w_t \in S_x X \cap v_0^\perp$, $w_0=\mathbf{e}_1$; then $\partial_t \partial_s \gamma(t,0) = \dot{\alpha}_0 v_0 + \dot{w}_0$ with $\dot{w}_0 \perp v_0$ and $\dot{\alpha}_0 = -1$ and we find
\[
\dd^2\fhi(\mathbf{h}_1,\mathbf{v}_1) = \xi(\dot{\alpha}_0 v_0 + \dot{w}_0) = - \xi(v_0) = -|\xi|,
\]
since $\dot{w}_0 \perp \xi$ and $v_0=\xi^\sharp/|\xi|$. The same computation works for $i=j=2$. This proves the claim (the same calculation holds at $v=-v_0$ modulo a flip in the sign).

The Hessian $\dd^2\fhi(0,\pm v_0)$ has determinant given by
\begin{equation}
\label{equation:determinant}
|\det\left(\dd^2\fhi(0,\pm v_0)\right)| = (\det A)^2 = |\xi|^4>0.
\end{equation}
This concludes the proof.
\end{proof}

\subsubsection{Pseudodifferential behaviour} We are now ready to conclude the proof of Theorem \ref{th_pseudo_behaviour}.

\begin{proof}[Proof of Theorem \ref{th_pseudo_behaviour}]
(i) We first deal with $\Pi_0$. The kernel of $\Pi_0$ is smooth outside of the diagonal (Lemma \ref{lem_smooth_kernel}), so it remains to study it near the diagonal. The measure $\dd D_{\pi_R(v)}(u)  \dd v$ on $T(S_xX)$ appearing in \eqref{equation:stationary} is smooth and proportional to the Sasaki volume form $\dd u \dd v$ of $T(S_xX)$ (the volume form of the Sasaki metric on $T(S_xX) \simeq T(S^2)$), that is $\dd D_{\pi_R(v)}(u)  \dd v = \kappa(v,u) \dd u \dd v$ for some smooth positive function $\kappa$ on $T(S_xX)$. In addition, $\kappa(v,0)=1$ (this stems from the fact that the differential of the Riemannian exponential map $\exp_x$ at $0$ is the identity).

We then apply the stationary phase lemma (see \cite[Theorem 3.3.2]{Lefeuvre-book} for instance) to expand \eqref{equation:stationary} in powers of $h$. As the volume form $\dd D_{\pi_R(v)}(u)  \dd v$ coincides with the Sasaki volume form $\dd u \dd v$ at the critical points $v=\pm v_0, u=0$, and the determinant of the Hessian was computed with respect to the Sasaki metric in \eqref{equation:determinant}, there are no additional factors in the stationary phase. We then find that there are two contributions coming from the two critical points:
\[
\begin{split}
\Pi_0(e^{iS/h}f)(x) &\sim  \dfrac{(2\pi h)^2}{|\xi|^2} e^{iS(x)/h} \left(f(x) a(x,v_0)+ \sum_{j \geq 1} h^j A_j^+(x) (\Psi^* f \cdot a)|_{v=+v_0,u=0}\right) \\ &+
\dfrac{(2\pi h)^2}{|\xi|^2} e^{iS(x)/h} \left(f(x)a(x,-v_0)+ \sum_{j \geq 1} h^j A_j^-(x) (\Psi^* f \cdot a)|_{v=-v_0,u=0}\right),
\end{split}
\]
for some differential operators $A_j^\pm(x)$ of order $\leq 2j$ on $T(S_xX)$. In addition, these operators depend smoothly on $x \in X$. This proves the claim and also yields the principal symbol of $R_0^*R_0$, namely
\[
\sigma_{\Pi_0}(x,\xi) = \dfrac{4 \pi^2}{|\xi|^2}\left(a(x,\xi^\sharp/|\xi|) + a(x,-\xi^\sharp/|\xi|)\right) .
\]

(ii) We now deal with $\Pi_2$. For $f_1, f_2 \in C^\infty_{\mathrm{comp}}(X,S^2T^*X)$ and $S$ as above, we compute
\[
\langle \Pi_2(e^{iS/h} f_1)(x), f_2(x) \rangle_{S^2 T^*_xX} = \langle R^*R \pi_\perp^*(e^{iS/h} f_1)(x), \pi_\perp^*f_2\rangle_{L^2(S_xX, \dd v)}.
\]
The right-hand side is very similar to the term computed in (i) (where $f(x)$ in (i) is now replaced by the function  $v \mapsto \tr(f_1|_{v^\perp})\tr(f_2|_{v^\perp})$ on $S_xX$). The same stationary phase argument applies and proves that $\Pi_2$ is a pseudodifferential operator with
\[
\langle\sigma_{\Pi_2}(x,\xi)f_1, f_2\rangle = \dfrac{4 \pi^2}{|\xi|^2}\left(a(x,\xi^\sharp/|\xi|) + a(x,-\xi^\sharp/|\xi|)\right)\tr(f_1|_{(\xi^\sharp)^\perp}) \tr(f_2|_{(\xi^\sharp)^\perp}), \qquad \xi \neq 0.
\]
A quick computation then shows that the principal symbol coincides with \eqref{equation:symbol-theta2}.
\end{proof}

\subsection{The kernel near the diagonal} \label{ss.kernel} To study the operator in the analytic setting, we need a precise expression of the kernel of the operator $\Pi_0$ on the diagonal. As pseudodifferential operators are conormal distributions on the diagonal $\Delta \subset X^\circ \times X^\circ$, it is convenient to blow up the diagonal and consider the blown-up space
\[
 \pi_{(2)} : X^\circ_{(2)} := [X^\circ \times X^\circ ; \Delta] \to X^\circ \times X^\circ
\]
The kernel of $\Pi_0$ naturally lives on $X^\circ_{(2)}$.

In a more concrete way, this boils down, at every $x \in X^\circ$, to introducing polar coordinates $y = \exp_x(r\omega)$ around $x$ (where $\exp$ denotes the Riemannian exponential map of $g$), where $r > 0$ and $\omega \in S_xX$, and compactifying the space by adding the slice $\{r=0\}$. Note that the slice $\{x \in X^\circ, r=0, \omega \in S_xX\}$ is then exactly the blow up of the diagonal (it projects via $\pi_{(2)} : X^\circ_{(2)} \to X^\circ \times X^\circ$ onto the diagonal). Finally, we let $C_{x,\omega} := S_xX \cap \omega^\perp$ be the great circle in $S_xX$ contained in the plane $\omega^\perp$, and $\dd C_{x,\omega}$ be the arclength measure on it.

The following holds:

\begin{theorem}[Expression of the Schwartz kernel near the diagonal]
\label{theorem:wow}
There exists a smooth function $\chi \in C^\infty(X^\circ_{(2)})$ such that the Schwartz kernel $K_0 \in \cD'(X^\circ \times X^\circ)$ of $\Pi_0$ satisfies near the diagonal:
\begin{equation}
\label{equation:wow}
K_0(x,r,\omega) = \dfrac{\chi(x,r,\omega)}{r}.
\end{equation}
In addition,
\[
\chi(x,r,\omega) = \int_{C_{x,\omega}} a~ \dd C_{x,\omega} + \cO(r),
\]
where $\cO(r)$ is a smooth remainder term.

Furthermore, if the foliation is analytic, then $\chi$ is analytic.
\end{theorem}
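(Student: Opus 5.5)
The plan is to start from the representation \eqref{equation:ventilo} of $\Pi_0$ and change variables so that the integration over $\hat\Sigma(x)$ becomes, near the diagonal, an integration over the leafwise exponential coordinates $(v,u)\in T(S_xX)$ used in \eqref{equation:stationary}. Concretely, for $f$ supported near $x$,
\[
\Pi_0 f(x)=\int_{S_xX}\int_{v^\perp} f(\Psi(v,u))\,a(v)\,\kappa(v,u)\,\dd u\,\dd v ,
\]
with $\Psi(v,u)=\exp^{v^\perp}_x(u)$ and $\kappa(v,0)=1$. I will then introduce polar coordinates in the fibre, $u=\rho\theta$ with $\rho>0$ and $\theta\in S_xX\cap v^\perp$, so that $\dd u=\rho\,\dd\rho\,\dd\theta$. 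The resulting total parameter space is $\{(v,\theta,\rho)\}$, the unit tangent bundle of $S_xX\simeq\mathbb S^2$ times $(0,\rho_0)$, which has dimension $4$. The target is polar coordinates $(r,\omega)$ around $x$ in $X$, which has dimension $3$. The fibres of the map between them are the circles $H_{x,y}$ of Lemma~\ref{lemma:circle}.

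Next, I will write $\Psi(v,\rho\theta)=\exp_x(r\,\omega)$ and use the fact that a leaf through $x$ with normal $v$ agrees with the geodesic plane $\exp_x(v^\perp)$ to second order, since $\Psi(v,u)=\exp_x(u+\cO(|u|^2))$. This gives
\[
r=\rho\,(1+\cO(\rho)),\qquad \omega=\theta+\cO(\rho),
\]
with remainders smooth in $(v,\theta,\rho)$, and analytic when the foliation is analytic. The natural blown-up map is therefore $\beta:(v,\theta,\rho)\mapsto (r,\omega)$. At $\rho=0$ it reduces to $(v,\theta)\mapsto\theta$, the projection $S(S_xX)\to S_xX$, whose fibre over $\omega$ is the great circle $C_{x,\omega}$, since $\theta\perp v$ means $v\perp\omega$. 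So $\beta$ extends smoothly to $\rho=0$ as a submersion.

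After rewriting $\rho=r\,\tilde\rho(v,\theta,r)$, $\beta$ becomes a smooth family, in $r\in[0,r_0)$, of circle fibrations over $S_xX$. Pushing forward the measure $a\,\kappa\,\rho\,\dd\rho\,\dd\theta\,\dd v$ and comparing with $\dd\mathrm{vol}_g(y)=r^2(1+\cO(r))\,\dd r\,\dd\omega$ gives
\[
K_0=\frac{r\,\chi}{r^2}=\frac{\chi}{r},
\]
where $\chi(x,r,\omega)$ is the integral of $a\,\kappa\,\times$ (a Jacobian) over the fibre $\beta^{-1}(r,\omega)$. At $r=0$ the fibre is $C_{x,\omega}$, $\kappa=1$, the Jacobian is $1$, and $\theta\mapsto\omega$ has unit derivative. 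This gives $\chi=\int_{C_{x,\omega}}a\,\dd C_{x,\omega}+\cO(r)$. Smoothness, and analyticity in the analytic case, of $\chi$ in $(x,r,\omega)$ then follow from the implicit function theorem applied to $\beta$, with $x$ as a parameter. The identification with the off-diagonal formula \eqref{equation:noyau-pi0} for $r>0$ is just Fubini.

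The main obstacle is showing that $\beta$ is a genuine submersion uniformly up to $\rho=0$, including in the radial direction. The relation $r=\rho(1+\cO(\rho))$ must hold with a smooth, nonvanishing quotient $r/\rho$. This requires controlling $\exp_x^{-1}\circ\Psi$ to second order, using the leafwise exponential and the boundedness of the second fundamental forms of the leaves near $x$, which comes from smoothness of the foliation (Theorem~\ref{thm:StructureOfdiscSpace}). Away from $r=0$, the fibre structure and compactness are already given by Lemma~\ref{lemma:circle} and Theorem~\ref{theorem:bolker}.
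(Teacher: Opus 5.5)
Your argument is correct, and it reaches the result by a somewhat different route from the paper's.

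The paper fixes $(\omega,r)$ and applies an implicit function theorem to the rescaled unknown $u=ru'$ (Lemma~\ref{lemma:graph-peyresq}). This writes the fibre $C_{x,\omega,r}$ as a graph $v\mapsto v_{g(\omega,v,r)}$ over the great circle $C_{x,\omega,0}$. The paper then computes $\dd\Psi$ along $H_{x,y}$ in the frame $(\mathbf{v}_1,\mathbf{v}_2,\hat N^{\mathrm{lift}},\hat T^{\mathrm{lift}})$. The only nontrivial transverse entry is $\beta=g_y(\delta y,n(y))=-r+\cO(r^2)$, so the fibre measure is $\alpha/|\beta|$. In that picture the $1/r$ appears because tilting the normal $v$ towards $\omega$ moves the point $y$, at distance $r$, normally to the disc by about $r$.

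You instead also blow up the source, via polar coordinates $u=\rho\theta$ in $v^\perp$. You note that the lift of $\exp_x^{-1}\circ\Psi$ is a submersion up to the front face, where it reduces to $(v,\theta)\mapsto\theta$ with great-circle fibres. You then push forward the smooth density $a\kappa\rho\,\dd\rho\,\dd\theta\,\dd v$, and the $1/r$ is just the ratio of the polar Jacobians $\rho\approx r$ and $r^2$.

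Your route avoids building the fibres explicitly and computing $n(y)$, $\hat T$, $\hat N$ and $\delta y$. It also makes smoothness and analyticity of $\chi$ a case of a general fact: fibre integration along a smooth (resp.\ analytic) submersion with compact fibres preserves smoothness (resp.\ analyticity). The paper's route gives in exchange the explicit formula $K_0=\int_0^{2\pi}a(v_{g})\,\dd\theta/|\beta|$, and it tracks the loss of one derivative in finite regularity.

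You should state three points explicitly.
\begin{enumerate}
\item The leading coefficient uses the fact that the Liouville measure of $S(S_xX)$ pushes forward under $(v,\theta)\mapsto\theta$ to $\dd\omega$ times arclength on $C_{x,\omega}$. This follows from $\mathrm{SO}(3)$-invariance and the symmetry $(v,\theta)\mapsto(\theta,v)$.
\item The local $(v,u)$ representation captures the whole fibre $H_{x,y}$ for $y$ near $x$ only because $\hat\Sigma(x)$ is compact and meets $\pi_1^{-1}(\{x\})$ exactly along $\{u=0\}$.
\item The step you flag as the main obstacle is routine. The quotient $r/\rho$ is smooth and positive because $\exp_x^{-1}\circ\Psi(v,u)=u+\cO(|u|^2)$, with remainder smooth in $(v,u)$ by Taylor's theorem. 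No curvature bounds are needed.
\end{enumerate}
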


The proof of \eqref{equation:wow} occupies the rest of this subsection.

\subsubsection{The model case}

The idea is to describe the $(\Phi,k)$-discs passing through $x$ as a perturbation of a Euclidean model case which we now describe.

Assume that $(X,g) \simeq \R^3$ is Euclidean and $x \in X^\circ$. We define the local disc $\mathrm{D}_{\mathrm{euc}}(\pi_R(v))$ passing through $x$ with normal $v$ by:
\begin{equation}
\label{equation:disc-model}
\mathrm{D}_{\mathrm{euc}}(\pi_R(v)) = \{x+u ~:~ u \in v^\perp \}.
\end{equation}
The map $\Psi : T(S_xX) \to X$ is then defined by $\Psi(v,u) = u$. We further assume that $a(v)=1$, where $a$ is the function in \eqref{equation:r0star-r0}. The space $T(S_xX)$ is here equipped with the natural Sasaki metric from $S_xX \simeq S^2$ and the target space $X$ is equipped with the Euclidean metric.

\begin{lemma}
In the model case, $\chi = 2\pi$.
\end{lemma}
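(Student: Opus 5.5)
In the model case the kernel can be computed directly from \eqref{equation:r0star-r0} with $a\equiv 1$. The plan is to write $\Pi_0 f(x)$ as an integral over $y\in\R^3$, and then read off $K_0$ in polar coordinates $y=x+r\omega$. For $v\in S_xX$ the disc through $x$ with normal $v$ is the plane $x+v^\perp$. This gives
\[
\Pi_0 f(x)=\int_{S_xX}\int_{v^\perp} f(x+u)\,\dd u\,\dd v .
\]

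The first step is to write the inner planar integral in polar coordinates. Put $u=\rho\,\theta$ with $\theta\in S_xX\cap v^\perp=C_{x,v}$. Then
\[
\Pi_0 f(x)=\int_{S_xX}\int_{C_{x,v}}\int_0^\infty f(x+\rho\theta)\,\rho\,\dd\rho\,\dd\theta\,\dd v .
\]

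The second step is to exchange the order of integration over the incidence set $\{(v,\theta)\in S_xX\times S_xX:\ \theta\perp v\}$, which is the unit tangent bundle of $\mathbb S^2$. The relation is symmetric, so the measure $\dd\theta\,\dd v$ (arclength on the fibre times the round measure on the base) equals $\dd v\,\dd\theta$ with the roles of $v$ and $\theta$ swapped. Both are the Liouville measure of $S\mathbb S^2$, which is invariant under this swap. We then obtain
\[
\Pi_0 f(x)=\int_{S_xX}\Big(\int_{C_{x,\theta}}\dd v\Big)\int_0^\infty f(x+\rho\theta)\,\rho\,\dd\rho\,\dd\theta=2\pi\int_{S_xX}\int_0^\infty f(x+\rho\theta)\,\rho\,\dd\rho\,\dd\theta .
\]

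The third step compares this with the Euclidean volume form in polar coordinates, $\dd y=\rho^2\,\dd\rho\,\dd\theta$. It gives
\[
\Pi_0 f(x)=\int_{\R^3}\frac{2\pi}{|y-x|}\,f(y)\,\dd y ,
\]
so $K_0(x,r,\omega)=2\pi/r$. Comparing with \eqref{equation:wow}, $\chi\equiv 2\pi$, which also agrees with $\int_{C_{x,\omega}}1\,\dd C_{x,\omega}=2\pi$.

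The only point needing care is the symmetry of the incidence measure in the second step. I would justify it by $\mathrm{SO}(3)$-invariance. Both measures on the incidence manifold are $\mathrm{SO}(3)$-invariant, and $\mathrm{SO}(3)$ acts simply transitively on the incidence manifold, so the two measures agree up to a constant. Their total masses are both $4\pi\cdot 2\pi$, so that constant is $1$.
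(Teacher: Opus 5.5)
Your argument is correct, but it takes a different route from the paper's.

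\textbf{The paper's route.} The paper fixes $y=x+r\omega$ and works with the fibre $H_{x,y}=\Psi^{-1}(y)\subset T(S_xX)$ of the submersion $\Psi(v,u)=u$. It observes that $\dd\Psi$ is the identity on the vertical bundle. It then computes $\dd\Psi(\mathbf{h}_1)=-rv$ and $\dd\Psi(\mathbf{h}_2)=0$ on the horizontal lifts of $\mathbf{e}_1=\omega$ and $\mathbf{e}_2$. From this it reads off a coarea factor $1/r$, so the fibre has measure $\int_{H_{x,y}}\alpha/r=2\pi/r$.

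\textbf{Your route.} You do a global change of variables in three steps. First, polar coordinates in each plane. Second, the swap $(v,\theta)\mapsto(\theta,v)$ on the incidence manifold $S\mathbb{S}^2$. Third, spherical coordinates in $\R^3$. Your justification of the swap is sound: $\mathrm{SO}(3)$ acts simply transitively, the swap commutes with the diagonal action, and both measures have total mass $8\pi^2$.

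\textbf{What each buys.} Your route is more elementary. It also gives the kernel $2\pi/|x-y|$ globally, not only near the diagonal. However, it depends entirely on the exact symmetries of the flat model (planes and rotation invariance of the incidence relation), and these disappear for genuine $(\Phi,k)$-discs. The paper's Jacobian computation is chosen because it survives perturbation. In the proof of Theorem \ref{theorem:wow}, the same structure reappears: a vertical isometry, a frame adapted to $\hat N^{\mathrm{lift}},\hat T^{\mathrm{lift}}$, and the $1$-form $\alpha$, with $r$ replaced by $\beta=-r+\cO(r^2)$. So the model lemma is exactly the leading-order term of the general kernel formula.
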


\begin{proof}
Without loss of generality, we assume $x=0 \in \R^3$. Let $y$ be a point near $x$ and write $y = x + r \omega$ for some $\omega \in S_xX$. Let $\pi : T(S_xX) \to S_xX$ be the projection and $C_{x,y} := \pi(H_{x,y}) \simeq S^1$; this is the set of vectors on $S_xX$ such that the surface whose Gauss lift at $x$ is $v \in C_{x,y}$ contains $y$. Observe that $C_{x,y} := C_{x,\omega} = S_xX \cap \omega^\perp$ is independent of $r$.

We now compute $\dd\Psi$. Let $v \in C_{x,y}$ and $u \in v^\perp$ such that $y = u$. The tangent space to $T_{(v,u)} \left(T(S_xX)\right)$ splits as $\cH_{(v,u)} \oplus \cV_{(v,u)}$ where $\cV_{(v,u)} \simeq v^\perp$ is equipped with the Euclidean metric. The map $\dd\Psi : \cV_{(v,u)} \to T_y \mathrm{D}(\pi_R(v)) = v^\perp$ is an isometry (it is the identity actually). To compute $\dd\Psi : \cH_{(v,u)} \to T_y X$, we may take $\mathbf{e}_1 = \omega \in T_v(S_xX), \mathbf{e}_2 = \omega^\perp$ (the rotation of $\omega$ by $+\pi/2$). Note that $\mathbf{e}_2$ is tangent to $C_{x,\omega}$. We then let $\mathbf{h}_i$ be the horizontal lifts of $\mathbf{e}_i$. That is $\mathbf{h}_1$ corresponds (infinitesimally) to the parallel transport of $u = r\omega$ along the geodesic spanned by $\mathbf{e}_1$ and $\mathbf{h}_2$ corresponds (infinitesimally) to the parallel transport of $u$ along $C_{x,\omega}$, which keeps $u$ fixed in $v^\perp$. A quick computation then shows that
\[
\dd \Psi(\mathbf{h}_1) = - r v, \qquad \dd \Psi(\mathbf{h}_2) = 0.
\]
Equivalently, letting $\alpha$ be the smooth $1$-form on $T(S_xX)$ defined on $H_{x,y}$ by $\alpha(\mathbf{h}_2)=1$ and $\alpha(\mathbf{h}_1)=0=\alpha(\cV)$ (which has norm $1$), we see that $|\Psi^*(\vol_y) \wedge \alpha/r|  = \dd u \dd v$ is the Sasaki volume. That is
\[
|H_{x,y}| = \int_{H_{x,y}} \alpha/r = 2\pi/r.
\]
This proves the claim.
\end{proof}

\subsubsection{Geometry of $T(S_xX)$} \label{sssection:tsx-geometry} We briefly recall the geometry of $T(S_xX)$ we are considering. Let $\pi : T(S_xX) \to S_xX$ be the footpoint projection. We recall that $\cV_{(v,u)} = \ker \dd\pi$ is the vertical bundle and $\cH_{(v,u)}$ is the horizontal distribution (for the round metric on $S_xX \simeq S^2$). We let $\Psi(v,u) := \exp_x^{v^\perp}(u)$, the leafwise exponential map. Given $(v,u) \in T(S_xX)$, the map $\dd\Psi_{(v,u)} : \cV_{(v,u)} \to T_y\mathrm{D}(\pi_R(v))$ is an isomorphism. By definition, $T_y\mathrm{D}(\pi_R(v))$ is equipped with the metric induced by $(X,g)$.

This allows to define a natural metric $G_x$ on $T(S_xX)$ which we now describe. We first equip $\cV_{(v,u)}$ with the pullback metric using $\dd\Psi$. We make the splitting $\cV \oplus \cH$ orthogonal with respect to $\cH$. On $\cH$, we consider the Sasaki metric, that is for $Z_1,Z_2 \in \cH_{(v,u)}$, we have $G_x(Z_1,Z_2)=\langle \dd\pi(Z_1),\dd\pi(Z_2)\rangle$. This defines a metric $G_x$ on $\cV \oplus \cH$. The volume form induced by this metric is denoted by $\dd v \dd \mathrm{D}(\pi_R(v))$. That is the integral of a function $f$ on $T(S_xX)$ against this measure is given by
\begin{equation}
\label{equation:integration}
\int_{S_xX} \left(\int_{T_v(S_xX)} f(v,u) \dd \mathrm{D}(\pi_R(v))\right) \dd v.
\end{equation}
Observe that this coincides with \eqref{equation:r0star-r0}, except for the absence of the function $a$. Finally, all these objects are smooth with respect to the base point $x \in X^\circ$.

\subsubsection{Local parametrization} \label{sssection:model-case} We now deal with the general case. We first parametrize the $(\Phi,k)$-discs. At any point $x \in X$, it will be convenient to work in an exponential chart around $x$. That is, we identify the neighborhood of $x$ with a neighborhood of $0$ in $T_xX$ under the diffeomorphism $T_xX \ni w \mapsto y:= \exp_x(w) \in X$. Recall that $\cV \to SX$ is the vertical vector bundle whose fibre at $(x,v) \in SX$ is $v^\perp$ (the tangent space to $S_xX$). We let $\cV^\bullet$ be the restriction of $\cV$ to $SX^\bullet$.

The following holds:

\begin{lemma}
Assume that $\bar g$ is $(\Phi,k)$-simple. Then there exists a smooth function $f : \cV^\bullet \to \R$ defined near the $0$-section in $\cV^\bullet$ such that for all $v \in SX$ with $x = \pi(v)$:
\begin{equation}
\label{equation:parametrization-discs}
\mathrm{D}(\pi_R(v)) \cap U_x = \{\exp_x(u + f(x,v,u)v) ~:~ v \in S_xX, u \in v^\perp\},
\end{equation}
where $U_x$ is a small neighborhood of $x$. In addition, $f(x,v,u) = \cO(|u|^2)$.
\end{lemma}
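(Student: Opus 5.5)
The plan is to obtain the parametrization \eqref{equation:parametrization-discs} by writing each disc, in the exponential chart at its marked point, as a graph over its own tangent plane. The implicit function theorem then supplies the graph function, and the smooth structure of the foliation from Theorem \ref{thm:StructureOfdiscSpace} supplies smooth dependence on all parameters.

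Fix $(x,v)\in SX^\bullet$ and set $c:=\pi_R(x,v)$. By Theorem \ref{thm:StructureOfdiscSpace}, $\mathrm{D}(c)$ passes through $x$ with oriented unit normal $v$, so its tangent plane at $x$ is $v^\perp$. In the chart $w\mapsto\exp_x(w)$, the pulled-back disc $\exp_x^{-1}(\mathrm{D}(c)\cap U_x)$ is a smooth surface through $0\in T_xX$ with tangent plane $v^\perp$ there.

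Define
\[
F(x,v,u,t):=\rho_{\pi_R(x,v)}\big(\exp_x(u+tv)\big)\ ,
\]
where $\rho_c$ is a local defining function for $\mathrm{D}(c)$. The function $\rho_c$ can be chosen smooth in $c$: Lemma \ref{lemma:MDIsSubmanifold} realizes $\MD$ locally as the zero set of the submersion $\Phi(c,p,t)=f(c,p)+u(c,p)-t$, so one may take $\rho_c(y)$ to be a signed coordinate in the tubular chart, depending smoothly on $(c,y)$.

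Since $\partial_tF=\dd\rho_c(v)\neq0$ at $u=0,t=0$, the implicit function theorem gives a smooth solution $t=f(x,v,u)$ near $u=0$. Smoothness holds jointly in $(x,v,u)$, because $\pi_R$ and $\exp$ are smooth and $(x,v)$ ranges over $SX^\bullet$. Near a fixed base point the neighbourhood size can be chosen locally uniform, which gives $f$ defined on a neighbourhood of the zero section of $\cV^\bullet$.

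It remains to check the order of vanishing. We have $f(x,v,0)=0$ since $x\in\mathrm{D}(c)$. Differentiating $F(x,v,u,f)=0$ in $u$ at $u=0$ gives $\dd\rho_c(\dd\exp_x(u'))+\partial_tF\,\partial_uf=0$. Here $\dd\exp_x$ at $0$ is the identity and $u'\in v^\perp=\ker\dd\rho_c(x)$, so $\partial_uf=0$. Hence $f=\cO(|u|^2)$ by Taylor's formula with a smooth remainder.

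The main obstacle is the uniformity of $U_x$ and the domain of $f$ near $\partial X$, where the discs become small. On compact subsets of $SX^\bullet$ this is automatic. If more is needed, I would instead run the same construction with the extended foliation of Theorem \ref{theorem:extension-foliation} on the open set $U\subset SX_e$. This makes the construction uniform up to the boundary, and it is analytic when $\bar g$ is analytic, by Theorem \ref{thm:analytic-foliated-plateau}.
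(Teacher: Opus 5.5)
Your proposal is correct and is essentially the paper's own argument, which the paper calls immediate. The discs $\mathrm{D}(\pi_R(v))$ form a smooth family through $x$ with normal $v$, so they are graphs over $v^\perp$ in the exponential chart at $x$, with $f(x,v,0)=0$ because $x$ lies on the disc and $\partial_u f(x,v,0)=0$ because $v$ is normal there. You simply make the graph step explicit, via the implicit function theorem applied to a defining function built from the local submersion of Lemma \ref{lemma:MDIsSubmanifold}, and your fallback to the extended foliation of Theorem \ref{theorem:extension-foliation} covers the boundary points that the paper leaves implicit.
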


Here $u + f(v,u)v \in T_x X$. In the following, we will see the point $x$ as being a fixed parameter, and work in the exponential chart at $x$. That is we will omit the term $\exp_x$. Note that in the model case, $f \equiv 0$, and $\exp_x$ does not appear. The function $f$ inherits the regularity of the foliation by $(\Phi,k)$-discs. If the foliation is analytic, then so is $f$. The proof is immediate and simply relies on the fact that $\mathrm{D}(\pi_R(v))$ is a smooth family of discs (with respect to $v$) passing through $x$ (hence $f(v,0)=0$) and such that $v$ is normal to $\mathrm{D}(\pi_R(v))$ at $x$ (hence $\partial_u f(v,0)=0$). All such discs can be written as graphs over the discs of the model case (see \eqref{equation:disc-model}). The dependence of $f$ with respect to $x$ is also smooth.

\subsubsection{Expression of the curve $C_{x,y}$} To simplify notation, we now fix the point $x \in X$. However, we emphasize that all the objects described in this paragraph are allowed to vary with respect to $x$.

We let $\pi : T(S_xX) \to S_xX$ be the footpoint projection and $C_{x,y} := \pi(H_{x,y})$. We take polar coordinates and write $y(r,\omega) = \exp(r\omega)$ for $r \in [0,r_0) \times S_xX$, where $\exp$ is the Riemannian exponential map of $\bar g$. Note that, as we work in an exponential chart around $x$, this simplifies to $y(r\omega)=r\omega$. We may thus write $C_{x,y} = C_{x,\omega,r}$. We let
\[
C_{x,\omega,0} := S_xX \cap \omega^\perp.
\]
The main claim is that $C_{x,\omega,r}$ are smooth graphs over $C_{x,\omega,0}$ up to $r=0$ (see Lemma \ref{lemma:graph-peyresq} below).

To package this in a single space, we introduce the circle bundle $E \to S_xX$ whose fibre above $\omega \in S_xX$ is $C_{x,\omega,0} = S_xX \cap \omega^\perp$. Given $\omega \in S_xX$, $v \in C_{x,\omega,0}$, and $\eps \in \R$, we let $v_\eps := \exp_v(\eps \omega)$, where the exponential map is the Riemannian exponential map on $S_xX \simeq S^2$. Note that
\begin{equation}
\label{equation:thi}
v_\eps = \cos(\eps) v + \sin(\eps) \omega.
\end{equation}
Finally, we let $D(S_xX) \subset T(S_xX) \to S_xX$ denote the (closed) disc bundle over $S_xX$ whose fibre at $v \in S_xX$ is the disc in $T_v(S_xX)$ of radius $r_0 > 0$ (for $r_0 > 0$ small enough) computed with respect to the Sasaki metric.

The following holds:

\begin{lemma}
\label{lemma:graph-peyresq}
Assume $f \in C^{k,\alpha}(D(S_xX))$ satisfies $f(v,0)=0$, $\partial_u f(v,0)=0$. Then there exist (unique) $C^{k-1,\alpha}$ functions
\[
g : E \times [0,r_0) \to \R, \qquad  u : E \times [0,r_0) \to T_xX, \qquad u(\omega,v,0)=0
\]
such that for every $\omega \in S_xX$, for every $v \in E_\omega = C_{x,\omega,0}$:
\begin{equation}
\label{equation:potrie}
y(r,\omega) = u(\omega,v,r) + f(v_{g(\omega,v,r)}, u(\omega,v,r)) v_{g(\omega,v,r)}, \quad u(\omega,v,r) \perp v_{g(\omega,v,r)},
\end{equation}
and
\begin{equation}
\label{equation:implication}
\partial_r u(\omega,v,0) = \omega, \qquad g(\omega,v,0) = 0.
\end{equation}
In addition, if $f$ is smooth (resp. analytic), then $(g,u)$ are smooth (resp. analytic).
\end{lemma}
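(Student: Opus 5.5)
We will treat \eqref{equation:potrie} as a system of equations in the unknowns $(g,u)$ and remove the degeneracy at $r=0$ by rescaling. Since $y(r,\omega)=r\omega$ in the exponential chart, we write $u = r\,\tilde u$ with $\tilde u\perp v_g$. Because $f(v,0)=0$ and $\partial_u f(v,0)=0$, Taylor's formula with integral remainder gives
\begin{equation*}
f(w,r\tilde u)=r^2\tilde f(w,r,\tilde u),\qquad \tilde f(w,r,\tilde u):=\int_0^1(1-\tau)\,\partial_u^2f(w,\tau r\tilde u)(\tilde u,\tilde u)\,\dd\tau .
\end{equation*}
The function $\tilde f$ loses two derivatives in general. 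However, since we only solve in the direction $v$ and need $C^{k-1,\alpha}$, we can instead use the first-order form $f(w,r\tilde u)=r\int_0^1\partial_uf(w,\tau r\tilde u)\tilde u\,\dd\tau=: r\,\hat f(w,r,\tilde u)$. Here $\hat f$ is $C^{k-1,\alpha}$ and satisfies $\hat f(w,0,\tilde u)=0$. Dividing \eqref{equation:potrie} by $r$ then yields the regular system
\begin{equation*}
\mathcal{G}(\omega,v,r;g,\tilde u):=\Big(\tilde u+\hat f\big(v_g,r,\tilde u\big)\,v_g-\omega,\ \langle \tilde u, v_g\rangle\Big)=0 ,
\end{equation*}
which makes sense for all $r\in(-r_0,r_0)$ after extending $f$ to be $C^{k,\alpha}$ in a neighbourhood. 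This system is an equation in $T_xX\times\R$ with unknowns $(g,\tilde u)\in\R\times T_xX$.

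At $r=0$ we have $\hat f\equiv 0$, so the system reduces to $\tilde u=\omega$ and $\langle\omega,v_g\rangle=0$. Using \eqref{equation:thi}, and the facts $v\perp\omega$ and $|\omega|=1$, we get $\langle\omega,v_g\rangle=\sin g$. Hence $g=0$ and $\tilde u=\omega$ is a solution for every $(\omega,v)\in E$. Next we compute the differential of $\mathcal G$ in $(g,\tilde u)$ at this point. The first component has derivative $\delta\tilde u\mapsto\delta\tilde u$, and its derivative in $g$ vanishes because $\hat f=0$ at $r=0$. The second component has derivative $(\delta g,\delta\tilde u)\mapsto \langle\delta\tilde u,v\rangle+\cos(0)\,\langle\omega,\omega\rangle\,\delta g$. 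The resulting $4\times4$ matrix is block lower triangular with diagonal entries $\mathrm{Id}$ and $1$, so it is invertible.

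The implicit function theorem (in its $C^{k-1,\alpha}$, smooth or analytic version, with the compact parameter space $E$ handled by a standard uniformity and compactness argument) then provides unique functions $g(\omega,v,r)$ and $\tilde u(\omega,v,r)$ near $(0,\omega)$ for $|r|<r_0$, after shrinking $r_0$. Setting $u=r\tilde u$ gives $u(\omega,v,0)=0$, $\partial_ru(\omega,v,0)=\tilde u(\omega,v,0)=\omega$ and $g(\omega,v,0)=0$, which is \eqref{equation:implication}. Multiplying the system back by $r$ recovers \eqref{equation:potrie} together with the constraint $u\perp v_g$. The regularity of $(g,u)$ is exactly the regularity of $\mathcal G$. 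When $f$ is smooth (resp. analytic), $\hat f$ is smooth (resp. analytic) by the integral formula, so the analytic implicit function theorem applies.

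The main obstacle is uniqueness and the meaning of the statement for $r>0$. Uniqueness holds only among solutions with $g$ small and $\tilde u$ close to $\omega$. To address it, we will note that for small $r$ any solution with $v_g$ close to $C_{x,\omega,0}$ must have $|u|\approx r$, because $|f|=\cO(|u|^2)$; hence $\tilde u$ is close to $\omega$, and the local uniqueness of the implicit function theorem applies. A further point to check is the regularity bookkeeping: the loss of a single derivative from passing to $\hat f$ is what produces the claimed $C^{k-1,\alpha}$ regularity.
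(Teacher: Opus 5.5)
Your proof is correct and takes the paper's approach: the same rescaling $u=r\,u'$, the same regularized system with base solution $(g,u')=(0,\omega)$ at $r=0$, and the same block-triangular linearization fed into the implicit function theorem. Your $\hat f$ is the paper's $r^{-1}f(v_g,ru')$, and your first-order integral formula supplies the justification of $C^{k-1,\alpha}$ regularity up to $r=0$ that the paper only asserts. Your remarks on local uniqueness (small $g$, $\tilde u$ near $\omega$) and on the compact parameter space $E$ are slightly more explicit than the paper.
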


Note that in the model case \S\ref{sssection:model-case}, $u_0(\omega,v,r) := r \omega$, $g(\omega,v,r)=0$ and $v_{g(\omega,v,r)} = v_0=v$.  If $f$ is also $C^{k,\alpha}$ with respect to $x \in X$, then the functions $u$ and $g$ also depend $C^{k-1,\alpha}$ smoothly on $x \in X$.

\begin{figure}[h!]

\begin{center}
\includegraphics[scale=0.8]{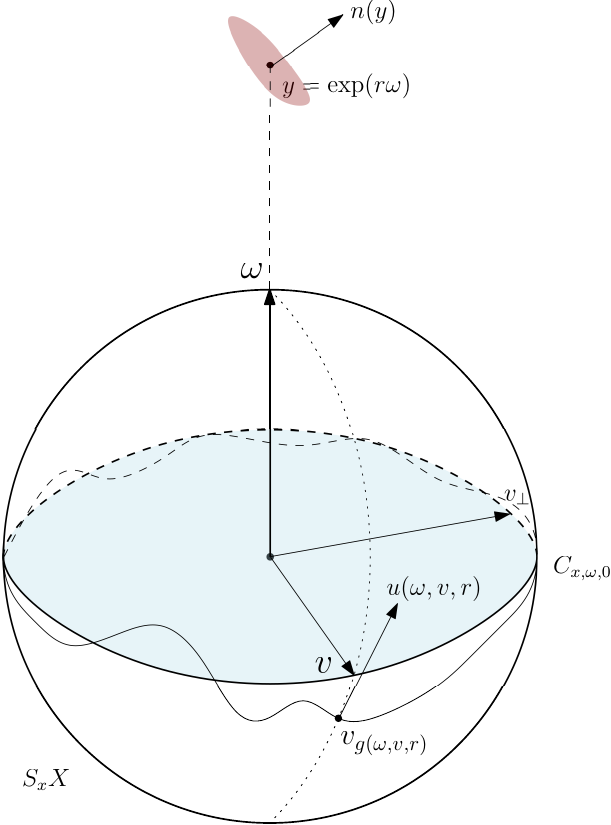}
\caption{Cartoon illustrating the objects introduced in Lemma \ref{lemma:graph-peyresq}.}
\label{figure:sphere}

\end{center}

\end{figure}

\begin{proof}
For fixed $\omega \in S_xX$ and $v \in E_\omega = S_xX \cap \omega^\perp$, we solve the implicit equation $F(r ; (g,u')) = 0$ for small values of $r$ where
\[
F :  [0,r_0) \times (-\eps,\eps) \times T_xX \to T_xX \times \R, \qquad F = (F_1,F_2)
\]
is given by
\[
\begin{split}
F_1(r; (g,u')) := \omega- \left(u' + r^{-1} f(v_{g}, r u') v_{g}\right), \qquad F_2(r; (g,u')) := \langle u', v_{g} \rangle.
\end{split}
\]
Note that $f(v,u) = \cO(|u|^2)$ by assumption so the function $F_1$ is $C^{k-1,\alpha}$ up to $r=0$ if $f$ is $C^{k,\alpha}$, and $r \mapsto r^{-1}f(v,ru')$ vanishes at $r=0$. At $r=0$, $g=0$ and $u'=\omega$ solve the equation $F(0;(0,\omega))=0$. The function $u$ appearing in \eqref{equation:potrie} is $u(r) = r u'(r)$. Indeed, multiplying $F_1$ by $r$, we find
\[
r \omega- \left(u(r) +  f(v_{g(r)}, u(r)) v_{g(r)}\right) = y(r,\omega)- \left(u(r) +  f(v_{g(r)}, u(r)) v_{g(r)}\right) = 0.
\]
To apply the implicit function theorem, we compute the differential of $F$ with respect to $(g,u')$ at $r=0,g=0,u'=\omega$:
\[
\begin{split}
&\partial_g F_1(0;(0,\omega)) = 0, \qquad \partial_{u'} F_1(0;(0,\omega))\delta u' = -\delta u' \\
&\partial_g F_2(0;(0,\omega)) = 1, \qquad \partial_{u'} F_2(0;(0,\omega))\delta u' = \langle \delta u', v \rangle.
\end{split}
\]
This map is clearly invertible, so the implicit function theorem applies.
\end{proof}

\subsubsection{Proof of Theorem \ref{theorem:wow}} We now complete the proof of Theorem \ref{theorem:wow}. To do so, we express the differential of the submersion $\Psi$. Let $y = \exp(r\omega)$, and consider a point $z = (v,u) \in H_{x,y}$. By Lemma \ref{lemma:graph-peyresq}, we can write
\[
z = (v_{g(\omega,v,r)},u(\omega,v,r)), \qquad u(\omega,v,r) \perp v_{g(\omega,v,r)},
\]
for some $\omega \in S_xX, v \in C_{x,\omega,0}$. The differential $\dd \Psi_z : T_z(T(S_xX)) \to T_yX$ can be split using the Sasaki decomposition
\[
T_z(T(S_xX)) = \cV_{(v,u)} \oplus \cH_{(v,u)}.
\]
Recall that this space is equipped with the metric $G_x$ introduced in \S\ref{sssection:tsx-geometry}. By construction,
\[
\dd\Psi_z : \cV_{(v,u)} \to T_y(\mathrm{D}(\pi_R(v))
\]
is an isometry.

The target space $T_yX$ is equipped with the Riemannian metric $g_y$ while $\cH_{(v,u)}$ is equipped with the Sasaki metric. Given $\omega \in S_xX$ and $v \in C_{x,\omega,0} = S_xX \cap \omega^\perp$, we let $v_\perp \in S_xX$ be the unique vector such that $(v,\omega,v_\perp)$ is an orthonormal frame of $T_xX$. Finally, we let $n(y) \in T_yX$ be the unit normal vector at $y$ to $\mathrm{D}(\pi_R(v))$ pointing in the same direction as $v$ (which is normal to $\mathrm{D}(\pi_R(v))$ at the point $x$ by construction). Note that there is a loss of $1$ regularity exponent in $n(y)$ as it involves a derivative of $f$; this accounts for the loss of $1$ exponent of regularity in Theorem \ref{theorem:wow}.

\begin{lemma}
$n(y) = v + \cO(r)$.
\end{lemma}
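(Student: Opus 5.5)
We work in the exponential chart at $x$, where by \eqref{equation:parametrization-discs} the disc $\mathrm{D}(\pi_R(v))$ near $x$ is the graph $\{u+f(v,u)v : u\in v^\perp\}$, with $f(v,0)=0$ and $\partial_u f(v,0)=0$. Here $v$ denotes $v_{g(\omega,v,r)}$ and $u=u(\omega,v,r)$ are the quantities supplied by Lemma \ref{lemma:graph-peyresq}. The point is that $y=u+f(v,u)v$ lies on this graph. The plan is to write down the normal to the graph explicitly and then estimate each piece using \eqref{equation:implication}.

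First, the tangent plane of the graph at $y$ is spanned by $w+\partial_uf(v,u)[w]\,v$ for $w\in v^\perp$. This is computed in the chart; the chart metric at $y$ is $\bar g$ pulled back by $\exp_x$. Orthogonality is taken with respect to that metric.

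In the Euclidean metric of $T_xX$, the unit normal to the graph pointing towards $v$ is
\[
\tilde n = \frac{v-\nabla_u f(v,u)}{\sqrt{1+|\nabla_u f(v,u)|^2}}\ .
\]
Since $\partial_u f(v,0)=0$ and $f$ is $C^1$ in $u$, we have $\nabla_u f(v,u)=\cO(|u|)$. By \eqref{equation:implication}, $u(\omega,v,r)=r\omega+o(r)$, and in fact $u=\cO(r)$ with a smooth remainder. Hence $\tilde n=v_{g}+\cO(r)$.

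Next, again by \eqref{equation:implication}, $g(\omega,v,r)=\cO(r)$. Combined with \eqref{equation:thi}, this gives $v_{g}=v+\cO(r)$.

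Finally, we pass from the Euclidean normal to the $g_y$-normal. In normal coordinates the metric satisfies $g_y=\mathrm{Id}+\cO(|y|^2)=\mathrm{Id}+\cO(r^2)$. The $g_y$-normal is therefore obtained from $\tilde n$ by an $\cO(r^2)$ correction followed by renormalization. Identifying $T_yX$ with $T_xX$ through the chart, or through parallel transport, which differs from it by $\cO(r)$, we conclude $n(y)=v+\cO(r)$.

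The main obstacle is regularity rather than the estimate itself. The $\cO(r)$ remainder must be smooth (resp. analytic) on $E\times[0,r_0)$ uniformly down to $r=0$, and this requires tracking the loss of one derivative coming from $\nabla_uf$. We handle it by writing $\nabla_u f(v,u)=\int_0^1\partial_u^2 f(v,tu)\,u\,dt$. Then $\nabla_u f(v,u)$ equals $r$ times a $C^{k-2,\alpha}$ function, because $u=r\,u'$ with $u'$ smooth up to $r=0$. This makes the remainder an honest function of $r$ on the blown-up space.
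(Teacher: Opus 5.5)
Your proposal is correct and follows essentially the paper's argument. Like the paper, you describe $T_y\mathrm{D}(\pi_R(v))$ as the tangent space of the graph, use $\partial_u f=\cO(r)$ to put the Euclidean normal within $\cO(r)$ of the disc's normal direction at $x$, and absorb the $\cO(r^2)$ deviation of $g_y$ from the Euclidean metric in normal coordinates. Your explicit step $v_{g(\omega,v,r)}=v+\cO(r)$, which uses $g(\omega,v,r)=\cO(r)$, spells out a point the paper leaves implicit.
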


In the model situation, $n(y)=v$. As in the statement of Theorem \ref{theorem:wow}, the remainder term $\cO(r)$ is a smooth (vector-valued) function up to $r=0$, and the estimate is uniform with respect to $x$ (in a fixed compact set of $X^\circ$) and $\omega$.

\begin{proof}
Using \eqref{equation:parametrization-discs}, we see that the tangent space $T_y\mathrm{D}(\pi_R(v))$ is given by
\begin{equation}
\label{equation:tangent-d}
T_y\mathrm{D}(\pi_R(v)) = \{w + \partial_u f_{(v,u)}(w) v ~:~ w \in \cV_{(v,u)}\}.
\end{equation}
In the exponential chart, the metric $g(y)$ can be written as $g(y) = g_{\mathrm{Euc}} + \cO(r^2)$ (where the remainder is smooth up to $r=0$). In particular, the frame $(v,\omega,v_\perp)$ is an orthonormal frame for $g(y)$ at the point $y$ modulo $\cO(r^2)$. We look for a unit normal vector $n(y)$ of the form $n(y)=(v+w)/|v+w|$ where $\langle v,w\rangle=0$. Using \eqref{equation:tangent-d},  and that $\partial_u f = \cO(r)$, we find that $w = \cO(r)$ and the claim follows.
\end{proof}

As $\dd \Psi_z : \cV_{(v,u)} \to T_y(\mathrm{D}(\pi_R(v))$ is an isometry, it suffices to compute $\dd\Psi$ on transverse vectors to $\cV$. At a point $v_{g(\omega,v,r)} = v_g \in C_{x,\omega,r}$ of coordinates $(\omega,v,r) \in E \times [0,r_0)$, we associate
\[
\hat{T}(\omega,v,r), \hat{N}(\omega,v,r) \in T_{v_{g(\omega,v,r)}}(S_xX)
\]
where $\hat{T}$ is the unit tangent vector to the curve $C_{x,\omega,r} \subset S_xX$, and $\hat{N}$, the unit normal vector to the curve, with the convention that $(\hat{T}, \hat{N})$ has the orientation of $S_xX$. The following holds:
\begin{lemma}
Letting
\begin{equation}
\label{equation:tn}
\begin{split}
T& := -\partial_v g \sin(g) v + \cos(g) v_\perp + \partial_v g \cos(g)\omega, \\
 N & := -\tan(g)v - \dfrac{\partial_v g}{\cos^2(g)} v_\perp + \omega,
\end{split}
\end{equation}
we have $\hat{T} = T/|T|, \hat{N}=N/|N|$. In particular:
\begin{equation}
\label{equation:tn-simple}
\hat{T} = v_\perp + \cO(r), \qquad \hat{N} = \omega + \cO(r).
\end{equation}
\end{lemma}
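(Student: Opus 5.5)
The plan is to differentiate the explicit parametrization of the curve $C_{x,\omega,r}$ supplied by Lemma \ref{lemma:graph-peyresq}. For fixed $(\omega,r)$, the curve is $v\mapsto v_{g(\omega,v,r)}=\cos(g)v+\sin(g)\omega$ by \eqref{equation:thi}, where $v$ ranges over the great circle $C_{x,\omega,0}=S_xX\cap\omega^\perp$. We parametrize this circle by arclength. Its unit tangent at $v$ is then $v_\perp$, since $(v,\omega,v_\perp)$ is an oriented orthonormal frame, and $\partial_v v=v_\perp$. Writing $\partial_v g$ for the derivative of $g$ along this arclength parameter and using $\partial_v\omega=0$, the chain rule gives
\[
\partial_v\big(v_{g}\big)=-\partial_v g\,\sin(g)\,v+\cos(g)\,v_\perp+\partial_v g\,\cos(g)\,\omega,
\]
which is exactly the vector $T$ in \eqref{equation:tn}. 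Provided $T\neq0$, this shows that $\hat T=T/|T|$ is the unit tangent to $C_{x,\omega,r}$ with the induced orientation.

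For the normal, it suffices to check that $N$ lies in $T_{v_g}(S_xX)=v_g^\perp$ and is orthogonal to $T$; the sign is fixed by orientation afterwards. For the first condition, using the orthonormality of $(v,\omega,v_\perp)$ we compute
\[
\langle N,v_g\rangle=-\tan(g)\cos(g)+\sin(g)=0.
\]
For the second,
\[
\langle N,T\rangle=\tan(g)\,\partial_v g\,\sin(g)-\frac{\partial_v g}{\cos^2(g)}\cos(g)+\partial_v g\cos(g),
\]
and factoring out $\partial_v g/\cos(g)$ leaves $\sin^2(g)-1+\cos^2(g)=0$. Hence $N$ is normal to the curve inside $S_xX$. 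To confirm the orientation we evaluate at $r=0$, where $g=0$ by \eqref{equation:implication}. There $T=v_\perp$ and $N=\omega$, and $(v_\perp,\omega)$ agrees with the orientation of $T_v S_xX$ induced by the outward normal $v$. This orientation is the convention already used for $(\hat T,\hat N)$, so by continuity in $r$ we get $\hat N=N/|N|$. In particular $|T|\geq\cos(g)>0$ and $|N|\geq1$ for small $r$, so the normalizations are harmless.

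Finally, \eqref{equation:tn-simple} follows from the regularity in Lemma \ref{lemma:graph-peyresq}. The function $g$ is smooth (resp. analytic) on $E\times[0,r_0)$ with $g(\omega,v,0)=0$ for all $v$. Hence $\partial_v g$ also vanishes at $r=0$, and both $g$ and $\partial_v g$ are $\cO(r)$ with smooth remainders up to $r=0$. Substituting this into \eqref{equation:tn} gives $T=v_\perp+\cO(r)$ and $N=\omega+\cO(r)$. Since the norms are $1+\cO(r)$ and bounded away from zero, the same expansions hold for $\hat T$ and $\hat N$. The only step needing care, and the one I expect to be the main obstacle, is the orientation and parametrization convention (arclength on $C_{x,\omega,0}$ versus on $C_{x,\omega,r}$). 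The normalization makes this immaterial for $\hat T$, and the continuity argument at $r=0$ settles the sign of $\hat N$.
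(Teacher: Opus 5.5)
Your proposal is correct and is essentially the paper's argument: $T$ comes from differentiating $v\mapsto v_g=\cos(g)v+\sin(g)\omega$ along $v_\perp$, and $N$ is pinned down by $\langle N,v_g\rangle=\langle N,T\rangle=0$. The paper solves this linear system for the coefficients $-\tan(g)$ and $-\partial_v g/\cos^2(g)$, whereas you verify them, and in both cases the expansions follow from $g(\omega,v,0)=0$.

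There is one slip in your orientation check. If $(v,\omega,v_\perp)$ is positively oriented and $S_xX$ is oriented by its outward normal, then the positive pair in $T_vS_xX$ is $(\omega,v_\perp)$, not $(v_\perp,\omega)$. So the sign in $\hat N=N/|N|$ actually depends on the paper's unspecified orientation choice for $v_\perp$. This is harmless, since only $|\beta|$ enters the kernel computation afterwards.
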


Note that in the model case, $g=0$, and we obtain that $\hat{T} = v_\perp$, $\hat{N} = \omega$. As above, the remainder $\cO(r)$ are smooth with respect to $(\omega,v)$ up to $r=0$. As above, there is a loss of $1$ exponent of regularity as the formulas \eqref{equation:tn} involve the derivative of $g$.

\begin{proof}
Consider the map $C_{x,\omega,0} \ni v \mapsto v_{g(\omega,v,r)} \in C_{x,\omega,r}$. This curve lifts to the curve $v \mapsto (v_{g(\omega,v,r)},u(\omega,v,r)) \in H_{x,\omega,r} \subset E$. Using \eqref{equation:thi}, and differentiating $v$ in the $v_\perp$ direction, we find (writing $\partial_v g := \partial_vg(v_\perp)$):
\[
T := \partial_v v_{g(\omega,v,r)}(v_\perp) = -\partial_v g \sin(g) v + \cos(g) v_\perp + \partial_v g \cos(g) \omega.
\]
We then normalize $T$ to obtain the unit vector $\hat{T} := T/|T|$.

To compute $N$, we look for a vector field with coordinates $N = \alpha v + \beta v_\perp + \omega$ satisfying the conditions $\langle N,v_g \rangle = 0 = \langle N,T\rangle$. This yields:
\[
\alpha \cos(g) + \sin(g) = 0 = -\alpha \partial_vg \sin(g) + \beta \cos(g) + \partial_v g \cos(g).
\]
That is
\[
\alpha = -\tan(g), \qquad \beta = -\dfrac{\partial_v g}{\cos^2(g)}.
\]
Normalizing $N$, we find that $\hat{N}=N/|N|$.

The estimate \eqref{equation:tn-simple} is obtained by combining \eqref{equation:tn} with \eqref{equation:implication}.
\end{proof}

Finally, we consider $(v,u) \in H_{x,y}$. The vector $\hat{T}$ (which is tangent to $C_{x,y}$ at $v \in C_{x,y}$) lifts uniquely to a vector $\hat{T}^{\mathrm{lift}} \in T_{(v,u)}(T(S_xX))$ which is tangent to $H_{x,y}$. By construction, the horizontal component of $\hat{T}^{\mathrm{lift}}$ projects to $\hat{T}$ but $\hat{T}^{\mathrm{lift}}$ may have a non trivial vertical component. We also consider $\hat{N}^{\mathrm{lift}}$, the horizontal lift of $\hat{N}$ at the point $(v,u)$. If we perturb $(v,u)$ in the direction $\hat{T}^{\mathrm{lift}}$, then $y = \Psi(v,u)$ is not moved since $\Psi(H_{x,y}) = y$ (by definition). That is $\dd \Psi_z(\hat{T}^{\mathrm{lift}})=0$.

We now perturb $(v,u)$ in the direction $\hat{N}^{\mathrm{lift}}$. The point $y = \Psi(v,u)$ is then moved by $\delta y = \dd \Psi_{(v,u)}(\hat{N}^{\mathrm{lift}}) \in T_y X$.

\begin{lemma}
$\delta y = \left(-\langle u,\hat{N} \rangle  + \dd f (\hat{N}^{\mathrm{lift}})\right)v_g + f\hat{N} = -r n(y) + \cO(r^2)$.
\end{lemma}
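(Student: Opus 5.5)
We differentiate $\Psi(v,u) = u + f(v,u)\,v$ in the exponential chart at $x$. The displacement is taken along the horizontal lift $\hat N^{\mathrm{lift}}$ at $z=(v_g,u)\in H_{x,y}$, with $v_g:=v_{g(\omega,v,r)}$.

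Let $t\mapsto (v(t),u(t))$ be the curve tangent to $\hat N^{\mathrm{lift}}$. Then $v(t)$ is the great-circle geodesic in $S_xX$ starting at $v_g$ with velocity $\hat N$, and $u(t)$ is the parallel transport of $u$ along it. On the round sphere $S_xX$, parallel transport of a tangent vector $u\perp v(t)$ along a great circle keeps $u$ in $v(t)^\perp$ and satisfies, as a vector in $T_xX$,
\[
\dot u(0) = -\langle u,\hat N\rangle\, v_g .
\]
This follows by differentiating $\langle u(t),v(t)\rangle = 0$ and using $\dot v(0)=\hat N$. Along the same curve, $\frac{d}{dt}\big(f(v(t),u(t))\big) = \dd f(\hat N^{\mathrm{lift}})$ and $\dot v(0)=\hat N$. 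The product rule therefore gives
\[
\delta y = -\langle u,\hat N\rangle v_g + \dd f(\hat N^{\mathrm{lift}})\, v_g + f\,\hat N ,
\]
which is the first equality.

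For the asymptotics we use the earlier lemmas. By Lemma \ref{lemma:graph-peyresq}, $u = u(\omega,v,r) = r\omega + \cO(r^2)$ and $g=\cO(r)$. By \eqref{equation:tn-simple}, $\hat N = \omega + \cO(r)$. Hence
\[
\langle u,\hat N\rangle = r + \cO(r^2).
\]
Since $f(v,u) = \cO(|u|^2)$ uniformly, $f\hat N = \cO(r^2)$. The horizontal derivative satisfies $\dd f(\hat N^{\mathrm{lift}}) = \partial_v f(v,u)\cdot \hat N$ at fixed parallel-transported $u$, and $f(\cdot,u)=\cO(|u|^2)$ uniformly in $v$, so this term is also $\cO(r^2)$. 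Finally, $v_g = v + \cO(r)$ by \eqref{equation:thi} and $g=\cO(r)$, and the preceding lemma gives $n(y) = v + \cO(r)$. Combining, $\delta y = -r v_g + \cO(r^2) = -r\,n(y) + \cO(r^2)$.

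The main obstacle is to check that each $\cO(r^2)$ remainder is smooth up to $r=0$, and analytic in the analytic case. For the term $\partial_v f$, this follows from a Taylor expansion of $f$ in $u$ with coefficients that are smooth in $v$: write $f(v,u)=\int_0^1(1-s)\,\partial_u^2 f(v,su)(u,u)\,\dd s$, which holds because $f(v,0)=0$ and $\partial_uf(v,0)=0$. The remaining remainders are products of smooth $\cO(r)$ quantities produced by Lemma \ref{lemma:graph-peyresq}, and inherit the same regularity, at the cost of one derivative as noted above.
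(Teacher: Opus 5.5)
Your proof is correct and follows the paper's own argument: differentiate $\Psi(v,u)=u+f(v,u)v$ along the horizontal lift, use $\delta u=-\langle u,\hat N\rangle v_g$ (parallel transport on the round sphere), then substitute $u=r\omega+\cO(r^2)$, $\hat N=\omega+\cO(r)$, $f=\cO(|u|^2)$ and $n(y)=v+\cO(r)$. Two of your steps go slightly beyond the paper, which leaves them implicit: you base the computation at $v_g$ rather than $v$, and you justify smoothness of the remainders via the integral form of Taylor's formula.
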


Note that in the model case ($f=0$), $\hat{N}= \omega$, $g=0, v_g = v = n(y)$ (where $n(y) \in T_y X$ is the normal vector to the disc) and $u(\omega,v,r)=r \omega$, so we find that $\delta y = - r n(y)$.

\begin{proof}
We have $\delta v = \hat{N}$, so $\delta u = -\langle u,\hat{N}\rangle v$. Since $y = \Psi(v,u) = u + f(v,u)v$, we find that
\[
\begin{split}
\delta y & = \dd \Psi_{(v,u)}(\hat{N}^\cH) = \delta u + \dd f_{(v,u)}(\hat{N}^{\mathrm{lift}}) v_g + f \delta v \\
&  =\left(-\langle u,\hat{N} \rangle  + \dd f (\hat{N}^{\mathrm{lift}})\right)v_g + f\hat{N}.
\end{split}
\]
Note that $\dd f(\hat{N}^{\mathrm{lift}}) = \cO(r^2)$ because $\hat{N}^{\mathrm{lift}}$ is a horizontal vector field and $f(v,u)=\cO(|u|^2)$.
Using the previous approximations, we find that $\delta y = -rv + \cO(r^2) = -r n(y) + \cO(r^2)$.
\end{proof}

We pick an arbitrary orthonormal basis $(\mathbf{v}_1,\mathbf{v}_2)$ on $\cV_{(v,u)}$ (for the metric $G_x$ obtained by pullback from $T_y \mathrm{D}(\pi_R(v))$), and let $(\mathbf{e}_1,\mathbf{e}_2) := \dd\Psi_z(\mathbf{v}_1,\mathbf{v}_2)$ be the corresponding orthonormal frame on  $T_y \mathrm{D}(\pi_R(v))$. We can then write the matrix of $\dd \Psi_z$ in the basis $(\mathbf{v}_1,\mathbf{v}_2, \hat{N}^{\mathrm{lift}},\hat{T}^{\mathrm{lift}})$ of $T_z(T(S_xX))$ and in the orthonormal basis $(\mathbf{e}_1,\mathbf{e}_2,n(y))$ of $T_yX$ (with respect to $g_y$):
\begin{equation}
\label{equation:matrice-psi}
\dd\Psi_z = \begin{pmatrix} 1 & 0 & * & 0 \\
0 & 1 & * & 0 \\
0 & 0 & \beta & 0 \end{pmatrix}, \qquad \beta := g_y(\delta y, n(y)) = -r + \cO(r^2).
\end{equation}
Here, $\beta \in C^\infty(E \times [0,r_0])$ is a smooth function on $E \times [0,r_0]$, that is $\beta = \beta(\omega,v,r)$ and this is smooth up to $r=0$.

Observe that $(\mathbf{v}_1,\mathbf{v}_2, \hat{N}^{\mathrm{lift}},\hat{T}^{\mathrm{lift}})$ is \emph{not} an orthonormal frame of $T_z(T(S_xX)) = \cV \oplus \cH$ because $\hat{T}^{\mathrm{lift}}$ may not be purely horizontal and of norm $1$. Nevertheless, this frame has determinant $1$, that is the volume form is given by $\mathbf{v}_1 \wedge \mathbf{v}_2 \wedge \hat{N}^{\mathrm{lift}} \wedge \hat{T}^{\mathrm{lift}}$ after identification of $1$-forms and vectors via the metric $G_x$. We introduce the $1$-form $\alpha$ on $T(S_xX)$ such that:
\[
\alpha(\cV) = 0 = \alpha(\hat{N}^{\mathrm{lift}}), \qquad \alpha(\hat{T}^{\mathrm{lift}})=1.
\]
Observe that the horizontal component of $\hat{T}^{\mathrm{lift}}$ projects to $\hat{T}$. Since $\alpha(\cV)=0$, letting $\hat{T}^{\mathrm{lift}}_{\cH}$ be the horizontal component of $\hat{T}^{\mathrm{lift}}$, we find that $\alpha(\hat{T}^{\mathrm{lift}}_{\cH}) = 1$. Recall that the metric $G_x$ on $T(S_xX)$ induces the Riemannian measure $\dd v \dd \mathrm{D}(\pi_R(v))$ (see \eqref{equation:integration}).

\begin{lemma}
The following holds:
\[
|\Psi^*(\vol_y) \wedge \alpha/\beta| = \dd v \dd\mathrm{D}(\pi_R(v))
\]
\end{lemma}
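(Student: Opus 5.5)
We would prove the identity pointwise at $z=(v,u)\in H_{x,y}$ by evaluating both sides on the frame $(\mathbf{v}_1,\mathbf{v}_2,\hat{N}^{\mathrm{lift}},\hat{T}^{\mathrm{lift}})$ of $T_z(T(S_xX))$. Here $\Psi^*(\vol_y)$ denotes the pullback of the Riemannian volume form of $g_y$ on $T_yX$ under $\dd\Psi_z$. It is a $3$-form on the $4$-dimensional space $T_z(T(S_xX))$, so $\Psi^*(\vol_y)\wedge\alpha$ is a top-degree form. It therefore suffices to compute its value on this one frame and compare it with the value of the $G_x$-volume form on the same frame.

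First, we would evaluate the wedge on the frame. Since $\alpha$ vanishes on $\mathcal V$ and on $\hat N^{\mathrm{lift}}$, and $\alpha(\hat T^{\mathrm{lift}})=1$, the alternating-sum expansion of $(\Psi^*\vol_y\wedge\alpha)(\mathbf v_1,\mathbf v_2,\hat N^{\mathrm{lift}},\hat T^{\mathrm{lift}})$ has only one surviving term, up to sign. That term is $\vol_y(\dd\Psi_z\mathbf v_1,\dd\Psi_z\mathbf v_2,\dd\Psi_z\hat N^{\mathrm{lift}})$. The term containing $\dd\Psi_z\hat T^{\mathrm{lift}}$ would vanish in any case, because $\dd\Psi_z(\hat T^{\mathrm{lift}})=0$ (the curve $H_{x,y}$ maps to the single point $y$).

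Second, we would read off this determinant from the matrix \eqref{equation:matrice-psi}. In the $g_y$-orthonormal basis $(\mathbf e_1,\mathbf e_2,n(y))$, the $3\times3$ minor built from the first three columns is upper triangular with diagonal $(1,1,\beta)$. Its determinant is therefore $\beta$, and so
\[
\big|(\Psi^*\vol_y\wedge\alpha)(\mathbf v_1,\mathbf v_2,\hat N^{\mathrm{lift}},\hat T^{\mathrm{lift}})\big|=|\beta| .
\]
For $r>0$ we have $\beta=-r+\cO(r^2)\neq0$, so after dividing by $\beta$ the form $\Psi^*(\vol_y)\wedge\alpha/\beta$ takes the value $1$ in absolute value on the frame.

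Third, we would show that the frame has $G_x$-volume $1$; this is the step we expect to need the most care. The vectors $\mathbf v_1,\mathbf v_2$ are $G_x$-orthonormal in $\mathcal V$, and $\mathcal V\perp\mathcal H$ for $G_x$. Since the volume form is multilinear and alternating, we may subtract vertical components from $\hat T^{\mathrm{lift}}$ without changing the volume, which replaces it by its horizontal part $\hat T^{\mathrm{lift}}_{\mathcal H}$. The frame volume then equals the Sasaki area in $\mathcal H$ of the pair $(\hat N^{\mathrm{lift}},\hat T^{\mathrm{lift}}_{\mathcal H})$. On $\mathcal H$ the metric $G_x$ is pulled back by $\dd\pi$, and $\dd\pi$ sends these two vectors to $\hat N$ and $\hat T$, which form an orthonormal basis of $T_{v_g}(S_xX)$. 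This area is therefore $1$, which matches the normalization $\alpha(\hat T^{\mathrm{lift}}_{\mathcal H})=1$.

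The two top-degree forms then agree in absolute value on a basis, hence they are equal as densities. Combined with \eqref{equation:integration}, this gives the claimed equality of densities. We would also note that the same computation, carried out in the model case with $\beta=-r$, reproduces the formula used in the model-case lemma, which serves as a consistency check.
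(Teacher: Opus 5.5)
Your proof is correct and follows essentially the same argument as the paper: evaluate $\Psi^*(\vol_y)\wedge\alpha$ on the frame $(\mathbf{v}_1,\mathbf{v}_2,\hat{N}^{\mathrm{lift}},\hat{T}^{\mathrm{lift}})$, note that this frame has unit $G_x$-volume, and read off the value $\beta$ from \eqref{equation:matrice-psi}. Your third step spells out the unit-volume claim, which the paper asserts ``by construction'': you drop the vertical part of $\hat{T}^{\mathrm{lift}}$ and use that $\dd\pi$ maps $(\hat{N}^{\mathrm{lift}},\hat{T}^{\mathrm{lift}}_{\mathcal H})$ isometrically onto the orthonormal pair $(\hat{N},\hat{T})$.
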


\begin{proof}
By construction, the basis $(\mathbf{v}_1,\mathbf{v}_2, \hat{N}^{\mathrm{lift}},\hat{T}^{\mathrm{lift}})$ has determinant $1$ for the measure $\dd v \dd \mathrm{D}(\pi_R(v))$. The result is then immediate since:
\[
\begin{split}
\Psi^*(\vol_y) \wedge \alpha (\mathbf{v}_1,\mathbf{v}_2, \hat{N}^{\mathrm{lift}},\hat{T}^{\mathrm{lift}}) & = \vol_y(\dd\Psi(\mathbf{v}_1), \dd\Psi(\mathbf{v}_2),\dd\Psi( \hat{N}^{\mathrm{lift}})) \alpha(\hat{T}^{\mathrm{lift}})  = \beta.
\end{split}
\]
\end{proof}

We can now complete the proof of Theorem \ref{theorem:wow}.

\begin{proof} By construction, for $y = \exp_x(r \omega)$, $H_{x,y} = H_{x,\omega,r}$ is parametrized by
\[
H_{x,\omega,r} = \{(v_{g(\omega,v,r)}, u(\omega,v,r)) ~:~ v \in C_{x,\omega,0}\}.
\]
In turn, we parametrize $C_{x,\omega,0}$ by $[0,2\pi]$, that is we take $[0,2\pi] \ni \theta \mapsto v(\theta)$ such that $\dot{v}(\theta) = v_\perp$ at $v=v(\theta)$. Using \eqref{equation:noyau-pi0}, this yields the following formula for the kernel of $\Pi_0$:
\[
K_0(x,y) = \int_{H_{x,y}} a \cdot \alpha/|\beta| = \int_0^{2\pi} \dfrac{a(v_{g(\omega,v(\theta),r)}) \dd\theta}{|\beta|(\omega,v(\theta),r)}.
\]
Using \eqref{equation:matrice-psi}, we find that $1/\beta = r^{-1}(1+\cO(r))$ where the $\cO(r)$ is smooth with respect to $(\omega,v)$ up to $r=0$. Furthermore, $g(\omega,v(\theta),r) = \mathcal{O}(r)$. This yields:
\[
K_0(x,y) =r^{-1}\left(\int_0^{2\pi} a(v(\theta)) \dd \theta+ \cO(r)\right) = r^{-1}\chi(x,\omega,r).
\]
This completes the proof.
\end{proof}

\subsubsection{Consequences}

Finally, we derive Theorem \ref{theorem:pdo-analytic} from Theorem \ref{theorem:wow}:

\begin{proof}[Proof of Theorem \ref{theorem:pdo-analytic}]
Straightforward consequence of Theorem \ref{theorem:wow} combined with Proposition \ref{prop:analytic-pdo}.
\end{proof}

\subsection{Extension of the normal operator and consequences} We now extend the normal operator to a neighborhood of $X$ and derive a stability estimate for the Radon transform.

\subsubsection{Extension} Recall that $(X_e, \bar g_e)$ is a proper Riemannian extension of $(X, \bar g)$, and $U \subset SX_e$ is the open set introduced in Theorem \ref{theorem:extension-foliation} containing $SX$. The Radon transform $R$ can be extended to a map $R_e : C^\infty(U) \to C^\infty(\cC_e)$ by letting for $c \in \cC_e$
\[
R_e(c) := \int_{\mathrm{L}_e(c)} f~ \dd \mathrm{L}_e(c),
\]
where $\mathrm{L}_e(c)$ denotes the Gauss lift of $\mathrm{D}_e(c)$ to $SX_e$. We then define
\begin{equation}
\label{equation:extension-operator}
P := \pi_* R_e^* R_e \pi^*,
\end{equation}
where the adjoint is computed with respect to an arbitrary smooth (resp. analytic) measure on $\cC_e$. We claim that $P$ satisfies the properties required in Theorem \ref{theorem:extension-normal-operator}:

\begin{proof}[Proof of Theorem \ref{theorem:extension-normal-operator}]
That $P$ is a pseudodifferential operator of order $-2$ on $X^\circ_e$ follows \emph{verbatim} from the arguments developed in the proof of Theorem \ref{th_pseudo_behaviour} using that the double fibration of Theorem \ref{theorem:extension-foliation} satisfies the Bolker condition. That $P$ is analytic whenever $\bar g$ is analytic also follows from the arguments developed in the proof of Theorem \ref{theorem:pdo-analytic}.

Finally, the principal symbol of $P$ is
\[
\sigma_P(x,\xi) =4\pi^2 /|\xi|^2 \left(a(x,\xi^\sharp/|\xi|) + a(x,-\xi^\sharp/|\xi|)\right), \qquad x \in X_e, \xi \neq 0,
\]
for some $a > 0$. It is elliptic on $X$, hence on an open neighborhood of $X$. In addition, by construction, $PE_0 f = 0$ if $R_0 f =0$.
\end{proof}

\subsubsection{Continuity} The following technical result will be used in the proof of injectivity of the Radon transform for generic metrics:

\begin{lemma}
\label{lemma:continuity}
Let $\bar g_0$ be a $(\Phi,k)$-simple and real analytic metric on $X$. For $\bar g$ sufficiently close to $\bar g_0$, let $P_{\bar g}$ denote the extended normal operator constructed above. Then, the map
\[
C^\infty(X, \mathrm{Sym}^2 T^*X) \to \Psi^{-2}(X_e^\circ), \qquad \bar g \to P_{\bar g}
\] 
is continuous.
\end{lemma}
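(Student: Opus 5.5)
The strategy is to track how every ingredient entering $P_{\bar g}$ depends on $\bar g$, and to show that each depends continuously on $\bar g$ in $C^\infty$ near $\bar g_0$. Continuity of $P_{\bar g}$ in $\Psi^{-2}(X_e^\circ)$ then follows from the explicit structure of the operator. Concretely, I would show three things. First, the extended double fibration of Theorem \ref{theorem:extension-foliation} (the extended discs $\mathrm{D}_e(c)$, the map $\pi_R$, the open set $U$) can be built uniformly for $\bar g$ near $\bar g_0$ and depends continuously on $\bar g$. Second, the weight $a$ and the leafwise measures depend continuously on $\bar g$. Third, the seminorms of $\Psi^{-2}$ are controlled continuously by these data: the symbol seminorms near the diagonal, and the $C^\infty$ norms of the kernel away from it.

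\textbf{Step 1 (uniform foliation).} Area simplicity, and more generally $(\Phi,k)$-simplicity, is open in $C^N$. Strict stability gives invertibility of $J^\Phi$, and compactness of $\overline{\cC}$ makes this invertibility uniform. So the implicit function theorem argument of Lemma \ref{lemma:MDIsSubmanifold} can be run with $\bar g$ as an extra Banach parameter. This produces discs $\mathrm{D}_{\bar g}(c)$ depending smoothly on $(\bar g,c)$ for $c$ in compact subsets of $\cC$. Near $\partial\cC$, I would rerun Theorem \ref{thm:AsymptoticPlateauProblem} with the metric as a parameter: the operator $L_p$ depends smoothly on $\bar g$, and the Taylor/polynomial argument of Proposition \ref{proposition:key} and Theorem \ref{theorem:uniform-smoothness} is uniform in the parameter. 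This gives $\tilde f^\pm$ jointly smooth in $(\bar g,p,s,q)$. We then fix the extension $X_e$ and one extension procedure that depends continuously on $\bar g$, for example a fixed linear Seeley-type extension operator applied to the graph functions. With these choices the Gauss parametrization of $\mathrm{MD}_e$ is a diffeomorphism onto $U$ for all $\bar g$ close to $\bar g_0$, after shrinking $U$ and $X_e$ uniformly. The Bolker condition and the invertibility of the Gauss map are open conditions, so they persist. Since we are free to choose, I would take the measure $\dd c$ on $\cC_e$ independent of $\bar g$. Then $a_{\bar g}$, defined by \eqref{equation:fubini}, is the Jacobian of a continuously varying diffeomorphism, and hence is continuous in $\bar g$.

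\textbf{Step 2 (continuity of the kernel).} Away from the diagonal, formula \eqref{equation:noyau-pi0} writes $K(x,y)$ as an integral of $\tilde a$ over the circle $H_{x,y}$. By Lemma \ref{lemma:circle} and the submersion property of Theorem \ref{theorem:bolker}, these circles vary smoothly in $(\bar g,x,y)$, so $K_{\bar g}$ varies continuously in $C^\infty$ off any neighbourhood of $\Delta$. Near the diagonal, Theorem \ref{theorem:wow} gives $K=\chi/r$ with $\chi$ smooth on the blow-up $X^\circ_{e,(2)}$. Its proof uses only the implicit function theorem of Lemma \ref{lemma:graph-peyresq} applied to the local graph function $f$ of \eqref{equation:parametrization-discs}, together with the explicit matrix \eqref{equation:matrice-psi}. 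Every step is continuous in $f$ and $a$, hence in $\bar g$, so $\chi_{\bar g}$ depends continuously on $\bar g$ in $C^\infty(X^\circ_{e,(2)})$. A kernel of the form $\chi/r$, with $\chi$ smooth on the blown-up space, corresponds to a classical operator of order $-2$. Its full symbol is obtained by Fourier transforming in polar coordinates, and this transform is a continuous map from $C^\infty$ functions on the blow-up to $S^{-2}$ symbols. Combining the near- and off-diagonal pieces with a cutoff yields continuity of $\bar g\mapsto P_{\bar g}$.

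\textbf{Main obstacle.} I expect the hardest part to be the uniformity in $\bar g$ near the boundary. We must make sure that the compactification $\overline{\cC}$, the coordinates $(p,s)$, the extension $X_e$ and the open set $U$ can be chosen once for all nearby metrics. Otherwise, the operators $P_{\bar g}$ do not even live on a common domain. My first attempt would be to fix $\alpha$ analytic and use the Fermi chart of $\bar g_0$. I would then treat the change to Fermi coordinates of $\bar g$ as a smooth perturbation inside $\mathbf{H}'$ in Lemma \ref{lemma:SecondAsymptoticsOfH}, so that the asymptotic analysis of \S\ref{section:asymptotic} goes through with $\bar g$ as a smooth parameter.
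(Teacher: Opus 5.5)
Your proposal is correct and follows essentially the same route as the paper: smooth dependence of the discs on $(\bar g,c)$ via the implicit function theorem, made uniform up to $\overline{\cC}$ by the small-disc analysis, then a fixed Seeley-type extension to a common $X_e$ and $\cC_e$, and finally continuity of $\chi_{\bar g}$ from the construction in Theorem \ref{theorem:wow}, with continuity in $\Psi^{-2}(X_e^\circ)$ read off from the Schwartz kernels. Your separate treatment of the off-diagonal kernel and your fixed choice of $\dd c$ only make explicit what the paper leaves implicit; the paper additionally notes that each seminorm of $\Psi^{-2}(X_e^\circ)$ involves only finitely many derivatives of $\bar g$, which gives the $C^N$ version used in Corollary \ref{corollary:kernel}.
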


Both spaces are Fréchet spaces, so continuity in the previous lemma is to be understood in the Fréchet sense (using semi-norms).

\begin{proof}
First, by the implicit function theorem applied to the $(\Phi,k)$-surface equation, together with the invertibility of the Jacobi operators, the family of $(\Phi,k)$-discs $\mathrm D_{\bar g}(c)\subset X$ depends smoothly on $(\bar g,c)$ (see \S\ref{section:plateau}); the same conclusion holds up to the compactification $\overline{\cC}$ by the analysis of small discs developed in \S\ref{section:asymptotic}. We may then fix a continuous extension operator for the graph functions defining these discs near $\partial X$, and use it, for every $\bar g$ close to $\bar g_0$, to extend the family $\mathrm D_{\bar g}(c)$ to a fixed larger manifold $X_e$ and a fixed extension $\cC_e$ of $\cC$. This can be done with a Seeley-type extension operator for instance (see the proof of Lemma \ref{lemma:interpolation} where such an extension is also used). With this choice, the resulting extended foliation depends continuously on $\bar g$ in the $C^\infty$ topology.

Then, by the description of the Schwartz kernel obtained in Theorem~\ref{theorem:wow}, near the diagonal we have
\[
K_{\bar g}(x,r,\omega)=\frac{\chi_{\bar g}(x,r,\omega)}{r},
\]
where $\chi_{\bar g}$ is smooth up to $r=0$. The construction of $\chi_{\bar g}$ in the proof of Theorem~\ref{theorem:wow} then easily shows that $\bar g\mapsto\chi_{\bar g}$ is continuous in the $C^\infty$ topology. Since, in this description, continuity with values in $\Psi^{-2}(X_e^\circ)$ is equivalent to continuity of the corresponding Schwartz kernels, it follows that $\bar g\mapsto P_{\bar g}\in\Psi^{-2}(X_e^\circ)$ is continuous. In particular, for every fixed seminorm of $\Psi^{-2}(X_e^\circ)$, only finitely many derivatives of $\bar g$ are involved, so the corresponding continuity holds in the $C^N$ topology for $N$ sufficiently large. 
\end{proof}

\subsubsection{Consequences}

The existence of an extension of the normal operator $\Pi_0$ implies the following stability estimate for the Radon transform:

\begin{lemma}
\label{lemma:soir}
Assume that $\bar g$ is $(\Phi,k)$-simple and $R_0 : C(X) \to C(\cC)$ is injective. Then there exists $C > 0$ such that for all $f \in C(X)$:
\begin{equation}
\label{equation:soir}
\|f\|_{H^{-2}(X)} \leq C \|R_0 f\|_{L^\infty(\cC)}.
\end{equation}
\end{lemma}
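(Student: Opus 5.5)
The plan is to use the extended normal operator $P$ of Theorem \ref{theorem:extension-normal-operator} together with elliptic estimates and a compactness argument. First I will bound $\|f\|_{H^{-2}}$ by $\|PE_0 f\|$ plus a compact error, and then use injectivity to remove the error.

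\textbf{Step 1 (elliptic estimate).} Since $P\in\Psi^{-2}(X_e^\circ)$ is elliptic on a neighbourhood $V$ of $X$, I take a parametrix $Q\in\Psi^{2}$ with $QP=\mathrm{Id}+K$ microlocally near $X$, where $K$ is smoothing. Inserting a cutoff $\chi\in C^\infty_{\mathrm{comp}}(V)$ with $\chi\equiv1$ near $X$ gives, for $f\in C(X)$,
\begin{equation*}
\|E_0 f\|_{H^{-2}(X_e)}\leq C\|PE_0f\|_{L^2(V)}+C\|E_0f\|_{H^{-N}(X_e)}.
\end{equation*}
Here $H^{-2}(X)$ is normed by $\|E_0f\|_{H^{-2}}$, or this norm dominates it.

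\textbf{Step 2 (bounding $PE_0f$ by $R_0f$).} By \eqref{equation:extension-operator}, $PE_0f=\pi_*R_e^*R_e\pi^*E_0f$. Because $E_0f$ vanishes outside $X$, $R_e\pi^*E_0f$ coincides with $R_0f$ on $\cC$ and vanishes on $\cC_e\setminus\overline{\cC}$. So it suffices to show that $h\mapsto\pi_*R_e^*h$ is bounded $L^\infty(\cC_e)\to L^2(V)$. This follows from the formula $R_e^*h(v)=a(v)h(\pi_R(v))$ with $a$ smooth on $U\supset SX$; after shrinking $V$ and taking the measure on $\cC_e$ smooth, $a$ is bounded on $\pi^{-1}(V)\cap U$. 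Hence $\|PE_0f\|_{L^2}\leq C\|R_0f\|_{L^\infty(\cC)}$.

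\textbf{Step 3 (removing the compact term).} I argue by contradiction. Suppose $f_n\in C(X)$ satisfy $\|E_0f_n\|_{H^{-2}}=1$ and $\|R_0f_n\|_{L^\infty}\to0$. Since the embedding $H^{-2}\hookrightarrow H^{-N}$ on the compact set $X$ is compact, a subsequence converges in $H^{-N}$ to some $f_\infty$ supported in $X$. Step 1 then makes $(E_0f_n)$ Cauchy in $H^{-2}$, so $E_0f_n\to f_\infty$ in $H^{-2}$ with $\|f_\infty\|_{H^{-2}}=1$, and $PE_0 f_n\to Pf_\infty$ gives $Pf_\infty=0$.

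\textbf{Main obstacle.} The hard step is concluding $f_\infty=0$, because injectivity is only assumed on $C(X)$ while $f_\infty$ is only a distribution. I will first use ellipticity of $P$ near $X$ to get $f_\infty\in C^\infty(V)$; since it is supported in $X$, it is then smooth and vanishes to infinite order at $\partial X$. Next I need $R_0f_\infty=0$. I would obtain this by showing that $R_0$, viewed as the transform $R_e\pi^*$ on distributions supported in $X$, is continuous from $H^{-2}$ into distributions on $\cC$; this holds because $R_e\pi^*$ is a Fourier integral operator of order $-1$ by Bolker. Passing to the limit then gives $R_0f_\infty=\lim R_0f_n=0$ in $\cD'(\cC)$. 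Since $f_\infty\in C(X)$, injectivity yields $f_\infty=0$, contradicting $\|f_\infty\|_{H^{-2}}=1$.
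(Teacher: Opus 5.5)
Your proposal is correct and follows essentially the paper's proof. It has the same three ingredients: the parametrix estimate with an $H^{-N}$ remainder, removal of that remainder by a compactness/contradiction argument using injectivity, and the bound $\|PE_0f\|_{L^2}\le C\|R_0f\|_{L^\infty(\cC)}$. The paper runs the contradiction on the estimate for $PE_0$ before inserting this last bound, which makes no difference.

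At your ``main obstacle'', the paper passes directly from $PE_0f_\infty=0$ and smoothness of $f_\infty$ to $R_0f_\infty=0$, implicitly via $0=\langle PE_0f_\infty,E_0f_\infty\rangle=\|R_e\pi^*E_0f_\infty\|^2_{L^2(\cC_e)}$. That is lighter than your FIO argument. In any case, the continuity of $R_e\pi^*$ on compactly supported distributions that you need already follows by duality from the smoothness of the backprojection, without Bolker.
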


\begin{proof}
Let $\chi \in C^\infty_{\mathrm{comp}}(X_e^\circ)$ be a smooth compactly supported function such that $\chi \equiv 1$ on $X$, and $\chi$ is supported in a small neighborhood of $X$ contained in the elliptic set of $P$. By ellipticity, there exists $Q \in \Psi^{2}(X^\circ_e)$ and $K \in \Psi^{-\infty}(X^\circ_e)$ (with compactly supported Schwartz kernel) such that
\[
QP = \chi + K.
\]
This yields:
\[
\|\chi f \|_{H^{-2}(X_e)} \leq \|QP f\|_{H^{-2}(X_e)} + \|Kf\|_{H^{-2}(X_e)} \leq C\left(\|P f\|_{L^2(X_e)} + \|f\|_{H^{-2026}(X_e)} \right)
\]
We then specify this inequality to $E_0f$ where $E_0 : L^2(X) \to L^2(X_e)$ is the extension operator by $0$ outside of $X$, $f \in H^{-2}(X)$, and obtain
\begin{equation}
\label{equation:matin}
\|\chi E_0 f \|_{H^{-2}(X_e)} = \|f\|_{H^{-2}(X)} \leq C\left(\|P E_0 f\|_{L^2(X_e)} + \|E_0 f\|_{H^{-2026}(X)} \right).
\end{equation}
A standard argument by contradiction allows to remove the remainder term and yields
\begin{equation}
\label{equation:midi}
\|f\|_{H^{-2}(X)} \leq C\|P E_0 f\|_{L^2(X_e)}, \qquad \forall f \in H^2(X).
\end{equation}
Indeed, if \eqref{equation:midi} does not hold, then one can find a sequence $(f_n)_{n \geq 0}, f_n \in H^{-2}(X)$, such that
\[
\|f_n\|_{H^{-2}(X)} = 1 \geq n \|P E_0 f_n\|_{L^2(X_e)}.
\]
That is $P E_0 f_n \to 0$ in $L^2(X_e)$. However, applying \eqref{equation:matin} with $f_n-f_m$, and using that $H^{-2}(X) \hookrightarrow H^{-2026}(X)$ is a compact embedding, we see that, upon extracting a subsequence, $(f_n)_{n \geq 0}$ is a Cauchy sequence in $H^{-2}(X)$ which therefore converges to $f_\infty \in H^{-2}(X)$. In addition, $\|f_\infty\|_{H^{-2}(X)}=1$. By continuity, $P E_0 f_\infty = 0$, which implies that $f_\infty \in C^\infty(X)$ and $R_0 f_\infty = 0$, so $f_\infty \equiv 0$ by injectivity of $R_0$. This contradicts $\|f_\infty\|_{H^{-2}(X)}=1$ and proves \eqref{equation:midi}.

Finally, we assume that $f \in C(X)$. Using that $P = \pi_* R_e^*R_e \pi^*$ and $R_e \pi^* E_0 f$ has compact support in $\cC_e$, we obtain that
\[
\|P E_0 f\|_{L^2(X_e)} \leq C \|R_e \pi^* E_0 f \|_{L^2(\cC_e)} \leq C \|R \pi^* f \|_{L^2(\cC)} = C \|R_0 f\|_{L^2(\cC)} \leq C \|R_0 f\|_{L^\infty(\cC)},
\]
where $C > 0$ is a uniform constant. This proves the claim.
\end{proof}

Finally, we use the existence of an extension to show that the Radon transform is injective for generic metrics (Corollary \ref{corollary:kernel}):

\begin{proof}[Proof of Corollary \ref{corollary:kernel}]
\emph{(i-ii)} We apply Theorem \ref{theorem:extension-normal-operator}. First, observe that, by ellipticity of $P$ on $X$, the kernel $K := \{f \in C(X_e) ~:~ \mathrm{supp}(f) \subset X, Pf=0\}$ is finite-dimensional and contained in $C^\infty(X_e) \cap \{\mathrm{supp}(f) \subset X\}$. In particular, any function in $K$ is smooth on $X_e$, supported in $X$, and therefore vanishes to infinite order on $\partial X$.

Let $f \in C(X)$ such that $R_0 f = 0$. We extend $f$ by $0$ to a function $E_0f$ with compact support in $X$. By Theorem \ref{theorem:extension-normal-operator}, $P E_0 f = 0$ so $E_0 f \in K$ which is finite-dimensional. In addition, $E_0 f \in C^\infty(X_e)$ so $f \in C^\infty(X)$ and $f$ vanishes to infinite order on $\partial X$. \\

\emph{(iii)} If $\bar g$ is analytic, and $R_0 f = 0$, then $PE_0f = 0$ so $E_0 f$ is analytic by analytic elliptic regularity. Therefore, $E_0f \equiv 0$, that is $f \equiv 0$ so $R_0$ is injective for analytic metrics. We now deal with arbitrary generic metrics. We fix an area simple analytic metric $\bar g_0$.  Using Lemma \ref{lemma:continuity}, we deduce that 
\[
C^N(X,\mathrm{Sym}^2T^*X) \to \mathcal{L}(H^{-2}_{\mathrm{comp}}(X_e), L^2(X_e)), \qquad \bar g \mapsto P_{\bar g}
\]
is continuous for $N \gg 1$ sufficiently large. By continuity, the estimate \eqref{equation:midi} then holds for all metrics $\bar g$ which is $C^N$ close to $\bar g_0$. This yields injectivity of the corresponding surface Radon transform of $\bar g$.
\end{proof}

\section{Geometric rigidity results}

\label{section:applications}

We now discuss the applications of the framework introduced in the previous sections to the boundary area rigidity problem. Throughout this section, we consider the case $\Phi=\tr/2$, $k=0$, and assume that $\bar g$ is area simple (Definition \ref{definition:area-simple}).

\subsection{The boundary area spectrum}

Let $(X,\bar g)$ be a smooth Riemannian ball with area simple metric, and consider the foliation by minimal discs.

\begin{defi} The boundary area spectrum of $\bar g$ is the map
\[
\mathcal{A}_{\bar g} : \cC \to (0,\infty), \qquad \mathcal{A}_{\bar g}(c) :=\Area_{\bar g}(\mathrm{D}_{\bar g}(c)).
\]
\end{defi}

The next lemma asserts that the disc $D_{\bar g}(c)$ is the unique minimizer of the area functional amongst all surfaces bounded by $c$.

\begin{lemma}
\label{lemma:area-minimizer}
The disc $\mathrm{D}_{\bar g}(c)$ is the unique minimizer of the functional $S \mapsto \Area_{\bar g}(S)$, where $S$ is any embedded surface, of any topological type, bounded by $c$.
\end{lemma}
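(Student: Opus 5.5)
The plan is a calibration argument built from the minimal foliation already constructed, combined with existence of a minimizer.

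\emph{Step 1: calibrate the disc.} Fix $c_0\in\cC$ and write $D_0:=\mathrm{D}_{\bar g}(c_0)$. By strict stability, the Jacobi operator of $D_0$ admits a positive solution. Using the construction in the proof of Lemma \ref{lemma:MDIsSubmanifold} (boundary data in $E^-$, which are nowhere vanishing by Lemma \ref{lemma:WeakMP}), I would build a one-parameter family of circles $c_t$, $t\in(-\delta,\delta)$, whose discs $\mathrm{D}(c_t)$ are pairwise disjoint. These discs foliate a neighbourhood of $D_0$ in $X$. The unit normal field $n$ of this local foliation is divergence free, since each leaf is minimal. Hence $\omega:=\iota_n \dd\vol_{\bar g}$ is a closed $2$-form with $|\omega|_{S}|\leq \dd S$ on any surface, with equality exactly on surfaces tangent to the leaves. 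Stokes' theorem then gives $\Area(S)\geq\int_S\omega=\int_{D_0}\omega=\Area(D_0)$ for every surface $S$ with $\partial S=c_0$ that stays inside the foliated region. Equality forces $S$ to be a leaf, and hence $S=D_0$.

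\emph{Step 2: pass from local to global.} The calibration only exists near $D_0$, so the global statement needs a different input. The plan is to use mean convexity of $\partial X$ (the first condition of area simplicity) together with classical geometric measure theory: for each $c$ there exists an area minimizer among surfaces of any topological type bounded by $c$. It is embedded and smooth up to the boundary (Hardt--Simon boundary regularity), and it lies in $X$ by mean convexity. This minimizer is in particular a properly embedded minimal surface bounded by $c$. It remains to show it is a disc, which would give that it equals $\mathrm{D}_{\bar g}(c)$ by the uniqueness in Definition \ref{definition:area-simple}.

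\emph{Step 3: topology via the Gauss lift foliation.} This is the main obstacle. To exclude minimizers of higher genus or with several components, I would use a continuity argument in $c$. For tiny circles, Theorem \ref{thm:SmallCircles} together with a monotonicity and small-area argument shows the minimizer is the small graph disc. Along a path in $\cC$, the set of $c$ for which every minimal surface bounded by $c$ coincides with $\mathrm{D}(c)$ should be both open and closed. Openness follows from compactness of minimizers plus the nondegeneracy of $\mathrm{D}(c)$ from strict stability: nearby minimizers converge smoothly to $\mathrm{D}(c)$, hence are normal graphs over it, hence are discs and coincide with $\mathrm{D}(c')$. Closedness follows by smooth convergence of minimizers. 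Alternatively, one could rule out non-disc minimal surfaces $S$ directly. Theorem \ref{thm:StructureOfdiscSpace} shows every $(x,v)\in SX^\bullet$ is tangent to a unique disc $\mathrm{D}(c')$. A maximum-principle tangency argument could then sweep the discs $\mathrm{D}(c_t)$ along a family of circles starting from a degenerate one and find a first interior tangency with $S$, forcing $S=\mathrm{D}(c_t)$. I would try this sweeping argument first, using the extension of the foliation in Theorem \ref{theorem:extension-foliation} to handle boundary contact. The delicate point is guaranteeing that the first contact is at an interior point rather than along $c$.
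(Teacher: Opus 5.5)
\textbf{The gap is Step 3.} You flag it as the main obstacle but never resolve it. Neither of your two routes is completed, and the continuity route does not work as stated.
\begin{itemize}
\item Your openness argument only controls \emph{minimizers} for nearby circles, whereas the set you define requires \emph{every} minimal surface bounded by $c'$ to coincide with $\mathrm{D}_{\bar g}(c')$.
\item Your closedness argument only shows that $\mathrm{D}_{\bar g}(c)$ is \emph{a} minimizer for the limit circle. It cannot exclude a second minimal surface bounded by $c$ (possibly of higher genus, possibly of equal area) that is not a limit of anything.
\end{itemize}
The source of the trouble is your claim at the start of Step 2 that the foliation, and hence the calibration, exists only near $D_0$. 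That claim is what pushes you into boundary regularity and a separate topological argument, and neither is needed.

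\textbf{The missing idea: the family in Step 1 can be taken global.}
\begin{itemize}
\item Choose pairwise disjoint circles $(c_t)_{t\in(-1,1)}$ in $\cC$ with $c_0=c$, collapsing to distinct points $p_\pm\in\partial X$ as $t\to\pm1$. For instance, take preimages under $\alpha$ of parallel round circles; their velocity is the constant element of $\cT$, so it is nowhere vanishing.
\item Positivity of the Jacobi fields (Lemma~\ref{lemma:WeakMP}), the strong maximum principle and Corollary~\ref{cor:LimitsOfSmallCircles} show that the discs $D_t:=\mathrm{D}_{\bar g}(c_t)$ foliate $X\setminus\{p_\pm\}$.
\item Now let $S$ be any minimal surface with $\partial S=c$ and $S\neq D_0$. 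The leaf parameter $t|_S$ vanishes on $\partial S$, so it attains a non-zero extremum $t^*$.
\item That extremum cannot lie on $c$, because $c$ is itself a leaf boundary and $c_{t^*}\cap c=\emptyset$. Nor can it lie elsewhere on $\partial X$, by mean convexity.
\item Hence $S$ touches $D_{t^*}$ from one side at an interior point. The strong maximum principle forces $S=D_{t^*}$, contradicting $\partial S=c\neq c_{t^*}$.
\end{itemize}
This is the paper's argument. It removes exactly your ``delicate point'' about boundary contact. It also applies to minimal surfaces of every topological type, so you never need to show that the minimizer is a disc. Your Step 2 is then needed only for existence of a minimizer, which the paper's reduction ``any minimizer is minimal'' leaves implicit.

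Alternatively, with this global foliation your Step 1 calibration $\omega=\iota_n\dd\vol_{\bar g}$ is defined on $X\setminus\{p_\pm\}$. It is bounded near $p_\pm$, so these two points are removable for $\dd\omega=0$. Stokes' theorem then gives $\Area_{\bar g}(S)\geq\Area_{\bar g}(D_0)$, with equality only for $S=D_0$. This proves minimality and uniqueness together, without any geometric measure theory.
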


\begin{proof}
Since any minimizer is, in particular, minimal, it suffices to show that $\mathrm{D}_{\bar g}(c)$ is the unique minimal surface in $X$ bounded by $c$. Let $(c_t)_{t\in(-1,1)}$ be a smooth family of disjoint circles in $\partial X$, Hausdorff converging onto the distinct points $p_\pm$ at $t=\pm 1$, and such that $c_0=c$. For all $t$, denote $D_t:=\mathrm{D}_{\bar g}(c_t)$, and recall that the family $(D_t)_{t\in(-1,1)}$ foliates $X\setminus\{p_\pm\}$. Now let $S$ be another minimal surface in $X$ bounded by $c$. If $S$ is not equal to $\mathrm{D}_{\bar g}(c)=D_0$, then $S$ is an interior tangent to some other leaf $D_t$ of $(D_t)_{t\in(-1,1)}$ at some point. It follows by the strong geometric maximum principle that $S=D_t$, so that $c_0=\partial S=\partial D_t=c_t$. This is absurd, and uniqueness follows.
\end{proof}

Recall that the operator $R_2$ was introduced in \eqref{equation:s2}.

\begin{lemma}
\label{lemma:differential-area}
Consider a foliation of $X$ by minimal discs. Then:
\[
\partial_\eps \mathcal{A}_{{\bar g}+\eps h}(c)|_{\eps = 0} = \dfrac{1}{2} \int_{D(c)} \tr_g(h|_{T_xD(c)}(x)) \dd\Area_{D(c)}(x) = R_2h(c)/2
\]
%
\end{lemma}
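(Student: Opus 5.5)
The plan is to compute the derivative via the first variation of area, treating the dependence of the disc on $\eps$ as a normal variation that contributes nothing because the disc is minimal. For $\eps$ small, $\bar g_\eps:=\bar g+\eps h$ remains area simple, since area simplicity is an open condition. Moreover, by the implicit function theorem argument of Lemma \ref{lemma:MDIsSubmanifold}, now applied with the metric as an additional smooth parameter (the $\Phi$-Jacobi operator of $\mathrm{D}_{\bar g}(c)$ is invertible with Dirichlet conditions), the discs $\mathrm{D}_{\bar g_\eps}(c)$ depend smoothly on $\eps$. We may therefore write
\[
\mathrm{D}_{\bar g_\eps}(c)=\{\exp_p(u_\eps(p)N(p))\ :\ p\in \mathrm{D}(c)\}
\]
up to reparametrization, with $u_0=0$ and $u_\eps|_{c}=0$, since the boundary curve $c$ is fixed and independent of $\eps$. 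Denote by $e_\eps:\mathrm{D}(c)\to X$ this parametrization.

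Next, split the derivative of $\Area_{\bar g_\eps}(e_\eps)$ at $\eps=0$ by the chain rule into two pieces: the variation of the metric with the surface fixed, and the variation of the surface with the metric fixed. For the first piece, the area form of $e_0^*\bar g_\eps$ is $\sqrt{\det(I+\eps\, g^{-1}e_0^*h)}\,\dd \mathrm{D}(c)$, whose $\eps$-derivative at $0$ is $\frac12\tr_g(h|_{T\mathrm{D}(c)})\,\dd\mathrm{D}(c)$. For the second piece, the first variation formula for area in $(X,\bar g)$ gives
\[
-\int_{\mathrm{D}(c)} 2H\,\dot u\,\dd \mathrm{D}(c)
\]
plus boundary terms. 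The mean curvature term vanishes because $H=0$. The boundary term is an integral over $c$ of the conormal component of the variation field; since the variation is $\dot u N$, which vanishes on $c$, it also vanishes. (One may also keep a tangential reparametrization component, since boundary points move along $c$ tangentially to $c$ and are hence orthogonal to the conormal.)

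Summing the two pieces gives the first equality of the statement. The second equality then holds by definition of $R_2=R\circ\pi_\perp^*$, using $\pi_\perp^*h(v)=\tr(h|_{v^\perp})$ and the fact that $v^\perp=T_x\mathrm{D}(c)$ along the Gauss lift $\mathrm{L}(c)$, whose area form is pulled back from that of $\mathrm{D}(c)$.

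The main obstacle is justifying the differentiability of $\eps\mapsto\mathrm{D}_{\bar g_\eps}(c)$, which requires stability and uniqueness to persist. I would handle this exactly as in Lemma \ref{lemma:MDIsSubmanifold}, treating $\bar g$ as a Banach parameter in the functional $\mathcal F$. Uniqueness for nearby metrics ensures that the discs obtained this way are indeed $\mathrm{D}_{\bar g_\eps}(c)$, and Lemma \ref{lemma:area-minimizer} identifies their area with $\mathcal A_{\bar g_\eps}(c)$.
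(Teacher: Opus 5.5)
Your proposal is correct and follows the paper's argument. The paper's one-line proof says that the disc is critical (indeed locally area-minimizing) among discs with fixed boundary $c$, so the motion of $\mathrm{D}_{\bar g+\eps h}(c)$ contributes nothing and only the $\eps$-derivative $\frac12\tr_g(h|_{T\mathrm{D}(c)})$ of the induced area form survives; you make this explicit through the first variation formula and the implicit-function-theorem justification of smooth dependence on $\eps$. The paper leaves that justification implicit here and invokes it only in the subsequent $C^2$-regularity lemma.
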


In particular, given a conformal perturbation $e^{2 \eps f} \bar g$ of $\bar g$, we find
\begin{equation}
\label{equation:differential-conformal}
\partial_\eps \mathcal{A}_{e^{2\eps f}\bar g}(c)|_{\eps = 0} = 2 \int_{D(c)} f(x) \dd\Area_{D(c)}(x) = 2 R_0 f(c).
\end{equation}

\begin{proof}
Minimal surfaces are local minimizers of the area function among discs with a fixed boundary; the formula is immediate.
\end{proof}

The following result asserts that the boundary area spectrum depends in a $C^2$ fashion on the metric, provided it is smooth enough.

\begin{lemma}
The map
\[
C^{4}(X) \ni f \mapsto \mathcal{A}_{e^{2f}\bar g} \in L^\infty(\cC)
\]
is $C^2$. In particular, there exist $C, \eps_0 > 0$ such that for all $f \in C^{4}(X)$, $\|f\|_{C^4(X)} < \eps$:
\begin{equation}
\label{equation:taylor}
\|\mathcal{A}_{e^{2f}\bar g}-\mathcal{A}_{\bar g}-2R_0f\|_{L^\infty(\cC)} \leq C \|f\|^2_{C^{4}(X)}.
\end{equation}
\end{lemma}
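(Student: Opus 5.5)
We plan to realise $\mathcal{A}_{e^{2f}\bar g}(c)$ as a composition of smooth maps between Banach spaces, using the implicit function theorem exactly as in Lemma \ref{lemma:MDIsSubmanifold}, but now with the conformal factor $f$ as an additional parameter. Fix $c_0\in\overline{\cC}$ and, near interior circles, use the tubular parametrization $F(c,p,g)$ of Lemma \ref{lemma:MDIsSubmanifold}. Define
\begin{equation*}
\mathcal{F}:C^{4}(X)\times U\times\mathcal{V}^{2,\alpha}_0\to C^{0,\alpha},\qquad \mathcal{F}(f,c,g):=H_{e^{2f}\bar g}\big(F_{c,g}\big)\ ,
\end{equation*}
the mean curvature of the graph with respect to $e^{2f}\bar g$. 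Since the conformal change formula $H_{e^{2f}\bar g}=e^{-f}(H_{\bar g}+\partial_n f)$ involves only first derivatives of $f$, evaluated at points of the graph and composed with $g$, the map $\mathcal{F}$ is $C^2$ as soon as $f\in C^{4}$. Here two derivatives are lost when differentiating twice under composition with $g$, which is why we require $C^{4}$ rather than $C^{2}$ regularity. Its partial derivative in $g$ at $(0,c_0,u(c_0))$ is $J\big(\langle\xi,N\rangle\,\cdot\big)$. This operator is invertible by strict stability, so the implicit function theorem yields a $C^2$ map $(f,c)\mapsto g(f,c)$. By uniqueness (openness of area simplicity together with Lemma \ref{lemma:area-minimizer}), its graph is $\mathrm{D}_{e^{2f}\bar g}(c)$. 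Composing with the area functional $\int e^{2f}\,\dd\Area$, which is smooth in $(f,g)$ on $C^{4}\times C^{2,\alpha}$, gives local $C^2$ dependence of $\mathcal{A}$ on $f$.

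The key step is to make all bounds uniform in $c\in\cC$, in order to obtain a statement in $L^\infty(\cC)$. On compact subsets of $\cC$ this follows from compactness of the family of Jacobi operators, whose inverses are uniformly bounded. Near $\partial\cC$ we instead use the rescaled Plateau problem of Theorem \ref{thm:AsymptoticPlateauProblem}. We add $f$ as a parameter in $\mathcal{H}(p,r,\phi)$. After the rescaling $q=p+rx$, the metric $e^{2f}\bar g$ enters only through $f(q)$ and $r\,\dd f(q)$, so the rescaled operator remains $C^2$ in $(f,\phi)$ uniformly down to $r=0$. Its linearization at $r=0$ is the constant-coefficient operator $L_p$, which is invertible. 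The implicit function theorem then produces $\phi(f,p,r)$ that is $C^2$ in $f$ with bounds uniform in $(p,r)$. The area equals $r^2$ times a smooth functional of $(f,\phi)$, so it is uniformly $C^2$ as well. This uniform boundary analysis is the main obstacle: one must check that the remainder terms coming from $f$ do not introduce negative powers of $r$. We expect this to hold because each derivative of $f$ comes with a factor of $r$.

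Finally, the Taylor estimate \eqref{equation:taylor} follows from the uniform $C^2$ bound together with Taylor's formula with integral remainder. The first-order term is identified by \eqref{equation:differential-conformal} as $2R_0f$. The second derivative is bounded on $\{\|f\|_{C^4}<\eps_0\}$ uniformly in $c$, which gives a constant $C$ independent of $c$.
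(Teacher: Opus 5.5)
Your proposal follows the paper's route: the implicit function theorem of Lemma \ref{lemma:MDIsSubmanifold} with the conformal factor as an extra parameter, followed by a second-order Taylor expansion whose linear term is identified through \eqref{equation:differential-conformal}; your use of the rescaled problem of Theorem \ref{thm:AsymptoticPlateauProblem} to get uniformity in $c$ near $\partial\cC$ makes explicit a point the paper leaves to the citation of \cite{Busch-Liimatainen-Salo-Tzou-25}. One small correction there: in the normalization where the linearization at $r=0$ is $L_p$, the term $\partial_n f$ from the conformal change formula enters at order $r^0$, not with a factor of $r$, so for $f\neq 0$ the $r=0$ solution is a shifted paraboloid rather than $\phi=0$; this is harmless, because you apply the implicit function theorem at $(f,r,\phi)=(0,0,0)$ and only need the absence of negative powers of $r$, which does hold.
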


\begin{proof}
The $C^2$ regularity of this map follows from the implicit function theorem applied in the proof of Lemma \ref{lemma:MDIsSubmanifold} with the metric $\bar g \in C^4$ as a parameter. See \cite[Theorem 2.2]{Busch-Liimatainen-Salo-Tzou-25} where this argument is written up in detail. The inequality \eqref{equation:taylor} follows from a Taylor expansion to order $2$ around $\bar g$, using \eqref{equation:differential-conformal}.
\end{proof}

\subsection{Local rigidity} \label{ssection:local-rigidity} We now establish the local rigidity of the boundary area spectrum of minimal surfaces (Theorem \ref{theorem:main}). It follows from the more general result:

\begin{theorem}
\label{theorem:estimee}
Let $(X,\bar g)$ be a three-dimensional Riemannian ball such that $\bar g$ is area simple with respect to $\cC$, and $R_0 : C(X) \to C(\cC)$ is injective. Then, there exists $C, \eps > 0$ such that for all $f \in C^{14}(X)$, $\|f\|_{C^{14}(X)} < \eps$, the following inequality holds:
\[
\|f\|_{H^{-2}(X)} \leq C  \|\mathcal{A}_{e^{2f}\bar g}- \mathcal{A}_{\bar g}\|_{L^\infty(\cC)}.
\]
In particular, if $\mathcal{A}_{e^{2f}\bar g} = \mathcal{A}_{\bar g}$, then $f = 0$.
\end{theorem}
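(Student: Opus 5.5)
The plan is to combine the stability estimate of Lemma \ref{lemma:soir} with the second-order Taylor expansion \eqref{equation:taylor}, and then absorb the quadratic remainder using interpolation between a negative Sobolev norm and a high Hölder norm. Since $R_0$ is injective by assumption, Lemma \ref{lemma:soir} gives, for all $f\in C(X)$,
\[
\|f\|_{H^{-2}(X)} \leq C\|R_0 f\|_{L^\infty(\cC)}.
\]
By \eqref{equation:taylor}, for $\|f\|_{C^4(X)}$ small,
\[
2\|R_0 f\|_{L^\infty(\cC)} \leq \|\mathcal{A}_{e^{2f}\bar g}-\mathcal{A}_{\bar g}\|_{L^\infty(\cC)} + C\|f\|_{C^4(X)}^2.
\]
Combining these yields
\[
\|f\|_{H^{-2}(X)} \leq C\|\mathcal{A}_{e^{2f}\bar g}-\mathcal{A}_{\bar g}\|_{L^\infty(\cC)} + C\|f\|_{C^4(X)}^2,
\]
and the whole task reduces to showing that the last term can be absorbed into the left-hand side.

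For the absorption I would use an interpolation inequality, which is presumably the Lemma \ref{lemma:interpolation} alluded to earlier and proved via a Seeley extension to $\R^3$. Bound $\|f\|_{C^4}\leq C\|f\|_{H^{s}}$ with $s>4+3/2$, say $s=6$. Then interpolate $H^6$ between $H^{-2}$ and $H^{14}$:
\[
\|f\|_{H^6}\leq C\|f\|_{H^{-2}}^{1/2}\|f\|_{H^{14}}^{1/2},
\]
where the exponent is $1/2$ because $6=\tfrac12(-2)+\tfrac12\cdot 14$. Using $\|f\|_{H^{14}(X)}\leq C\|f\|_{C^{14}(X)}$ on the compact $X$, this gives
\[
\|f\|_{C^4}^2\leq C\|f\|_{H^{-2}}\|f\|_{C^{14}}\leq C\eps\|f\|_{H^{-2}}.
\]
Choosing $\eps$ small so that $C\eps\leq 1/2$ and absorbing the term yields the claimed estimate. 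This is what explains the exponent $14$: exactly the Sobolev order for which the quadratic error becomes linear in $\|f\|_{H^{-2}}$ with a small constant. The conclusion $f=0$ then follows immediately when the two area spectra coincide.

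The main obstacle is the interpolation step on a manifold with boundary. Here $H^{-2}(X)$ must be understood consistently with Lemma \ref{lemma:soir}, that is, as the norm of $E_0f$ in $H^{-2}(X_e)$, or as a dual space. I would handle this by extending $f$ with a Seeley extension operator to a compactly supported function on $X_e$, doing the interpolation there via the Fourier transform (Hölder/Cauchy–Schwarz on $\langle\xi\rangle$-weights), and checking that the negative-order norm of the extension is controlled by the one used in Lemma \ref{lemma:soir}.

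If that compatibility fails, I would instead prove the stability estimate with $E_0f$ directly: apply the argument to $E_0 f$, which only requires bounding $\|E_0f\|_{H^6}$. Since $E_0 f$ is not smooth across $\partial X$, this would force using $C^4$ of $f$ inside $X$ together with a separate interpolation for functions on $X$ (for example, via Gagliardo–Nirenberg on a Lipschitz domain). I would try the Seeley extension route first.
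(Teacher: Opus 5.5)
Your proposal is correct and follows the paper's proof essentially step for step. Like the paper, you combine Lemma \ref{lemma:soir} with the Taylor estimate \eqref{equation:taylor}, use the embedding $H^6(X)\hookrightarrow C^4(X)$ and the interpolation $\|f\|_{H^6}^2\le C\|f\|_{H^{-2}}\|f\|_{H^{14}}$, then bound $\|f\|_{H^{14}}\le C\|f\|_{C^{14}}$ and absorb the quadratic term with $\eps\sim 1/(2C)$. The compatibility issue you flag is exactly what Lemma \ref{lemma:interpolation} (with $s=6$, $t=8$) settles via a Seeley extension, with the negative norm defined through $E_0$ as in Lemma \ref{lemma:soir}, so your first route is the one the paper takes.
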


We refer the reader to \S\ref{sssection:sobolev-boundary} for the definition of Sobolev spaces on manifolds with boundary. By Corollary \ref{corollary:kernel}, Theorem \ref{theorem:estimee} holds for an open dense subset of area simple metrics. We will use the interpolation Lemma \ref{lemma:interpolation} in the following form ($s=6,t=8$): there exists a constant $C > 0$ such that for all $f \in H^{14}(X)$
\begin{equation}
\label{equation:interpolation}
\|f\|^2_{H^6(X)} \leq C \|f\|_{H^{-2}(X)} \|f\|_{H^{14}(X)}.
\end{equation}

\begin{proof}[Proof of Theorem \ref{theorem:estimee}]
We use \eqref{equation:taylor} and Taylor expand the boundary area spectrum about $\bar g$:
\[
\mathcal{A}_{e^{2f}\bar g} = \mathcal{A}_{\bar g} + 2R_0 f + \mathcal{O}_{L^\infty(\cC)}(\|f\|^2_{C^{4}}).
\]
Note that the previous equality holds on $L^\infty(\cC)$.
Applying \eqref{equation:soir}, we obtain:
\[
\|f\|_{H^{-2}(X)} \leq C \|R_0 f\|_{L^\infty(\cC)} \leq C\left(\|\mathcal{A}_{e^{2f}g}- \mathcal{A}_g\|_{L^\infty(\cC)} +\|f\|^2_{C^{4}(X)}\right).
\]
We then use the embedding $H^{6}(X) \hookrightarrow C^{4}(X)$ and the estimate \eqref{equation:interpolation}, interpolating $H^{6}(X)$ between $H^{-2}(X)$ and $H^{14}(X)$:
\[
\begin{split}
\|f\|_{H^{-2}(X)} & \leq  C\left(\|\mathcal{A}_{e^{2f}\bar g}- \mathcal{A}_{\bar g}\|_{L^\infty(\cC)} +\|f\|_{H^{-2}(X)}\|f\|_{H^{14}(X)}\right) \\
& \leq C\left(\|\mathcal{A}_{e^{2f}\bar g}- \mathcal{A}_{\bar g}\|_{L^\infty(\cC)} +\|f\|_{H^{-2}(X)}\|f\|_{C^{14}(X)}\right).
\end{split}
\]
Assuming we have the \emph{a priori} bound $\|f\|_{C^{14}(X)} < \eps := 1/2C$, we obtain:
\[
\|f\|_{H^{-2}(X)} \leq C \|\mathcal{A}_{e^{2f}{\bar g}}- \mathcal{A}_{\bar g}\|_{L^\infty(\cC)},
\]
for some constant $C > 0$, which is the claimed estimate.
\end{proof}

\subsection{Global rigidity in the analytic case}

Finally, we prove Theorem \ref{theorem:global}.

\begin{proof}[Proof of Theorem \ref{theorem:global}]
Let $\bar g$ be an area simple metric, and let $f$ be a real analytic conformal factor such that $e^{2f} \bar g$ is area simple and
\[
\mathcal{A}_{\bar g}(c) = \mathcal{A}_{e^{2f} \bar g}(c), \qquad \forall c \in \cC.
\]
To show that $f \equiv 0$, we will prove that $f$ vanishes to infinite order on $\partial X$. We denote by $R_0^{\bar g}$ and $R_0^{e^{2f} \bar g}$ the surface Radon transforms of functions corresponding respectively to $\bar g$ and $e^{2f} \bar g$. Given a disc $\mathrm{D} \subset X$, we let $g$ denote the restriction of $\bar g$ to $\mathrm{D}$.

We begin with the following preliminary observations:
\begin{align}
\label{equation:radon1}
R_0^{\bar g}(e^{2f}-1)(c) \geq 0, & \qquad \forall c \in \cC \\
\label{equation:radon2}
R_0^{e^{2f} \bar g}(e^{-2f}-1)(c) \geq 0, & \qquad \forall c \in \cC.
\end{align}
Indeed, observe that, as the disc $\mathrm{D}_{e^{2f} \bar g}(c)$ minimizes the area within all discs with prescribed boundary $c$ (Lemma \ref{lemma:area-minimizer}), the following inequality holds:
\[
\mathcal{A}_{e^{2f}\bar g}(c) = \mathrm{Area}_{e^{2f} \bar g}(\mathrm{D}_{e^{2f} \bar g}(c)) \leq \mathrm{Area}_{e^{2f} \bar g}(\mathrm{D}_{\bar g}(c)) = \int_{\mathrm{D}_{\bar g}(c)} e^{2f} \dd \vol_{g} = R_0^{\bar g}e^{2f}(c)
\]
However, by hypotheses, $\mathcal{A}_{e^{2f}\bar g}(c) = \mathcal{A}_{\bar g}(c) = R_0^{\bar g}1(c)$. This proves \eqref{equation:radon1}. The proof of \eqref{equation:radon2} is the same upon reversing the roles of $\bar g$ and $e^{2f} \bar g$.

We will now use the following fact, derived from Corollary \ref{cor:LimitsOfSmallCircles}: if $x_0 \in \partial X$ and $(c_n)_{n \geq 0}$ is a family of circles collapsing to $x_0$, then $\mathrm{D}_{\bar g}(c_n)$ Hausdorff-convergence to $\{x_0\}$. In particular, for $n \gg 1$ large enough, $\mathrm{D}_{\bar g}(c_n)$ is contained in an arbitrarily small neighborhood of $x_0$. We will prove that the jets of $f$ vanish to infinite order on $\partial X$ by induction. First, let us establish that $f = 0$ on $\partial X$. If there exists $x_0 \in \partial X$ such that $f(x_0) > 0$, then $e^{-2f}-1 < 0$ near $x_0$, and $R_0^{e^{2f} \bar g}(e^{-2f}-1)(c_n) < 0$ for $n \gg 1$ large enough, where $(c_n)_{n \geq 0}$ is a family of circles collapsing to $x_0$. This contradicts \eqref{equation:radon2}. Similarly, $f(x_0) < 0$ is ruled out using \eqref{equation:radon1}, so $f = 0$ on $\partial X$.

Assume that in boundary normal coordinates $f(x,t) = t^m \tilde{f}(x,t)$ for some smooth function $\tilde{f}$, where $m \geq 1$, $x \in \partial X$ and $t \geq 0$. We want to show that $\tilde{f}(\cdot, 0)=0$. Assume for a contradiction that $\tilde{f}(x_0,0) > 0$ for some $x_0 \in \partial X$. Then
\[
e^{-2f}-1 = -2t^m \tilde{f}(x,t) + \mathcal{O}(t^{2m}) < 0
\]
near $(x_0,0)$. As in the previous paragraph, this then contradicts \eqref{equation:radon2} using a family of circles collapsing to $x_0$. Similarly, $\tilde{f}(x_0,0) < 0$ is also excluded using \eqref{equation:radon1} so $\tilde{f}(x_0,0)=0$ which proves the induction. This completes the proof.
\end{proof}

\appendix

\section{Microlocal analysis}

\label{appendix:pseudo}

\subsection{Pseudodifferential operators and Sobolev spaces}

\subsubsection{Standard properties} In this section, we recall standard properties of pseudodifferential operators and refer the interested reader to \cite{Hormander-65, Shubin-01, Lefeuvre-book} for details.

Let $Y$ be a smooth second-countable manifold without boundary (not necessarily compact). We denote by $\mathcal{E}'(Y)$ the space of compactly supported distributions (the topological dual of the Fréchet space of smooth densities) and $\mathcal{D}'(Y)$ the space of distributions (the dual of compactly supported smooth densities). Let $A : \mathcal{E}'(Y) \to \mathcal{D}'(Y)$ be a continuous operator. It is called \emph{smoothing} if it maps boundedly into $C^\infty(Y)$. The set of smoothing operators is denoted $\Psi^{-\infty}(Y)$.

Let $S^m(T^*Y)$ be the space of symbols of degree $m \in \R$ over $T^*Y$. This corresponds to the set of smooth functions $a \in C^\infty(T^*Y)$ such that in any local relatively compact patch of coordinates $U \subset Y$, the following estimate holds:
\[
|\partial_x^\beta \partial_\xi^\alpha a(x,\xi)| \leq C \langle\xi\rangle^{m-|\alpha|}, \qquad \forall (x,\xi) \in T^*U,
\]
where $\alpha, \beta \in \Z_{\geq 0}^n$, $C > 0$ is a uniform constant, and $\langle\xi\rangle := \sqrt{1+|\xi|^2}$, where $|\bullet|$ is the Euclidean norm in $\R^n$.

The left quantization of symbols in $\R^n$ is defined for $a \in S^m(T^*\R^n)$ and $f \in C^\infty_{\mathrm{comp}}(\R^n)$ as
\begin{equation}
\label{equation:op-rn}
\Op_{\R^n}(a)f(x) := \dfrac{1}{(2\pi)^n} \int_{\R^n_y \times \R^n_\xi} e^{i\xi(x-y)} a(x,\xi) f(y) \dd y \dd \xi.
\end{equation}
Similarly, given $a \in S^m(T^*Y)$, one can define a quantization $\Op(a)$ on $Y$ by using a partition of unity and local charts, and \eqref{equation:op-rn} in each chart, see e.g. \cite[\S5.2.3]{Lefeuvre-book}. Notice that the quantization map $\Op$ is not entirely canonical and depends on the local charts. The pseudodifferential algebra of $Y$ is then defined as
\[
\Psi^m(Y) := \{ \Op(a) + R ~:~ a \in S^m(T^*Y), R \in \Psi^{-\infty}(Y)\}.
\]
However, it can be checked that $\Psi^m(Y)$ is intrinsic to $Y$. Any operator $A \in \Psi^m(Y)$ is well-defined and bounded as a map $A : C^\infty_{\mathrm{comp}}(Y) \to C^\infty(Y)$ and $A : \mathcal{E}'(Y) \to \mathcal{D}'(Y)$.

Given $A \in \Psi^m(Y)$, its \emph{principal symbol} $[\sigma_A]$ is well-defined as an element
\[
[\sigma_A] \in S^m(T^*Y)/S^{m-1}(T^*Y),
\]
and for any representative $\sigma_A \in S^m(T^*Y)$, $A-\Op(\sigma_A) \in \Psi^{m-1}(Y)$. An operator is called \emph{elliptic} if its principal symbol (any representative of it) satisfies: for all compact subset $K \subset Y$, there exists $C > 0$ such that for all $x \in K, \xi \in T^*_xY$, and $|\xi| > C$
\[
|a(x,\xi)| \geq \langle \xi \rangle^m/C.
\]
The Japanese bracket is here computed with respect to an arbitrary background metric on $Y$.

A standard fact is that elliptic operators admit parametrices. Namely if $A \in \Psi^m(Y)$ is elliptic, then there exists $Q \in \Psi^{-m}(Y)$ and $R, R' \in \Psi^{-\infty}(Y)$ such that
\begin{equation}
\label{equation:parametrix}
A Q = \mathbf{1}- R, \qquad QA=\mathbf{1}-R'.
\end{equation}
We refer to \cite[\S5.3]{Lefeuvre-book} for details. The existence of a parametrix implies the following well-known elliptic regularity statement: if $u \in \mathcal{E}'(Y)$ is such that $A u = 0$, then $u \in C^\infty_{\mathrm{comp}}(Y)$. Indeed, $QA u = 0 = u-R'u$, that is $u = R' u \in C^\infty_{\mathrm{comp}}(Y)$ as $R'$ is smoothing. The same results hold in the analytic category, provided the pseudodifferential operators under consideration are analytic. See \cite{Treves-80} for details.

\subsubsection{A useful characterization}

It will be convenient at some point in the article to use the historical characterization of smooth ($C^\infty$) pseudodifferential due to Hörmander, see \cite[Definition 2.1]{Hormander-65}. In the following, we say that a set $A \subset C^\infty(Y)$ is \emph{bounded} if there exists a sequence $(A_k)_{k \geq 0}$ of positive real numbers and an exhaustion $Y = \cup_{k \geq 0} K_k$ by nested compact subsets $(K_k)_{k \geq 0}$ such that for all $f \in A, \|f\|_{C^k(K_k)} \leq A_k$. It is known that $A$ is compact if and only if it is closed and bounded.

An operator $P$ is a pseudodifferential operator of order $m \in \R$ if $P : C^\infty_{\mathrm{comp}}(Y) \to C^\infty(Y)$ is continuous, and there exists a sequence $s_0=0 < s_1 < ...$ of real numbers diverging to $+\infty$ such that for all $f \in C^\infty_{\mathrm{comp}}(Y),\, S \in C^\infty(Y)$ such that $\dd S \neq 0$ on $\mathrm{supp}(f)$, there is an asymptotic expansion:
\begin{equation}
\label{equation:pdo-expansion}
e^{-i S/h} P(f e^{iS/h}) \sim h^{-m} \sum_{j=0}^{+\infty} P_j(f,S) h^{s_j}.
\end{equation}
By this, we mean that for every integer $N > 0$, for every compact set $K$ of real-valued functions $S \in C^\infty(Y)$ with $\dd S \neq 0$ on the $\mathrm{supp}(f)$, for every $0 < h < 1$, the following holds: the error term
\begin{equation}
\label{equation:negligible}
h^{-s_N+m}\left(e^{-i S/h} P\left(f e^{i S/h}\right) - h^{-m}\sum_{j=0}^{N-1} P_j(f,S)h^{s_j}\right)
\end{equation}
belongs to a bounded set in $C^\infty(Y)$ with bound independent of $h$. In particular, $P$ is \emph{differential} if and only if the sum \eqref{equation:pdo-expansion} ranges over a finite number of $j$'s and $P$ is \emph{classical} if the $s_j$'s take integer values.

\subsubsection{Sobolev spaces on closed manifolds}

\label{sssection:sobolev}

We now further assume that $Y$ is a smooth closed manifold. Let $g$ be an arbitrary background metric, $\Delta_g \geq 0$ the induced Laplace operator. The Sobolev space $H^s(Y)$ is defined as the completion of $C^\infty(Y)$ with respect to the norm
\[
\|u\|^2_{H^s(Y)} := \|(\mathbf{1}+\Delta_g)^{s/2} u\|^2_{L^2(Y)},
\]
where $(\mathbf{1}+\Delta_g)^{s/2} \in \Psi^s(Y)$ is defined for $s \in \R$ thanks to the spectral theorem.

\subsubsection{Sobolev spaces on manifolds with boundary}

\label{sssection:sobolev-boundary}

In this paragraph, $Y$ is now assumed to be a smooth Riemannian manifold with boundary. Let $Y_e$ be a smooth closed Riemannian extension of $Y$, equipped with a metric $g_e$, such that $Y \hookrightarrow Y_e$ embeds isometrically. (This can be achieved by considering the double space of $Y$ obtained by gluing together two copies of $Y$ along the boundary and extending smoothly the metric $g$ to the other copy of $Y$.)

For $s \geq 0$, the Sobolev space $H^s(Y)$ is defined as the completion of $C^\infty(Y)$ with respect to the norm:
\[
\|f\|_{H^s(Y)} := \inf_{F \in H^s(Y_e), F|_Y=f} \|F\|_{H^s(Y_e)},
\]
while, for $s \leq 0$, $H^s(Y)$ is the completion of smooth functions with respect to the norm
\[
\|f\|_{H^{s}(Y)} := \|E_0 f\|_{H^s(Y_e)}, 
\]
where $E_0 : L^2(Y) \to L^2(Y_e)$ denotes the extension operator by $0$ outside of $Y$.

By construction, for $s \geq 0$, $H^{-s}(Y)$ identifies with the dual of $H^s(Y)$ \emph{via} the $L^2$ inner product on $Y$. That is, for $u \in H^s(Y), v \in H^{-s}(Y)$, letting $U \in H^s(Y_e)$ be an arbitrary extension such that $U|_Y=u$, it holds that
\[
|\langle u,v\rangle_{L^2(Y)}| = |\langle U, E_0 v \rangle_{L^2(Y_e)}| \leq \|U\|_{H^s(Y_e)} \|E_0v\|_{H^{-s}(Y_e)} \leq \|U\|_{H^s(Y_e)}\|v\|_{H^{-s}(Y)}.
\]
Taking the infimum over all such extensions $U$, we obtain
\[
|\langle u,v\rangle_{L^2(Y)}| \leq \|u\|_{H^s(Y)} \|v\|_{H^{-s}(Y)}.
\]

The following interpolation inequality is used in \S\ref{ssection:local-rigidity}:

\begin{lemma}
\label{lemma:interpolation}
Let $s \geq 0, t \geq s$. Then there exists $C := C(s,t) > 0$ such that for all $f \in H^{s+t}(Y)$:
\[
\|f\|_{H^s(Y)}^2 \leq C \|f\|_{H^{s+t}(Y)} \|f\|_{H^{s-t}(Y)}.
\]
\end{lemma}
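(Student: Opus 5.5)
The plan is to reduce to the case of a closed manifold via the extension $Y\subset Y_e$ and then apply the spectral interpolation inequality on $Y_e$. On the closed manifold $Y_e$, write $\Lambda := (\mathbf{1}+\Delta_{g_e})^{1/2}$. For any $F$, Cauchy--Schwarz gives
\[
\|F\|^2_{H^s(Y_e)} = \langle \Lambda^{s+t}F, \Lambda^{s-t}F\rangle_{L^2(Y_e)} \leq \|F\|_{H^{s+t}(Y_e)}\|F\|_{H^{s-t}(Y_e)},
\]
using self-adjointness of $\Lambda$, or more precisely the spectral theorem. The difficulty is that the three norms on $Y$ are defined differently. Since $s\geq0$ and $s+t\geq 0$, the first two are infimum-over-extensions norms, while $s-t\leq 0$, because $t\geq s$, is the zero-extension norm. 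So we cannot simply insert $E_0f$ everywhere: $E_0 f$ is not in $H^{s+t}(Y_e)$ in general.

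The first step is to fix a bounded extension operator $\mathcal{E}$. I will use a Seeley-type extension across $\partial Y$ in boundary normal coordinates, followed by a cutoff, chosen so that $\mathcal{E}:H^{r}(Y)\to H^r(Y_e)$ is bounded for all $0\leq r\leq s+t$. Then $\|\mathcal{E}f\|_{H^{s+t}(Y_e)}\leq C\|f\|_{H^{s+t}(Y)}$, and $\|f\|_{H^s(Y)}\leq \|\mathcal{E}f\|_{H^s(Y_e)}$ by the definition as an infimum.

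The key remaining step, which is the main obstacle, is to bound $\|\mathcal{E}f\|_{H^{s-t}(Y_e)}$ by $C\|E_0 f\|_{H^{s-t}(Y_e)}$. A negative-order norm of an extension is not obviously controlled by that of the zero extension.

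To get around this, I will avoid $\mathcal{E}$ in the negative norm and argue by duality instead. Write $\|f\|_{H^s(Y)}^2 = \langle f, f\rangle_s$, and use the duality pairing established just before the lemma,
\[
|\langle u,v\rangle_{L^2(Y)}|\leq \|u\|_{H^{r}(Y)}\|v\|_{H^{-r}(Y)}, \qquad r\geq 0 .
\]
Set $r := t-s\geq 0$ and $u := \Lambda_Y^{2s} f$, where $\Lambda_Y^{2s}$ denotes a restriction to $Y$ of the pseudodifferential operator $\Lambda^{2s}$ applied to $\mathcal{E}f$. Then
\[
\|\mathcal{E}f\|^2_{H^s(Y_e)} = \langle \Lambda^{2s}\mathcal{E} f, \mathcal{E} f\rangle_{L^2(Y_e)} .
\]
To make the pairing live on $Y$ only, I would instead choose $\mathcal{E}$ to be a "reflection" extension whose $L^2$-adjoint is also bounded between the relevant negative Sobolev spaces, and move everything onto $E_0 f$. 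Seeley's extension has this property: its adjoint $\mathcal{E}^*$ maps $H^{-r}(Y_e)$ restrictions boundedly. This gives
\[
\|f\|^2_{H^s(Y)} \lesssim \|\Lambda^{2s}\mathcal{E}f\|_{H^{t-s}}\,\|E_0 f\|_{H^{s-t}(Y_e)}\lesssim \|f\|_{H^{s+t}(Y)}\|f\|_{H^{s-t}(Y)}.
\]

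The delicate point is justifying the transposition. If it fails, my fallback is the complex interpolation identity $[H^{s-t}_0,H^{s+t}]_{1/2}=H^s$ (valid away from half-integer exceptional indices) together with the standard interpolation inequality $\|f\|_{[X_0,X_1]_{1/2}}\le\|f\|_{X_0}^{1/2}\|f\|_{X_1}^{1/2}$.
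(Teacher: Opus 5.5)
Once you write the transposition out, your argument is exactly the paper's. With $\mathcal{E}$ a Seeley (reflection-type) extension,
\begin{align*}
\|f\|_{H^s(Y)}^2 &\le \|\mathcal{E}f\|^2_{H^s(Y_e)} = \langle \Lambda^{2s}\mathcal{E}f,\mathcal{E}f\rangle_{L^2(Y_e)} = \langle \mathcal{E}^*\Lambda^{2s}\mathcal{E}f,f\rangle_{L^2(Y)}\\
&\le \|\mathcal{E}^*\Lambda^{2s}\mathcal{E}f\|_{H^{t-s}(Y)}\,\|f\|_{H^{s-t}(Y)}.
\end{align*}
The paper writes the middle pairing as $\langle E_0f,\mathcal{E}\mathcal{E}^*\Lambda^{2s}\mathcal{E}f\rangle_{L^2(Y_e)}$. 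So the function to pair with $f$ is $u=\mathcal{E}^*\Lambda^{2s}\mathcal{E}f$. Your first choice, the restriction of $\Lambda^{2s}\mathcal{E}f$ to $Y$, only gives $\langle \Lambda^{2s}\mathcal{E}f,E_0f\rangle_{L^2(Y_e)}$ and loses the part of $\mathcal{E}f$ on $Y_e\setminus Y$.

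The step you call delicate is justified with the wrong mapping property. The last line uses that $\mathcal{E}^*:H^{t-s}(Y_e)\to H^{t-s}(Y)$ is bounded, at the \emph{nonnegative} order $t-s\ge 0$. Boundedness of $\mathcal{E}^*$ between negative Sobolev spaces, which is what you invoke, is not what is used.

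The correct statement is true and elementary. Locally, if $\mathcal{E}f(x',x_n)=\sum_{j=1}^K a_j\chi(x_n)f(x',-jx_n)$ for $x_n<0$, then for $x_n>0$ one has
\[
\mathcal{E}^*G(x',x_n)=G(x',x_n)+\sum_j (a_j/j)\,\chi(-x_n/j)\,G(x',-x_n/j),
\]
up to smooth density factors. This is a finite sum of cutoffs of $G$ composed with diffeomorphisms and restricted to $Y$, hence bounded $H^r(Y_e)\to H^r(Y)$ for every $r\ge 0$. Together with $\|\Lambda^{2s}\mathcal{E}f\|_{H^{t-s}(Y_e)}=\|\mathcal{E}f\|_{H^{s+t}(Y_e)}\le C\|f\|_{H^{s+t}(Y)}$, this closes the proof.

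By duality the same fact also gives
\[
\|\mathcal{E}f\|_{H^{s-t}(Y_e)}=\sup_{\|\phi\|_{H^{t-s}(Y_e)}\le 1}|\langle f,\mathcal{E}^*\phi\rangle_{L^2(Y)}|\le C\|f\|_{H^{s-t}(Y)}.
\]
This is precisely the bound you called the ``main obstacle''. So it is not an obstacle: you could conclude directly from the closed-manifold Cauchy--Schwarz inequality applied to $F=\mathcal{E}f$, and the complex-interpolation fallback is not needed.
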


\begin{proof}
Let $E : C^\infty(Y) \to C^\infty(Y_e)$ be a \emph{Seeley-type} extension operator (see \cite{Seeley-64}). That is, if $u \in C^\infty(Y)$, then $E u \in C^\infty(Y_e)$ and $E u|_{Y} = u$. Additionally, for all $s \in \R$, 
\begin{equation}
\label{equation:seeley}
E : H^s(Y) \to H^s(Y_e)
\end{equation}
is bounded. Note that, by duality,
\begin{equation}
\label{equation:seeley2}
E^* : H^s(Y_e) \to H^s(Y)
\end{equation}
is also bounded.

It then holds that:
\[
\begin{split}
\|f\|_{H^s(Y)}^2 & = \inf_{F \in H^s(Y_e), F|_Y=f} \|F\|^2_{H^s(Y_e)} \leq \|E f\|^2_{H^s(Y_e)} = \langle Ef, (\mathbf{1}+\Delta_{\bar g_e})^s Ef \rangle_{L^2(Y_e)} \\
& = \langle E_0 f, EE^*(\mathbf{1}+\Delta_{\bar g_e})^s Ef\rangle_{L^2(Y_e)} \leq \|E_0 f\|_{H^{s-t}(Y_e)}\|EE^*(\mathbf{1}+\Delta_{\bar g_e})^s Ef\|_{H^{-(s-t)}(Y_e)} \\
& = \|f\|_{H^{s-t}(Y)}\|EE^*(\mathbf{1}+\Delta_{\bar g_e})^s Ef\|_{H^{-(s-t)}(Y_e)}.
\end{split}
\]
Notice that it follows from \eqref{equation:seeley} and \eqref{equation:seeley2} that
\[
EE^*(\mathbf{1}+\Delta_{\bar g_e})^s E : H^{s+t}(Y) \to H^{-(s-t)}(Y_e)
\]
is bounded (by a constant $C > 0$). This provides the desired inequality:
\[
\|f\|_{H^s(Y)}^2 \leq C \|f\|_{H^{s-t}(Y)} \|f\|_{H^{s+t}(Y)}.
\]
\end{proof}

\subsection{A result on analytic pseudodifferential operators}

As above, we assume that $Y$ is a Riemannian manifold. For the sake of simplicity, we also further assume that $Y$ is $3$-dimensional but the arguments remain valid for higher dimensional manifolds up to the appropriate scaling ($1/d(x,y)$ becomes $1/d(x,y)^{n-2}$).

Let $P : C^\infty(Y) \to \cD'(Y)$ be a continuous, formally selfadjoint, operator with kernel $K \in \cD'(Y \times Y)$. That is
\[
\langle P u, v \rangle_{L^2(\vol_Y)} = \langle u, P^*v \rangle_{L^2(\vol_Y)}
\]
for all $u,v \in C^\infty(Y)$. This implies that $\iota^* K = K$ where $\iota(x,y):=(y,x)$ is the $\Z_2$ symmetry of $Y \times Y$. We let
\[
Y^{(2)} := [Y \times Y ; \Delta]
\]
be the blow up of $Y \times Y$ along the diagonal $\Delta \subset Y \times Y$. For a fixed $x \in Y$, we use polar coordinates $y = \exp_x(r\omega)$ around $x$. We will be interested in the case where, near $\Delta$, the kernel has the form
\begin{equation}
\label{equation:k}
K(x,r,\omega) = \dfrac{\chi(x,r,\omega)}{r}, \qquad \chi(x,r,\omega) = \psi(x) + \cO(r),
\end{equation}
where $\chi \in C^\infty(Y^{(2)})$ is smooth on the blown up space up to $r=0$. Note that, by Schur's test, $K \in L^2(Y \times Y, \vol_Y \times \vol_Y)$ so $P$ is compact (hence bounded) on $L^2(Y)$. Actually, $K$ is the kernel of a smooth pseudodifferential operator of order $-2$ (since $Y$ has dimension three).

\begin{prop}
\label{prop:analytic-pdo}
Assume that $Y$ is (analytically) diffeomorphic to the unit ball in $\R^3$, and that the function $\chi$ (see \eqref{equation:k}) is analytic with respect to $(x,r,\omega)$. Then $P$ is an analytic pseudodifferential operator in the sense of \cite{Treves-80}.
\end{prop}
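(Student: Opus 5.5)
The plan is to show that the Schwartz kernel $K$ is the kernel of an analytic pseudodifferential operator in Trèves' sense, which can be characterised via the analytic wave front set: $K$ is analytic off the diagonal and its Fourier transform in the direction normal to the diagonal has a classical analytic symbol expansion. Concretely, work in analytic coordinates on the ball $Y\simeq\mathbb{B}^3$ and write $y=x+z$, where $z$ is the analytic polar variable given by $z=\exp_x^{-1}(y)=r\omega$; this change of variables is analytic near the diagonal. Then $K(x,x+z)=\chi(x,|z|,z/|z|)/|z|$ for $|z|$ small. Since $\chi$ is analytic in $(x,r,\omega)$ up to $r=0$, expand it as
\begin{equation*}
\chi(x,r,\omega)=\sum_{j\geq 0} r^j\chi_j(x,\omega),
\end{equation*}
with geometric bounds $|\chi_j|\leq C^{j+1}$ on a complex neighbourhood. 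Each term contributes $r^{j-1}\chi_j(x,\omega)$, which is a homogeneous distribution in $z$ of degree $j-1$ with an analytic angular factor.

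First, compute the Fourier transform in $z$ of each homogeneous piece $|z|^{j-1}\chi_j(x,z/|z|)$. Expanding $\chi_j(x,\cdot)$ in spherical harmonics with analytic (exponentially decaying) coefficients and applying the Funk--Hecke formula, this transform is a homogeneous function of degree $-2-j$ in $\xi$. For exceptional degrees it can carry logarithms; these arise only for even integer degrees $\geq 0$, i.e.\ polynomial pieces, and do not occur in degree $-2-j<-2$. The result is an analytic function of $\xi/|\xi|$ extending holomorphically to a conic neighbourhood. This gives a formal symbol $\sum_j a_{-2-j}(x,\xi)$.

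Second, verify the analytic symbol estimates $|a_{-2-j}(x,\xi)|\leq C^{j+1} j!\,|\xi|^{-2-j}$ on a complex conic neighbourhood. The main input is that Funk--Hecke constants grow like Gamma-function ratios $\Gamma(\tfrac{3+j-1}{2}+\ell/2)/\Gamma(\tfrac{1-j}{2}+\ell/2)$, combined with the geometric decay of the $\chi_j$.

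Third, show that the remainder $K-\sum_{j<N}(\cdots)$, after cutting off with an analytic-type (Ehrenpreis) cutoff of radius $\sim 1/N$, agrees with the quantized realisation. The difference should be analytic off the diagonal, using that $K$ is analytic for $x\neq y$ because $\chi$ is analytic in $r>0$ and the change of variables is analytic. It should also be exponentially small in the Boutet de Monvel--Krée sense.

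Finally, self-adjointness ($\iota^*K=K$) is used only to ensure that the left and right quantizations agree, so that one can symmetrise and check the estimate with respect to either variable. The ball assumption guarantees global analytic coordinates, so the local statements patch together.

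The main obstacle will be the second and third steps: obtaining uniform factorial bounds for the Fourier transforms of the homogeneous pieces, including control of the angular analyticity uniformly in $j$. If the spherical-harmonic route proves unwieldy, the fallback is to use Trèves' characterisation through the FBI transform. One would compute $\int e^{i\xi\cdot z - \langle\xi\rangle |z|^2/2} K(x,x+z)\,dz$ directly in polar coordinates, deform the $r$-contour into the complex domain using analyticity of $\chi$ in $r$, and read off the exponential decay of the remainder.
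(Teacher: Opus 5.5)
Your plan is viable, but it takes a genuinely different and much heavier route than the paper. In its proof of this proposition the paper never computes a symbol. In a global analytic chart it writes $Pf(x)=\int k(x,x-y)f(y)\,\dd y$ and invokes the Baouendi--Goulaouic--M\'etivier characterization of kernels of analytic pseudodifferential operators. It then only has to check two things:
\begin{itemize}
\item $k(x,m)$ extends holomorphically to a complex cone $\{|\Im m|<h|\Re m|\}$, for $x$ in a complex neighbourhood, with $|\partial_m^\alpha k|\le C|m|^{-1-|\alpha|}$ for $|\alpha|\le 2$;
\item $\int_{|m|>\eps}\partial^\alpha_m k\,\dd m$ is uniformly bounded for $|\alpha|=2$.
\end{itemize}
These follow from the holomorphic extension of $(m_1^2+m_2^2+m_3^2)^{-1/2}$ to such a cone, the extension of $\chi$ to a Grauert tube, and one integration by parts.

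Your Steps 2--3 amount to reproving that characterization in this special case. Step 2 does go through. The Funk--Hecke constants $\Gamma(\frac{\ell+j+2}{2})/\Gamma(\frac{\ell-j+1}{2})$ are bounded by $C^j(\ell+j)^{j+1/2}$. The $e^{-\delta\ell}$ decay of the spherical-harmonic coefficients of $\chi_j$ absorbs this into a bound $C^{j+1}j!$ on a slightly smaller complex neighbourhood.

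Step 3, however, is only a placeholder as written. You need a Boutet de Monvel--Kr\'ee realization $\sum_j(1-\psi_j)a_{-2-j}$, where the $\psi_j$ are Ehrenpreis cutoffs equal to $1$ for $|\xi|\le Cj$ and supported in $|\xi|\le 2Cj$. A fixed rescaled cutoff would not do, since its moments grow too fast. Set $\kappa_j(x,m)=|m|^{j-1}\chi_j(x,m/|m|)$. Near the diagonal, $K$ minus the realized kernel equals $\sum_j\kappa_j*\check\psi_j$, with convolution in $m$. You must prove that this series converges geometrically to a holomorphic function on a complex neighbourhood of $m=0$: choose $C$ large, then the imaginary width small. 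Off the diagonal both kernels are analytic. This is exactly what the cited characterization packages. Your route buys an explicit full symbol expansion; the paper's needs only elementary complex analysis of $k$.

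A few slips to fix along the way.
\begin{itemize}
\item The Fourier analysis must be done in the linear chart variable $m$, not in $z=\exp_x^{-1}(y)$; writing $y=x+z$ conflates the two. This is harmless: since $\exp_x^{-1}(y)=A(x,y)(y-x)$ with $A$ analytic and $A(x,x)=\mathbf{1}$, one gets $k(x,m)=\tilde\chi(x,|m|,m/|m|)/|m|$ with $\tilde\chi$ again analytic up to $|m|=0$.
\item No logarithms occur. Each $\kappa_j$ is locally integrable, so its Fourier transform is homogeneous of degree $-2-j$ away from $\xi=0$, and polynomial pieces contribute only at $\xi=0$.
\item Self-adjointness does not make the left and right symbols coincide. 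It is also not needed, since the characterization is applied to the left-reduced kernel alone.
\end{itemize}
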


We will use the following two facts in the proof:
\begin{equation}
\label{equation:utile}
\exp^{-1}_x(y) = A(x,y)(y-x), \qquad d_g(x,y) = \langle B(x,y)(y-x),(y-x)\rangle^{1/2},
\end{equation}
for some analytic matrix-valued map $A$ such that $A(x,x)=\mathbf{1}$, $\langle\bullet,\bullet\rangle$ is the Euclidean metric in $\R^3$, $B$ is analytic with respect to $x$ and $y$, and $B(x,x)=g(x)$ (that is the Riemannian metric $g$ seen as a symmetric $2$-tensor in Euclidean coordinates). We refer to \cite[Lemma 3.1]{Stefanov-Uhlmann-05} for a proof.

\begin{proof}
By assumption, up to an analytic diffeomorphism, $Y$ can be seen as an open subset in $\R^3$, which we will do from now on. Let
\[
v(x,y) := \exp_x^{-1}(y) = A(x,y)(y-x) = r(x,y)\omega(x,y), \qquad x,y \in Y,
\]
where $\omega(x,y) \in S_xX$. Then
\begin{equation}
\label{equation:romega}
r(x,y) := g_x(v(x,y),v(x,y))^{1/2}, \qquad \omega(x,y) = v(x,y)/r(x,y).
\end{equation}
Let $SY$ denote the unit tangent bundle of $Y$. The function $\chi$ is defined on an open subset $U'$ of the real analytic manifold $SY \times [0,\infty)$ containing $SY \times \{0\}$. By analyticity, it extends across $r=0$ and defines an analytic function on an open subset $U$ of $Z := SY \times \R$ containing $SY \times \{0\}$. Let $Z_\C$ denote the Grauert tube of $Z$, see \cite{Grauert-58}. Then $\chi$ admits an extension as a holomorphic function, still denoted by $\chi$, to an open subset $U_\C \subset Z_\C$ such that $U_\C \cap Z = U$.

By definition, the operator $P$ has Schwartz kernel $K \in \cD'(Y \times Y)$ satisfying
\[
Pf(x) = \int_Y K(x,y) \dd\vol_Y(y) = \int_Y \dfrac{\chi(x,r,\omega)}{d_g(x,y)} f(y) \sqrt{\det~g(y)} \dd y,
\]
where $g(y)$ is the symmetric $2$-tensor representing the (analytic) metric $g$ in the coordinates, and $\dd y$ is the Lebesgue measure. We can thus write
\[
Pf(x) = \int_Y k(x,x-y) f(y)\dd y,
\]
where the function $k(x,m)$ is given by (see \eqref{equation:utile} and \eqref{equation:romega})
\begin{equation}
\label{equation:kxm}
k(x,m) = \dfrac{\chi(x,r(x,x-m),\omega(x,x-m))}{\langle B(x,x-m)m,m\rangle^{1/2}} \sqrt{\det~g(x-m)},
\end{equation}
and defined on an open subset $V \subset \R^3 \times \R^3$ containing $Y \times \{0\}$. (Note that, if $x \in Y$ is fixed, then $V \supset (x,x-Y)$.)

We now use the characterization of the kernel of analytic pseudodifferential operators established in \cite{Baouendi-Goulaouic-Metivier-83}, see items (2'), (3') and (5') of that article (beware that $(x,m)$ in our conventions correspond to $(z,x)$ in \cite{Baouendi-Goulaouic-Metivier-83}). Since $P$ has order $-2$, it suffices to verify that:
\begin{enumerate}
\item[(i)] There is an open neighborhood $V_\C \subset \C^3 \times \C^3$ of $V \subset \R^3 \times \R^3$ and $h > 0$ such that $k$ extends holomorphically in
\[
V_\C \cap \{m \in \C^3 \setminus \{0\}, |\Im(m)| < h|\Re(m)|\},
\]
and for all compact subsets $K_\C \subset \pi_1(V_\C)$ (where $\pi_1 : \C^3 \times \C^3 \to \C^3$ is the projection onto the first factor), there is a $C > 0$ such that for $|\alpha|=0,1,2$
\begin{equation}
\label{equation:condition1}
|\partial^\alpha_m k(x,m)| \leq C |m|^{-(1+|\alpha|)}, \qquad \forall (x,m) \in V_\C, x \in K_\C, m \neq 0, |\Im(m)| < h|\Re(m)|.
\end{equation}
\item[(ii)] For all compact subset $K \subset Y$, there is a $C > 0$ such that for $|\alpha|=2$, and for all $\eps > 0$, for all $x \in K$:
\begin{equation}
\label{equation:condition2}
\left|\int_{x-Y, |m| > \eps} \partial^\alpha_m k(x,m) \dd m \right| \leq C.
\end{equation}
\end{enumerate}

\emph{Condition (i).} We begin by the following elementary observation: the function $m \mapsto 1/|m|=1/\sqrt{m^2}$ defined on $\R \setminus \{0\}$ extends holomorphically to $\C \cap \{|\Im(m)| < |\Re(m)|/2026\}$ after choosing the branch of the square root function $re^{i\theta} \mapsto r^{1/2} e^{i\theta/2}$. The same observation applies to the norm $m \mapsto 1/|m|=1/\sqrt{m_1^2+...+m_n^2}$ in $\R^n \setminus \{0\}$. The function $\chi$ being holomorphic in $U_\C$, and $A$ and $B$ being holomorphic on $V_\C$ (up to shrinking $V_\C$ a little if necessary), the previous fact implies that $k$ admits a holomorphic extension to $V_\C \cap  \{|\Im(m)| < h|\Re(m)|\}$, where $h > 0$ is chosen small enough (depending on $A$ and $B$). Using the expression \eqref{equation:kxm} for the kernel, the estimate \eqref{equation:condition1} is then straightforward to verify by a direct calculation. \\

\emph{Condition (ii).} Estimate \eqref{equation:condition1} shows that
\begin{equation}
\label{equation:oui}
\partial_{m_i} k(x,m) \leq C/|m|^2,
\end{equation}
for some uniform $C > 0$, for all $x \in K, m \in x-Y$. For simplicity, we assume that $\partial^\alpha_m = \partial_{m_1}^2$. Integrating by parts in \eqref{equation:condition2}, we find that for $\eps_0 > 0$ small enough:
\[
\begin{split}
\int_{x-Y, |m| > \eps} \partial^\alpha_m k(x,m) \dd m & = \int_{x-Y, |m| > \eps_0} \partial^\alpha_m k(x,m) \dd m   + \int_{|m|=\eps_0}  \nu_1(m) \partial_{m_1} k(x,m) \dd m \\
& \qquad  -\int_{|m|=\eps}  \nu_1(m) \partial_{m_1} k(x,m) \dd m,
\end{split}
\]
where $\nu(m)$ denotes the unit radial vector field in $\R^3$ and $\nu_i(m) := \langle \nu(m),\mathbf{e}_i\rangle$. In the previous equality, the first two terms on the right-hand side are independent of $\eps$ and thus uniformly bounded. As to the last term, using \eqref{equation:oui} and that the volume of the sphere $\{|m|=\eps\} \subset \R^3$ is $\leq C \eps^2$, we obtain \eqref{equation:condition2}.
\end{proof}

\section{Notation for asymptotic analysis}
\label{ss:Notation}

In order to carry out the asymptotic analysis of \S\ref{section:asymptotic}, we will use the following notation. Let $X$ be a smooth, finite-dimensional manifold, and let $U\subseteq\Bbb{R}^m$ be a neighbourhood of the origin. For $f\in C^\infty(X\times(U\setminus\{0\}))$, we denote
\begin{equation*}
f(x,y) = \opO(y^k)
\end{equation*}
whenever there exist smooth functions $f_1,\cdots,f_m\in C^\infty(X\times U)$ and homogeneous polynomials $p_1,\cdots,p_m$, all of order $k$, such that
\begin{equation*}
f(x,y) = \sum_{i=1}^m p_i(y)f_i(x,y)\ .
\end{equation*}
We underline that the usefulness of this notation stems precisely from the fact that the functions $f_1,\cdots,f_m$ are defined and smooth over the whole of $X\times U$, \emph{including at the origin} in $U$.

There are two main tools for working with this notation. The first is essentially Taylor's theorem.
\begin{lemma}
\label{lemma:TaylorsTheorem}
Let $X$ be a smooth, finite-dimensional manifold, and let $U\subseteq\mathbb{R}^m$ be a neighbourhood of the origin. For all $f\in C^\infty(U\times V)$ and for all positive integer $k$, there exist smooth functions $p_k:X\times\Bbb{R}^m\rightarrow\Bbb{R}$ and $f_{1,k+1},\cdots,f_{m,k+1}:X\times U\rightarrow\Bbb{R}$, and polynomials $q_{1,k+1},\cdots,q_{m,k+1}:\Bbb{R}^m\rightarrow\Bbb{R}$, such that
\begin{enumerate}[label=(\roman*)]
\item for all $x$, $p_k(x,\cdot)$ is a polynomial of order $k$;
\item for all $i$, $q_{i,k+1}$ is homogeneous of order $(k+1)$; and
\item
\begin{equation*}
f(x,y) = p_k(x,y) + \sum_{i=1}^m q_{i,k+1}(y)f_{i,k+1}(x,y)\ .
\end{equation*}
\end{enumerate}
\end{lemma}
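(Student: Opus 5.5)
We prove the statement by induction on $k$, using the Taylor expansion of Lemma \ref{lemma:TaylorsTheorem} in a single vector of variables. Fix $x$ and write $y=(y_1,\dots,y_m)$.

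For the base case, Taylor's formula with integral remainder,
\[
f(x,y)=f(x,0)+\sum_i y_i\int_0^1 \partial_{y_i}f(x,ty)\,\dd t ,
\]
gives the decomposition with $p_0(x,y)=f(x,0)$, $q_{i,1}(y)=y_i$, and $f_{i,1}(x,y)=\int_0^1\partial_{y_i}f(x,ty)\,\dd t$. This requires $U$ to be star-shaped about the origin. After shrinking $U$ to a ball we may assume this, and we note the statement only concerns a neighbourhood of the origin. The functions $f_{i,1}$ are smooth on $X\times U$ by differentiation under the integral sign, and this holds at $y=0$ as well.

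For the inductive step, suppose the decomposition is known at order $k$, with remainder $\sum_i q_{i,k+1}(y)f_{i,k+1}(x,y)$. Apply the base case to each $f_{i,k+1}$:
\[
f_{i,k+1}(x,y)=f_{i,k+1}(x,0)+\sum_j y_j g_{ij}(x,y),
\]
with $g_{ij}$ smooth. Then set $p_{k+1}:=p_k+\sum_i q_{i,k+1}(y)f_{i,k+1}(x,0)$. Since each $q_{i,k+1}$ is homogeneous of degree $k+1$, $p_{k+1}(x,\cdot)$ is a polynomial of degree at most $k+1$. The new remainder is $\sum_{i,j} y_jq_{i,k+1}(y)\,g_{ij}(x,y)$, and each $y_jq_{i,k+1}$ is homogeneous of degree $k+2$. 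Relabelling the finitely many pairs $(i,j)$ gives the required form; the number of terms need not stay equal to $m$.

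Alternatively, one can bypass the induction. Use the multivariable integral remainder formula
\[
f(x,y)=\sum_{|\gamma|\le k}\frac{\partial^\gamma_y f(x,0)}{\gamma!}y^\gamma+\sum_{|\gamma|=k+1}\frac{k+1}{\gamma!}\,y^\gamma\int_0^1(1-t)^k\,\partial^\gamma_y f(x,ty)\,\dd t ,
\]
and take $q_\gamma=y^\gamma$ and $f_\gamma$ equal to the integral.

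The only point needing care is that the remainder coefficients are smooth across $y=0$. This follows because the integrands are smooth in $(x,y,t)$ on $X\times U\times[0,1]$. No other obstacle arises.
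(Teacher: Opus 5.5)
Your proof is correct and follows the route the paper intends: Taylor's formula with integral remainder (Hadamard's lemma), which is all the paper offers when it calls the lemma ``essentially Taylor's theorem''. Your caveat on the number of remainder terms is genuinely needed, since with exactly $m$ terms the statement fails (e.g.\ for $m=2$, $k=1$, $X=\R$, $f=y_1^2+xy_1y_2+x^2y_2^2$, the quadratic parts at $x=0,1,2$ span a $3$-dimensional space); and shrinking $U$ to a ball can be avoided by taking $R:=f-p_k$, a cutoff $\chi(y)$ equal to $1$ near $0$ and supported in a ball, applying your formula to $\chi R$, and writing $(1-\chi)R=\sum_{|\gamma|=k+1}y^\gamma\,\frac{(k+1)!}{\gamma!}\,(1-\chi)\,y^\gamma R/|y|^{2k+2}$.
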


The second tool is the fact that differentiation and composition by smooth functions define smooth functionals over H\"older spaces (see, for example, \cite{delaLlaveObaya1999,BourdaudLanzaDeCristoforis2002}, or \cite[Appendix A]{Smith2024AsymptoticGeometry}). This is made use of as follows. Let $X$ and $Y$ be smooth, finite-dimensional manifolds, possibly with boundary, and let $U\subseteq\Bbb{R}^m$ be a neighbourhood of the origin. Suppose that $Y$ is compact and, for all $k$, let $J^kY$ denote the bundle of $k$-jets of smooth functions over $Y$. Given a smooth function $F:X\times J^k Y\times (U\setminus\{0\})\rightarrow\Bbb{R}$, for all $(l,\alpha)$, we define the smooth functional $\mathcal{F}:X\times C^{l+k,\alpha}(Y)\times (U\setminus\{0\})\rightarrow C^{l,\alpha}(Y)$ by
\begin{equation*}
\mathcal{F}(x,f,z)(y) := F(x,J^kf(y),z)\ .
\end{equation*}
For all $m$,
\begin{equation*}
F(x,J^kf,z) = \opO(z^m)\ \Rightarrow\ \mathcal{F}(x,f,z) = \opO(z^m)\ ,
\end{equation*}
so that asymptotic formulae for smooth functionals over Banach manifolds follow from the analogous asymptotic formulae for smooth functions over finite-dimensional jet spaces. The latter are often derived using Taylor's theorem in the form stated above.

\section{Proper extensions}

\label{section:extension}

Let $N$ and $N_e$ be two smooth (resp. analytic) manifolds with boundary which are diffeomorphic as smooth (resp. analytic) manifolds. We say that an embedding $\iota : N \to N_e$ is a smooth (resp. analytic) \emph{proper extension} whenever $\iota(N) \subset N_e^\circ$. If $(N,g)$ and $(N_e,g_e)$ are both Riemannian, then we say that the embedding $\iota : (N,g) \to (N_e,g_e)$ is a \emph{proper Riemannian extension} whenever it is an isometry and a proper extension. In what follows, for simplicity, we will suppress the notation $\iota$ and view $N$ as a subset of $N_e$.

\begin{lemma}
\label{lemma:extension}
Let $(X,\bar g)$ be a smooth (resp. analytic) Riemannian ball. There exists a smooth (resp. analytic) proper Riemannian extension $(X_e,\bar g_e)$ of the smooth (resp. analytic) Riemannian manifold $(X,\bar g)$.
\end{lemma}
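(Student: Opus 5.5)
The plan is to construct $X_e$ by collar extension. In the smooth case, fix the inward normal geodesic flow from $\partial X$. This identifies a collar neighbourhood of $\partial X$ with $\partial X\times[0,\delta)$, in which $\bar g=g_t+\dd t^2$ (Fermi coordinates as in \S\ref{appendix:asymptotic}, with $t$ the distance to the boundary). Next, glue the outer collar $\partial X\times(-1,0]$ to $X$ along $\partial X$ to obtain the manifold $X_e:=X\cup_{\partial X}(\partial X\times[-1,0])$. This is diffeomorphic to a closed ball, since it is $X$ with an exterior collar attached, and $X\subset X_e^\circ$.

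The metric extension is the key step. Use a Seeley extension operator (as in the proof of Lemma \ref{lemma:interpolation}) applied to the family $t\mapsto g_t$, viewed as a smooth family of symmetric $2$-tensors on $\partial X$ for $t\in[0,\delta)$. This gives a smooth family $g_t$ for $t\in(-\delta',\delta)$, and we set $\bar g_e:=g_t+\dd t^2$ there. Since $g_t$ is positive definite at $t=0$, it remains positive definite for $|t|$ small, after shrinking $\delta'$. Finally, shrink the outer collar to $\partial X\times[-\delta'/2,0]$. The smooth structure on the glued manifold is the one induced by the coordinate $t\in(-\delta',\delta)$. The resulting inclusion is an isometry onto its image, which lies in the interior, so it is a proper Riemannian extension.

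In the analytic case, first choose an analytic collar. Since $\partial X$ is analytic and $\bar g$ is analytic up to the boundary, the normal exponential map from $\partial X$ is analytic, and $\bar g$ extends analytically across the boundary by definition of analyticity up to the boundary. Concretely, in analytic boundary charts the coefficients of $\bar g$ are given by convergent power series centred at boundary points, and these converge on a full neighbourhood. By uniqueness of analytic continuation the local extensions agree on overlaps, so they patch to an analytic metric on an open neighbourhood of $X$ in any ambient analytic manifold containing $X$. For such an ambient manifold one can take the analytic double or an analytic collar $\partial X\times(-1,1)$ obtained by extending the analytic normal coordinate. By compactness of $\partial X$ the extension is defined on a uniform collar $t>-\delta'$, and positivity again holds for small $\delta'$. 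Taking $X_e=\{t\ge-\delta'/2\}$ gives the analytic proper Riemannian extension.

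I expect the analytic gluing to be the main subtlety. One must make sure that an analytic ambient collar exists and that the locally defined continuations patch consistently, which follows from the identity theorem on the connected overlaps near $\partial X$. In the smooth case, by contrast, the argument is routine.
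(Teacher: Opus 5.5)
Your proposal is correct and follows the same route as the paper. Both attach an exterior collar to $X$ along $\partial X$ and extend $\bar g$ arbitrarily in the smooth case, where your Seeley extension of $t\mapsto g_t$ in Fermi coordinates is one concrete choice; in the analytic case both use that $\bar g$ continues uniquely across $\partial X$, which the paper phrases as the boundary jets of $\bar g$ determining $\bar g_e$. The details you add (positivity on a thin collar, patching local continuations by the identity theorem, a uniform collar by compactness of $\partial X$) are exactly what the paper calls immediate.
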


Note that, in the analytic case, the metric $\bar g_e$ is completely determined by $\bar g$.

\begin{proof}
The proof is immediate. Indeed, we consider first a collar neighborhood $[0,t_0)_t \times \partial X$ of the boundary $\partial X$ of $(X, \bar g)$ and we denote $X_e := X \sqcup (-t_0,0)_t \times \partial X$ for some small $t_0 > 0$. In the smooth case, the metric $\bar g$ can be smoothly extended to $X_e$ in an arbitrary fashion. In the analytic case, upon reducing $r_0$ if necessary, the jets of $\bar g$ on $\partial X$ fully determine $\bar g_e$ on $X_e \setminus X$.
\end{proof}

\bibliographystyle{alpha}
\bibliography{Biblio}

\begin{thebibliography}{GSUZ26}

\bibitem[ABN20]{Alexakis-Balehowsky-Nachman-20}
S.~Alexakis, T.~Balehowsky, and A.~Nachman.
\newblock Determining a {R}iemannian metric from minimal areas.
\newblock {\em Adv. Math.}, 366:107025, 71, 2020.

\bibitem[AG26]{Ambrozio-Guajardo-26}
L.~Ambrozio and D.~Guajardo.
\newblock Equivariant constructions of spheres with {Z}oll families of minimal
  spheres.
\newblock {\em Adv. Math.}, 496:Paper No. 110989, 57, 2026.

\bibitem[ALLS26]{Alvarez-Lefeuvre-Lowe-Smith}
S.~Alvarez, T.~Lefeuvre, B.~Lowe, and G.~Smith.
\newblock Stability and uniqueness of minimal disks in non-constant curvature.
\newblock {\em Preprint}, 2026.

\bibitem[ALS25a]{Alvarez-Lowe-Smith-25-1}
S.~Alvarez, B.~Lowe, and G.~Smith.
\newblock Foliated plateau problems and asymptotic counting of surface
  subgroups.
\newblock {\em Ann. Sci. \'Ec. Norm. Sup\'er. (4)}, 58(3):607--663, 2025.

\bibitem[ALS25b]{Alvarez-Lowe-Smith-25-2}
S.~Alvarez, B.~Lowe, and G.~Smith.
\newblock Rigidity of the hyperbolic marked energy spectrum and entropy for
  {$k$}-surfaces.
\newblock {\em J. \'Ec. polytech. Math.}, 12:1197--1227, 2025.

\bibitem[Alv25]{Alvarez-25}
S.~Alvarez.
\newblock Foliated {P}lateau problems, geometric rigidity and equidistribution
  of closed $k$-surfaces.
\newblock {\em to appear in {A}ctes du {S}\'eminaire de {T}h\'eorie {S}pectrale
  et {G}éométrie}, 2025.

\bibitem[AMN25]{Ambrozio-Marques-Neves-25}
L.~Ambrozio, F.~Marques, and A.~Neves.
\newblock Riemannian metrics on the sphere with {Z}oll families of minimal
  hypersurfaces.
\newblock {\em J. Differential Geom.}, 130(2):269--341, 2025.

\bibitem[And83]{Anderson-83}
M.~Anderson.
\newblock Complete minimal hypersurfaces in hyperbolic {$n$}-manifolds.
\newblock {\em Comment. Math. Helv.}, 58(2):264--290, 1983.

\bibitem[Aro57]{Aronszajn-57}
N.~Aronszajn.
\newblock A unique continuation theorem for solutions of elliptic partial
  differential equations or inequalities of second order.
\newblock {\em J. Math. Pures Appl. (9)}, 36:235--249, 1957.

\bibitem[BCG95]{Besson-Courtois-Gallot-95}
G.~Besson, G.~Courtois, and S.~Gallot.
\newblock Entropies et rigidit\'{e}s des espaces localement sym\'{e}triques de
  courbure strictement n\'{e}gative.
\newblock {\em Geom. Funct. Anal.}, 5(5):731--799, 1995.

\bibitem[BdC80]{Barbosa-doCarmo-80}
J.~L. Barbosa and M.~do~Carmo.
\newblock Stability of minimal surfaces and eigenvalues of the {L}aplacian.
\newblock {\em Math. Z.}, 173(1):13--28, 1980.

\bibitem[Ber69]{Berg}
C.~Berg.
\newblock Corps convexes et potentiels sph\'eriques.
\newblock {\em Mat.-Fys. Medd. Danske Vid. Selsk.}, 37(6):64, 1969.

\bibitem[BGM83]{Baouendi-Goulaouic-Metivier-83}
M.~S. Baouendi, C.~Goulaouic, and G.~M{\'e}tivier.
\newblock Kernels and symbols of analytic pseudodifferential operators.
\newblock {\em J. Differ. Equations}, 48:227--240, 1983.

\bibitem[BI10]{Burago-Ivanov-10}
D.~Burago and S.~Ivanov.
\newblock Boundary rigidity and filling volume minimality of metrics close to a
  flat one.
\newblock {\em Ann. of Math. (2)}, 171(2):1183--1211, 2010.

\bibitem[BI13]{Burago-Ivanov-13}
D.~Burago and S.~Ivanov.
\newblock Area minimizers and boundary rigidity of almost hyperbolic metrics.
\newblock {\em Duke Math. J.}, 162(7):1205--1248, 2013.

\bibitem[BLdC02]{BourdaudLanzaDeCristoforis2002}
G.~Bourdaud and M.~Lanza~de Cristoforis.
\newblock Functional calculus in h\"older-zygmund spaces.
\newblock {\em Trans. Amer. Math. Soc.}, 354(10):4109--4129, 2002.

\bibitem[BLP24]{Bohr-Lefeuvre-Paternain-24}
J.~Bohr, T.~Lefeuvre, and G.~Paternain.
\newblock Invariant distributions and the transport twistor space of closed
  surfaces.
\newblock {\em J. Lond. Math. Soc., II. Ser.}, 109(5):39, 2024.
\newblock Id/No e12894.

\bibitem[BLST25]{Busch-Liimatainen-Salo-Tzou-25}
L.~Busch, T.~Liimatainen, M.~Salo, and L.~Tzou.
\newblock Generalized boundary rigidity and minimal surface transform.
\newblock {\em Preprint, [arXiv:2510.23366]}, 2025.

\bibitem[CGL24]{Cekic-Guillarmou-Lefeuvre-24}
M.~Ceki\'c, C.~Guillarmou, and T.~Lefeuvre.
\newblock Local lens rigidity for manifolds of {A}nosov type.
\newblock {\em Anal. PDE}, 17(8):2737--2795, 2024.

\bibitem[Chr65]{Christoffel}
E.~B. Christoffel.
\newblock Ueber die {B}estimmung der {G}estalt einer krummen {O}berfl\"ache
  durch lokale {M}essungen auf derselben.
\newblock {\em J. Reine Angew. Math.}, 64:193--209, 1865.

\bibitem[CLT24]{Carstea-Liimatainen-Tzou-24}
L.~C{\^a}rstea, T.~Liimatainen, and L.~Tzou.
\newblock The {C}alder\'on problem on riemannian surfaces and of minimal
  surfaces.
\newblock {\em Preprint, [arXiv:2406.16944]}, 2024.

\bibitem[CM11]{Colding-Minicozzi-11}
T.~H. Colding and W.~P. Minicozzi, II.
\newblock {\em A course in minimal surfaces}, volume 121 of {\em Graduate
  Studies in Mathematics}.
\newblock American Mathematical Society, Providence, RI, 2011.

\bibitem[CMN22]{Calegari-Marques-Neves-22}
D.~Calegari, F.~Marques, and A.~Neves.
\newblock Counting minimal surfaces in negatively curved 3-manifolds.
\newblock {\em Duke Math. J.}, 171(8):1615--1648, 2022.

\bibitem[CNS88]{Caffarelli-Nirenberg-Spruck-88}
L.~Caffarelli, L.~Nirenberg, and J.~Spruck.
\newblock Nonlinear second-order elliptic equations. {V}. {T}he {D}irichlet
  problem for {W}eingarten hypersurfaces.
\newblock {\em Comm. Pure Appl. Math.}, 41(1):47--70, 1988.

\bibitem[Cro90]{Croke-90}
C.~B. Croke.
\newblock Rigidity for surfaces of nonpositive curvature.
\newblock {\em Comment. Math. Helv.}, 65(1):150--169, 1990.

\bibitem[Cro91]{Croke-91}
C.~B. Croke.
\newblock Rigidity and the distance between boundary points.
\newblock {\em J. Differential Geom.}, 33(2):445--464, 1991.

\bibitem[Cro04]{Croke-04}
C.~B. Croke.
\newblock Rigidity theorems in {R}iemannian geometry.
\newblock In {\em Geometric methods in inverse problems and {PDE} control},
  volume 137 of {\em IMA Vol. Math. Appl.}, pages 47--72. Springer, New York,
  2004.

\bibitem[dlLO99]{delaLlaveObaya1999}
R.~de~la Llave and R.~Obaya.
\newblock Regularity of the composition operator in spaces of h\"older
  functions.
\newblock {\em Discrete Contin. Dynam. Systems}, 5(1):157--184, 1999.

\bibitem[DZ16]{Dyatlov-Zworski-16}
S.~Dyatlov and M.~Zworski.
\newblock Dynamical zeta functions for {A}nosov flows via microlocal analysis.
\newblock {\em Ann. Sci. \'{E}c. Norm. Sup\'{e}r. (4)}, 49(3):543--577, 2016.

\bibitem[EGM09]{EspinarGalvezMira}
J.~Espinar, J.~G\'alvez, and P.~Mira.
\newblock Hypersurfaces in {$\Bbb H^{n+1}$} and conformally invariant
  equations: the generalized {C}hristoffel and {N}irenberg problems.
\newblock {\em J. Eur. Math. Soc. (JEMS)}, 11(4):903--939, 2009.

\bibitem[Fir68]{Firey}
W.~Firey.
\newblock Christoffel's problem for general convex bodies.
\newblock {\em Mathematika}, 15:7--21, 1968.

\bibitem[FS11]{Faure-Sjostrand-11}
F.~Faure and J.~Sj\"{o}strand.
\newblock Upper bound on the density of {R}uelle resonances for {A}nosov flows.
\newblock {\em Comm. Math. Phys.}, 308(2):325--364, 2011.

\bibitem[FV16]{FillastreVeronelli}
F.~Fillastre and G.~Veronelli.
\newblock Lorentzian area measures and the {C}hristoffel problem.
\newblock {\em Ann. Sc. Norm. Super. Pisa Cl. Sci. (5)}, 16(2):383--467, 2016.

\bibitem[G\.59]{Garding_1959}
Lars G\.arding.
\newblock An inequality for hyperbolic polynomials.
\newblock {\em J. Math. Mech.}, 8:957--965, 1959.

\bibitem[Ger03]{Gerhardt2003Hypersurfaces}
C.~Gerhardt.
\newblock Hypersurfaces of prescribed scalar curvature in {L}orentzian
  manifolds.
\newblock {\em J. Reine Angew. Math.}, 554:157--199, 2003.

\bibitem[Gra58]{Grauert-58}
H.~Grauert.
\newblock On {Levi}'s problem and the imbedding of real-analytic manifolds.
\newblock {\em Ann. Math. (2)}, 68:460--472, 1958.

\bibitem[Gro83]{Gromov-83}
M.~Gromov.
\newblock Filling {R}iemannian manifolds.
\newblock {\em J. Differential Geom.}, 18(1):1--147, 1983.

\bibitem[Gro91a]{Gromov-91-1}
M.~Gromov.
\newblock Foliated {P}lateau problem. {I}. {M}inimal varieties.
\newblock {\em Geom. Funct. Anal.}, 1(1):14--79, 1991.

\bibitem[Gro91b]{Gromov-91-2}
M.~Gromov.
\newblock Foliated {P}lateau problem. {II}. {H}armonic maps of foliations.
\newblock {\em Geom. Funct. Anal.}, 1(3):253--320, 1991.

\bibitem[GS04]{GuanSpruck2004Locally}
B.~Guan and J.~Spruck.
\newblock Locally convex hypersurfaces of constant curvature with boundary.
\newblock {\em Comm. Pure Appl. Math.}, 57(10):1311--1331, 2004.

\bibitem[GSUZ26]{Grebnev-Stefanov-Uhlmann-Zhou-26}
H.~{Grebnev}, P.~{Stefanov}, G.~{Uhlmann}, and H.~{Zhou}.
\newblock {The linearized minimal surfaces problem}.
\newblock {\em arXiv e-prints}, page arXiv:2606.24682, June 2026.

\bibitem[GT01]{Gilbarg-Trudinger-01}
D.~Gilbarg and N.~Trudinger.
\newblock {\em Elliptic partial differential equations of second order}.
\newblock Classics in Mathematics. Springer-Verlag, Berlin, 2001.
\newblock Reprint of the 1998 edition.

\bibitem[GU89]{Greenleaf-Uhlmann-89}
A.~Greenleaf and G.~Uhlmann.
\newblock Nonlocal inversion formulas for the {X}-ray transform.
\newblock {\em Duke Math. J.}, 58(1):205--240, 1989.

\bibitem[GU91]{Greenleaf-Uhlmann-91}
A.~Greenleaf and G.~Uhlmann.
\newblock Microlocal techniques in integral geometry.
\newblock Integral geometry and tomography, {Proc}. {AMS}-{IMS}-{SIAM} {Jt}.
  {Summer} {Res}. {Conf}., {Arcata}/{CA} ({USA}) 1989, {Contemp}. {Math}. 113,
  121-135 (1991)., 1991.

\bibitem[Gui76]{Guillemin-76}
V.~Guillemin.
\newblock The {Radon} transform on {Zoll} surfaces.
\newblock {\em Adv. Math.}, 22:85--119, 1976.

\bibitem[Gui85]{Guillemin-84}
V.~Guillemin.
\newblock On some results of {G}el'fand in integral geometry.
\newblock In {\em Pseudodifferential operators and applications ({N}otre
  {D}ame, {I}nd., 1984)}, volume~43 of {\em Proc. Sympos. Pure Math.}, pages
  149--155. Amer. Math. Soc., Providence, RI, 1985.

\bibitem[Gui17a]{Guillarmou-17-1}
C.~Guillarmou.
\newblock Invariant distributions and {X}-ray transform for {A}nosov flows.
\newblock {\em J. Differential Geom.}, 105(2):177--208, 2017.

\bibitem[Gui17b]{Guillarmou-17-2}
C.~Guillarmou.
\newblock Lens rigidity for manifolds with hyperbolic trapped sets.
\newblock {\em J. Amer. Math. Soc.}, 30(2):561--599, 2017.

\bibitem[Hel59]{Helgason-59}
S.~Helgason.
\newblock Differential operators on homogeneous spaces.
\newblock {\em Acta Math.}, 102:239--299, 1959.

\bibitem[HL13]{HarveyLawson_2013}
F.~R. Harvey and H.~B. Lawson, Jr.
\newblock G\aa rding's theory of hyperbolic polynomials.
\newblock {\em Comm. Pure Appl. Math.}, 66(7):1102--1128, 2013.

\bibitem[HL21]{HarveyLawson2021Pseudoconvexity}
F.~R. Harvey and H.~B. Lawson.
\newblock Pseudoconvexity for the special {L}agrangian potential equation.
\newblock {\em Calculus of Variations and Partial Differential Equations},
  60:1--37, 2021.

\bibitem[HLS26]{Huang-Lowe-Seppi-26}
Z.~Huang, B.~Lowe, and A.~Seppi.
\newblock Uniqueness and non-uniqueness for the asymptotic {P}lateau problem in
  hyperbolic space.
\newblock {\em Proc. Lond. Math. Soc. (3)}, 132(1):Paper No. e70121, 31, 2026.

\bibitem[H{\"{o}}r65]{Hormander-65}
L.~H{\"{o}}rmander.
\newblock Pseudo-differential operators.
\newblock {\em Comm. Pure Appl. Math.}, 18:501--517, 1965.

\bibitem[H{\"o}r07]{Hormander-07}
Lars H{\"o}rmander.
\newblock {\em The analysis of linear partial differential operators. {III}:
  {Pseudo}-differential operators}.
\newblock Class. Math. Berlin: Springer, reprint of the 1994 ed. edition, 2007.

\bibitem[KM12a]{Kahn-Markovic-12-2}
J.~Kahn and V.~Markovi\'c.
\newblock Counting essential surfaces in a closed hyperbolic three-manifold.
\newblock {\em Geom. Topol.}, 16(1):601--624, 2012.

\bibitem[KM12b]{Kahn-Markovic-12-1}
J.~Kahn and V.~Markovic.
\newblock Immersing almost geodesic surfaces in a closed hyperbolic three
  manifold.
\newblock {\em Ann. of Math. (2)}, 175(3):1127--1190, 2012.

\bibitem[Kur91]{Kurusa-91}
{\'A}.~Kurusa.
\newblock The {Radon} transform on hyperbolic space.
\newblock {\em Geom. Dedicata}, 40(3):325--339, 1991.

\bibitem[Lab97]{Labourie-97}
F.~Labourie.
\newblock Probl\`emes de {M}onge-{A}mp\`ere, courbes holomorphes et
  laminations.
\newblock {\em Geom. Funct. Anal.}, 7(3):496--534, 1997.

\bibitem[Lab00]{Labourie-00}
F.~Labourie.
\newblock Un lemme de {M}orse pour les surfaces convexes.
\newblock {\em Invent. Math.}, 141(2):239--297, 2000.

\bibitem[Lab02]{Labourie-02}
F.~Labourie.
\newblock The phase space of {$k$}-surfaces.
\newblock In {\em Rigidity in dynamics and geometry ({C}ambridge, 2000)}, pages
  295--307. Springer, Berlin, 2002.

\bibitem[Lab05]{Labourie-05}
F.~Labourie.
\newblock Random {$k$}-surfaces.
\newblock {\em Ann. of Math. (2)}, 161(1):105--140, 2005.

\bibitem[Lab21]{Labourie-21}
F.~Labourie.
\newblock Asymptotic counting of minimal surfaces and of surface groups in
  hyperbolic 3-manifolds.
\newblock {\em S\'eminaire Bourbaki, expos{\'e} 1179 Ast\'{e}risque},
  (430):Exp. No. 1179, 425--457, 2021.

\bibitem[Lef19]{Lefeuvre-19-1}
T.~Lefeuvre.
\newblock On the s-injectivity of the {X}-ray transform on manifolds with
  hyperbolic trapped set.
\newblock {\em Nonlinearity}, 32(4):1275--1295, 2019.

\bibitem[Lef20]{Lefeuvre-19-2}
T.~Lefeuvre.
\newblock Local marked boundary rigidity under hyperbolic trapping assumptions.
\newblock {\em J. Geom. Anal.}, 30(1):448--465, 2020.

\bibitem[Lef25]{Lefeuvre-book}
T.~Lefeuvre.
\newblock {\em Microlocal analysis in hyperbolic dynamics and geometry. {With}
  a contributed chapter by {Yann} {Chaubet}}, volume~32 of {\em Cours Sp{\'e}c.
  (Paris)}.
\newblock Paris: Soci{\'e}t{\'e} Math{\'e}matique de France (SMF), 2025.

\bibitem[L{\'o}p13]{Lopez-13}
R.~L{\'o}pez.
\newblock {\em Constant mean curvature surfaces with boundary}.
\newblock Springer Monogr. Math. Berlin: Springer, 2013.

\bibitem[Low21]{Lowe-21}
B.~Lowe.
\newblock Deformations of totally geodesic foliations and minimal surfaces in
  negatively curved 3-manifolds.
\newblock {\em Geom. Funct. Anal.}, 31(4):895--929, 2021.

\bibitem[Mic82]{Michel-81}
R.~Michel.
\newblock Sur la rigidit\'{e} impos\'{e}e par la longueur des
  g\'{e}od\'{e}siques.
\newblock {\em Invent. Math.}, 65(1):71--83, 1981/82.

\bibitem[Mor58a]{Morrey-58-1}
C.~B. Morrey.
\newblock On the analyticity of the solutions of analytic non-linear elliptic
  systems of partial differential equations. {I}. {A}nalyticity in the
  interior.
\newblock {\em Amer. J. Math.}, 80:198--218, 1958.

\bibitem[Mor58b]{Morrey-58-2}
C.~B. Morrey.
\newblock On the analyticity of the solutions of analytic non-linear elliptic
  systems of partial differential equations. {II}. {A}nalyticity at the
  boundary.
\newblock {\em Amer. J. Math.}, 80:219--237, 1958.

\bibitem[MST23]{Mazzuchelli-Salo-Tzou-23}
M.~Mazzuchelli, M.~Salo, and L.~Tzou.
\newblock A general support theorem for analytic double fibration transforms.
\newblock {\em Preprint, [arXiv:2306.05906]}, 2023.

\bibitem[Muk77]{Mukhometov-77}
R.~G. Mukhometov.
\newblock The reconstruction problem of a two-dimensional {R}iemannian metric,
  and integral geometry.
\newblock {\em Dokl. Akad. Nauk SSSR}, 232(1):32--35, 1977.

\bibitem[Muk81]{Mukhometov-81}
R.~G. Mukhometov.
\newblock On a problem of reconstructing {R}iemannian metrics.
\newblock {\em Sibirsk. Mat. Zh.}, 22(3):119--135, 237, 1981.

\bibitem[Ota90]{Otal-90-2}
J.-P. Otal.
\newblock Sur les longueurs des g\'{e}od\'{e}siques d'une m\'{e}trique \`a
  courbure n\'{e}gative dans le disque.
\newblock {\em Comment. Math. Helv.}, 65(2):334--347, 1990.

\bibitem[Pat99]{Paternain-99}
G.~Paternain.
\newblock {\em Geodesic flows}, volume 180 of {\em Progress in Mathematics}.
\newblock Birkh\"{a}user Boston, Inc., Boston, MA, 1999.

\bibitem[PSU13]{Paternain-Salo-Uhlmann-13}
G.~Paternain, M.~Salo, and G.~Uhlmann.
\newblock Tensor tomography on simple surfaces.
\newblock {\em Invent. Math.}, 193(1):229--247, 2013.

\bibitem[PSU23]{Paternain-Salo-Uhlmann-23}
G.~Paternain, M.~Salo, and G.~Uhlmann.
\newblock {\em Geometric inverse problems---with emphasis on two dimensions},
  volume 204 of {\em Cambridge Studies in Advanced Mathematics}.
\newblock Cambridge University Press, Cambridge, 2023.
\newblock With a foreword by Andr\'as Vasy.

\bibitem[PU05]{Pestov-Uhlmann-05}
L.~Pestov and G.~Uhlmann.
\newblock Two dimensional compact simple {R}iemannian manifolds are boundary
  distance rigid.
\newblock {\em Ann. of Math. (2)}, 161(2):1093--1110, 2005.

\bibitem[RS94]{Rosenberg-Spruck-94}
H.~Rosenberg and J.~Spruck.
\newblock On the existence of convex hypersurfaces of constant {G}auss
  curvature in hyperbolic space.
\newblock {\em J. Differential Geom.}, 40(2):379--409, 1994.

\bibitem[San52]{Santalo-52}
L.~A. Santal\'{o}.
\newblock Measure of sets of geodesics in a {R}iemannian space and applications
  to integral formulas in elliptic and hyperbolic spaces.
\newblock {\em Summa Brasil. Math.}, 3:1--11, 1952.

\bibitem[See64]{Seeley-64}
R.~Seeley.
\newblock Extension of {{\(C^ \infty\)}} functions defined in a half space.
\newblock {\em Proc. Am. Math. Soc.}, 15:625--626, 1964.

\bibitem[Sem61]{Semyanistyj-61}
V.~I. Semyanistyj.
\newblock Homogeneous functions and some problems of integral geometry in
  spaces of constant curvature.
\newblock {\em Sov. Math., Dokl.}, 2:59--62, 1961.

\bibitem[Sha94]{Sharafutdinov-94}
V.~A. Sharafutdinov.
\newblock {\em Integral geometry of tensor fields}.
\newblock Inverse and Ill-posed Problems Series. VSP, Utrecht, 1994.

\bibitem[Shu01]{Shubin-01}
M.~A. Shubin.
\newblock {\em Pseudodifferential operators and spectral theory}.
\newblock Springer-Verlag, Berlin, second edition, 2001.
\newblock Translated from the 1978 Russian original by Stig I. Andersson.

\bibitem[Smi13]{Smith2013SpecialLagrangian}
G.~Smith.
\newblock Special {L}agrangian curvature.
\newblock {\em Math. Ann.}, 355(1):57--95, 2013.

\bibitem[Smi20]{Smith-20}
G.~Smith.
\newblock The {P}lateau problem for convex curvature functions.
\newblock {\em Ann. Inst. Fourier (Grenoble)}, 70(1):1--66, 2020.

\bibitem[Smi21]{Smith-21}
G.~Smith.
\newblock On the asymptotic {P}lateau problem in {C}artan--{H}adamard
  manifolds.
\newblock {\em Preprint, [arXiv:2107.14670]}, 2021.

\bibitem[Smi24]{Smith2024AsymptoticGeometry}
G.~Smith.
\newblock On the asymptotic geometry of finite-type $k$-surfaces in
  three-dimensional hyperbolic space.
\newblock {\em J. Eur. Math. Soc. (JEMS)}, 26(2):407--467, 2024.

\bibitem[SU04]{Stefanov-Uhlmann-04}
P.~Stefanov and G.~Uhlmann.
\newblock Stability estimates for the {X}-ray transform of tensor fields and
  boundary rigidity.
\newblock {\em Duke Math. J.}, 123(3):445--467, 2004.

\bibitem[SU05]{Stefanov-Uhlmann-05}
P.~Stefanov and G.~Uhlmann.
\newblock Boundary rigidity and stability for generic simple metrics.
\newblock {\em J. Amer. Math. Soc.}, 18(4):975--1003, 2005.

\bibitem[SUV21]{Stefanov-Uhlmann-Vasy-21}
P.~Stefanov, G.~Uhlmann, and A.~Vasy.
\newblock Local and global boundary rigidity and the geodesic {X}-ray transform
  in the normal gauge.
\newblock {\em Ann. of Math. (2)}, 194(1):1--95, 2021.

\bibitem[Tr{\`e}80]{Treves-80}
F.~Tr{\`e}ves.
\newblock Introduction to pseudodifferential and {Fourier} integral operators.
  {Vol}. 1: {Pseudodifferential} operators. {Vol}. 2: {Fourier} integral
  operators.
\newblock The {University} {Series} in {Mathematics}. {New} {York}, {London}:
  {Plenum} {Press}. {XXXIX}, {XXV}, 649 p. (1980)., 1980.

\bibitem[Uhl83]{Uhlenbeck-83}
K.~Uhlenbeck.
\newblock Closed minimal surfaces in hyperbolic {$3$}-manifolds.
\newblock In {\em Seminar on minimal submanifolds}, volume 103 of {\em Ann. of
  Math. Stud.}, pages 147--168. Princeton Univ. Press, Princeton, NJ, 1983.

\bibitem[UV16]{Uhlmann-Vasy-16}
G.~Uhlmann and A.~Vasy.
\newblock The inverse problem for the local geodesic ray transform.
\newblock {\em Invent. Math.}, 205(1):83--120, 2016.

\bibitem[Wan06]{Wang-06}
R.~Wang.
\newblock Analyticity of solutions of analytic non-linear general elliptic
  boundary value problems, and some results about linear problems.
\newblock {\em Front. Math. China}, 1(3):382--429, 2006.

\bibitem[Whi87]{White-87}
B.~White.
\newblock The space of {$m$}-dimensional surfaces that are stationary for a
  parametric elliptic functional.
\newblock {\em Indiana Univ. Math. J.}, 36(3):567--602, 1987.

\end{thebibliography}

\end{document}